\numberwithin{figure}{section}
\numberwithin{table}{section}
\newtheorem{theorem}{Theorem}[section]
\newtheorem{lemma}[theorem]{Lemma}
\newtheorem{prop}[theorem]{Proposition}
\theoremstyle{definition}
\newtheorem{definition}[theorem]{Definition}
\newtheorem{cor}[theorem]{Corollary}
\theoremstyle{remark}
\newtheorem{remark}[theorem]{Remark}
\numberwithin{equation}{section}
\newfont{\tap}{tap scaled 650}
\def \R{{\mathbb R}}
\def \g{{\mathfrak g}}
\def \h{{\mathfrak h}}
\def \Z{{\mathbb Z}}
\def \S{{\mathcal S}}
\def \O{{\mathcal O}}
\def \Q{{\mathbb Q}}
\def \A{{\mathcal A}}
\def \[{[ }
\def \]{] }
\def \M{{\mathfrak M}}
\def \T{{\mathcal T}}
\def \P{\mathbb{P}}
\def\F{\mathcal{F}}
\def\t{\widetilde}
\def\mr{\mathrm}
\def\h{\widehat}
\def\o{\overline}
\def\p{\partial}
\def\g{\gamma}
\def\D{\mathsf{D}}
\def\x{\mathbf{x}}
\def\pp{\mathbf{p}}
\def\cc{\mathbf{c}}
\def\PP{{\mathbb P}}
\def\ZZ{{\mathbb Z}}
\def \B{{\mathcal B}}
\begin{document}

\title[Cluster algebras and triangulated orbifolds]{Cluster algebras and triangulated orbifolds}
\author{Anna Felikson}
\address{Independent University of Moscow, B. Vlassievskii 11, 119002 Moscow, Russia}
\curraddr{School of Engineering and Science, Jacobs University Bremen, Campus Ring 1, D-28759, Germany}
\email{a.felikson@jacobs-university.de}
\thanks{Research was supported in part by grants DFG FE-1241/2 (A.F.), DNS 0800671 (M.S.) and RFBR 11-01-00289-a (P.T.)}

\author{Michael Shapiro}
\address{Department of Mathematics, Michigan State University, East Lansing, MI 48824, USA}
\email{mshapiro@math.msu.edu}

\author{Pavel Tumarkin} 
\address{Department of Mathematical Sciences, Durham University, Science Laboratories, South Road, Durham, DH1 3LE, UK}
\email{pavel.tumarkin@durham.ac.uk}

\begin{abstract}
We construct geometric realizations for non-exceptional mutation-finite cluster algebras by extending the theory of Fomin and Thurston~\cite{FT} to skew-symmetrizable case. Cluster variables for these algebras are renormalized lambda lengths on certain hyperbolic orbifolds. We also compute the growth rate of these cluster algebras, provide the positivity of Laurent expansions of cluster variables, and prove the sign-coherence of $\cc$-vectors.

\end{abstract}

\maketitle
\setcounter{tocdepth}{1}
\tableofcontents

\section{Introduction}
\label{intro}
We continue investigation of cluster algebras of finite mutation type started in~\cite{FST1} and~\cite{FST2}.

In~\cite{FST1}, we classified all the \emph{skew-symmetric} exchange matrices with finite mutation class. It occurs that all but eleven exceptional mutation classes of skew-symmetric exchange matrices of rank at least $3$ can be obtained from triangulated marked bordered surfaces via construction provided by Fomin, Shapiro and Thurston~\cite{FST}.

In~\cite{FST2}, we completed classification of finite mutation classes of exchange matrices by extending the combinatorial technique of~\cite{FST} to general (i.e., {\it skew-symmetrizable}) case. All but several exceptional finite mutation classes consist of so called {\it s-decomposable} exchange matrices (the precise definitions will be given below).

In this paper, we relate non-exceptional mutation-finite cluster algebras to triangulated orbifolds. Extending the technique of Fomin and Thurston~\cite{FT} to skew-symmetrizable case, we construct geometric realizations for algebras with s-decomposable exchange matrices. In these realizations,
 (tagged) triangulations of certain orbifolds form clusters with { (modified)} lambda lengths of arcs serving as cluster variables.
The geometric realization provides various structural results, for example, we prove that the exchange graph in a cluster algebra with s-decomposable exchange matrices does not depend on coefficients. 

One of the tools of~\cite{FST2} was a notion of {\it unfolding} introduced by Zelevinsky (it can be understood as a counterpart of the {\it unfolding procedure} introduced by Lusztig in~\cite{L} for generalized Cartan matrices). In particular, we construct unfoldings for a class of mutation-finite matrices. In the current paper we provide a geometric version of unfolding, and construct unfoldings for almost all mutation-finite matrices. We then use unfoldings to compute the growth rate of all cluster algebras originating from orbifolds, and for generalization of positivity results by Musiker, Schiffler and Williams~\cite{MSW} to Laurent expansions of corresponding cluster variables.

Another application of the construction is a proof of the sign-coherence for $\cc$-vectors. In~\cite{FZ4}, Fomin and Zelevinsky conjectured that all the entries of $\cc$-vectors are either nonnegative or nonpositive. This conjecture was proved for skew-symmetric cluster algebras by Derksen, Weyman and Zelevinsky~\cite{DWZ}, and for a large class of skew-symmetrizable algebras by Demonet~\cite{D}. We extend the list of algebras for which the conjecture holds by proving the sign-coherence for $\cc$-vectors for all algebras originating from orbifolds. 

\medskip

The paper is organized as follows.

In Section~\ref{cluster}, we recall necessary definitions and basic facts on cluster algebras, exchange matrices, and their diagrams.

Section~\ref{blockdecomp} is devoted to the technique of s-decomposable diagrams. We recall the basic facts and results from~\cite{FST} and~\cite{FST2}, and introduce block decompositions of matrices.

In Section~\ref{orbifolds-s}, we construct a triangulated orbifold for any s-decomposable diagram. The simplicial complex of triangulations of this orbifold coincides with exchange graph of corresponding cluster algebra. The construction is close to the similar construction of Chekhov and Mazzocco~\cite{ChM}.

{ 
In Section~\ref{sec-geom}, a geometric realization of cluster algebras  with s-decomposable exchange matrices is constructed. To do this, we proceed in a way similar to~\cite{FT}, where cluster variables were represented by modified lambda lengths of arcs of triangulations of marked bordered surfaces. However, unlike~\cite{FT}, we need to consider arcs of triangulations not of the given orbifold but of some its modification. We call this modified orbifold an {\it associated orbifold}. This associated orbifold can be constructed  for any specific s-decomposable matrix. In some special cases an associated orbifold occurs to be a regular surface.
}

In Section~\ref{sec-lam}, we generalize the notion of laminations and shear coordinates to the orbifold case. Sections~\ref{sec-opened} and~\ref{sec-teichm} are devoted to a construction of a geometric realization of cluster algebras with arbitrary coefficients. Main results are contained in Section~\ref{main}.

In Section~\ref{sec-growth}, we investigate the growth of cluster algebras with s-decomposable exchange matrices. We use orbifolds to show that the exchange graph of a cluster algebra with an s-decomposable skew-symmetrizable exchange matrix is quasi-isometric to the exchange graph of a cluster algebra with some block-decomposable skew-symmetric exchange matrix. In this way we classify the growth rate of all cluster algebras with s-decomposable exchange matrices. This gives rise to a classification of the growth of all cluster algebras~\cite{FSTT}.

In Section~\ref{unfolding-s}, we recall the definition of an unfolding of skew-symmetrizable matrices introduced by A.~Zelevinsky (personal communication), extend it to a notion of unfolding of a diagram, and recall the results of~\cite{FST2}. Section~\ref{unfolding-fin} is devoted to a construction of unfoldings of almost all mutation-finite skew-symmetrizable matrices.

In Section~\ref{sec-pos}, we prove the positivity conjecture for (almost all) cluster algebras originating from orbifolds, namely, for ones with s-decomposable exchange matrices admitting unfolding. This is done by extending the results of~\cite{MSW} to the orbifold case.

Finally, in Section~\ref{signs} we prove the sign-coherence of $\cc$-vectors for all cluster algebras originating from orbifolds.

\medskip

We would like to thank  L.~Chekhov and S.~Fomin for fruitful discussions, and the anonymous referee for valuable comments.

\section{Basics on cluster algebras}
\label{cluster}

\noindent
We briefly remind the definition of a cluster algebra.

An integer $n\times n$ matrix $B$ is called \emph{skew-symmetrizable} if there exists an
integer diagonal $n\times n$ matrix $D=diag(d_1,\dots,d_n)$,
such that the product $BD$ is a skew-symmetric matrix, i.e., $b_{ij}d_j=-b_{ji}d_i$.

Let $\PP$ be \emph{a tropical semifield } equipped with commutative multiplication $\cdot$ and addition $\oplus$. The multiplicative group of $\PP$ is \emph{a coefficient group} of cluster algebra, i.e, it is a free abelian group. $\ZZ\PP$  is the integer group ring, $\F$ is a field of rational functions in $n$ independent
variables with coefficients in the field of fractions of $\ZZ\PP$.
$\F$ is called {\it an ambient field}.

\begin{definition}
\emph{A seed} is a triple $(\x,\pp,B)$, where
\begin{itemize}
\item[-]
$\pp=(p_{x}^\pm)_{x\in\x}$, a $2n$-tuple of elements of $\PP$ is a \emph{coefficient tuple} of cluster $\x$;

\item[-]
$\x=\{x_1,\dots,x_n\}$ is a collection of algebraically independent rational functions of $n$ variables which generates $\F$ over the field of fractions of $\ZZ\PP$;

\item[-]
$B$ is a skew-symmetrizable integer matrix (\emph{exchange matrix}).
\end{itemize}
The part $\x$ of seed $(\x,\pp,B)$ is called \emph{cluster}, elements $x_i$ are called \emph{cluster variables}.

\end{definition}

\begin{definition}[seed mutation]
For any $k$, $1\le k\le n$ we define \emph{the mutation} of seed $(\x,\pp,B)$ in direction $k$
as a new seed $(\x',\pp',B')$ in the following way:
\begin{equation}\label{eq:MatrixMutation}
b'_{ij}=\left\{
           \begin{array}{ll}
             -b_{ij}, & \hbox{ if } i=k \hbox{ or } j=k; \\
             b_{ij}+\frac{|b_{ik}|b_{kj}+b_{ik}|b_{kj}|}{2}, & \hbox{ otherwise.}
           \end{array}
         \right.
\end{equation}

\begin{equation}\label{eq:ClusterMutation}
x'_i=\left\{
           \begin{array}{ll}
             x_i, & \hbox{ if } i\ne k; \\
             \frac{p^+_k\prod_{b_{jk}>0} x_j^{b_{jk}}+p^-_k\prod_{b_{ji}<0} x_j^{-b_{ji}}}{x_k}, & \hbox{ otherwise.}
           \end{array}
         \right.
\end{equation}

\begin{eqnarray}\label{eq:CoeffMutation}
  p'^\pm_k &=& p^\mp_k \\
  \hbox{ for } i\ne k\qquad p'^+_i/p'^-_i &=& \left\{
           \begin{array}{ll}
             (p^+_k)^{b_{ki}}p^+_i/p^-_i, & \hbox{ if } b_{ki}\ge 0; \\
             (p^-_k)^{b_{ki}}p^+_i/p^-_i, & \hbox{ if } b_{ki}\le 0; \\
           \end{array}
         \right.
\end{eqnarray}

\end{definition}

\noindent
We write $(\x',\pp',B')=\mu_k\left((\x,\pp,B)\right)$.
Notice that $\mu_k(\mu_k((\x,\pp,B)))=(\x,\pp,B)$.
We say that two seeds are \emph{mutation-equivalent}
if one is obtained from the other by a sequence of seed mutations.
Similarly we say that two clusters or two exchange matrices are \emph{mutation-equivalent}.

Notice that exchange matrix mutation~(\ref{eq:MatrixMutation}) depends only on the exchange matrix itself.
The collection of all matrices mutation-equivalent to a given matrix $B$ is called the \emph{mutation class} of $B$.

For any skew-symmetrizable matrix $B$ we define \emph{initial seed} $(\x,\pp,B)$ as a collection
$(\{x_1,\dots,x_n\},\{p_1^\pm,\ldots,p_n^\pm\},B)$, where $B$ is the \emph{initial exchange matrix}, $\x=\{x_1,\dots,x_n\}$ is the \emph{initial cluster}, $\pp=\{p_1^\pm,\ldots,p_n^\pm\}$ is the \emph{initial coefficient tuple}.

{\it Cluster algebra} $\A(B)$ associated with the skew-sym\-met\-ri\-zab\-le $n\times n$ matrix $B$ is a subalgebra of $\Q(x_1,\dots,x_n)$ generated by all cluster variables of the clusters mutation-equivalent
to the initial seed $(x,B)$.

Cluster algebra $\A(B)$ is called \emph{of finite type} if it contains only finitely many
cluster variables. In other words, all clusters mutation-equivalent to initial cluster contain
only finitely many distinct cluster variables in total.

\begin{definition}\label{def:FinMutType} A cluster algebra with only finitely many exchange matrices is called \emph{of finite mutation type}.
\end{definition}

\begin{remark} Since the orbit of an exchange matrix depends on the exchange matrix only, we may speak about skew-symmetrizable matrices of finite mutation type.
\end{remark}



Following~\cite{FZ2}, we encode an $n\times n$ skew-symmetrizable integer matrix $B$ by a finite simplicial $1$-complex $S$ with oriented weighted edges called {\it diagram}. The weights of a diagram are positive integers.

Vertices of $S$ are labeled by $[1,\dots,n]$. If $b_{ij}>0$, we join vertices $i$ and $j$ by an  edge directed from $i$ to $j$ and assign to this edge weight $-b_{ij}b_{ji}$. Not every diagram corresponds to a skew-symmetrizable integer matrix: given a diagram $S$ of a skew-symmetrizable integer matrix $B$, a product of weights along any chordless cycle of $S$ is a perfect square (cf.~\cite[Exercise~2.1]{Kac}).

Distinct matrices may have the same diagram. At the same time, it is easy to see that only finitely many matrices may correspond to the same diagram.
All weights of a diagram of a skew-symmetric matrix are perfect squares. Conversely, if all weights of a diagram $S$ are perfect squares, then there exists a skew-symmetric matrix $B$ with diagram $S$.

As it is shown in~\cite{FZ2}, mutations of exchange matrices induce {\it mutations of diagrams}. If $S$ is the diagram corresponding to matrix $B$, and $B'$ is a mutation of $B$ in direction $k$, then we call the diagram $S'$ associated to $B'$ a {\it mutation of $S$ in direction $k$} and denote it by $\mu_k(S)$. A mutation in direction $k$ changes weights of diagram in the way described in Figure~\ref{quivermut} (see~\cite{FZ2}).

\begin{figure}[!h]
\begin{center}
\psfrag{a}{\small $a$}
\psfrag{b}{\small $b$}
\psfrag{c}{\small $c$}
\psfrag{d}{\small $d$}
\psfrag{k}{\small $k$}
\psfrag{mu}{\small $\mu_k$}
\epsfig{file=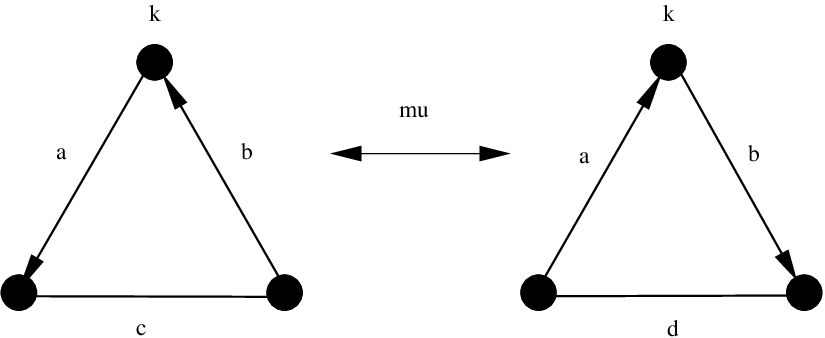,width=0.4\linewidth}\\
\medskip
$\pm\sqrt{c}\pm\sqrt{d}=\sqrt{ab}$
\caption{Mutations of diagrams. The sign before $\sqrt{c}$ (resp., $\sqrt{d}$) is positive if the three vertices form an oriented cycle, and negative otherwise. Either $c$ or $d$ may vanish. If $ab$ is equal to zero then neither value of $c$ nor orientation of the corresponding edge does change.}
\label{quivermut}

\end{center}
\end{figure}

Hence, for a given diagram, the notion of its {\it mutation class} is well-defined. We call a diagram (resp., matrix) {\it mutation-finite} if its mutation class is finite.

\medskip

\section{Block decompositions of diagrams and matrices}
\label{blockdecomp}

First, we remind the definitions from~\cite{FST} and~\cite{FST2}.

\begin{definition}
\label{def-block}
In~\cite{FST}, a {\it block} is a diagram isomorphic to one of the diagrams with black/white colored vertices shown in Fig.~\ref{bloki}, or to a single vertex. Vertices marked in white are called {\it outlets}, we call the black ones {\it dead ends}. A connected skew-symmetric diagram $S$ is called {\it block-decomposable} if it can be obtained from a collection of blocks by identifying outlets of different blocks along some partial matching (matching of outlets of the same block is not allowed), where two single edges with same endpoints and opposite directions cancel out, and two single edges with same endpoints and same directions form an edge of weight $4$. A non-connected diagram $S$ is called  block-decomposable either if $S$ satisfies the definition above, or if $S$ is a disjoint union of several diagrams (without any edge joining one to another) satisfying the definition above. If a skew-symmetric diagram $S$ is not block-decomposable then we call $S$ {\it non-decomposable}. Depending on a block, we call it {\it a block of type} $\rm{I}$, $\rm{II}$, $\rm{III}$, $\rm{IV}$, $\rm{V}$, or simply {\it a block of $n$-th type}.

\begin{figure}[!h]
\begin{center}
\psfrag{1}{${\rm{I}}$}
\psfrag{2}{${\rm{II}}$}
\psfrag{3a}{${\rm{IIIa}}$}
\psfrag{3b}{${\rm{IIIb}}$}
\psfrag{4}{${\rm{IV}}$}
\psfrag{5}{${\rm{V}}$}
\epsfig{file=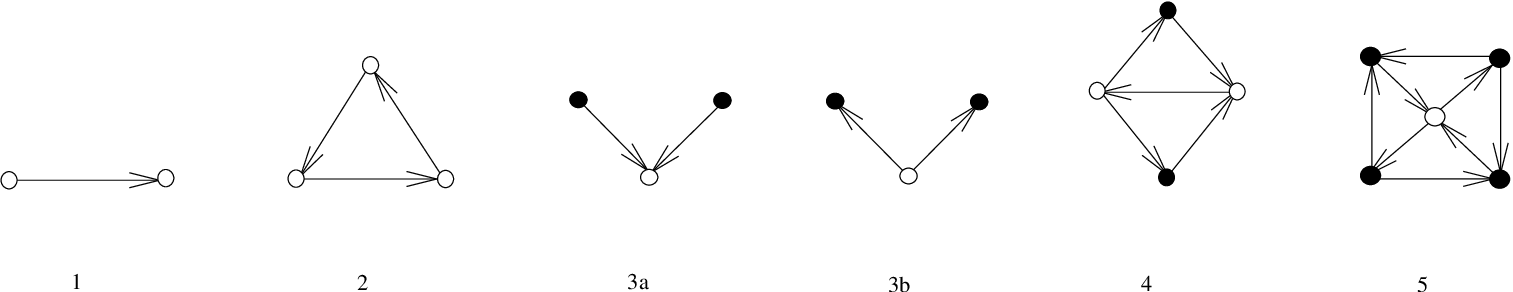,width=0.999\linewidth}
\caption{Skew-symmetric blocks. Outlets are colored in white, dead ends are black.}
\label{bloki}
\end{center}
\end{figure}

\end{definition}

Block-decomposable diagrams are in one-to-one correspondence with adjacency matrices of arcs of ideal (tagged) triangulations of bordered two-dimensional surfaces with marked points (see~\cite[Section~13]{FST} for the detailed explanations). Mutations of block-decomposable diagrams correspond to flips of (tagged) triangulations. In particular, this implies that mutation class of any block-decomposable diagram is finite, and any subdiagram of a block-decomposable one is block-decomposable too.

It is proved in~\cite{FST1} that block-decomposable diagrams almost exhaust mutation-finite ones. Namely, any mutation-finite non-decomposable skew-symmetric diagram of order at least $3$ is mutation-equivalent to one of $11$ exceptional diagrams, see~\cite[Theorem~6.1]{FST1}.

\begin{definition}
\label{def-s-block}
To adopt the technique of blocks to general (skew-symmetrizable) case, we introduce new blocks called {\it s-blocks} of types $\mr{\t{III}a}$, $\mr{\t{III}b}$, $\t{\mr{IV}}$, $\t{\mr V}_1$, $\t{\mr V}_2$, and $\t{\mr V}_{12}$ shown in Table~\ref{newblocks}, and exceptional blocks shown in Table~\ref{newblocks-e}.
\end{definition}

\begin{table}[!h]
\caption{s-blocks and their local unfoldings (see Sections~\ref{unfolding-s},~\ref{unfolding-fin}). Vertex $v_i$ and the set $E_i$ are marked in the same way. Outlets are colored white.}
\label{newblocks}
\begin{tabular}{|l|c|c|c|c|c|c|}
\hline
&&&&&&\\
\begin{tabular}{c}
\raisebox{-0.4cm}{s-blocks}
\end{tabular}
&
\psfrag{2-}{\tiny $2$}
\psfrag{3at}{\scriptsize ${\mr{\t{III}a}}$}
\parbox[c]{1.4cm}{\epsfig{file=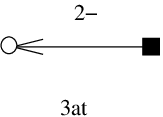,width=0.99\linewidth}}
&
\psfrag{2-}{\tiny $2$}
\psfrag{3bt}{\scriptsize ${\mr{\t{III}b}}$}
\parbox[c]{1.4cm}{\epsfig{file=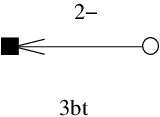,width=0.99\linewidth}}
&
\psfrag{2-}{\tiny $2$}
\psfrag{4t}{\scriptsize $\t{\mr{IV}}$}
\parbox[c]{1.4cm}{\epsfig{file=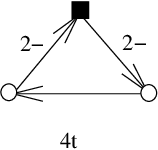,width=0.99\linewidth}}
&
\psfrag{2-}{\tiny $2$}
\psfrag{51t}{\scriptsize $\t{\mr{V}}_1$}
\parbox[c]{1.4cm}{\epsfig{file=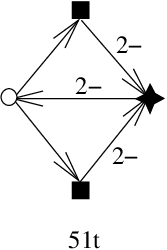,width=0.99\linewidth}}
&
\psfrag{2-}{\tiny $2$}
\psfrag{52t}{\scriptsize $\t{\mr{V}}_2$}
\parbox[c]{1.4cm}{\epsfig{file=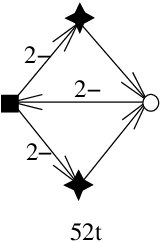,width=0.99\linewidth}}
&
\psfrag{2-}{\tiny $2$}
\psfrag{4}{\tiny $4$}
\psfrag{512t}{\scriptsize $\t{\mr{V}}_{12}$}
\parbox[c]{1.4cm}{\epsfig{file=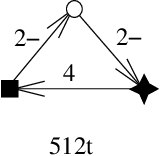,width=0.99\linewidth}}
\\
&&&&&&
\\
\hline
&&&&\multicolumn{3}{c|}{}
\\
\begin{tabular}{c}
Unfoldings
\end{tabular}
&
\parbox[c]{1.4cm}{\epsfig{file=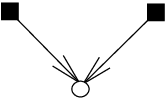,width=0.99\linewidth}}
&
\parbox[c]{1.4cm}{\epsfig{file=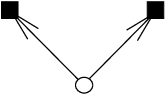,width=0.99\linewidth}}
&
\parbox[c]{1.4cm}{\epsfig{file=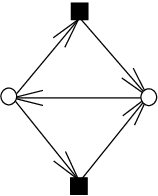,width=0.99\linewidth}}
&
\multicolumn{3}{c|}{
\parbox[c]{1.4cm}{\epsfig{file=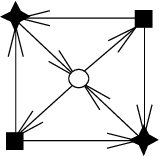,width=0.99\linewidth}}
}
\\
&&&&\multicolumn{3}{c|}{}
\\
\hline

\end{tabular}
\end{table}

\begin{table}
\caption{Exceptional s-blocks and their unfoldings. s-blocks shown in the table have no outlets, so they cannot be used to construct other s-decomposable diagrams} 
\label{newblocks-e}
\begin{tabular}{|l|c|c|c|}
\hline
&&&\\
\begin{tabular}{c}
\raisebox{-0.4cm}{s-blocks}
\end{tabular}
&
\psfrag{2-}{\tiny $2$}
\parbox[c]{1.9cm}{\epsfig{file=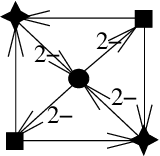,width=0.99\linewidth}}
&
\psfrag{2-}{\tiny $2$}
\psfrag{4-}{\tiny $4$}
\parbox[c]{1.9cm}{\epsfig{file=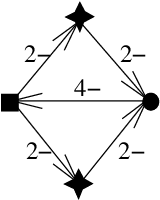,width=0.99\linewidth}}
&
\psfrag{4-}{\tiny $4$}
\psfrag{4}{\tiny $4$}
\parbox[c]{1.9cm}{\epsfig{file=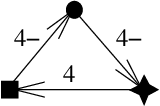,width=0.99\linewidth}}
\\
&&&
\\
\hline
&\multicolumn{3}{c|}{}
\\
\begin{tabular}{c}
Unfoldings
\end{tabular}
&
\multicolumn{3}{c|}{
\parbox[c]{2.6cm}{\epsfig{file=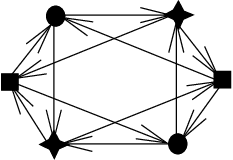,width=0.99\linewidth}}
}
\\
&\multicolumn{3}{c|}{}
\\
\hline
\end{tabular}
\end{table}

\begin{definition}
\label{s-dec diagr}
A diagram is called {\it s-decomposable} if it can be obtained from a collection of blocks and s-blocks according to the same rules as block-decomposable diagram (the way of identification remains well-defined since any edge with two white ends has weight one).
We keep the term ``block-decomposable'' for s-decomposable diagrams corresponding to skew-symmetric matrices. A diagram called {\it non-decomposable} if it is not s-decomposable.

\end{definition}

It is proved in~\cite{FST2} that any mutation-finite non-decomposable diagram of order at least $3$ is either skew-symmetric or mutation-equivalent to one of $7$ exceptional diagrams, see~\cite[Theorem~5.13]{FST2}.

\begin{remark}
The exceptional s-blocks shown in Table~\ref{newblocks-e} have no outlets, so they cannot be used in constructing other s-decomposable diagrams. However, they are mutation-finite, cannot be decomposed into other blocks and s-blocks, and can be constructed as diagrams of triangulations of some orbifolds (see Table~\ref{all-matrices-e}), so we call them {\em s-blocks} for completeness of the theory. 

\end{remark}

Now we can define s-decomposable matrices.

\begin{definition}
\label{s-dec matr}
A skew-symmetrizable matrix is {\it s-decomposable} (respectively, {\it block-de\-com\-posable}) if its diagram is {s-decomposable} (respectively, {block-decomposable})

\end{definition}

Block-decomposable (or s-decomposable) matrices can be indeed decomposed into blocks in the following way. Let $B$ be an s-decomposable $n\times n$ matrix with diagram $\D$. For every block $\B_j$ in $\D$ spanned by vertices $v_{i_1},\dots,v_{i_k}$ consider the following $n\times n$ matrix $B_j$: the matrix corresponding to the block (see Tables~\ref{blocks} and~\ref{all-matrices}) is located on $(i_1,\dots,i_k)$-places of $B_j$, and the other entries are zeros. Then $B$ is the sum of all matrices $B_j$ for all blocks $\B_j$.

\begin{table}
\caption{Skew-symmetric blocks and the corresponding surfaces}
\label{blocks}
\begin{tabular}{|c|c|c|c|}
\hline
Block
&Diagram&Matrix&Surface\\
\hline
\raisebox{5mm}{$\mr{{I}}$}&
\raisebox{5mm}{\epsfig{file=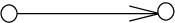,width=0.09\linewidth}}&
\raisebox{5mm}{\small $\left(\begin{smallmatrix}
0&1\\
-1&0\\
\end{smallmatrix}\right)$}
&\raisebox{-2mm}[18mm][4mm]{\psfrag{boundary}{\tiny\tap boundary}\epsfig{file=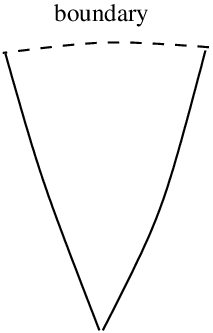,width=0.073\linewidth}}
\\
\hline
\raisebox{4mm}{$\mr{{II}}$}&
\raisebox{1mm}{\epsfig{file=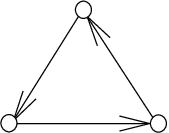,width=0.09\linewidth}}&
\raisebox{4mm}{\small $\left(\begin{smallmatrix}
0&1&-1\\
-1&0&1\\
1&-1&0\\
\end{smallmatrix}\right)$}&
\raisebox{-1mm}[16mm][3mm]{\epsfig{file=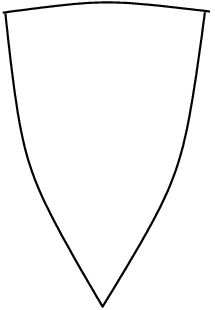,width=0.068\linewidth}}
\\
\hline
\raisebox{5mm}{$\mr{{III}a}$}&
\psfrag{3a}{}\raisebox{5mm}{\epsfig{file=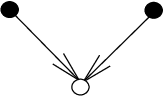,width=0.09\linewidth}}&
\raisebox{8mm}{\small $\left(\begin{smallmatrix}
0&-1&-1\\
1&0&0\\
1&0&0\\
\end{smallmatrix}\right)$}&
\psfrag{u}{\tiny }
\psfrag{v1}{\tiny }
\psfrag{v2}{\tiny }
\raisebox{-1mm}[20mm][3mm]{\epsfig{file=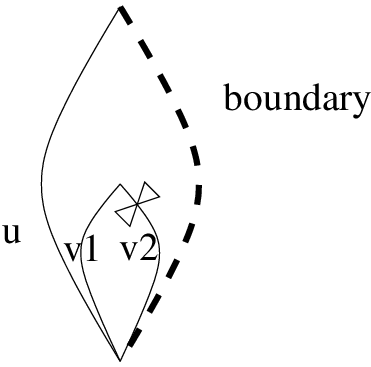,width=0.12\linewidth}}
\\
\hline
\raisebox{5mm}{$\mr{IIIb}$}&
\psfrag{3b}{}\raisebox{5mm}{\epsfig{file=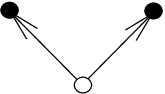,width=0.09\linewidth}}&
\raisebox{8mm}{\small $\left(\begin{smallmatrix}
0&0&-1\\
0&0&-1\\
1&1&0\\
\end{smallmatrix}\right)$}&
\psfrag{u}{\tiny }
\psfrag{v1}{\tiny }
\psfrag{v2}{\tiny }
\raisebox{-1mm}[20mm][3mm]{\epsfig{file=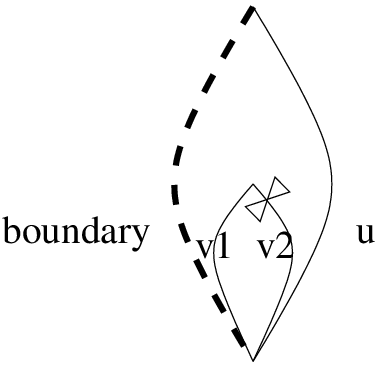,width=0.121\linewidth}}
\\
\hline
\raisebox{6mm}{$\mr{{IV}}$}&
\psfrag{4}{}\raisebox{2mm}{\epsfig{file=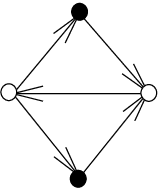,width=0.09\linewidth}}&
\raisebox{9mm}{\small $\left(\begin{smallmatrix}
0&1&-1&-1\\
-1&0&1&1\\
1&-1&0&0\\
1&-1&0&0\\
\end{smallmatrix}\right)$}&
\psfrag{w}{}
\psfrag{u}{}
\psfrag{v1}{}
\psfrag{v2}{}
\psfrag{p1}{}
\psfrag{p2}{}
\raisebox{-0.5mm}[23mm][3mm]{\epsfig{file=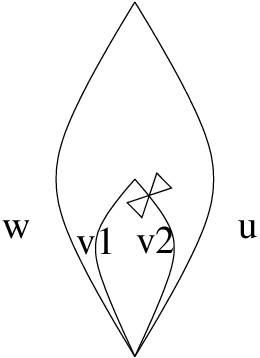,width=0.095\linewidth}}
\\
\hline
\raisebox{6mm}{$\mr{{V}}$}&
\psfrag{5}{}\raisebox{1.5mm}{\epsfig{file=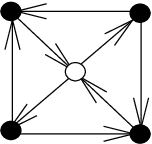,width=0.09\linewidth}}&
\raisebox{7mm}{\small $\left(\begin{smallmatrix}
0&1&-1&-1&1\\
-1&0&1&1&0\\
1&-1&0&0&-1\\
1&-1&0&0&-1\\
-1&0&1&1&0
\end{smallmatrix}\right)$}&
\psfrag{u}{}
\psfrag{w1}{}
\psfrag{w2}{}
\psfrag{p1}{}
\psfrag{p2}{}
\raisebox{-1mm}{\epsfig{file=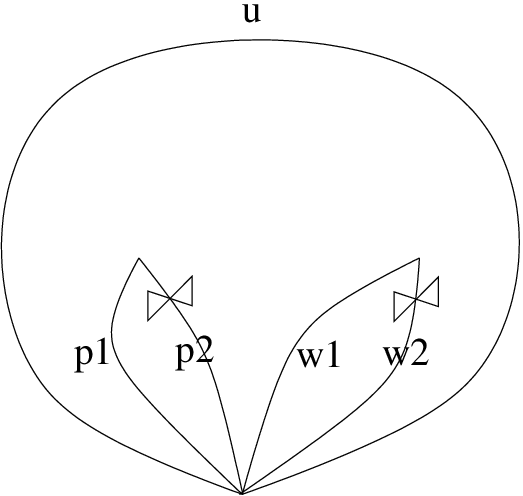,width=0.12\linewidth}}
\\
\hline
\end{tabular}
\end{table}

\begin{table}
\caption{s-blocks, their matrices, orbifolds, and associated orbifolds, see Sections~\ref{sec lambda 2} and~\ref{sec_gen}}
\label{all-matrices}
\begin{tabular}{|c|c|c|c|c|}
\hline
s-Block
&Diagram&Orbifold&Matrix
&Associated
orbifold
\\
\hline
\raisebox{-4mm}{$\mr{\t{III}a}$}&
\psfrag{2-}{\tiny $2$}\psfrag{3at}{}\raisebox{-10mm}[0mm][0mm]{\epsfig{file=diagrams_pic/block3at.eps,width=0.09\linewidth}}&
\raisebox{-10mm}[0mm][0mm]{\epsfig{file=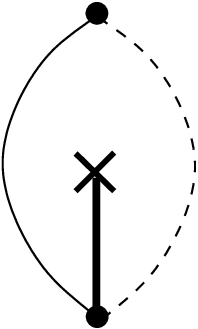,width=0.05\linewidth}}&
\raisebox{2mm}[9mm][5mm]{\small $\left(\begin{smallmatrix}
0&-1\\
2&0\\
\end{smallmatrix}\right)$}&
\raisebox{-4mm}[0mm][0mm]{\epsfig{file=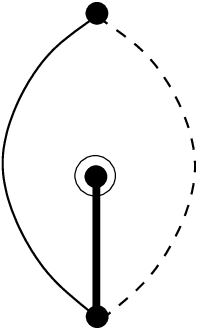,width=0.05\linewidth}}\\
\hhline{|~|~|~|-|-|}
&&&
\raisebox{5mm}{\small $\left(\begin{smallmatrix}
0&-2\\
1&0\\
\end{smallmatrix}\right)$}&
\raisebox{0mm}{\epsfig{file=diagrams_pic/1.eps,width=0.05\linewidth}}
\\
\hline
\raisebox{-4mm}{$\mr{\t{III}b}$}&
\psfrag{2-}{\tiny $2$}\psfrag{3bt}{}\raisebox{-10mm}[0mm][0mm]{\epsfig{file=diagrams_pic/block3bt.eps,width=0.09\linewidth}}&
\raisebox{-10mm}[0mm][0mm]{\epsfig{file=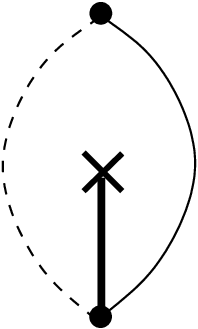,width=0.05\linewidth}}&
\raisebox{2mm}[9mm][5mm]{\small $\left(\begin{smallmatrix}
0&-2\\
1&0\\
\end{smallmatrix}\right)$}&
\raisebox{-4mm}[0mm][0mm]{\epsfig{file=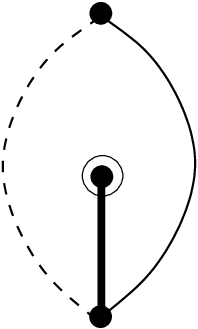,width=0.05\linewidth}}\\
\hhline{|~|~|~|-|-|}
&&&
\raisebox{5mm}{\small $\left(\begin{smallmatrix}
0&-1\\
2&0\\
\end{smallmatrix}\right)$}&
\raisebox{0mm}{\epsfig{file=diagrams_pic/2.eps,width=0.05\linewidth}}
\\
\hline
\raisebox{-4mm}{$\mr{\t{IV}}$}&
\psfrag{2-}{\tiny $2$}\psfrag{4t}{}\raisebox{-10mm}[0mm][0mm]{\epsfig{file=diagrams_pic/block4t.eps,width=0.09\linewidth}}&
\raisebox{-10mm}[0mm][0mm]{\epsfig{file=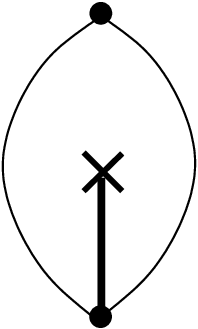,width=0.05\linewidth}}&
\raisebox{2mm}[9mm][5mm]{\small $\left(\begin{smallmatrix}
0&1&-1\\
-1&0&1\\
2&-2&0\\
\end{smallmatrix}\right)$}&
\raisebox{-4mm}[0mm][0mm]{\epsfig{file=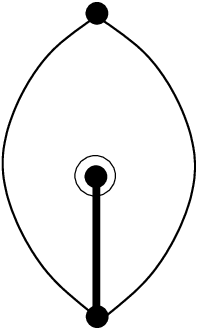,width=0.05\linewidth}}\\
\hhline{|~|~|~|-|-|}
&&&
\raisebox{5mm}{\small $\left(\begin{smallmatrix}
0&1&-2\\
-1&0&2\\
1&-1&0\\\end{smallmatrix}\right)$}&
\raisebox{0mm}{\epsfig{file=diagrams_pic/3.eps,width=0.05\linewidth}}\\
\hline
\raisebox{-4mm}{$\mr{\t{V}_1}$}&
\psfrag{2-}{\tiny $2$}\psfrag{51t}{}\raisebox{-17mm}[0mm][0mm]{\epsfig{file=diagrams_pic/block51t.eps,width=0.09\linewidth}}&
\raisebox{-10mm}[0mm][0mm]{\epsfig{file=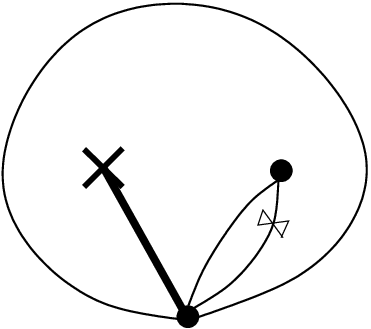,width=0.07\linewidth}}&
\raisebox{0mm}[8mm][5mm]{\small $\left(\begin{smallmatrix}
0&1&-1&1\\
-1&0&1&0\\
2&-2&0&-2\\
-1&0&1&0\\
\end{smallmatrix}\right)$}&
\raisebox{-4mm}[0mm][0mm]{\epsfig{file=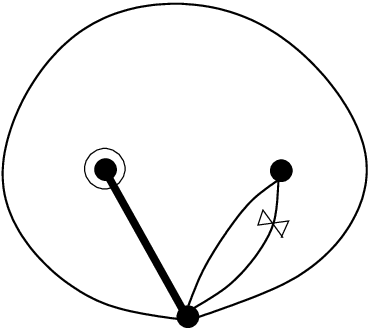,width=0.07\linewidth}}\\
\hhline{|~|~|~|-|-|}
&&&
\raisebox{5mm}{\small $\left(\begin{smallmatrix}
0&1&-2&1\\
-1&0&2&0\\
1&-1&0&-1\\
-1&0&2&0\\
\end{smallmatrix}\right)$}&
\raisebox{0mm}{\epsfig{file=diagrams_pic/4.eps,width=0.07\linewidth}}
\\
\hline
\raisebox{-4mm}{$\mr{\t{V}_2}$}&
\psfrag{2-}{\tiny $2$}\psfrag{52t}{}\raisebox{-17mm}[0mm][0mm]{\epsfig{file=diagrams_pic/block52t.eps,width=0.09\linewidth}}&
\raisebox{-10mm}[0mm][0mm]{\epsfig{file=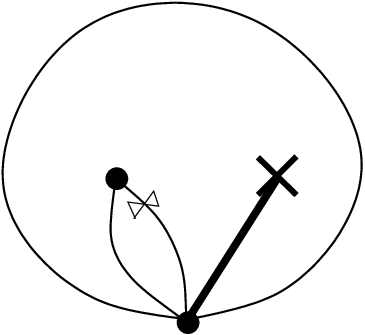,width=0.07\linewidth}}&
\raisebox{0mm}[8mm][5mm]{\small $\left(\begin{smallmatrix}
0&2&-2&2\\
-1&0&1&0\\
1&-1&0&-1\\
-1&0&1&0\\
\end{smallmatrix}\right)$}&
\raisebox{-4mm}[0mm][0mm]{\epsfig{file=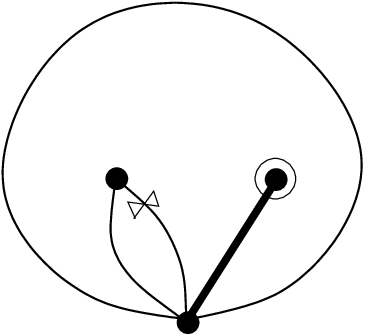,width=0.07\linewidth}}\\
\hhline{|~|~|~|-|-|}
&&&
\raisebox{4mm}{\small $\left(\begin{smallmatrix}
0&1&-1&1\\
-2&0&1&0\\
2&-1&0&-1\\
-2&0&1&0\\
\end{smallmatrix}\right)$}&
\raisebox{0mm}{\epsfig{file=diagrams_pic/5.eps,width=0.07\linewidth}}
\\
\hline
\raisebox{-10mm}[10mm][0mm]{$\mr{\t{V}_{12}}$}&
\psfrag{2-}{\tiny $2$}\psfrag{4}{\tiny $4$}\psfrag{512t}{}\raisebox{-20mm}[0mm][0mm]{\epsfig{file=diagrams_pic/block512t.eps,width=0.09\linewidth}}&
\raisebox{-15mm}[0mm][0mm]{\epsfig{file=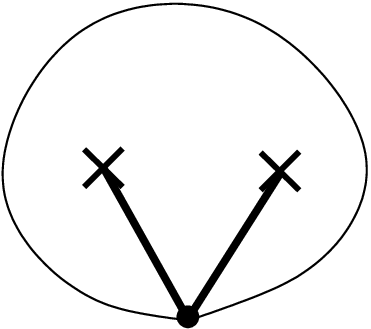,width=0.07\linewidth}}&
\raisebox{2mm}[7mm][1mm]{\small $\left(\begin{smallmatrix}
0&2&-2\\
-1&0&1\\
2&-2&0\\
\end{smallmatrix}\right)$}&
\raisebox{0mm}{\epsfig{file=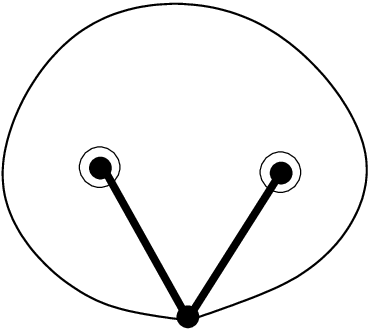,width=0.07\linewidth}}\\
\hhline{|~|~|~|-|-|}
&&&
\raisebox{4mm}{\small $\left(\begin{smallmatrix}
0&1&-1\\
-2&0&1\\
4&-2&0\\
\end{smallmatrix}\right)$}&
\raisebox{0mm}{\epsfig{file=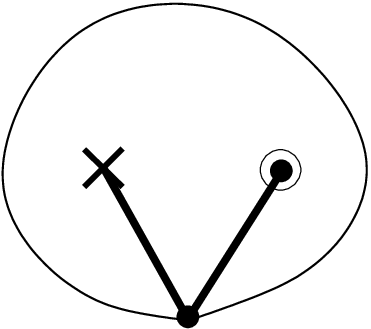,width=0.07\linewidth}}\\
\hhline{|~|~|~|-|-|}
&&&
\raisebox{4mm}{\small $\left(\begin{smallmatrix}
0&2&-4\\
-1&0&2\\
1&-1&0\\
\end{smallmatrix}\right)$}&
\raisebox{0mm}{\epsfig{file=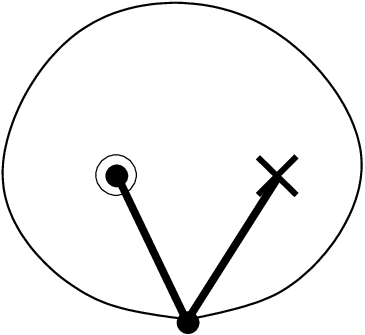,width=0.07\linewidth}}\\
\hhline{|~|~|~|-|-|}
&&&
\raisebox{4mm}{\small $\left(\begin{smallmatrix}
0&1&-2\\
-2&0&2\\
2&-1&0\\
\end{smallmatrix}\right)$}&
\raisebox{0mm}{\epsfig{file=diagrams_pic/6.eps,width=0.07\linewidth}}\\
\hline
\end{tabular}

\end{table}

\begin{table}
\caption{Exceptional s-blocks, their matrices, orbifolds, and associated orbifolds, see Sections~\ref{sec lambda 2} and~\ref{sec_gen}}
\label{all-matrices-e}
\begin{tabular}{|c|c|c|c|}
\hline
Diagram&Orbifold&Matrix
&Associated
orbifold
\\
\hline
\psfrag{2-}{\tiny $2$}\psfrag{6t}{}\raisebox{-10mm}[0mm][0mm]{\epsfig{file=diagrams_pic/block6t1.eps,width=0.08\linewidth}}&
\raisebox{-10mm}[0mm][0mm]{\epsfig{file=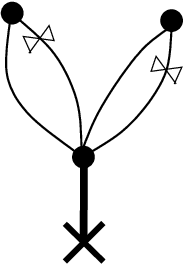,width=0.08\linewidth}}&
\raisebox{4mm}[5mm][0mm]{\small $\left(\begin{smallmatrix}
0&0&1&1&-1\\
0&0&1&1&-1\\
-1&-1&0&0&1\\
-1&-1&0&0&1\\
2&2&-2&-2&0\\
\end{smallmatrix}\right)$}&
\raisebox{-2mm}{\epsfig{file=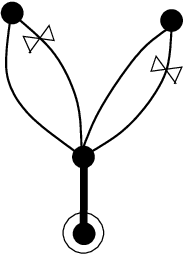,width=0.08\linewidth}}\\
\hhline{|~|~|-|-|}
&&
\raisebox{7mm}{\small $\left(\begin{smallmatrix}
0&0&1&1&-2\\
0&0&1&1&-2\\
-1&-1&0&0&2\\
-1&-1&0&0&2\\
1&1&-1&-1&0\\\end{smallmatrix}\right)$}&
\raisebox{0mm}{\epsfig{file=diagrams_pic/7.eps,width=0.08\linewidth}}
\\
\hline
\psfrag{2-}{\tiny $2$}\psfrag{4-}{\tiny $4$}\raisebox{-30mm}[0mm][0mm]{\epsfig{file=diagrams_pic/block6t2.eps,width=0.08\linewidth}}&
\raisebox{-30mm}[0mm][0mm]{\epsfig{file=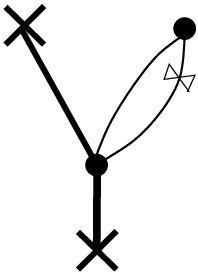,width=0.08\linewidth}}&
\raisebox{4mm}[5mm][0mm]{\small $\left(\begin{smallmatrix}
0&0&1&-1\\
0&0&1&-1\\
-2&-2&0&2\\
2&2&-2&0\\
\end{smallmatrix}\right)$}&
\raisebox{-2mm}{\epsfig{file=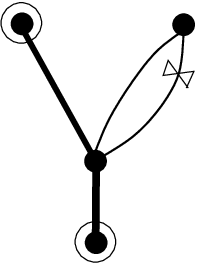,width=0.08\linewidth}}\\
\hhline{|~|~|-|-|}
&&
\raisebox{7mm}{\small $\left(\begin{smallmatrix}
0&0&1&-2\\
0&0&1&-2\\
-2&-2&0&4\\
1&1&-1&0\\
\end{smallmatrix}\right)$}&
\raisebox{0mm}{\epsfig{file=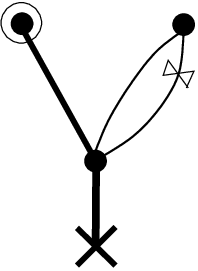,width=0.08\linewidth}}
\\
\hhline{|~|~|-|-|}
&&
\raisebox{7mm}{\small $\left(\begin{smallmatrix}
0&0&2&-1\\
0&0&2&-1\\
-1&-1&0&1\\
2&2&-4&0\\
\end{smallmatrix}\right)$}&
\raisebox{0mm}{\epsfig{file=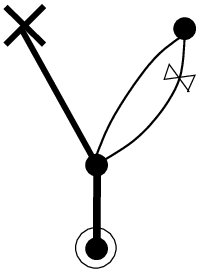,width=0.08\linewidth}}
\\
\hhline{|~|~|-|-|}
&&
\raisebox{7mm}{\small $\left(\begin{smallmatrix}
0&0&2&-2\\
0&0&2&-2\\
-1&-1&0&2\\
1&1&-2&0\\
\end{smallmatrix}\right)$}&
\raisebox{0mm}{\epsfig{file=diagrams_pic/17.eps,width=0.08\linewidth}}
\\
\hline
\psfrag{4-}{\tiny $4$}\psfrag{4}{\tiny $4$}\raisebox{-25mm}[0mm][0mm]{\epsfig{file=diagrams_pic/block6t3.eps,width=0.08\linewidth}}&
\raisebox{-30mm}[0mm][0mm]{\epsfig{file=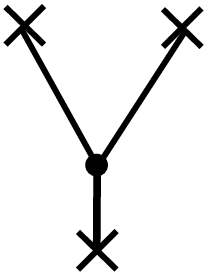,width=0.08\linewidth}}&
\raisebox{4mm}[5mm][0mm]{\small $\left(\begin{smallmatrix}
0&2&-2\\
-2&0&2\\
2&-2&0\\
\end{smallmatrix}\right)$}&
\raisebox{-2mm}{\epsfig{file=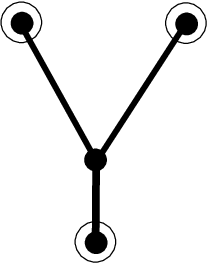,width=0.08\linewidth}}\\
\hhline{|~|~|-|-|}
&&
\raisebox{7mm}{\small $\left(\begin{smallmatrix}
0&1&-1\\
-4&0&2\\
4&-2&0\\
\end{smallmatrix}\right)$}&
\raisebox{0mm}{\epsfig{file=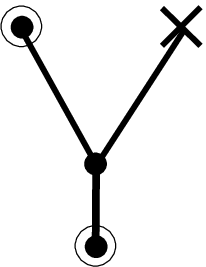,width=0.08\linewidth}}
\\
\hhline{|~|~|-|-|}
&&
\raisebox{7mm}{\small $\left(\begin{smallmatrix}
0&2&-1\\
-2&0&1\\
4&-4&0\\
\end{smallmatrix}\right)$}&
\raisebox{0mm}{\epsfig{file=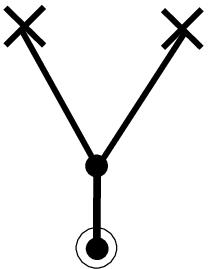,width=0.08\linewidth}}
\\
\hhline{|~|~|-|-|}
&&
\raisebox{7mm}{\small $\left(\begin{smallmatrix}
0&2&-2\\
-2&0&2\\
2&-2&0\\
\end{smallmatrix}\right)$}&
\raisebox{0mm}{\epsfig{file=diagrams_pic/18.eps,width=0.08\linewidth}}
\\
\hline
\end{tabular}

\end{table}

\section{Triangulations of orbifolds}
\label{orbifolds-s}
Let $S$ be a connected oriented 2-dimensional surface with (possibly empty) boundary $\p S$.

{ By an {\it orbifold} $\O$ we mean a triple $\O=(S,M,Q)$, where $S$ is a bordered surface with a finite set of marked points $M$, and $Q$
is a finite (non-empty) set of special points called {\it orbifold points}, $M\cap Q=\emptyset$. Some marked points may belong to $\p S$ (moreover, every boundary component must contain at least one marked point; the interior marked points are also called {\it punctures}), while
all orbifold points are interior points of $S$ (later on, as we will supply the orbifold with a metric, the orbifold points will have angle $\pi$). By {\it boundary} $\p \O$ we mean $\p S$.
}

An {\it arc} $\gamma$ in $\O$ is a curve in $\O$ considered up to relative isotopy (of $\O\setminus \{M\cup Q\}$) modulo endpoints such that
\begin{itemize}
\item one of the following holds:
\begin{itemize}
\item either both endpoints of $\gamma$ belong to $M$ (and then $\gamma$ is an {\it ordinary arc})
\item or one endpoint belongs to $M$ and another belongs to $Q$ (then $\gamma$ is called {\it pending arc});
\end{itemize}
\item $\gamma$ has no self-intersections, except that its endpoints may coincide;
\item except for the endpoints, $\gamma$ and $M\cup Q\cup \p \O$ are disjoint;
\item if $\gamma$ cuts out a monogon then this monogon contains either a point of $M$ or at least two points of $Q$;
\item $\gamma$ is not homotopic to a boundary segment.

\end{itemize}

Note that we do not allow both endpoints of $\gamma$ to be in $Q$.

Two arcs $\gamma$ and $\gamma'$ are {\it compatible} if the following two conditions hold:
\begin{itemize}
\item they do not intersect in the interior of $\O$;
\item if both $\gamma$ and $\gamma'$ are pending arcs, then the ends of $\gamma$ and $\gamma'$ that are orbifold points do not coincide { (i.e., two pending arcs may share a marked point, but neither an ordinary point nor a orbifold point)}.

\end{itemize}

A {\it triangulation} of $\O$ is a maximal collection  of distinct pairwise compatible arcs.
The arcs of a triangulation cut $\O$ into {\it triangles}. We allow self-folded triangles as well as triangles one or two
of whose edges are pending arcs. See Fig.~\ref{triangles} for the list of possible triangles.

The following lemma is evident.

\begin{lemma}
\label{max}
Any set of compatible arcs on an orbifold is contained in some triangulation.

\end{lemma}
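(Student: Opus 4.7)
The plan is a greedy extension argument: start with the given compatible set $\Gamma$ and keep adjoining compatible arcs until no further arc can be added. For this to produce a triangulation in finitely many steps, we need an a priori upper bound on the cardinality of \emph{any} set of pairwise compatible arcs in $\O$, depending only on the topology of $\O$ and on $|M|,|Q|$.

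First I would establish such a bound. Given a finite set of pairwise compatible arcs $\Gamma'$, cut $\O$ open along every arc of $\Gamma'$. Because arcs in $\Gamma'$ do not meet in the interior, and because a pending arc is not allowed to share its orbifold endpoint with another pending arc, this cutting yields a disjoint union of surface pieces whose boundary contains points of $M$ and at most finitely many ``half-edges'' of pending arcs terminating at orbifold points of $Q$. Each complementary region must contain at least one point of $M\cup Q\cup\partial\O$ (otherwise one arc would cut out an empty monogon, which is forbidden). A standard Euler characteristic count for the cut-open surface, with the orbifold points contributing a fixed deficit, then yields a bound $N(\O)$ on $|\Gamma'|$ depending only on the genus of $S$, the number of boundary components, $|M|$, and $|Q|$.

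Next I would run the greedy procedure. Set $\Gamma_0=\Gamma$. If $\Gamma_i$ is not maximal among sets of pairwise compatible arcs, pick any arc $\gamma\notin\Gamma_i$ compatible with every arc of $\Gamma_i$ and set $\Gamma_{i+1}=\Gamma_i\cup\{\gamma\}$. Since $|\Gamma_{i+1}|=|\Gamma_i|+1\le N(\O)$, this process must terminate at some maximal compatible set $\Gamma_\infty$, which is by definition a triangulation of $\O$ containing $\Gamma$.

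The main obstacle is the bounding step, where one has to make sure that the forbidden configurations (a monogon with no marked point inside, a pending arc homotopic to another, two pending arcs sharing an orbifold endpoint) are all ruled out so that the cut-open pieces really are ``honest'' polygonal regions to which the Euler characteristic count applies. Once that bookkeeping is done, the lemma follows immediately; this is presumably why the authors call it evident and would include only a one-line proof.
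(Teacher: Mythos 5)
Your proposal is correct, and in fact supplies more detail than the paper does: the authors simply declare Lemma~\ref{max} evident and give no argument at all. Since the paper defines a triangulation to be a maximal collection of distinct pairwise compatible arcs, the only content of the lemma is that the greedy extension terminates, i.e.\ that pairwise compatible collections have uniformly bounded cardinality; your Euler-characteristic bound is the standard way to justify this, and your reduction is exactly the intended one. One small imprecision in your sketch: it is not true that every complementary region of a compatible collection must contain a point of $M\cup Q\cup\partial\O$ --- a region can be free of such points if it is not a disk (an annular or higher-genus piece), since then no single arc cuts out an empty monogon. This does not hurt the bound: disk regions are forced to contain a marked, orbifold, or boundary point by the monogon/boundary-isotopy prohibitions, while non-disk regions carry strictly negative Euler characteristic, so the count over all complementary pieces still yields a bound $N(\O)$ depending only on the genus, the number of boundary components, $|M|$, and $|Q|$. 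With that repaired, your argument is complete.
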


\begin{figure}[!h]
\begin{center}
\epsfig{file=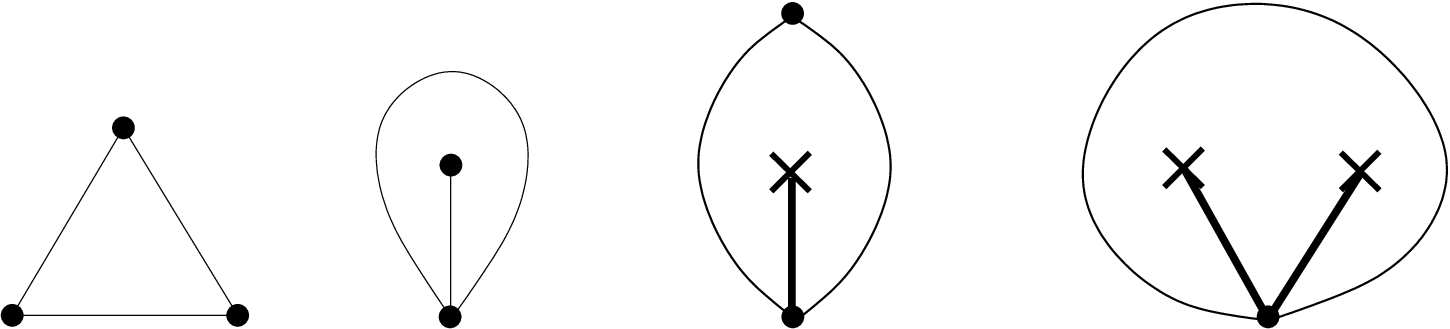,width=0.6\linewidth}
\caption{Types of triangles admissible for triangulations of orbifolds
(vertices marked by a cross denote orbifold points, bold edges denote pending arcs) }
\label{triangles}
\end{center}
\end{figure}

A {\it flip} of an arc $\gamma$ of a triangulation $T$  replaces $\gamma$ by a unique arc $\gamma'\ne \gamma$ such that
$\gamma' \cup (T\setminus \gamma)$ forms a new triangulation of $S$.
In Fig.~\ref{flip-pending} we show flips involving pending arcs.

\begin{figure}[!h]
\begin{center}
\epsfig{file=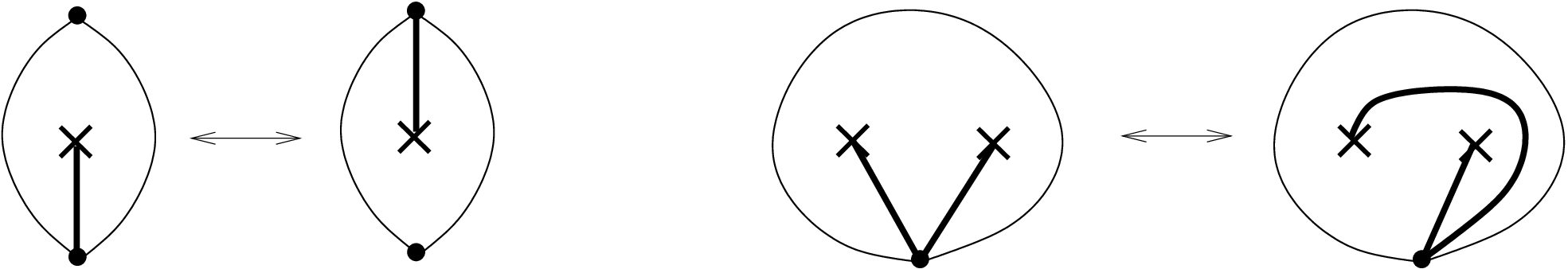,width=0.9\linewidth}
\caption{Flips of pending arcs}
\label{flip-pending}
\end{center}
\end{figure}

\subsection{Transitivity of flips on triangulations of orbifolds}

In this section we prove the following theorem.

\begin{theorem}
\label{transitivity}
For any orbifold $\O$ flips act transitively on triangulations of $\O$.

\end{theorem}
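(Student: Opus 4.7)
The plan is to induct on $|Q|$, with the base case $|Q|=0$ being the classical transitivity theorem for triangulations of bordered surfaces with marked points from~\cite{FST}.

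The inductive step rests on the following reduction lemma: for a fixed orbifold point $q\in Q$ and a fixed pending arc $\gamma$ at $q$, any triangulation $T$ of $\O$ can be flipped to a triangulation containing $\gamma$. Granted this, the theorem follows quickly: starting from two triangulations $T_1,T_2$, flip each to contain $\gamma$; then cut $\O$ along $\gamma$ to produce an orbifold $\O'$ with $|Q|-1$ orbifold points (the cone point $q$ is absorbed into a new boundary segment, and the marked endpoint of $\gamma$ is duplicated on that segment). The restrictions of the resulting triangulations to $\O'$ are flip-connected by the inductive hypothesis, and each of those flips lifts back to a flip of a triangulation of $\O$ that leaves $\gamma$ in place.

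I would prove the reduction lemma in two stages. First, use the pending-arc flips of Figure~\ref{flip-pending} to arrange that some pending arc of $T$ at $q$ is isotopic to $\gamma$ relative to the ordinary structure of $T$; this step relies on the fact that $T$ always contains at least one pending arc at $q$ (any triangle covering the orbifold point must have a pending edge there, by Figure~\ref{triangles}), together with the observation that pending-arc flips act on the collection of pending arcs at $q$ much like half-twists around $q$. Second, if $\gamma$ still crosses $T$ transversely in the interior, apply the classical bigon-reduction argument: locate two consecutive intersection points of $\gamma$ with $T$ cobounding an innermost bigon together with a single triangle edge $\delta$, and flip $\delta$ to strictly decrease the geometric intersection number $\sum_{\delta'\in T}|\gamma\cap\delta'|$.

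The main obstacle is the case analysis underlying the reduction lemma. On an ordinary surface every triangle has three ordinary edges, but on $\O$ one must also handle self-folded triangles and triangles with one or two pending edges (Figure~\ref{triangles}); the delicate configurations are those in which an intersection of $\gamma$ with $T$ occurs adjacent to a pending edge at the same orbifold point $q$, because the non-standard compatibility constraint (two pending arcs at the same orbifold point are never compatible, regardless of interior crossings) interacts with the interior bigon reduction and may force one to perform a pending flip before an ordinary flip of $\delta$ is available.
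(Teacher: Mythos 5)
Your overall scaffolding is sound and genuinely different from the paper's: you induct on the number of orbifold points, cutting along one pending arc at a time, whereas the paper cuts along \emph{all} pending arcs at once (Lemma~\ref{same pending arcs}) and then handles the combinatorics of the whole system of pending arcs separately. The cutting correspondence you describe (triangulations of $\O$ containing $\gamma$ $\leftrightarrow$ triangulations of the cut orbifold, flips matching flips) is exactly the mechanism of Lemma~\ref{same pending arcs} and is fine. The problem is that your entire argument funnels into the ``reduction lemma'' --- that any triangulation can be flipped to contain a prescribed pending arc $\gamma$ --- and that lemma is precisely where all the difficulty of the theorem lives; you sketch it in two stages and then explicitly defer ``the main obstacle,'' namely the case analysis. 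As written, this is a genuine gap, not a routine verification. Stage 1 asserts that pending-arc flips act on the pending arcs at $q$ ``much like half-twists,'' but a single pending flip only exchanges $\gamma_i$ for the other diagonal of its enclosing digon or monogon; to move the marked endpoint of the pending arc to an arbitrary marked point, or to change its isotopy class arbitrarily, you must interleave pending flips with ordinary flips that reshape the enclosing digon, and nothing in your sketch controls this. Stage 2 (innermost-bigon reduction of $|\gamma\cap T|$ against the \emph{full} triangulation) is exactly the step that breaks in the presence of self-folded triangles, monogons with two pending arcs, and the non-local compatibility rule for pending arcs; the claim that flipping the bigon's edge $\delta$ ``strictly decreases'' the intersection number is false for some of these configurations without first modifying $T$ elsewhere, and you acknowledge this without resolving it.

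For contrast, the paper avoids the hard case analysis by never intersecting the target arc with a full triangulation. It works only with the \emph{system of pending arcs} $\Gamma$: an ``elementary transformation'' replaces one pending arc by another compatible one, and Lemma~\ref{elem} shows each such transformation is realized by a single flip inside a triangulation \emph{chosen after the fact} to contain a convenient triangle $\Delta$ around the arc being moved. Transitivity of elementary transformations on systems of pending arcs is then proved by reducing $|p\cap\Gamma|$ (Claims~1 and~2 in Lemma~\ref{order}, after centering via Lemma~\ref{center}), where the reduction step is a clean cut-and-reroute of a single pending arc --- no triangles, no self-folded configurations, no bigon analysis. If you want to salvage your induction, the cleanest repair is to prove your reduction lemma by exactly this device: forget $T$ except for its pending-arc system, move that system by elementary transformations until it contains $\gamma$, and invoke Lemmas~\ref{elem} and~\ref{same pending arcs} to convert the elementary transformations into flips.
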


By a {\it system of pending arcs} of a triangulation $T$ we mean the union of all pending arcs of $T$.

\begin{lemma}
\label{same pending arcs}
Flips act transitively on triangulations with the same system of pending arcs.

\end{lemma}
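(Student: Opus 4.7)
The plan is to reduce this lemma to the known transitivity of flips on ideal triangulations of bordered surfaces with marked points (the Fomin--Shapiro--Thurston theorem), by cutting the orbifold along the common pending arcs.

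First I would establish that each orbifold point $q\in Q$ is the endpoint of \emph{exactly one} pending arc in any triangulation $T$ of $\O$. Indeed, the compatibility condition forbids two distinct pending arcs from sharing an orbifold endpoint, so there is at most one. For the other direction, if $q$ were not the endpoint of any arc of $T$, then $q$ would lie in the interior of some complementary region; one could then add a new pending arc from a nearby marked point to $q$, contradicting the maximality of $T$. Thus the system of pending arcs $P$ is in canonical bijection with $Q$, and fixing $P$ means fixing one pending arc per orbifold point.

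Next, I would construct a bordered surface $\o S$ without orbifold points by cutting $\O$ along each arc of $P$. Cutting along a pending arc from $m\in M$ to $q\in Q$ splits it into two boundary segments meeting at the image of $q$, which I declare to be a new marked point; since the arcs of $P$ are pairwise disjoint except possibly at marked points, this operation is well-defined. The resulting surface $\o S$ carries a marked point set $\o M = M\cup Q$ on $\p\o S\cup \text{interior}$, and has no orbifold structure.

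The third step is to set up a bijection
\[
\{T \text{ triangulation of } \O : P\subset T\} \;\longleftrightarrow\; \{\text{ideal triangulations of } (\o S, \o M)\}
\]
sending $T$ to the collection $\o T$ consisting of the non-pending arcs of $T$, viewed as arcs on $\o S$. Using the list of admissible triangle types in Figure~\ref{triangles}, one checks case by case (triangles with zero, one, or two pending edges, including the self-folded configurations) that each triangle of $T$ becomes a triangle of $\o S$ with vertices in $\o M$, so that $\o T$ is indeed an ideal triangulation; the inverse operation simply glues back along $P$. Under this bijection, a flip of a non-pending arc $\gamma\in T$ corresponds exactly to the flip of $\gamma$ in $\o T$, since the flip is performed inside the complement of $P$.

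Finally, I would invoke the Fomin--Shapiro--Thurston transitivity theorem for flips on ideal triangulations of bordered surfaces with marked points. Given two triangulations $T_1,T_2$ of $\O$ with the common pending system $P$, the corresponding triangulations $\o T_1,\o T_2$ of $\o S$ are connected by a sequence of flips on $\o S$; lifting this sequence through the bijection produces a sequence of flips on $\O$ connecting $T_1$ to $T_2$, none of which involves a pending arc.

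The main obstacle is the bookkeeping in the third step: one must verify that every triangle of $T$ involving pending arcs (in particular the self-folded and pending self-folded configurations) yields, after cutting along $P$, a genuine triangle in $\o S$ with vertices in $\o M$, so that the complement of $\o T$ in $\o S$ really is a union of triangles. This is a finite case check using the admissible triangle types, after which the lemma follows immediately from the surface case.
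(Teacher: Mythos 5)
Your overall strategy --- cut the orbifold along the fixed system of pending arcs, identify the triangulations of $\O$ containing that system with the ideal triangulations of the resulting bordered surface, and quote transitivity of flips for surfaces --- is exactly the route the paper takes. There is, however, a concrete error in your construction of the cut surface, and it sits precisely at the step you yourself single out as the main obstacle, namely the claim that every triangle of $T$ becomes a triangle of $\o S$.

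The problem is your decision to declare the image of the orbifold point $q$ a new marked point, so that $\o M=M\cup Q$ and the doubled pending arc becomes \emph{two} boundary segments meeting at $q$. With that convention, the triangle of $T$ containing a pending arc $\gamma$ (a digon with sides $\alpha,\beta$ and the pending edge $\gamma$ traversed twice, cf.\ Figure~\ref{triangles}) is cut into a region bounded by $\alpha$, $\beta$ and the two copies of $\gamma$, with the image of $q$ as an extra vertex --- a quadrilateral, not a triangle. Consequently $\o T=T\setminus P$ is \emph{not} an ideal triangulation of $(\o S,\o M)$: already for $\O$ a digon with one interior orbifold point, $T=\{\gamma\}$ gives $\o T=\emptyset$, while your $(\o S,\o M)$ is a quadrilateral whose triangulations have one arc. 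In general, ideal triangulations of your $(\o S,\o M)$ have $|Q|$ more arcs than $|T|-|P|$, so the asserted bijection cannot hold. The fix is to leave the image of $q$ unmarked: the cut replaces each pending arc by a hole carrying a single marked point (the image of the marked endpoint $m$), the two copies of $\gamma$ forming one boundary segment. Then a digon with a pending edge becomes an honest ideal triangle with one side on the new boundary (this is also how the paper cuts in the proof of Lemma~\ref{Ptolemy-prime}), the arc counts match, and the rest of your argument --- the bijection, the flip correspondence, and the appeal to Hatcher/Fomin--Shapiro--Thurston transitivity --- goes through exactly as in the paper's proof.
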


\begin{proof}
Choose a system of pending arcs on $\O$.
To prove the lemma we cut the orbifold $\O$ along all pending arcs (i.e., we replace any pending arc by a hole with one marked point on the boundary) and denote by $S$ the obtained surface. The marked points of $S$ are the same as of $\O$, the orbifold points disappear. Every pending arc of $\O$ produces a boundary component of $S$ with exactly one marked point.

The triangulations of $\O$ containing the chosen system of pending arcs are in one-to-one correspondence with the triangulations of $S$
(and if two triangulations of $S$ are related by a flip in some arc, then the corresponding triangulations of $\O$ are related by
a flip in the corresponding arc). So, the lemma follows from transitivity of flips on triangulations of $S$ (see~\cite{H} and~\cite{FST}).

\end{proof}

A set $\{\g_1,\dots,\g_k\}$ of pending arcs on $\O$ is {\it compatible} if $\g_i$ and $\g_j$ are compatible for every $i\ne j$.

Every maximal compatible set of pending arcs is a system of pending arcs for some triangulation of $\O$: to see this, we cut $\O$ along all pending
arcs and triangulate the surface. In the sequel we will use the notion of system of pending arcs as a maximal compatible set of pending arcs not related to any triangulation.

An {\it elementary transformation} of a system $\Gamma=\{\g_1,\dots,\g_n\}$ of pending arcs is a substitution of a pending arc $\g_i$ by any other pending arc $\g_i'$ compatible with the set $\{\cup_j \g_j\}\setminus \g_i$ and not intersecting interior of $\g_i$.

\begin{lemma}
\label{elem}
Let $\Gamma=\{\g_1,\dots,\g_n\}$ be a system of pending arcs on $O$.
Let $\phi_i$ be an elementary transformation of a $\Gamma$ substituting $\g_i$ by $\g_i'$.
Then there exist triangulations $T$ and $T'$ containing the systems $\Gamma$ and $\Gamma'=\phi_i(\Gamma)$ respectively, such that $T'=f_i(T)$ where $f_i$ is a flip in the pending arc $\g_i$.

\end{lemma}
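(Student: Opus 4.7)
My plan is to construct $T$ explicitly in a neighborhood of $\g_i$ and then extend globally using Lemma~\ref{max}. The starting observation is that since $\Gamma$ is a maximal compatible system of pending arcs, every orbifold point of $\O$ is an endpoint of exactly one arc of $\Gamma$. The compatibility of $\g_i'$ with each $\g_j$ ($j\neq i$) therefore forces the orbifold endpoint of $\g_i'$ to coincide with that of $\g_i$; call this common orbifold point $q$. The remaining arcs $\g_j$ ($j\ne i$) are compatible with both $\g_i$ and $\g_i'$, so they avoid the interiors of both and their orbifold endpoints are distinct from $q$.

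Next, I would take a regular neighborhood $U$ of $\g_i\cup\g_i'$ small enough that $U$ contains no points of $M\cup Q$ besides $q$ and the marked endpoint(s) of $\g_i,\g_i'$, and such that $U$ meets the arcs $\g_j$ ($j\ne i$) only possibly at common marked endpoints. After a mild case analysis (depending on whether $\g_i$ and $\g_i'$ share their marked endpoint), $U$ is a disk with $q$ on its boundary in which $\g_i\cup\g_i'$ sits as part of a combinatorial configuration matching one of the templates of Fig.~\ref{flip-pending}. Using that template I build a local triangulation $T_U$ of $U$ containing $\g_i$ such that the flip of $\g_i$ in $T_U$ produces a triangulation of $U$ containing $\g_i'$; all the non-$\g_i$ arcs of $T_U$ are ordinary arcs inside $U$ chosen to match the template.

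For the global extension I observe that $T_U\cup(\Gamma\setminus\{\g_i\})$ is a compatible collection of arcs on $\O$: the arcs $\g_j$ with $j\ne i$ lie outside the interior of $U$ except possibly at shared marked endpoints, they share no orbifold point with any pending arc of $T_U$ (the only such is $\g_i$, pending at $q$), and they are pairwise compatible as a subset of $\Gamma$. Applying Lemma~\ref{max}, I extend this collection to a triangulation $T$ of $\O$ containing $\Gamma$. Since every arc of $T$ outside $U$ is fixed by the flip $f_i$, and inside $U$ the flip acts as the chosen template, the triangulation $T'=f_i(T)$ contains $\g_i'$ together with all $\g_j$ for $j\ne i$, i.e.\ $T'\supset\Gamma'$, as required.

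The main technical obstacle is the case analysis for the shape of $U$ and the verification, picture by picture, that the configuration $(\g_i,\g_i')$ always matches one of the flip templates of Fig.~\ref{flip-pending}. The subcases — $\g_i$ and $\g_i'$ sharing both endpoints versus only the orbifold endpoint $q$, together with the different ways $\g_i'$ can go around $q$ relative to $\g_i$ — require careful bookkeeping of the admissible triangles from Fig.~\ref{triangles}; but in each case the relevant auxiliary arcs can be drawn inside $U$ without being forced to coincide with any $\g_j$ ($j\ne i$), so the construction goes through.
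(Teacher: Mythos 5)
Your proposal is correct and follows essentially the same route as the paper's proof: the paper's curves $p_l$ and $p_r$ bound exactly the regular neighborhood $U$ of $\g_i\cup\g_i'$ you describe, the triangles $\Delta=(p_l,p_r,\g_i)$ and $\Delta'=(p_l,p_r,\g_i')$ play the role of your local template, and the global extension of $\{\g_i,p_l,p_r\}\cup(\Gamma\setminus\g_i)$ to a triangulation is your appeal to Lemma~\ref{max}. The only (harmless) difference is that you explicitly justify why $\g_i$ and $\g_i'$ share their orbifold endpoint, which the paper takes for granted.
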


\begin{proof}
Let $c_i$ be the common orbifold point of $\g_i$ and $\g_i'$. Let $x_i$ and $x_i'$ be the marked (i.e., non-orbifold) ends of $\g_i$ and $\g_i'$
(possibly, $x_i=x_i'$). Consider a path $\g_i\g_i'$ from $x_i$ through $c_i$ to $x_i'$.
Denote by $p_l$ and $p_r$ the paths in $\O$ built as in Fig.~\ref{paths}: $p_l$ goes from $x_i$ to $x_i'$
following the path $\g_i\g_i'$ and shifted to the left, while $p_r$ is the similar path shifted to the right from $\g_i\g_i'$.
Since the disc bounded by $p_l$ and $p_r$ contains no singularities except $c_i$, the curves $p_l$,$p_r$ and $\g_i$ are sides of
an admissible triangle $\Delta$ on $\O$ (in fact, $p_l$ may coincide with $p_r$ if $\O=\Delta$).
Similarly,  $p_l$, $p_r$ and $\g_i'$ are sides of a triangle $\Delta'$.

Now, delete the triangle with sides $p_l$, $p_r$, $\g_i$ from the orbifold $\O$ and choose any triangulation $T_0$ on $\O\setminus \Delta$
compatible with the set of pending arcs $\cup_j \g_j\setminus \g_i$. Then $T\cup \Delta$ is a triangulation of $\O$
compatible with the  system of pending arcs $\Gamma$. Similarly, $T\cup \Delta'$  is compatible with $\Gamma'$.
It is left to note that the triangulation $T\cup \Delta'$ can be obtained from $T\cup \Delta$ by a flip in the pending arc $\g_i$.

\end{proof}

\begin{figure}[!h]
\begin{center}
\psfrag{x}{\scriptsize $x$}
\psfrag{x'}{\scriptsize $x'$}
\psfrag{pl}{\scriptsize $p_l$}
\psfrag{pr}{\scriptsize $p_r$}
\psfrag{z'}{\scriptsize $\g_i'$}
\psfrag{z}{\scriptsize $\g_i$}
\psfrag{c}{\scriptsize $c_i$}
\epsfig{file=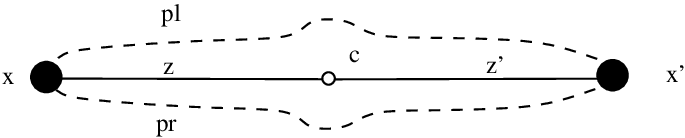,width=0.5\linewidth}
\caption{Proof of Lemma~\ref{elem}: paths $p_l$ and $p_r$}
\label{paths}
\end{center}
\end{figure}

Lemmas~\ref{same pending arcs} and~\ref{elem} show that to prove Theorem~\ref{transitivity}it is sufficient to prove the transitivity of action of elementary transformations on the set of systems of pending arcs of $\O$.

A system $\Gamma$ of pending arcs is {\it centered at a marked point $x$} if $x$ is an endpoint of every pending arc of $\Gamma$.

\begin{lemma}
\label{center}
For any system $\Gamma=\{\g_1,\dots,\g_n\}$ of pending arcs on $\O$ and any marked point $x\in \O$ one can find a sequence of at most $n$ elementary transformations which takes $\Gamma$ to a system $\Gamma'$ centered at $x$.

\end{lemma}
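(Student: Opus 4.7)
The plan is to argue by induction on the number $k$ of pending arcs in $\Gamma$ whose marked endpoint is not $x$. When $k=0$, $\Gamma$ is already centered at $x$. For the induction step, I would exhibit a single elementary transformation that strictly decreases $k$, yielding at most $n$ transformations in total.

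For $k\geq 1$, consider the chambers of $\O \setminus \bigcup_{\gamma\in\Gamma}\gamma$, and call a chamber \emph{inside} if $x$ lies in its closure. The key claim I would establish is that some inside chamber $C$ has on its boundary a pending arc $\gamma_i \in \Gamma$ with $m_i \neq x$. Granting this, both $x$ and $c_i$ lie on $\partial C$, and since $\overline{C}$ is connected I can draw a simple curve $\gamma_i'$ from $x$ to $c_i$ running through $C$ except at its endpoints. Such $\gamma_i'$ avoids every arc of $\Gamma\setminus\gamma_i$ (which lies outside $C$) and avoids the interior of $\gamma_i$ (which lies on $\partial C$ rather than in $C$), so the substitution $\gamma_i \mapsto \gamma_i'$ is a legitimate elementary transformation producing a system with one fewer non-$x$-centered arc.

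To prove the key claim, I would argue by contradiction: suppose every pending arc on the boundary of any inside chamber is $x$-centered. For any $x$-centered arc $\gamma:x\to c$, both chambers flanking $\gamma$ have $x$ in their closure since $x\in\gamma$, hence both are inside. Because $\O$ is connected, so is the adjacency graph of chambers (edges given by pending-arc crossings), so propagating across $x$-centered boundary arcs eventually exhausts every chamber. Then every pending arc of $\Gamma$ borders some (inside) chamber and is therefore $x$-centered, forcing $k=0$ and contradicting $k\geq 1$.

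The main obstacle will be this last propagation step, whose crux is the observation that every $x$-centered pending arc has both chambers it borders containing $x$ in their closure, combined with connectedness of the chamber-adjacency graph. Once the claim is in hand, constructing $\gamma_i'$ inside the single chamber $C$ is routine, since both endpoints already lie on $\partial C$ and $\overline{C}$ is connected.
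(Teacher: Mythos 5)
Your proof is correct and follows essentially the same route as the paper: re-root one pending arc at a time by running a new arc from $x$ to the orbifold point $c_i$ through the complement of the arc system, using at most one elementary transformation per arc. The only difference is that the paper short-circuits your chamber-propagation claim by observing directly that $\O\setminus\bigcup_j\gamma_j$ is connected (each orbifold point is the free end of a unique pending arc, so the arcs are non-separating slits), so there is in fact only one chamber and it is automatically ``inside''.
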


\begin{proof}
For each orbifold point $c_i$ there exists a unique pending arc $\g_i$ containing $c_i$, so $\O\setminus \{\cup_j \g_j\}$
is connected. This implies that we can perform an elementary transformation that replaces $\gamma_i$ by a pending arc connecting $c_i$ with a chosen fixed marked point $x$.

\end{proof}

\begin{lemma}
\label{order}
Let $\Gamma$ and $\t\Gamma$ be two systems of pending arcs, both centered at the same marked point $x$.
Then there exists a sequence of elementary transformations taking $\Gamma$ to $\t\Gamma$.

\end{lemma}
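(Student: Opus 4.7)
The plan is to induct on $n$, the number of orbifold points of $\O$. The base case $n = 1$ already contains the essential difficulty in miniature: both $\Gamma = \{\g\}$ and $\t\Gamma = \{\t\g\}$ consist of a single pending arc between $x$ and the unique orbifold point $c$, and one must connect any two such arcs by a chain of pending arcs in which consecutive arcs have disjoint interiors. I would handle this by a secondary induction on the minimal geometric intersection number $|\g \cap \t\g|$: at an outermost bigon cut from $\O$ by $\g$ and $\t\g$, I would construct an intermediate pending arc that agrees with $\g$ outside a neighborhood of this bigon but slips past $\g$ along a parallel push-off of $\t\g$ through the bigon, strictly decreasing the number of intersections with $\t\g$ while remaining disjoint from the interior of $\g$.

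For the inductive step with $n \ge 2$, the main sub-goal is to perform a sequence of elementary transformations on $\Gamma$ which arranges the pending arc ending at $c_1$ to coincide with $\t\g_1$. Once this is achieved, cutting $\O$ along the common pending arc $\t\g_1$ yields an orbifold $\O'$ with $n - 1$ orbifold points, and $\Gamma \setminus \{\g_1\}$ and $\t\Gamma \setminus \{\t\g_1\}$ descend to centered systems at (the marked point corresponding to) $x$ in $\O'$. By the inductive hypothesis, these are related by a sequence of elementary transformations on $\O'$, and each such transformation lifts to an elementary transformation on $\O$ (since $\t\g_1$ has become a boundary arc and plays no further role).

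To realize $\t\g_1$ in the current system, I would iteratively reduce the number of transverse intersections of $\t\g_1$ with $\bigcup_{j \ne 1}\g_j$. At an outermost intersection of $\t\g_1$ with some $\g_j$ (for $j \ne 1$), I would perform an elementary transformation replacing $\g_j$ by an arc obtained from $\g_j$ by pushing it across $\t\g_1$ through the corresponding innermost disk; as in the base case, the new arc is a pending arc ending at $c_j$, is compatible with $\Gamma \setminus \{\g_j\}$, is disjoint from the interior of $\g_j$, and has strictly fewer intersections with $\t\g_1$ than $\g_j$ did. Once $\t\g_1$ becomes disjoint from $\Gamma \setminus \{\g_1\}$, it is compatible with $\Gamma \setminus \{\g_1\}$, and the base-case argument applied to $\g_1$ and $\t\g_1$ completes the realization stage.

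The main obstacle is the careful case analysis needed to produce legal intermediate pending arcs: in each configuration of the outermost bigon one has to verify that the constructed arc is a \emph{bona fide} pending arc (embedded, not contracting to a boundary segment, not cutting off a monogon without marked points) and that both conditions in the definition of an elementary transformation hold. Bigons with a vertex at $x$, at an orbifold point, or with both sides themselves pending arcs must each be treated; in every case, however, a short local argument in a neighborhood of the bigon should suffice, and the global reduction in intersection numbers then drives the induction to termination.
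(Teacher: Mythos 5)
Your overall strategy --- realize the target pending arcs one at a time by elementary transformations that strictly decrease a geometric intersection number --- is the same as the paper's, and your induction on the number of orbifold points combined with cutting along the realized arc is a workable substitute for the paper's observation that realizing one arc never disturbs arcs disjoint from it. However, the reduction mechanism you describe has a genuine gap. First, the ``outermost bigon / innermost disk'' framing does not match the situation: once $\gamma_j$ and $\widetilde{\gamma}_1$ are in minimal position there is no empty bigon to push through, and the region bounded by their initial segments up to a first intersection point need not be a disk at all (it can carry genus or further marked points). What actually works, and is what the paper does, is a surgery rather than an isotopy across a disk: let $t$ be the intersection point of $\widetilde{\gamma}_1$ with the current system that is \emph{first along $\widetilde{\gamma}_1$ starting from $x$}, say $t\in\gamma_j$, and replace $\gamma_j$ by the concatenation of the segment of $\widetilde{\gamma}_1$ from $x$ to $t$ with the segment of $\gamma_j$ from $t$ to $c_j$. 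The whole point of taking the first intersection point with the \emph{entire} system is that the initial segment of $\widetilde{\gamma}_1$ then meets no arc of the system, so the new arc is automatically compatible with everything else; this is a global choice, not a local argument in a neighborhood of a bigon.

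Second, and this is where your argument would actually fail: in the inductive step you only reduce intersections of $\widetilde{\gamma}_1$ with $\gamma_j$ for $j\ne 1$, postponing $\gamma_1$ to a separate ``base-case'' phase. If the intersection point at which you surger is preceded, along $\widetilde{\gamma}_1$ from $x$, by crossings with $\gamma_1$ (or with any other arc of the current system), then the surgered arc runs through those crossings and is \emph{not} compatible with the rest of the system, so the proposed move is not a legal elementary transformation. The repair is exactly the paper's Claim~1: treat all arcs of the current system uniformly, always surgering the arc containing the first intersection point of the target path with the whole system. The accompanying observation that arcs disjoint from the target path are never touched by this process (the paper's Claim~2) then replaces both your phase separation and your cut-and-induct step, since the already-realized arcs $\widetilde{\gamma}_1,\dots,\widetilde{\gamma}_{k-1}$ are disjoint from $\widetilde{\gamma}_k$ and hence survive untouched.
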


\begin{proof}
First, we will show that using elementary transformations we can create a system containing a given pending arc.

\medskip

\noindent
{\bf Claim~1.}
{\it
Let $\Gamma=\{\g_1,\dots,\g_n \}$ be a system of pending arcs centered at $x$ and let $p$ be a path from $x$ to an orbifold point $c_1$.
Then there exists a system $\Gamma'$ of pending arcs centered at $x$, $p\in \Gamma'$, and a sequence of elementary transformations taking $\Gamma$ to $\Gamma'$.
}

\smallskip

To prove the statement we perturb $p$ so that it intersects $\Gamma$ transversely.
If $p\cap (\Gamma\setminus x)=\emptyset$ then there is nothing to prove: an elementary transformation substituting $\g_1$ by $p$ turns $\Gamma$ into
required system $\Gamma'$. So, we assume that $p\cap (\Gamma\setminus x)\ne \emptyset$. Denote by $|p\cap \Gamma|$ the number of points of intersection. We will show that there exists an elementary transformation which takes $\Gamma$ to a system $\Gamma_1$ such that
$|\Gamma_1\cap p |<|\Gamma\cap p|$.

Let $t\in \g_i$ be the first intersection point of the path $p$ (from $x$ to $c_1$)
with $\Gamma\setminus x$. Consider a path $\g_i'$ composed of the segment $[x,t]$ of the path $p$ and a segment $[t,c_i]$ of the path $\g_i$
(and then shift $\g_i'$ to minimize the number of intersections with $p$ and $\Gamma$, see Fig.~\ref{claim1}).
Notice that $\g_i'\cap (\Gamma\setminus \g_i)=\emptyset$, so there exists an elementary transformation of $\Gamma$ substituting $\g_i$ by $\g_i'$.
On the other hand, the path $p$ intersects system $\Gamma'=\g_i'\cup (\Gamma\setminus \g_i)$  in all the intersection points of $p\cap \Gamma$
except $t$, so $|\Gamma_1\cap p |=|\Gamma\cap p|-1$.

\begin{figure}[!h]
\begin{center}
\psfrag{x}{\scriptsize $x$}
\psfrag{p}{\scriptsize $p$}
\psfrag{t}{\raisebox{-0.5mm}{\scriptsize $t$}}
\psfrag{z'}{\scriptsize $\g_i'$}
\psfrag{z}{\scriptsize $\g_i$}
\epsfig{file=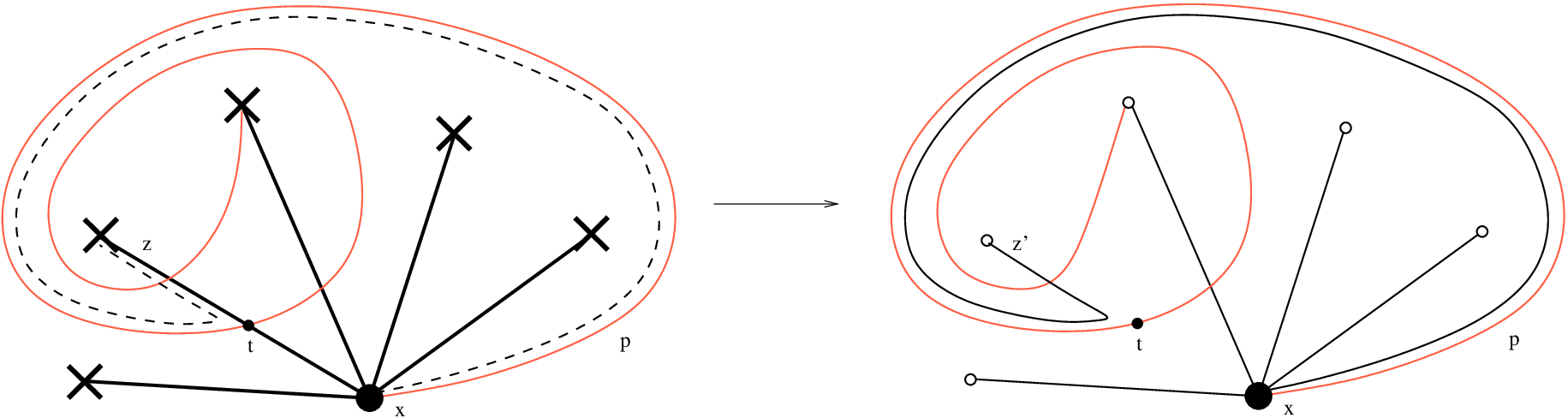,width=1.0\linewidth}
\caption{Elementary transformation decreasing the number of intersections $|\Gamma\cap p|$.}
\label{claim1}
\end{center}
\end{figure}

Thus, elementary transformations of the system of pending arcs allow us to decrease the number of intersection points of $\Gamma$ with $p$
by $1$, which implies that after several elementary transformations we come to the case  $p\cap (\Gamma\setminus x)=\emptyset$ which was
treated above. This proves Claim~1.

\medskip

Now we will use elementary transformations to include a given pending arc in a system of pending arcs preserving some subset of the system.

\noindent
{\bf Claim~2.}
{\it
Let $\Gamma=\{\g_1,\dots,\g_{n} \}$ be a system of pending arcs centered at $x$ and let $p$ be a path from $x$ to an orbifold point $c_i$.
Suppose that $p$ does not intersect the curves $\g_1,\dots,\g_{k-1}$. Then there exists a system $\Gamma_1$ of pending arcs containing   $\{\g_1,\dots,\g_{k-1},p\}$ and a sequence of elementary transformations taking $\Gamma$ to $\Gamma_1$.
}

Indeed, since $p$ does not intersect $\g_1,\dots,\g_{k-1}$, elementary transformations described in the proof of Claim~1 never affect
the curves $\g_1,\dots,\g_{k-1}$, so, these pending arcs also belong to the resulting collection $\Gamma_1$.

\smallskip

Now, to prove the lemma, it is sufficient to apply Claim~2 several times. Namely, given two systems $\Gamma=\{\g_1,\dots,\g_n\}$ and
$\t\Gamma=\{\g_1',\dots, \g_n'\}$, we choose $p=\g_1'$ and apply Claim~2. As we obtain a system $\Gamma_1$ containing $\g_1'$, choose $p=\g_2'$ and apply Claim~2 again to obtain a system $\Gamma_2$ containing both $\g_1'$ and $\g_2'$ (Claim~2 applies since $p=\g_2'$ does not intersect $\g_1'$).
Applying Claim~2 $n$ times (and choosing $p=\g_k'$ at $k$-th iteration) we obtain the required system $\t\Gamma$.
\end{proof}

\begin{proof}[Proof of Theorem~\ref{transitivity}]
We will show that every triangulation $T$ can be transformed by flips into one fixed triangulation $T_0$, the system of pending arcs of which is centered at randomly chosen marked point $x$.

By Lemma~\ref{center}, we can take system $\Gamma$ of pending arcs of $T$ to a system $\Gamma'$ centered at $x$. Applying Lemma~\ref{order}, we can obtain system $\Gamma_0$ of pending arcs of $T_0$. By Lemmas~\ref{same pending arcs}~and~\ref{elem}, all the elementary transformations above can be realized by sequences of flips. Now, applying Lemma~\ref{same pending arcs} another one time, we perform a sequence of flips to obtain triangulation $T_0$.

\end{proof}

Later we will also need the following two easy statements concerning triangulations of orbifolds.

\begin{lemma}
\label{bubbles-even}
Let $\O$ be an orbifold with even number of orbifold points.
Then there exists a triangulation $T$ of $\O$ such that every orbifold point of $\O$ is contained in a monogon with two pending arcs.

\end{lemma}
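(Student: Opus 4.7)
The plan is to build the triangulation directly by pairing up the $2k$ orbifold points and enclosing each pair inside a monogon.

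Fix any marked point $x\in M$. Starting from the pending arcs of an arbitrary triangulation of $\O$ (extended by Lemma~\ref{max} to a maximal compatible set, which is a system of pending arcs) and applying Lemma~\ref{center}, produce a system $\Gamma=\{\g_1,\dots,\g_{2k}\}$ all based at $x$, with $\g_j$ joining $x$ to the orbifold point $c_j$ (recall that a system contains exactly one pending arc per orbifold point). Pairwise compatibility of the $\g_j$ forces them to share only the endpoint $x$, so their initial tangent directions at $x$ give a well-defined cyclic order (or linear order, if $x\in\p\O$). Relabel so that $\g_1,\g_2,\dots,\g_{2k}$ appear in this order, and pair them adjacently as $(\g_{2i-1},\g_{2i})$ for $i=1,\dots,k$.

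For each pair, the union $T_i=\g_{2i-1}\cup\g_{2i}$ is a ``V''-shaped tree at $x$. Choose a sufficiently thin closed regular neighborhood $\o N_i$ of $T_i$ in $\O$ so that $\o N_i$ is a topological disk with $x$ on its boundary, the interior of $\o N_i$ contains only the orbifold points $c_{2i-1},c_{2i}$ (and no marked points), and $\o N_i$ is disjoint from every $\g_j$ with $j\notin\{2i-1,2i\}$. Set $\ell_i=\p \o N_i$. Then $\ell_i$ is a loop at $x$ bounding the monogon $\o N_i$, which contains precisely the two pending arcs $\g_{2i-1},\g_{2i}$ together with their orbifold endpoints. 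Because paired arcs are adjacent in the cyclic order, the neighborhoods $\o N_i$ can simultaneously be chosen pairwise disjoint outside $x$, so the entire collection $\Gamma\cup\{\ell_1,\dots,\ell_k\}$ is pairwise compatible.

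By Lemma~\ref{max}, this compatible collection extends to a triangulation $T$ of $\O$. By construction, each orbifold point $c_{2i-1}$ (respectively $c_{2i}$) lies in the monogon bounded by $\ell_i$, inside which sit the two pending arcs $\g_{2i-1}$ and $\g_{2i}$. The principal technical point is the geometric construction of the loops $\ell_i$ and the verification of pairwise compatibility of $\Gamma\cup\{\ell_1,\dots,\ell_k\}$; this is precisely what is enabled by centering all pending arcs at a single marked point $x$ (via Lemma~\ref{center}) and by the adjacency of paired arcs in the cyclic order, which permits simultaneously thin regular neighborhoods of the trees $T_i$.
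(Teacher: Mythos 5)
Your proposal is correct and follows essentially the same route as the paper: center all pending arcs at a single marked point via Lemma~\ref{center}, pair up adjacent arcs, enclose each pair by a loop based at $x$ (your boundary of a regular neighborhood of the tree $T_i$ is the same curve the paper draws by running along $e$, then $e'$, and back), and complete to a triangulation by Lemma~\ref{max}. The only point the paper adds that you omit is the degenerate case of a sphere with exactly one puncture and four orbifold points, where the two loops $\ell_1$ and $\ell_2$ are isotopic and hence not distinct arcs; one simply keeps a single copy, which still cuts off a monogon around each pair.
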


\begin{proof}
To find the required triangulation, first we connect all orbifold points with the same marked point $x$ (Lemma~\ref{center}).
Then we group the pending arcs in disjoint pairs of neighboring arcs, and
for each pair $(e,e')$ we draw a curve $p$ which starts at $x$, goes along $e$, then goes along $e'$
and returns back to $x$ (we assume that $p$ is close enough to $e$ and $e'$ so that $e,e'$ and $p$ compose a triangle).
Notice, that the curves obtained for different pairs of adjacent pending arcs are distinct (excluding the case of a sphere with
exactly one puncture and four orbifold points; in this case we just consider one of the two homotopy equivalent curves).
So, if there are $n=2k$ orbifold points in $\O$ then we build a compatible set of $2k$ pending arcs and $k$ curves enclosing the pairs
of pending arcs in disks. Any triangulation containing this set of arcs satisfies the conditions of the lemma
(such a triangulation does exist in view of Lemma~\ref{max}).

\end{proof}

\begin{lemma}
\label{bubbles-odd}
Let $\O$ be an orbifold with odd number $n=2k+1$ of orbifold points, $k\ge 1$.
Then there exists a triangulation $T$ of $\O$ such that
\begin{itemize}
\item $T$ contains $k$ triangles $\Delta_1,\dots,\Delta_k$
with two pending arcs and one triangle $\Delta_0$ with one pending arc (and several triangles without pending arcs);
\item $\Delta_0$ has a common edge with $\Delta_1$.
\end{itemize}
\end{lemma}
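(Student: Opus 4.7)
The plan is to mimic the proof of Lemma~\ref{bubbles-even}, modifying the construction to accommodate the single unpaired orbifold point.

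First, by Lemma~\ref{center}, I find a system $\Gamma=\{\g_0,\g_1,\dots,\g_{2k}\}$ of pending arcs all centered at a fixed marked point $x$, ordered by the cyclic order in which they emanate from $x$, with $\g_0$ adjacent to $\g_1$. For each pair $(\g_{2i-1},\g_{2i})$ with $1\le i\le k$, I draw a loop $p_i$ at $x$ running just outside $\g_{2i-1}\cup\g_{2i}$, exactly as in the proof of Lemma~\ref{bubbles-even}, so that $p_i,\g_{2i-1},\g_{2i}$ bound a triangle $\Delta_i$ with two pending arcs and no other singularities inside.

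To handle the leftover pending arc $\g_0$, I draw one further loop $q$ at $x$ running just outside the union $\g_0\cup p_1$: concretely, $q$ leaves $x$ along one side of $\g_0$, travels around $\Delta_1$ parallel to $p_1$, and returns to $x$ along the other side of $\g_0$. The region between $q$ and $p_1$ containing $\g_0$ is then a disk bounded by the three arcs $q,p_1,\g_0$, and is therefore a triangle $\Delta_0$ whose unique pending arc is $\g_0$; by construction, $\Delta_0$ shares the edge $p_1$ with $\Delta_1$. The collection $\{\g_0,\dots,\g_{2k},p_1,\dots,p_k,q\}$ is a compatible family of arcs, and by Lemma~\ref{max} it extends to a full triangulation $T$ of $\O$. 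Since every orbifold point is a vertex of one of $\Delta_0,\Delta_1,\dots,\Delta_k$, the complement of these triangles contains no orbifold points, and hence no further pending arcs appear in $T$.

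The main obstacle is ensuring that the loops $p_1,\dots,p_k$ and the extra loop $q$ are genuinely distinct simple closed curves on $\O$, which could fail in extreme low-complexity cases analogous to the exception noted in the proof of Lemma~\ref{bubbles-even}. For example, when $\O$ is a sphere with one puncture and three orbifold points, the loop $q$ collapses onto $p_1$ and $\Delta_0$ becomes simply the complement of $\Delta_1$, which is already a bubble with one pending arc $\g_0$ sharing the edge $p_1$ with $\Delta_1$; such sporadic small cases are resolved by exhibiting an explicit triangulation. In all remaining cases there is enough room on $\O$ to realize $q$ as a non-trivial simple loop distinct from the $p_i$'s, so the construction goes through.
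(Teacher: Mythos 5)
Your construction is correct and is essentially the paper's own proof: the authors likewise center all pending arcs at one marked point, enclose $k$ adjacent pairs by loops $p_1,\dots,p_k$, and add one extra loop enclosing the leftover pending arc together with the disk bounded by $p_1$, so that the resulting triangle $\Delta_0$ shares the edge $p_1$ with $\Delta_1$. The only (harmless) discrepancy is in the list of degenerate cases: the paper notes that on a once-punctured sphere the extra loop coincides with one of the $p_i$ already for up to five orbifold points, not only three, but in either situation the triangle $\Delta_0$ and its common edge with $\Delta_1$ still exist, so the conclusion is unaffected.
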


\begin{proof}
The proof is similar to the proof of the previous lemma. First, we find a compatible system of $n=2k+1$
pending arcs connecting all orbifold points with the same marked point $x$. Then we enclose $k$ pairs of adjacent pending arcs by curves $p_1,\dots,p_k$
and add one extra curve $p_0$ enclosing the extra pending arc together with one of the discs above (as in Fig.~\ref{bubbles}).
Similarly to the case of even number of orbifold points, there are several exclusions: namely,
if $\O$ is a sphere with only one puncture and at most $5$ orbifold points, then the curve $p_0$ coincides with one of the $p_i$.
Now, we are left to take any triangulation containing all the curves described above.

\end{proof}

\begin{figure}[!h]
\begin{center}
\psfrag{D3}{\scriptsize $\Delta_0$}
\psfrag{D1}{\scriptsize $\Delta_1$}
\psfrag{D2}{\scriptsize $\Delta_2$}
\psfrag{c0}{\scriptsize $p_0$}
\psfrag{c1}{\scriptsize $p_1$}
\psfrag{c2}{\scriptsize $p_2$}
\psfrag{x}{\scriptsize $x$}
\psfrag{c'}{\scriptsize $p_0$}
\epsfig{file=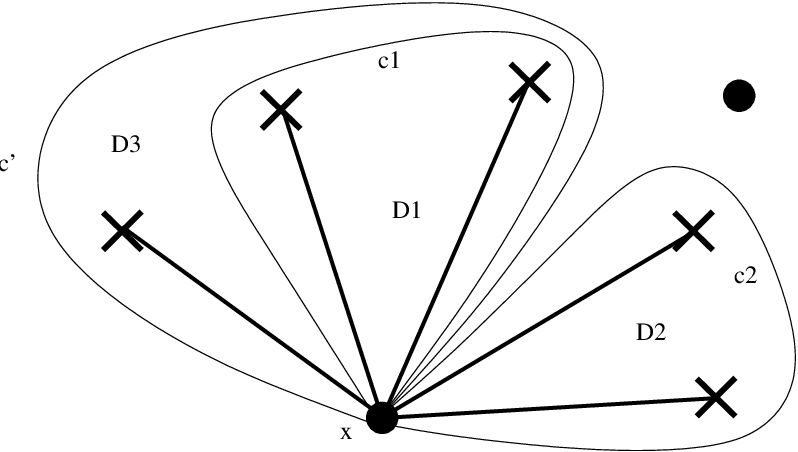,width=0.4\linewidth}
\caption{To the proof of Lemma~\ref{bubbles-odd}.}
\label{bubbles}
\end{center}
\end{figure}

\subsection{Mutations of diagrams and tagged triangulations of orbifolds}

To every triangulation of an orbifold we associate the following {\it diagram} $\D=\D(T)$:
\begin{itemize}
\item vertices of $\D$ correspond to arcs of $T$ (we denote by $v_i\in \D$ a vertex corresponding to an arc $e_i\in T$);
\item for every non-self-folded triangle $\Delta\in T$ and every pair of sides $(e_i,e_j)$ of $\Delta$ we draw an arrow in $\D$ from $v_i$ to $v_j$ if $e_j$ follows $e_i$ in $\Delta$ in clockwise order;
\item for every self-folded triangle $\Delta\in T$ with sides $(e_i,e_j,e_i)$ and for every arrow from $v_i$ to $v_k$ (from $v_k$ to $v_i$) we draw an arrow from $v_j$ to $v_k$ (respectively, from $v_k$ to $v_j$).
\item arrows between a pending arc and a non-pending arc are labeled by $2$;
\item arrows between two pending arcs are labeled by $4$;
\item if $v_i$ and $v_j$ are connected by two arrows in opposite directions, then these arrows cancel out;
\item if $v_i$ and $v_j$ are connected by two arrows in the same direction, then these two arrows are substituted by one arrow labeled by $4$.

\end{itemize}

\begin{remark}
If there are no orbifold points (i.e., the orbifold is just a bordered surface with marked points), the construction above coincides with the one from~\cite{FST} leading to a quiver associated to a triangulation of a surface.

\end{remark}

As it was shown in~\cite{FST}, block-decomposable quivers are precisely ones corresponding to triangulations of surfaces. We will now generalize this result by establishing similar correspondence between s-decomposable diagrams and triangulations of orbifolds.

\begin{lemma}
Any diagram obtained from triangulation of an orbifold is s-decomposable.

\end{lemma}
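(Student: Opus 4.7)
The plan is to mimic the surface case from~\cite{FST} by assigning to each triangle of $T$ a block or s-block and then checking that gluing these pieces along outlets reproduces $\D(T)$.

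For every triangle $\Delta$ of $T$, I would assign a block or s-block $\B_\Delta$ whose vertices correspond to the edges of $\Delta$: a vertex coming from an interior edge of $T$ is declared an outlet, and a vertex coming from a boundary edge is a dead end. The choice of block is dictated by the type of $\Delta$ in Figure~\ref{triangles}. Ordinary triangles (without pending arcs and not self-folded) yield the skew-symmetric blocks $\mr I$--$\mr V$ as in~\cite{FST}; self-folded triangles also yield blocks of types $\mr I$--$\mr V$, thanks to the special rule in the definition of $\D(T)$ that propagates arrows through the folded side; triangles with one pending arc yield s-blocks $\mr{\t{III}a}$, $\mr{\t{III}b}$, $\t{\mr{IV}}$, $\t{\mr V}_1$ or $\t{\mr V}_2$ depending on how many of the remaining sides are interior; and triangles with two pending arcs yield $\t{\mr V}_{12}$ or $\t{\mr{VI}}$. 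This local correspondence is verified by computing the arrows prescribed by the definition of $\D(T)$ at a single triangle and matching the result against Tables~\ref{blocks} and~\ref{all-matrices}.

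Next, I would check that the gluing reproduces $\D(T)$. Each interior edge $e$ of $T$ is shared by exactly two triangles and so corresponds to an outlet in each of the two associated blocks; identifying these outlets over all interior edges yields a diagram $\D'$. I would prove $\D' = \D(T)$ by checking arc by arc that the arrows and weights agree, using the fact that each arrow in $\D(T)$ is produced by the triangles containing both of its endpoints, and that the cancellation and combination rules in Definition~\ref{def-block} coincide with the local arrow-summation used in the construction of $\D(T)$.

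The main obstacle will be the configurations involving pending arcs. I would have to verify case by case that weight-$2$ arrows (between a pending and an ordinary arc) and weight-$4$ arrows (between two pending arcs) glue correctly; in particular, a weight-$4$ edge in $\D(T)$ may arise either from a single triangle containing two pending arcs or from the combination of two like-oriented weight-$2$ edges contributed by neighboring triangles, and both origins must be accounted for by the gluing rule of Definition~\ref{def-block}. Since every edge with two white endpoints in the blocks and s-blocks listed has weight one, the identification of outlets along interior non-pending edges is unambiguous, so the verification reduces to a finite enumeration of local pictures.
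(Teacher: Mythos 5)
Your proposal is correct and follows essentially the same route as the paper's (very terse) proof: each triangle of the triangulation is matched to the block or s-block representing it, and these blocks are attached along outlets in accordance with the gluings of triangles in the triangulation. The one caution is that not every interior arc yields an outlet --- pending arcs and the folded sides of self-folded triangles become dead ends --- but this is exactly what your promised case-by-case comparison against Tables~\ref{blocks} and~\ref{all-matrices} would establish.
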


\begin{proof}
Skew-symmetric blocks together with s-blocks represent all possible triangles which may appear in the triangulation.
So, we take the blocks corresponding to the triangles and attach their outlets in accordance with the gluings in the triangulation.
This results in the s-decomposition of the diagram corresponding to given triangulation.

%

\end{proof}

\begin{lemma}
\label{construction of O}
Any s-decomposable diagram can be obtained from a triangulation of some orbifold.

\end{lemma}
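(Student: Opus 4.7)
The plan is to proceed constructively, converting the s-decomposition of the given diagram into a gluing of "puzzle-piece" triangles and then checking that the resulting object is actually a triangulated orbifold with the correct diagram.

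First, I would fix an s-decomposition of $\D$ into blocks $\B_1,\dots,\B_m$ (of types I--V and s-types $\mr{\t{III}a},\mr{\t{III}b},\t{\mr{IV}},\t{\mr V}_1,\t{\mr V}_2,\t{\mr V}_{12},\t{\mr{VI}}$), together with the matching of outlets used to recover $\D$. To each such block I assign the puzzle piece explicitly listed in Tables~\ref{blocks} and~\ref{all-matrices}: skew-symmetric blocks give the five ordinary triangle types from~\cite{FST}, while each s-block gives one of the triangles with pending arcs shown in Fig.~\ref{triangles}. Vertices of a block correspond to arcs of the puzzle piece, outlets correspond to those arcs that are available for gluing, and dead ends correspond to arcs whose other side is already "used" inside the piece (e.g.\ the inner edge of a self-folded triangle, or a pending arc inside a monogon).

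Next I would build the orbifold $\O$ by taking the disjoint union of the puzzle pieces and identifying edges pairwise according to the chosen outlet matching; each unmatched outlet is declared to lie on the boundary $\p \O$. By construction, the orbifold points of $\O$ are precisely the free endpoints of pending arcs coming from s-blocks, and the marked points of $\O$ are the remaining vertices of the pieces (those lying on $\p S$ together with the punctures created by interior identifications). Transferring the orientation of each puzzle piece and checking local consistency at each edge gives an oriented surface with the required $(M,Q)$, and the glued edges together with the remaining pending arcs form a triangulation $T$ in the sense of Section~\ref{orbifolds-s}.

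It then remains to verify $\D(T)=\D$. For a single puzzle piece this is immediate from the block/triangle correspondence: the arrows prescribed by the rules for $\D(T)$ (including the factor~$2$ for each pending arc and the factor $4$ for two pending arcs in the same triangle) coincide with the arrows of the associated block. When two pieces are glued along matched outlets, the contributions of the two triangles at the common arc are added, so the combinatorics of arrows at the matched vertex in $\D(T)$ reproduces exactly the combinatorics dictated by outlet identification in the definition of s-decomposable diagrams (cancellation of opposite arrows, merging of parallel arrows into weight~$4$, unaffected behaviour in the presence of pending-arc labels).

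The main obstacle I expect is this last bookkeeping step. Opposite-arrow cancellation in the diagram must correspond geometrically to a self-folded triangle or to two triangles sharing an arc with compatible orientations, and the special weight-$4$ cases (most delicately, those produced by blocks of types $\t{\mr V}_2$ and $\t{\mr V}_{12}$ where a weight-$4$ arrow between two pending arcs can arise) must match the triangles with two pending arcs from Fig.~\ref{triangles}. I would handle this by going through the finite list of ways outlets can be paired between any two of the (finitely many) block types and checking case by case that the local diagram of the glued puzzle piece agrees with the edge produced by the s-decomposition rules; since the rules are symmetric with respect to which pair of outlets is matched, only the local type of the pair matters, keeping the case analysis finite and routine.
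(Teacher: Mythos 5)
Your proposal is correct and follows essentially the same route as the paper: the paper's proof simply takes, for each block of the decomposition, the corresponding triangulated surface or orbifold from Tables~\ref{blocks} and~\ref{all-matrices} and attaches these pieces along boundary edges according to the outlet matching. The additional bookkeeping you describe (verifying $\D(T)=\D$ after gluing, cancellation and weight-$4$ cases) is exactly the content the paper leaves implicit in this lemma and checks in the companion Lemma~\ref{flip-mut}-style case analyses, so your write-up is a legitimate expansion of the same argument rather than a different one.
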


\begin{proof}
For each of the blocks we take the corresponding triangulated surface (or orbifold, see Tables~\ref{blocks},~\ref{all-matrices} and~\ref{all-matrices-e})
and attach them along the boundary edges in accordance with the gluing of the blocks in the diagram.

\end{proof}

So, block decompositions of s-decomposable diagrams are in one-to-one correspondence with triangulations of orbifolds.

\begin{remark}
One can see from Table~\ref{all-matrices-e} that all the exceptional blocks arise from  triangulations of a sphere without boundary with four punctures and orbifold points in total, from which one, two, or three are punctures, and the remaining ones are orbifold points.
\end{remark}

Now we will show that, as in the case of surfaces, the construction of a diagram is consistent with action of flips on triangulations.
If $\D(T)$ is the diagram built by a triangulation $T$ and $f_i$ is a flip of an arc $e_i$ of $T$, we denote by $\mu_i$ the mutation of $\D(T)$ in vertex $v_i$.

\begin{lemma}
\label{flip-mut}
For each flip $f_i$ one has $\D(f_i(T))=\mu_i(\D(T))$.

\end{lemma}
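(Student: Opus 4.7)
The plan is to reduce the claim to a local verification near the flipped arc $e_i$, using locality of both operations: the mutation $\mu_i$ changes only those entries $b_{jk}$ with $j=i$, $k=i$, or both $v_j,v_k$ adjacent to $v_i$ in $\D(T)$; the flip $f_i$ only modifies the at most two triangles sharing $e_i$, leaving all other triangles intact, so only arrows in $\D(T)$ incident to $v_i$ or lying between vertices coming from these adjacent triangles can change. Thus it suffices to check the equality on the subdiagram spanned by $v_i$ and its neighbors, and the verification splits according to the local picture around $e_i$.

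First I would dispose of the case when $e_i$ is an ordinary (non-pending) arc. Here the triangles sharing $e_i$ are of the types listed in Figure~\ref{triangles}; their contribution to $\D(T)$ near $v_i$ comes either from a type~$\mathrm{I}$ or type~$\mathrm{II}$ block (no pending side) or from the pending triangles appearing in the s-blocks $\widetilde{\mathrm{III}}$, $\widetilde{\mathrm{IV}}$, $\widetilde{\mathrm{V}}$ (one or two pending sides among the \emph{other} two edges). In the purely skew-symmetric subcases the identity $\D(f_i(T))=\mu_i(\D(T))$ is exactly the quiver-theoretic statement proved in~\cite{FST}. In the remaining subcases the only difference is that some of the neighboring edges $e_j$ of $e_i$ are pending, so the arrows at $v_j$ carry the label $2$ (or $4$); since the weight-transformation law of Figure~\ref{quivermut} is quadratic in $\sqrt{\cdot}$, one checks directly from the block matrices in Tables~\ref{blocks} and~\ref{all-matrices} that the same local computation goes through with the labels $2,4$ propagated correctly.

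Next I would treat the essentially new case when $e_i$ itself is pending. Every flip of a pending arc is one of the three local pictures in Figure~\ref{flip-pending}: the orbifold point $c_i$ on $e_i$ sits in a (possibly self-folded, possibly containing a second pending arc) triangle $\Delta$, and the flip replaces $e_i$ by the unique other pending arc $e_i'$ at $c_i$ inside $\Delta$. For each such local picture, I would read off the block containing $v_i$ in $\D(T)$ from the dictionary in Tables~\ref{blocks} and~\ref{all-matrices} (one of $\widetilde{\mathrm{III}}\mathrm{a}$, $\widetilde{\mathrm{III}}\mathrm{b}$, $\widetilde{\mathrm{IV}}$, $\widetilde{\mathrm{V}}_1$, $\widetilde{\mathrm{V}}_2$, $\widetilde{\mathrm{V}}_{12}$, $\widetilde{\mathrm{VI}}$), do the same for the flipped triangulation $f_i(T)$, and then directly compute $\mu_i$ applied to the explicit $2\times 2$, $3\times 3$, $4\times 4$ or $5\times 5$ matrix of that block using~\eqref{eq:MatrixMutation}. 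In each case the resulting matrix (after absorbing the outlet identifications) coincides with the matrix of the new block, yielding $\mu_i(\D(T))=\D(f_i(T))$.

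The main obstacle is the bookkeeping in the pending-arc case: one must keep track of orientations and of the weights $2$ and $4$ created whenever the mutated edge connects a pending with a non-pending arc or two pending arcs, and handle correctly the degenerate configurations (self-folded triangles and triangles whose two non-$e_i$ sides coincide in the orbifold) that can arise on small orbifolds. These are however finitely many configurations and each is settled by a direct matrix mutation on the tabulated block matrices, which completes the verification.
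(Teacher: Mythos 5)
Your proposal is correct and follows essentially the same route as the paper: the authors likewise reduce to a finite local check of the triangles adjacent to $e_i$, invoke the known surface result of~\cite{FST} to dispose of all configurations without pending arcs, and verify the remaining finitely many pending-arc configurations directly. Your write-up simply spells out in more detail the case organization that the paper leaves as a ``straightforward exhaustion.''
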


\begin{proof}
The proof is a straightforward exhaustion of finitely many possibilities for adjacent to $e_i$ triangles.
Notice that  one need to verify only the cases when one of the adjacent triangles contains a pending arc
(in view of the similar fact known for the triangulated surfaces, see~\cite{FST}).

\end{proof}

As in the case of surfaces, one can note that not every edge of triangulation can be flipped. More precisely, there is no flip in an interior edge of a self-folded triangle. This difficulty can be resolved by the same trick as in the case of surfaces, namely, by introducing {\it tagged} triangulations (see~\cite{FST},~\cite{FT}). The construction is exactly the same as in the surface case, so we do not stop here for the details. We only mention that the ends of pending arcs being orbifold points are always tagged plain. It is easy to see that Lemma~\ref{flip-mut} holds for tagged triangulations as well.

We summarize the discussion above in the following lemma.

\begin{lemma}
\label{realization of diagram mut}
Let $\D$ be an s-decomposable diagram, and let $\O$ be an orbifold with tagged triangulation $T$ such that $\D=\D(T)$.
Then a diagram $\D'$ is mutation-equivalent to $\D$ if and only if $\D'$ can be obtained as $\D'=\D'(T')$ for some tagged triangulation $T'$ of $\O$.
Moreover, $\D'=\mu_{i_k}\circ\dots\circ\mu_{i_1}(D)$ if and only if  $T'=f_{i_k}\circ\dots\circ f_{i_1}(T)$.

\end{lemma}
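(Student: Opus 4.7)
The plan is to reduce everything to Lemma~\ref{flip-mut} (applied to tagged triangulations) together with an inductive argument on the length of a sequence of mutations/flips. Since the "moreover" part is the strongest statement, once it is established both the "if" and "only if" directions of the main equivalence follow immediately, so I would organize the proof as a single induction establishing the "moreover" clause.

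First I would prove the direction ``$T'=f_{i_k}\circ\dots\circ f_{i_1}(T)$ implies $\D'=\mu_{i_k}\circ\dots\circ \mu_{i_1}(\D)$''. This is a straightforward induction on $k$: the base case $k=0$ is the hypothesis $\D=\D(T)$, and the induction step is a direct application of Lemma~\ref{flip-mut} (in its tagged version, as noted after that lemma). For the converse direction, I would again argue by induction on $k$. Suppose $\D_{k-1}=\mu_{i_{k-1}}\circ\dots\circ \mu_{i_1}(\D)$ has been realized as $\D(T_{k-1})$ with $T_{k-1}=f_{i_{k-1}}\circ\dots\circ f_{i_1}(T)$. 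Because $T_{k-1}$ is a tagged triangulation of $\O$, the flip $f_{i_k}$ in the arc corresponding to the vertex $v_{i_k}$ is well-defined (this is the key reason for passing to tagged triangulations, following~\cite{FST},~\cite{FT}). Applying Lemma~\ref{flip-mut} to this flip gives $\D(f_{i_k}(T_{k-1}))=\mu_{i_k}(\D(T_{k-1}))=\D'$, as required.

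The "only if" part of the main equivalence is then just a specialization of the above: a mutation-equivalent diagram $\D'$ is, by definition, of the form $\mu_{i_k}\circ\dots\circ \mu_{i_1}(\D)$ for some sequence, and the induction produces the corresponding tagged triangulation $T'=f_{i_k}\circ\dots\circ f_{i_1}(T)$ with $\D'=\D(T')$. The "if" direction is similar: any tagged triangulation $T'$ of $\O$ can be reached from $T$ by a sequence of flips in view of Theorem~\ref{transitivity} (used in the tagged-triangulation version, where flips are defined for every arc), and then the first induction converts this sequence into the desired sequence of mutations taking $\D$ to $\D'$.

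The main technical point I expect to handle carefully is the justification that Theorem~\ref{transitivity} and Lemma~\ref{flip-mut} genuinely carry over from ordinary triangulations to tagged triangulations of an orbifold. For surfaces this is done in~\cite{FST} and~\cite{FT}, and for orbifolds the only new ingredient is the presence of pending arcs. Since the ends of pending arcs lying at orbifold points are always tagged plain, the construction of tagged triangulations is entirely parallel to the surface case, and the verification of Lemma~\ref{flip-mut} for tagged flips again reduces to a finite case analysis of the triangles adjacent to the flipped arc. Once this is in place, the induction above goes through without modification.
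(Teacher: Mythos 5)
Your proposal is correct and follows essentially the same route as the paper, which offers no separate argument beyond the sentence ``we summarize the discussion above in the following lemma'' --- i.e., it likewise rests on the tagged version of Lemma~\ref{flip-mut} for the flip/mutation compatibility and on the transitivity of flips (Theorem~\ref{transitivity}, carried over to tagged triangulations) for realizing every mutation-equivalent diagram. Your explicit induction and your flagged concern about transferring these two ingredients to the tagged setting are exactly the content the paper leaves implicit.
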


\subsection{Weighted orbifolds and matrix mutations}
\label{w_orb}

In the previous section we established a correspondence between s-decomposable diagrams and triangulated orbifolds. Every s-decomposable diagram $\D$ can be considered as a diagram of some mutation-finite s-decomposable skew-symmetrizable matrix $B$. In contrast to the skew-symmetric case, such matrix is not uniquely defined: every s-decomposable diagram with at least one edge labeled by $2$ can be constructed by several matrices. In this section, we associate with every s-decomposable matrix a triangulated orbifold with additional structure.

Given an s-decomposable skew-symmetrizable matrix $B$, denote by $D$ a unique diagonal matrix with positive integer entries $(d_1,\dots,d_n)$ such that $BD$ is skew-symmetric, and the greatest common divisor of the entries of $D$ is one. Given an s-decomposable diagram $\D$ constructed by s-decomposable matrix $B$, we call $d_i$ a {\it weight} of vertex $v_i$. The matrix $D$ is the same for every matrix $B'$ mutation equivalent to $B$, hence the weights of vertices of a diagram do not change under mutations.

\begin{definition}
An s-decomposable diagram $\D$ with a collection of weights $(d_1,\dots,d_n)$ as above is called a {\it weighted diagram} and is denoted by $\D^w$.
Weighted s-decomposable diagrams carry exactly the same information as s-decomposable matrices.

\end{definition}
In other words, weighted s-decomposable diagrams are in one-to-one correspondence with s-decomposable matrices.

It was shown in~\cite[Lemma~6.3]{FST2} that the weights $d_i$ of outlets of all blocks of any weighted s-decomposable connected diagram $\D^w$ are equal. The weight of any outlet is called {\it weight of the regular part} of $\D^w$ and is denoted by $w$.

Now fix an s-decomposable matrix $B$ and corresponding weighted diagram $\D^w$. According to Lemma~\ref{construction of O}, vertices of the diagram correspond to arcs of a triangulation $T$ of some orbifold $\O$. In this way we assign weights to every arc of $T$. In particular, we assign weight to every pending arc.

\begin{definition}
Given a weighted diagram $\D^w$ and corresponding triangulation $T$ of orbifold $\O$, a {\it weighted orbifold} $\O^w$ is the orbifold $\O$ with weights assigned to all its orbifold points according to the following rule: the weight of an orbifold point $c$ is the weight of the pending arc of $T$ incident to $c$ divided by $w$.

\end{definition}
The definition of weighted orbifold is consistent: a flip in any arc of triangulation does not change weight of a pending arc incident to a given orbifold point.

All possible weights of orbifolds points of a weighted orbifold are easy to describe. It was proved in~\cite{FST2} that the only weights that can appear in weighted diagram are $1$, $2$, and $4$. In~\cite[ Lemma~6.3]{FST2} we also proved that either $w=1$ or $w=2$, and if $w=1$ then there is no vertex of weight $4$. In terms of weights of orbifold points, every point has weight either $2$ or $1/2$.

Conversely, given an orbifold $\O$ with $k$ orbifold points, we can construct $2^k$ weighted orbifolds by assigning weights $2$ and $1/2$. Every triangulation of each of these orbifolds can be constructed by some s-decomposable skew-symmetrizable matrix.

Summarizing the definitions above, for every s-decomposable skew-symmetrizable matrix $B$ we constructed a weighted orbifold $\O^w$ with a triangulation $T$ via a weighted diagram $\D^w$. Now we are able to give a definition of signed adjacency matrix of a (tagged) triangulation of weighted orbifold.

\begin{definition}
If $(\O^w,T)$ is a weighted orbifold and its tagged triangulation corresponds to an s-decomposable skew-symmetrizable matrix $B$, the matrix $B$ is called {\it signed adjacency matrix} of $T$.

\end{definition}

\begin{remark}
It is easy to see that the actual number of distinct mutation classes of signed adjacency matrices constructed by one orbifold with $k$ orbifold points is usually much less than $2^k$. Namely, every two weighted orbifolds with the same number of orbifold points of weight $2$ (and thus with the same number of orbifold points of weight $1/2$) give rise to mutation-equivalent matrices (after some permutation of rows and columns). More precisely, given two orbifold points, the transposition of
them can be realized by a transposition of corresponding rows (and columns) of the signed adjacency matrix (with a mutation applied first in case of different weights).

At the same time, two weighted orbifolds with distinct number of orbifold points of weight $1/2$ (and thus $2$ as well) give rise to signed adjacency matrices with essentially different skew-symmetrizing matrices $D$, and thus these signed adjacency matrices are not mutation-equivalent. In other words, an orbifold with $k$ orbifold points provides $k+1$ distinct mutation classes of signed adjacency matrices indexed by the number of orbifold points of weight $2$ (or $1/2$).  

\end{remark}




\begin{theorem}
\label{thm_weight}
Let $B$ be a skew-symmetrizable matrix with weighted s-decomposable diagram $\D^w$. Let $\O^w$ be a weighted orbifold with a triangulation $T$ built by an s-decomposition of $\D^w$. Then a skew-symmetrizable matrix $B'$ is mutation-equivalent to $B$ if and only if $B'$ is a signed adjacency matrix of a tagged triangulation $T'$ of $\O^w$. Moreover, $B'=\mu_{i_k}\circ\dots\circ\mu_{i_1}(B)$ if and only if  $T'=f_{i_k}\circ\dots\circ f_{i_1}(T)$.

\end{theorem}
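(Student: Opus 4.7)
The plan is to bootstrap from Lemma~\ref{realization of diagram mut} (which handles the correspondence between mutations of s-decomposable diagrams and flips of tagged triangulations) and upgrade it to the matrix level by tracking weight data on both sides. Since weighted s-decomposable diagrams are in bijection with s-decomposable skew-symmetrizable matrices, the theorem reduces to showing that (a) mutation of the weighted diagram $\D^w(B)$ at vertex $i$ produces the weighted diagram of the triangulation $f_i(T)$ of $\O^w$, and (b) every tagged triangulation of $\O^w$ arises from $T$ by some flip sequence.

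For (a), the underlying unweighted diagrams agree by Lemma~\ref{realization of diagram mut}, and I would then verify that the two extra pieces of data also match. On the diagram side, the vertex weights $(d_1,\dots,d_n)$ with $\gcd=1$ form the normalized skew-symmetrizing diagonal $D$ of $B$, and it is a standard consequence of the matrix mutation formula~(\ref{eq:MatrixMutation}) that $D$ is preserved; so these weights pass through any mutation unchanged. On the orbifold side, the pending-arc weights derived from the weighted orbifold structure are preserved by flips: if $e_i$ is not a pending arc of $T$ (and not an interior edge of a self-folded triangle handled by the tagged convention), the pending arc at every orbifold point is untouched by $f_i$; and if $e_i$ is pending, Figure~\ref{flip-pending} shows that $f_i$ replaces it by a pending arc sharing the same orbifold endpoint, while Table~\ref{all-matrices} shows that the corresponding local matrix mutation of the s-block preserves the weight at that vertex. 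Combining these invariants identifies $\D^w(\mu_i(B))$ with the weighted diagram of $f_i(T)$; iterating yields the ``moreover'' part.

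For (b), I would apply Lemma~\ref{realization of diagram mut} in the reverse direction: any $B'$ mutation-equivalent to $B$ has weighted diagram $\D^w(B')$ whose underlying diagram $\D(B')$ equals $\D(T')$ for some tagged triangulation $T' = f_{i_k}\circ\cdots\circ f_{i_1}(T)$. By (a) iterated, the weight data on $T'$ inherited from $\O^w$ agrees with that of $\D^w(B')$, so $B'$ is the signed adjacency matrix of $T'$. Conversely, any signed adjacency matrix of a tagged triangulation of $\O^w$ is mutation-equivalent to $B$ by the same induction run in the other direction, using Theorem~\ref{transitivity} (together with its tagged extension) to reach every tagged triangulation from $T$ by flips.

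The main obstacle is the local verification in (a) that a flip at a pending arc preserves the weight assigned to the corresponding orbifold endpoint. Although the check reduces to finitely many configurations parametrized by the s-blocks listed in Table~\ref{all-matrices}, the interaction with adjacent self-folded triangles and the tagged conventions (the orbifold endpoint of a pending arc is always tagged plain) demand a careful case-by-case inspection. Once this is completed, the rest of the proof is bookkeeping against Lemma~\ref{realization of diagram mut} and the bijection between weighted s-decomposable diagrams and s-decomposable matrices.
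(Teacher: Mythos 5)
Your proposal is correct and follows essentially the same route as the paper: the paper's proof likewise reduces to the diagram-level statement of Lemma~\ref{realization of diagram mut}, the matrix analogue of Lemma~\ref{flip-mut} (your step (a), which the paper dispatches as a ``straightforward verification''), and the observation that a diagram together with the weights of the orbifold points recovers the matrix. The weight-invariance facts you track explicitly (preservation of the skew-symmetrizing diagonal $D$ under mutation, and of pending-arc weights under flips) are exactly the consistency statements recorded in Section~\ref{w_orb}, so your argument is a slightly more detailed write-up of the same proof.
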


\begin{proof}
A straightforward verification shows that the matrix analogue of Lemma~\ref{flip-mut} holds, i.e. for each flip $f_i$ one has $B(f_i(T))=\mu_i(B(T))$, where $B(T)$ is a signed adjacency matrix of the tagged triangulation $T$ of weighted orbifold $\O^w$. Now, the theorem follows from Lemma~\ref{realization of diagram mut} combined with the fact that knowing a diagram $\D$ together with the weights of the orbifold points of $\O^w$ one can recover the matrix $B$ (via weighted diagram $\D^w$).

\end{proof}

\section{Geometric realization of cluster algebras}
\label{sec-geom}

\subsection{Lambda lengths as cluster variables}

In~\cite{FT} Fomin and Thurston show that the notion of lambda length introduced by Penner~\cite{P}
works well for obtaining a geometric realization of some cluster algebras. More precisely,
for every skew-symmetric block-decomposable matrix $B$ there exists a bordered hyperbolic surface
$S(B)$ {  with marked points} such that lambda lengths of arcs of (tagged) triangulations on $S(B)$ serve as cluster variables of some cluster algebra with exchange matrix $B$ in some seed.

In this section, we adjust the basic construction to the case of cluster algebras with s-decomposable skew-symmetrizable exchange matrices.

Let $B$ be a skew-symmetrizable matrix with weighted s-decomposable diagram $\D^w$. Let $\O^w$ be a triangulated weighted orbifold built by
an s-decomposition of $\D^w$. First, we will consider the case when all orbifold points on $\O^w$ are of weight $1/2$. In this case the role of cluster variables will be played by lambda lengths of the arcs of tagged triangulations of $\O^w$.
Next, we will treat the case  of orbifolds with all orbifold points of weight $2$. In this case, we will introduce a surface $\S$
``associated'' to the orbifold $\O^w$ (constructed from $\O^w$ by a simple procedure).
The lambda lengths of arcs of tagged triangulations of $\S$ will serve as cluster variables.
Finally, we consider the general case, when both orbifold points of weight $1/2$ and $2$ may appear.
Then the cluster algebra will be modeled by lambda lengths of arcs of tagged triangulations of an ``associated'' orbifold
(an orbifold constructed from $\O^w$ by the same procedure as in the previous case).
Notice, that in case of absence of the orbifold points (i.e. in case of skew-symmetric matrix $B$) we obtain exactly
the initial construction described in~\cite{FT}.

\subsection{Orbifolds with orbifold points of weight 1/2}
\label{sec lambda 1/2}

Let $B$ be a matrix with s-decomposable weighted diagram $\D^w$, let $\O^w$ be a corresponding weighted orbifold.
Suppose that all orbifold points in $\O^w$ are of weight $1/2$ (in terms of matrix/weighted diagram this means that all the outlets have weight $2$, and there are no vertices of weight $4$).

We endow $\O^w$ with a hyperbolic structure with cusps in all marked points and with angles $\pi$ in orbifold points. To choose such a structure, one may take any ideal triangulation $T_{\O^w}$ of $\O^w$ and assume that each triangle of $T_{\O^w}$ is an ideal hyperbolic triangle (there is a $1$-parameter freedom in attaching ideal triangles along a given edge). Suppose also that for each marked point $c$ on $\O^w$ we have chosen
a horocycle centered at $c$.

Such structures on $\O^w$ form a {\it decorated Teichm\"uller space} (cf.~\cite{P}): a point of a decorated Teichm\"uller space
$\widetilde \T(\O^w)$ is a hyperbolic structure as above with a collection of horocycles, one around each marked point.
It is shown by Chekhov and Mazzocco~\cite{Ch},~\cite{ChM} that $\T(\O^w)$ is parametrized by the set of functions ({\it lambda lengths}) assigned to arcs of given triangulation of $\O^w$ (including boundary segments), defined in the following way.

\begin{definition}[{{\it Lambda length}}]
\label{lambda}
For an arc $\gamma$ with both ends in marked points we define a {\it lambda length} as usual (see~\cite{P}):
$$\lambda(\gamma)=exp(l(\gamma)/2),$$
where $l(\gamma)$ is the signed distance along $\gamma$ between the horocycles (positive, if the horoballs bounded by the horocycles do not intersect, and negative otherwise).

If $\gamma$ is a pending arc, we define
$$\lambda(\gamma)=exp(l(\gamma)/2)=exp(l'(\gamma)),$$
where $l'(\gamma)=l(\gamma)/2$ is the signed distance from the orbifold point to the horocycle (negative, if the orbifold point is contained inside
the horoball, and positive otherwise).

\end{definition}

In the definition above $l(\gamma)$ can be understood as the length of the ``round trip'' from the horocycle to the orbifold point and back.

\begin{figure}[!h]
\begin{center}
\psfrag{alpha}{\scriptsize $\alpha$}
\psfrag{beta}{\scriptsize $\beta$}
\psfrag{gamma}{\scriptsize $\gamma$}
\psfrag{delta}{\scriptsize $\delta$}
\psfrag{sigma}{\scriptsize $\sigma$}
\psfrag{theta}{\scriptsize $\theta$}
\psfrag{xi}{\scriptsize $\xi$}
\psfrag{mu}{\scriptsize $\mu$}
\psfrag{nu}{\scriptsize $\nu$}
\psfrag{zeta}{\scriptsize $\zeta$}
\psfrag{eta}{\scriptsize $\eta$}
\psfrag{phi}{\scriptsize $\phi$}
\psfrag{psi}{\scriptsize $\psi$}
\psfrag{hi}{\scriptsize $\chi$}
\psfrag{a}{\small (a)}
\psfrag{b}{\small (b)}
\psfrag{c}{\small (c)}
\psfrag{d}{\small (d)}
\epsfig{file=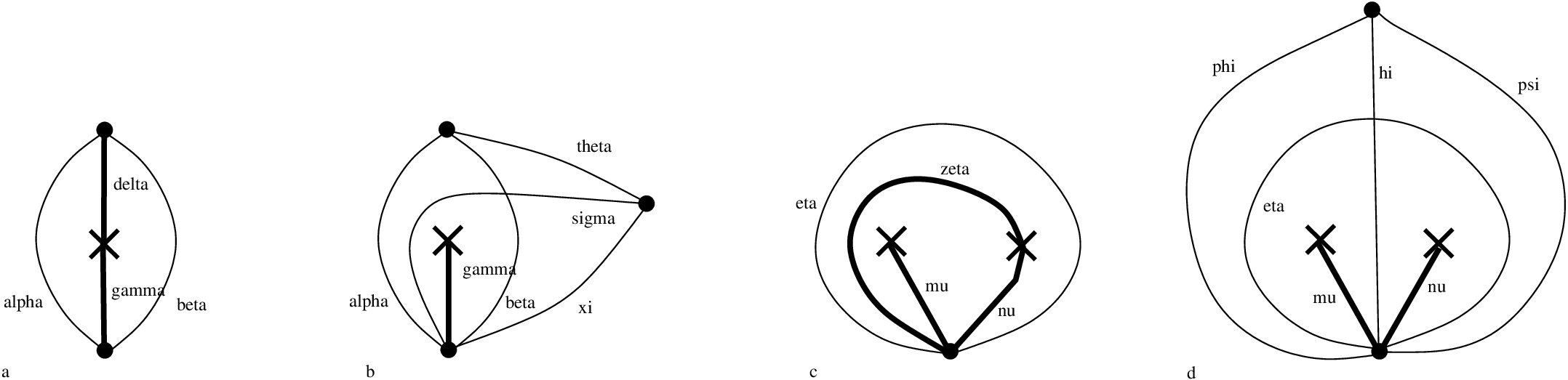,width=0.98\linewidth}
\caption{Notation for Ptolemy relations (Lemma~\ref{Ptolemy-prime})}
\label{ptolemy}
\end{center}
\end{figure}

\begin{lemma}
\label{Ptolemy-prime}
In the notation of Fig.~\ref{ptolemy} the following Ptolemy relations hold:
\begin{itemize}
\item[(a)] $\lambda(\gamma)\lambda(\delta)=\lambda(\alpha)^2+\lambda(\beta)^2$;
\item[(b)] $\lambda(\beta)\lambda(\sigma)=\lambda(\gamma)\lambda(\theta)+\lambda(\alpha)\lambda(\xi)$;
\item[(c)] $\lambda(\nu)\lambda(\zeta)=\lambda(\mu)^2+\lambda(\eta)^2$;
\item[(d)] $\lambda(\eta)\lambda(\chi)=\lambda(\mu)\lambda(\psi)+\lambda(\nu)\lambda(\phi)$.

\end{itemize}

\end{lemma}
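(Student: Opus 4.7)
The plan is to reduce each identity to Penner's classical Ptolemy relation
$$\lambda(AC)\lambda(BD)=\lambda(AB)\lambda(CD)+\lambda(BC)\lambda(AD)$$
for a decorated ideal quadrilateral $ABCD$ in $\mathbb{H}^2$, by lifting the relevant configuration to the branched double cover over the orbifold point(s) that occur in it. Near an orbifold point $c$ of weight $1/2$ (cone angle $\pi$), the covering map is modeled locally by $z\mapsto z^2$, so $c$ lifts to a single smooth point $\tilde c$, every marked point $x$ in a neighborhood of $c$ lifts to two points $\tilde x_1,\tilde x_2$, and the horocycle at $x$ lifts to horocycles at $\tilde x_1$ and $\tilde x_2$ of the same level.

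With this choice of decoration on the cover I will use two identifications of lambda lengths. First, a pending arc $\alpha$ at $c$ with marked endpoint $x$ lifts to a single ordinary arc $\tilde\alpha$ from $\tilde x_1$ to $\tilde x_2$ through $\tilde c$; since $l(\tilde\alpha)=2l'(\alpha)=l(\alpha)$, Definition~\ref{lambda} gives $\lambda(\tilde\alpha)=\lambda(\alpha)$. Second, an ordinary arc $\mu$ not passing through $c$ lifts to two disjoint ordinary arcs $\tilde\mu_1,\tilde\mu_2$ with $\lambda(\tilde\mu_i)=\lambda(\mu)$.

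For (a), the configuration is a bigon at $c$ bounded by the two pending arcs (call them $\alpha,\beta$ in Figure~\ref{ptolemy}(a)) with interior crossed by two ordinary arcs $\gamma,\delta$; its branched double cover is a genuine ideal quadrilateral whose four sides are the pairs of lifts of the ordinary arcs, and whose two diagonals are $\tilde\alpha,\tilde\beta$. Applying classical Ptolemy in the cover and substituting the two identifications above yields identity (a) at once; (c) is obtained the same way from the analogous configuration in Figure~\ref{ptolemy}(c). For (b) and (d), the configuration contains only one pending arc, and its branched double cover is an ordinary decorated hyperbolic polygon on which Ptolemy (applied once or along an auxiliary diagonal in the cover and then reinterpreted) reproduces the claimed formulas; no squares appear because on each side of the identity exactly one factor is the lift of a pending arc, contributing a single $\lambda$, matched by single factors arising from the other lifts of the ordinary arcs.

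The main delicate step will be the consistent bookkeeping of horocycle lifts, so that the identifications $\lambda(\tilde\alpha)=\lambda(\alpha)$ and $\lambda(\tilde\mu_i)=\lambda(\mu)$ hold without hidden normalization constants; this is essentially a local computation around the branch point in the Penner--Chekhov--Mazzocco framework. Once that bookkeeping is verified, all four identities follow from a single application of Ptolemy in the branched cover.
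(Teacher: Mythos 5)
Your overall strategy is exactly the one the paper uses: its proof of (a) ``cuts the digon along the pending arc $\gamma$ and glues together two copies of the resulting triangle,'' which is precisely the construction of the local double cover branched at the orbifold point, followed by the classical Ptolemy relation from~\cite{FT}; and your identifications $\lambda(\tilde\alpha)=\lambda(\alpha)$ for a pending arc (via $l=2l'$) and $\lambda(\tilde\mu_i)=\lambda(\mu)$ for the two lifts of an ordinary arc are the same bookkeeping the paper relies on. For (b) and (d) the paper avoids the cover altogether: it just cuts along the pending arcs, which turns the configuration into an honest quadrilateral on the hyperbolic plane with unchanged lambda lengths (the pending arc becomes a single ordinary side of the same lambda length). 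Your variant --- Ptolemy in a sub-quadrilateral of the branched cover --- also works but is needlessly indirect, and your bookkeeping claim there is off: in (d) only the term $\lambda(\nu)\lambda(\phi)$ involves a pending arc, not ``exactly one factor on each side.''

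The genuine problem is that your setup of case (a) is inverted, so the computation as written does not produce the stated identity. In Fig.~\ref{ptolemy}(a) the digon is bounded by the two \emph{ordinary} arcs $\alpha,\beta$, contains the orbifold point in its interior, and $\gamma,\delta$ are the two \emph{pending} arcs exchanged by the flip; a ``bigon bounded by two pending arcs with ordinary arcs crossing its interior'' cannot occur, since two pending arcs at the same orbifold point share that endpoint and do not bound a region traversed by further arcs of a triangulation. The shape of the identity already forces the correct assignment: in the branched double cover the squared terms must come from arcs lifting to \emph{two} copies (the ordinary sides, which appear as opposite sides of the covering quadrilateral), while the unsquared product on the left must come from the two arcs lifting to \emph{single} diagonals through the branch point (the pending arcs). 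With your labels, Ptolemy in the cover yields $\lambda(\alpha)\lambda(\beta)=\lambda(\gamma)^2+\lambda(\delta)^2$, i.e.\ relation (a) with the roles of $\{\alpha,\beta\}$ and $\{\gamma,\delta\}$ interchanged; the same correction is needed in (c), where $\nu,\zeta$ are the pending arcs and $\mu,\eta$ the ordinary ones. Once the labels are fixed, your argument coincides with the paper's.
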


\begin{proof}
First, we prove the relation (a). We cut the digon shown in Fig~\ref{ptolemy}.a along the pending arc $\gamma$, then glue together
two copies of the obtained triangle as in  Fig~\ref{ptolemy-proof}.a (together with the triangle we copy the chosen horocycles).
Since all orbifold points are points with angle $\pi$, we obtain a piece of hyperbolic surface. The relation (a) now follows from
the Ptolemy relation for triangulations of surfaces (together with  definitions of lambda lengths on surface and on orbifolds).

The relations (b)--(d) are proved similarly.  All of these relations describe some flips of a triangulation $T$ of $\O^w$,
so, we consider a quadrilateral $q$ (i.e.a union of two triangles) of $T$ containing the arcs included in the relation
(or a unique triangle $t$ in case of a flip in a pending arc).
We cut $\O^w$ along all pending arcs of $\O^w$ and along the boundary of the quadrilateral $q$ (respectively, along the boundary of
the triangle $t$), so that we obtain a quadrilateral or a triangle on hyperbolic plane. In case of a flip in a pending arc, we attach
two copies of the triangle $t$ along the image of the pending arc. Hence, in any case we come to a relation inside a quadrilateral on
hyperbolic plane, which follows immediately from the relations shown in~\cite{FT}.
See  Fig~\ref{ptolemy-proof}.(b)--(d) for the corresponding planar quadrilaterals.

\end{proof}

\begin{figure}[!h]
\begin{center}
\psfrag{alpha}{\scriptsize $\alpha$}
\psfrag{beta}{\scriptsize $\beta$}
\psfrag{gamma}{\scriptsize $\gamma$}
\psfrag{delta}{\scriptsize $\delta$}
\psfrag{sigma}{\scriptsize $\sigma$}
\psfrag{theta}{\scriptsize $\theta$}
\psfrag{xi}{\scriptsize $\xi$}
\psfrag{mu}{\scriptsize $\mu$}
\psfrag{nu}{\scriptsize $\nu$}
\psfrag{zeta}{\scriptsize $\zeta$}
\psfrag{eta}{\scriptsize $\eta$}
\psfrag{phi}{\scriptsize $\phi$}
\psfrag{psi}{\scriptsize $\psi$}
\psfrag{hi}{\scriptsize $\chi$}
\psfrag{a}{\small (a)}
\psfrag{b}{\small (b)}
\psfrag{c}{\small (c)}
\psfrag{d}{\small (d)}
\epsfig{file=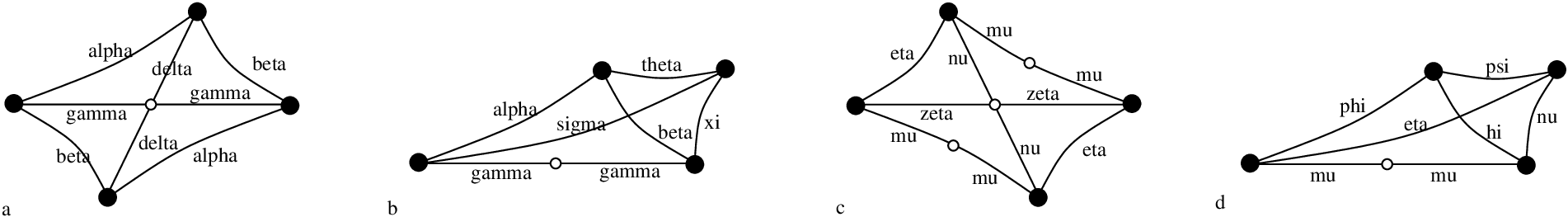,width=0.98\linewidth}
\caption{To the proof of Lemma~\ref{Ptolemy-prime}}
\label{ptolemy-proof}
\end{center}
\end{figure}

For a horocycle $h$ centered at interior marked point (puncture) of $\O^w$ denote by $L(h)$ the hyperbolic length of $h$.
Following~\cite{FT} we define a {\it conjugate horocycle} $\bar h$ around the same puncture by
the condition $L(h)L(\bar h)=1$.
As in~\cite{FT}, define a lambda length of a tagged arc using the distance to the conjugate horocycle:
for ends of the arc tagged plain one takes the distance to the initial horocycle, for ends tagged notched one takes
the distance to the conjugate horocycle.
Reasoning as in Lemma~\ref{Ptolemy-prime} one can see that the similar Ptolemy relations hold for lambda lengths of tagged
arcs.

\begin{remark}
\label{rem_ptolemy}
In Lemma~\ref{Ptolemy-prime} we discuss the basic Ptolemy relations only. More relations can be obtained in the same way by gluing some boundary edges
in Fig.~\ref{ptolemy}: some edges at the boundary can be attached to other ones, some edges can be self-identified
producing pending arcs.

\end{remark}

Now, let $T_{\O^w}$ be a triangulation of $\O^w$ with signed adjacency matrix $B=B(T_{\O^w})$. Choose an initial seed as follows
\begin{itemize}
\item $B=B(T_{\O^w})$;
\item ${\bf x}={\bf x}(T_{\O^w})=\{\lambda(\gamma): \gamma\in T_{\O^w}, \gamma\not\subset \partial \O^w\}$;
\item ${\bf p}={\bf p}(T_{\O^w})=\{\lambda(\gamma): \gamma\in T_{\O^w}, \gamma\subset \partial \O^w\}.$

\end{itemize}

Consider the cluster algebra $\A(B(T_{\O^w}),{\bf x}(T_{\O^w}),{\bf p}(T_{\O^w}))$
constructed by the initial seed $(B(T_{\O^w}),{\bf x}(T_{\O^w}),{\bf p}(T_{\O^w}))$.
Lemma~\ref{Ptolemy-prime} combined with Theorem~\ref{thm_weight} lead to the following theorem.

\begin{theorem}
\label{th_prime}
Let $B$ be an s-decomposable skew-symmetrizable matrix. Let $(\O^w,T_{\O^w})$ be a weighted orbifold and its triangulation constructed by an s-decomposition of $B$. Suppose also that weights of all orbifold points of $\O^w$ are equal to $1/2$.
Then the  cluster algebra  $\A(B(T_{\O^w}),{\bf x}(T_{\O^w}),{\bf p}(T_{\O^w}))$ satisfies the following conditions:
\begin{itemize}
\item the cluster variables of $\A$ are lambda lengths of tagged arcs of triangulations of $\O^w$;
\item the clusters consist of all lambda lengths of tagged arcs contained in the same triangulation of $\O^w$;
\item the coefficients are lambda lengths of the boundary components of $\O^w$
(the coefficient semifield $\P$ is the tropical semifield generated by the lambda lengths of boundary components);
\item the exchange graph of $\A$ coincides with the exchange graph of tagged triangulations of $\O^w$.

\end{itemize}
\end{theorem}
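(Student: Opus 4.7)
The plan is to construct an explicit bijection between the tagged triangulations of $\O^w$ and the seeds mutation-equivalent to $(B(T_{\O^w}),\mathbf{x}(T_{\O^w}),\mathbf{p}(T_{\O^w}))$, sending a tagged triangulation $T'$ to the triple $(B(T'),\mathbf{x}(T'),\mathbf{p}(T'))$ as defined in the initial seed. Then the four assertions of the theorem all follow at once: cluster variables are precisely lambda lengths of tagged arcs, clusters are precisely those packages that lie in a single triangulation, coefficients are precisely the boundary lambda lengths, and the exchange graph is identified with the flip graph of tagged triangulations.

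First I would verify that the proposed map is equivariant with respect to the two relevant operations: flips of tagged triangulations and seed mutations. The matrix part is already handled by Theorem~\ref{thm_weight}, which gives $B(f_k(T'))=\mu_k(B(T'))$. For the cluster variable part, a flip $f_k$ replaces the arc $e_k$ of $T'$ by a new arc $e_k'$ inside a local configuration (quadrilateral, digon with pending arc, or self-folded triangle), and in each such configuration Lemma~\ref{Ptolemy-prime} (together with its extensions via Remark~\ref{rem_ptolemy} for boundary identifications and gluings) provides a Ptolemy relation of the form
\[
\lambda(e_k)\,\lambda(e_k') \;=\; M^+ + M^-,
\]
where $M^\pm$ are monomials in the lambda lengths of the arcs adjacent to $e_k$ in $T'$. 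I would then match this identity term by term with the exchange relation \eqref{eq:ClusterMutation}: the exponents on the right-hand side are precisely the entries of the column $b_{\cdot k}$ of $B(T')$ read off from the diagram convention (in particular, a pending arc contributes a factor of $2$ in the exponent, explaining the square terms in parts (a) and (c) of Lemma~\ref{Ptolemy-prime}); the monomials in boundary lambda lengths appearing in $M^\pm$ are precisely the tropical coefficients $p_k^\pm$. Coefficient mutation \eqref{eq:CoeffMutation} is then automatic because boundary lambda lengths are unchanged by any flip (flips only modify interior arcs) and the tropical semifield $\P$ is by construction freely generated by them.

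Next I would invoke transitivity. By Theorem~\ref{transitivity} (extended to tagged triangulations in the same way as in \cite{FST},\cite{FT}) and Lemma~\ref{realization of diagram mut}, every tagged triangulation of $\O^w$ is obtained from $T_{\O^w}$ by a finite sequence of flips, so its associated triple is obtained from the initial seed by the corresponding sequence of mutations and hence is a seed of $\A$. Conversely, since mutations at $k$ only alter the $k$-th component of the seed in a way forced by the Ptolemy exchange, an inductive argument on the length of a mutation sequence shows that every seed of $\A$ arises from a tagged triangulation: the matrix part determines the triangulation (via Theorem~\ref{thm_weight}), and the cluster part is then pinned down as the corresponding lambda lengths. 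Algebraic independence of the initial cluster (which holds since lambda lengths parametrize the decorated Teichm\"uller space $\widetilde{\T}(\O^w)$ by the Chekhov--Mazzocco result) guarantees that distinct tagged triangulations yield distinct seeds.

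The main obstacle I anticipate is the bookkeeping in the second step: one must verify, flip by flip, that the signs, exponents, and coefficient monomials produced by the Ptolemy identities of Lemma~\ref{Ptolemy-prime} (under every admissible boundary identification allowed by Remark~\ref{rem_ptolemy}) agree exactly with those prescribed by \eqref{eq:ClusterMutation} read off from the diagram $\D(T')$. The delicate cases are flips involving a pending arc --- where a factor of $2$ on an edge of the diagram must be matched to a square of a lambda length --- and flips at an edge of a self-folded triangle, which requires passing to the tagged picture exactly as in \cite{FT}. Once this case analysis is carried out (and it is finite, since there are only finitely many local pictures of a flip in an orbifold with all orbifold points of weight $1/2$), the four conclusions of the theorem are immediate corollaries of the bijection constructed above.
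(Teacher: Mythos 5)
Your proposal is correct and follows essentially the same route as the paper: the paper derives Theorem~\ref{th_prime} directly from Lemma~\ref{Ptolemy-prime} (Ptolemy relations matching the exchange relations, with the squares accounting for the weight-$2$ diagram edges at pending arcs) combined with Theorem~\ref{thm_weight} (flips correspond to matrix mutations), exactly the two ingredients you organize into a flip--mutation equivariant bijection. Your additional remarks on transitivity of flips and on injectivity via the Teichm\"uller parametrization are the same points the paper leaves implicit by deferring to the surface-case machinery of~\cite{FT}.
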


\subsection{Orbifolds with orbifold points of weight $2$}
\label{sec lambda 2}

As in the previous section, let $B$ be a matrix with s-decomposable weighted diagram $\D^w$, let $\O^w$ be a corresponding weighted orbifold.
Suppose that all orbifold points in $\O^w$ are of weight $2$ (this corresponds to matrices/weighted diagrams with all the outlets of weight $1$).
In this section, we introduce a new object $\S(\O^w)$ (a hyperbolic surface built from the orbifold $\O^w$) and show that the
lambda lengths of tagged arc on $\S(\O^w)$ provide a realization of a cluster algebra with exchange matrix $B$.

\begin{definition}[{{\it Associated triangulated surface}}]
\label{ass surface}
Let $T$ be a tagged triangulation of the orbifold $\O^w$. An {\it associated triangulated surface} $\S(\O^w)$ is a surface
with tagged triangulation $\hat T $ built as follows:
for each triangle $t\in T$ containing an orbifold point we cut out $t$ and attach a triangulated
disk with marked points as shown in Fig.~\ref{ass}, so that every pending arc in $t$ corresponds to a pair of conjugate arcs in
$\hat T$.

The marked points arising in the procedure will be called {\it special}.
The pairs of conjugate arcs arising in this procedure will be called {\it associated (pairs of) arcs}. Arcs in every associated pair should be flipped
simultaneously.

\end{definition}

\begin{figure}[!h]
\begin{center}
\epsfig{file=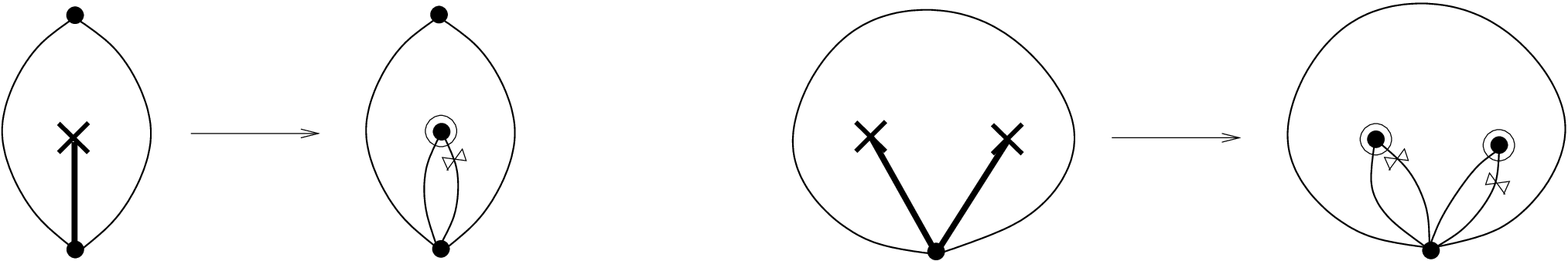,width=0.98\linewidth}
\caption{Construction of the associated surface. Special marked points are encircled.}
\label{ass}
\end{center}
\end{figure}

\begin{remark}
\label{ex gr ass surf}
The requirement that the associated arcs should be flipped simultaneously guarantees that the associated arcs always remain conjugate.
Therefore, the procedure of building the associated surface from the triangulation $T$ of $\O^w$ commutes with flips of $T$.
This implies that the exchange graph for tagged triangulations of $\O^w$ coincides with the exchange graph for tagged triangulations
of $\S(\O^w)$.

\end{remark}

{ 
Recall that horocycles $h$ and $\bar h$ are conjugate if $L(h)\cdot L(\bar h)=1$, where $L(h)$ is the hyperbolic length of $h$. Given a hyperbolic structure on $\S(\O^w)$,
a horocycle $h$ centered at interior marked point on $\S(\O^w)$ is {\it self-conjugate} if it coincides with its conjugate $\bar h$, implying $L(h)^2=1$.
}

\begin{definition}[{{\it Decorated Teichm\"uller space for associated surface}}]
A point in a decorated   Teichm\"uller space $\widetilde \T(\S)$ of the associated surface $\S=\S(\O^w)$
 is a hyperbolic structure on $\S$  with a collection of horocycles, one  around each marked point,
satisfying the condition that the horocycles centered at special marked points are self-conjugate.

\end{definition}

The lambda lengths of tagged arcs on associated surface are introduced in the usual way.
It follows directly from the definition that the associated arcs have the same lambda lengths.
Therefore, we may substitute a pair of associated tagged arcs $(\gamma',\gamma'')$ by a single tagged arc $\gamma$.
We call this single arc a {\it pending arc}, similar to the orbifold case and
define $\lambda(\gamma)=\lambda(\gamma')(=\lambda(\gamma''))$.

Given an associated surface $\S$ with $m$ special marked points, we can consider a similar surface $S$ whose marked points
are not special.
In the decorated Teichm\"uller space $\widetilde \T(S)$ of $S$ consider a codimension $m$ subspace $\Pi$ defined by any of the
following two equivalent conditions:
\begin{itemize}
\item[(1)] for marked point $x\in S$ corresponding to a special marked point of $\S$, the horocycle centered at $x$ is self-conjugate;
\item[(2)] for each pair of conjugate arcs $(\gamma',\gamma'')$ on $S$ corresponding to a pair of associated arcs of $\S$
holds $\lambda(\gamma)=\lambda(\gamma')$.

\end{itemize}
Clearly, the subspace $\Pi$ coincides with the decorated Teichm\"uller space  of  the associated surface $\S$.
The condition (2) also implies that the lambda lengths of tagged arcs of triangulations of $\S$ (together with lambda lengths of boundary segments) parametrize $\widetilde \T(\S)$.

\begin{figure}[!h]
\begin{center}
\psfrag{alpha}{\scriptsize $\alpha$}
\psfrag{beta}{\scriptsize $\beta$}
\psfrag{gamma}{\scriptsize $\gamma$}
\psfrag{delta}{\scriptsize $\delta$}
\psfrag{sigma}{\scriptsize $\sigma$}
\psfrag{theta}{\scriptsize $\theta$}
\psfrag{xi}{\scriptsize $\xi$}
\psfrag{mu}{\scriptsize $\mu$}
\psfrag{nu}{\scriptsize $\nu$}
\psfrag{zeta}{\scriptsize $\zeta$}
\psfrag{eta}{\scriptsize $\eta$}
\psfrag{phi}{\scriptsize $\phi$}
\psfrag{psi}{\scriptsize $\psi$}
\psfrag{hi}{\scriptsize $\chi$}
\psfrag{a}{\small (a)}
\psfrag{b}{\small (b)}
\psfrag{c}{\small (c)}
\psfrag{d}{\small (d)}
\epsfig{file=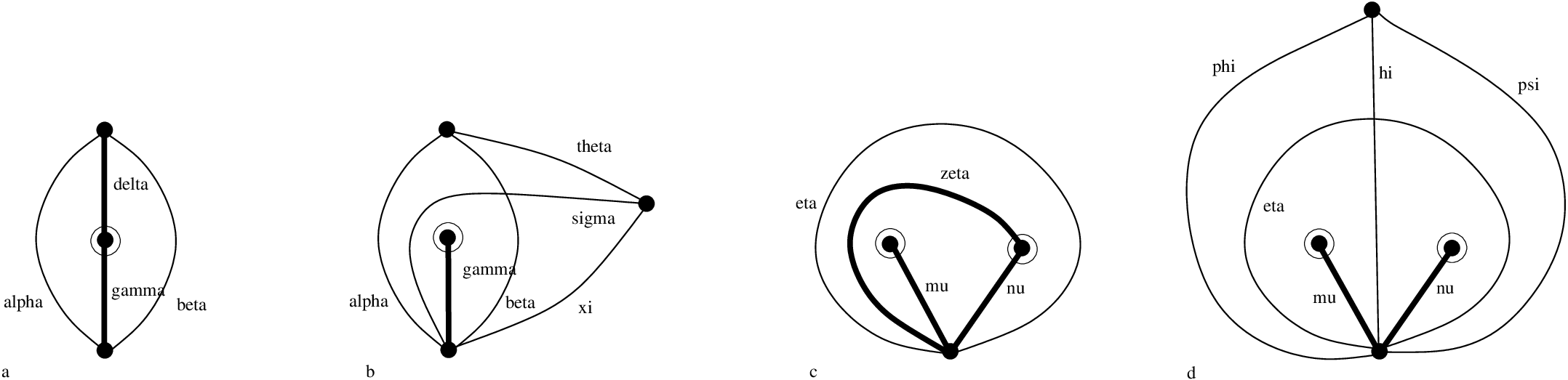,width=0.98\linewidth}
\caption{Notation for Ptolemy relations (Lemma~\ref{Ptolemy-local}). Pending arcs are drawn in bold.}
\label{ptolemy-local}
\end{center}
\end{figure}

\begin{lemma}
\label{Ptolemy-local}
In the notation of Fig.~\ref{ptolemy-local} the following Ptolemy relations hold:
\begin{itemize}
\item[(a)] $\lambda(\gamma)\lambda(\delta)=\lambda(\alpha)+\lambda(\beta)$;
\item[(b)] $\lambda(\beta)\lambda(\sigma)=\lambda(\gamma)^2\lambda(\theta)+\lambda(\alpha)\lambda(\xi)$;
\item[(c)] $\lambda(\nu)\lambda(\zeta)=\lambda(\mu)^2+\lambda(\eta)$;
\item[(d)] $\lambda(\eta)\lambda(\chi)=\lambda(\mu)^2\lambda(\psi)+\lambda(\nu)^2\lambda(\phi)$.

\end{itemize}

\end{lemma}

\begin{proof}
The relations immediately follow from the Ptolemy relations proved in~\cite{FT}.

\end{proof}

It is easy to see that the same Ptolemy relations hold also for tagged arcs.

Reasoning as in the previous section and taking into account Remark~\ref{ex gr ass surf}, we obtain a similar result for matrices providing weighted orbifolds with weights of orbifold points $2$.

\begin{theorem}
\label{th_local}
Let $B$ be an s-decomposable skew-symmetrizable matrix. Let $(\O^w,T_{\O^w})$ be a weighted orbifold and its triangulation constructed by an s-decomposition of $B$. Suppose also that weights of all orbifold points of $\O^w$ are equal to $2$.
Let $(\S=\S(\O^w),\hat T_{\S})$ be the corresponding associated surface and its triangulation.
Then the  cluster algebra  $\A= \A(B(\hat T_{\S}),{\bf x}(\hat T_{\S}),{\bf p}(\hat T_{\S}))$ satisfies the following conditions:
\begin{itemize}
\item the cluster variables of $\A$ are lambda lengths of tagged arcs contained in triangulations of $\S$;
\item the clusters consist of all lambda lengths of tagged arcs contained in the same triangulation of $\S$;
\item the coefficients are lambda length of the boundary segments of $\S$
(the coefficient semifield $\P$ is the tropical semifield generated by the lambda lengths of boundary segments);
\item the exchange graph of $\A$ coincides with the exchange graph of the tagged triangulations of $\S$.

\end{itemize}
\end{theorem}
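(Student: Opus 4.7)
\medskip

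\noindent\emph{Proof plan.} The plan is to mirror the argument of Theorem~\ref{th_prime}, with Lemma~\ref{Ptolemy-local} playing the role of Lemma~\ref{Ptolemy-prime} and the associated surface $\S=\S(\O^w)$ providing the geometric model. The key conceptual point is that Definition~\ref{ass surface} is tailored so that every structure on $\O^w$ (triangulation, flip, signed adjacency matrix, lambda length) lifts canonically to $\S$, and the two models determine the same cluster algebra.

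First I would set up the dictionary. Given a tagged triangulation $T_{\O^w}$ of $\O^w$, each pending arc of $T_{\O^w}$ lifts to a conjugate pair of tagged arcs in $\hat T_\S$ sharing the same lambda length (by the self-conjugacy condition at special marked points in the definition of $\widetilde\T(\S)$), while each ordinary arc and each boundary segment lifts to a unique arc or segment of the same lambda length. This sets up a bijection between entries of the initial seed $(B(\hat T_\S),\mathbf{x}(\hat T_\S),\mathbf{p}(\hat T_\S))$ and the combinatorial data of $T_{\O^w}$, so $B(\hat T_\S)=B(T_{\O^w})$ is the signed adjacency matrix coming from the s-decomposition of $\D^w$.

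Second I would check that flips on $\hat T_\S$ implement cluster mutations. On the combinatorial side, Remark~\ref{ex gr ass surf} says that flips of $\hat T_\S$ commute with the associated-pair identification and thus correspond bijectively to flips of $T_{\O^w}$; Theorem~\ref{thm_weight} then provides $B(f_i(T_{\O^w}))=\mu_i(B(T_{\O^w}))$. On the arithmetic side, I would verify case-by-case, using Lemma~\ref{Ptolemy-local} and the tagged-arc variants of its Ptolemy relations (obtained by passing to conjugate horocycles exactly as in~\cite{FT}), that each such flip produces the exchange relation~(\ref{eq:ClusterMutation}) for the signed adjacency matrix. The squared factors appearing on the right-hand side of the Ptolemy identities in Lemma~\ref{Ptolemy-local}(b), (c), (d) correspond precisely to the entries $|b_{ij}|=2$ in the matrices of s-blocks $\mr{\widetilde{III}}a$--$\mr{\widetilde{VI}}$ listed in Table~\ref{all-matrices}, while the weight $d_i=2$ assigned to pending-arc vertices by the skew-symmetrizing diagonal matrix accounts for the asymmetry between flipping at a pending arc and at an ordinary one.

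Finally I would conclude by induction on the length of a mutation sequence applied to the initial seed. Each mutation is realized by a flip of $\hat T_\S$ (Step~2); by Theorem~\ref{transitivity} and Remark~\ref{ex gr ass surf} the tagged flip graph of $\S$ is connected and coincides with that of $\O^w$, so every cluster variable is a lambda length of a tagged arc of some triangulation of $\S$, every cluster is the set of lambda lengths along a fixed tagged triangulation, and the cluster exchange graph coincides with the tagged flip graph. The boundary segments of $\S$ are invariant under flips and furnish the generators of the tropical semifield $\PP$, giving the claim about coefficients.

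The main obstacle is the case-by-case verification in the middle step: one must check, for each s-block in Table~\ref{all-matrices} (with appropriate weight~$w=1$ on the regular part and weight~$2$ on pending arcs), that the relevant instance of Lemma~\ref{Ptolemy-local} matches the cluster exchange relation on the nose, and that this remains valid after passing to tagged arcs around punctures. The argument is essentially mechanical but requires careful bookkeeping of exponents, signs, and horocycle choices; once assembled, together with the combinatorial matching furnished by Theorem~\ref{thm_weight} and Remark~\ref{ex gr ass surf}, it yields all four bullet points of the theorem.
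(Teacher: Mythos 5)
Your proposal follows essentially the same route as the paper: the authors prove Theorem~\ref{th_local} by repeating the argument of Theorem~\ref{th_prime} verbatim, substituting Lemma~\ref{Ptolemy-local} for Lemma~\ref{Ptolemy-prime} and invoking Remark~\ref{ex gr ass surf} to identify the exchange graph of tagged triangulations of $\O^w$ with that of $\S(\O^w)$. Your additional bookkeeping (the dictionary between pending arcs and conjugate pairs with equal lambda lengths, the matching of squared Ptolemy factors with the entries $|b_{ij}|=2$, and the induction on mutation sequences via Theorem~\ref{thm_weight}) is exactly the content the paper leaves implicit, so the proposal is correct and consistent with the paper's proof.
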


\subsection{General case}
\label{sec_gen}
Let $B$ be an s-decomposable skew-symmetrizable matrix with wei\-ghted diagram $\D=\D(B)$, let $\O^w$ be a corresponding weighted orbifold. In this section, we present a general construction providing a geometric realization of a cluster algebra with exchange matrix $B$. In the partial cases when all orbifold points of $\O^w$ are of the same weight this construction specifies to ones described in Sections~\ref{sec lambda 2} and~\ref{sec lambda 1/2}.

\begin{definition}[{{\it Associated triangulated orbifold}}]
\label{def-ass-gen}
Given a triangulation $T$ of $\O^w$, an {\it associated orbifold} $\hat \O$ is constructed as follows:
for each triangle $t\in T$ containing orbifold points of weight 2 we substitute $t$ by a piece of surface with {\it special marked point}
as in Fig.~\ref{ass1}. Each special marked point is an end of two tagged arcs (with distinct tags at this end),
these tagged arcs are called {\it associated}. The associated arcs in the associated orbifold should be flipped simultaneously
(this ``composite'' flip is considered as one transformation and is shown by one edge in the exchange graph of tagged triangulations of
$\hat \O$). We denote obtained triangulation of $\hat \O$ by $\t T$.

\end{definition}

\begin{definition}
\label{def_tr_assoc}
By {\it triangulation} of associated orbifold $\hat\O$ we mean only tagged triangulations of $\hat\O$ with conjugate pairs in all special marked points.

\end{definition}

One can note that every tagged triangulation of $\hat \O$ (in the sense of the definition above) can be considered as $\t T$ for some triangulation $T$ of $\O^w$.

\begin{figure}[!h]
\begin{center}
\psfrag{1}{\scriptsize 1/2}
\psfrag{2}{\scriptsize 2}
\epsfig{file=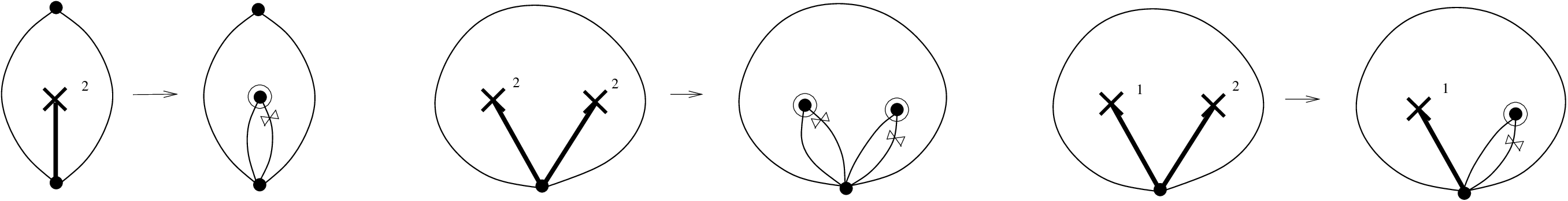,width=0.98\linewidth}
\caption{Construction of the associated orbifold. The numbers show the weights of the orbifold points.
 The special marked points are encircled.}
\label{ass1}
\end{center}
\end{figure}


Reasoning as in the case of associated surfaces in Remark~\ref{ex gr ass surf} we obtain the following lemma.

\begin{lemma}
\label{graph on O graph o}
The exchange graph of tagged triangulations of $\O^w$ coincides with exchange graph of tagged triangulations of
the associate orbifold $\hat\O$.

\end{lemma}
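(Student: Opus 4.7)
The plan is to exhibit the assignment $T \mapsto \widetilde{T}$ from Definition~\ref{def-ass-gen} as a bijection between tagged triangulations of $\O^w$ and tagged triangulations of $\hat\O$ (in the restricted sense of Definition~\ref{def_tr_assoc}) that commutes with flips, and then conclude that the two exchange graphs coincide.

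First I would verify bijectivity. Given a tagged triangulation $T$ of $\O^w$, Definition~\ref{def-ass-gen} produces $\widetilde{T}$ by locally replacing every triangle containing an orbifold point of weight $2$ with the planar piece shown in Figure~\ref{ass1}, so that each weight-$2$ pending arc becomes an associated pair of tagged arcs sharing a special marked point with opposite tags. This construction is injective because the local modification occurs inside disjoint neighborhoods of individual triangles and the remainder of the triangulation is left untouched. Surjectivity is precisely the remark following Definition~\ref{def_tr_assoc}: any tagged triangulation of $\hat\O$ satisfying the conjugate-pair condition at every special marked point can be obtained uniquely by collapsing each such associated pair back to a single pending arc, producing a tagged triangulation of $\O^w$.

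Next I would check that this bijection intertwines the two flip structures. There are two cases. If $\gamma\in T$ is not a weight-$2$ pending arc, then the local picture around $\gamma$ in $\O^w$ is identical to the picture around the corresponding arc in $\widetilde{T}$ (no weight-$2$ orbifold point has been substituted in its vicinity), so the ordinary flip of $\gamma$ in $\O^w$ matches the ordinary flip of the corresponding arc in $\hat\O$. If $\gamma$ is a weight-$2$ pending arc, then it corresponds to an associated pair $(\gamma',\gamma'')$ in $\widetilde{T}$; by the convention fixed in Definition~\ref{def-ass-gen}, flipping this pair simultaneously counts as one edge in the exchange graph of $\hat\O$, and a local inspection of Figure~\ref{ass1} shows that the result is again an associated pair with opposite tags at the same special marked point, hence still a triangulation in the sense of Definition~\ref{def_tr_assoc}. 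Comparison of the two local pictures shows that this composite flip corresponds exactly to the flip of $\gamma$ in $\O^w$ described in Figure~\ref{flip-pending}.

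The main point that needs checking is the second case: one has to confirm that a simultaneous flip of an associated pair produces again an associated pair at the \emph{same} special marked point with \emph{opposite} tags, so that the output stays inside the class of triangulations considered in Definition~\ref{def_tr_assoc}. This is a finite local verification carried out in the model piece of Figure~\ref{ass1}, entirely analogous to the one behind Remark~\ref{ex gr ass surf}. Once bijectivity and compatibility with flips are both established, the vertex sets of the two exchange graphs are identified and their edge sets coincide, which is exactly the content of the lemma.
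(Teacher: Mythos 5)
Your proposal is correct and follows essentially the same route as the paper, which simply invokes the reasoning of Remark~\ref{ex gr ass surf}: the construction $T\mapsto\widetilde{T}$ is a flip-equivariant bijection because simultaneous flips of associated pairs keep them conjugate at the same special marked point. You spell out the bijectivity of the vertex correspondence and the two-case local check more explicitly than the paper does, but the underlying argument is identical.
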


We define a hyperbolic structure on $\hat\O$ and pairs of conjugate horocycles in a usual way.

\begin{definition}[{{\it Decorated Teichm\"uller space for associated orbifold}}]
\label{dec teichm}
A point in a decorated   Teichm\"uller space $\widetilde \T(\hat \O)$ of the associated orbifold  $\hat \O$
 is a hyperbolic structure on $\hat \O$ (such that each marked point is turned into a cusp and an angle around each orbifold point equals
$\pi$)  together with a collection of horocycles, one  around each marked point,
satisfying the condition that the horocycles centered at special marked points are self-conjugate.

\end{definition}

As before, we use the chosen horocycles to define lambda lengths. For ordinary arcs we use the formula
$\lambda(\gamma)=exp(l(\gamma)/2),$ where $l(\gamma)$ is the signed distance along $\gamma$ between the horocycles.
For pending arcs we use the formula $\lambda(\gamma)=exp(l(\gamma)/2)=exp(l'(\gamma)),$
where $l(\gamma)=2l'(\gamma)$, and $l'(\gamma)$ is the signed distance from the horocycle to the orbifold point.

By definition, associated arcs have equal lambda lengths, so we substitute the pair of associated arcs by a single {\it double} arc (with a non-tagged end in the special marked point).

We denote by $\h T$ a triangulation $\t T$ of $\hat\O$ with  all conjugate pairs at special marked points substituted by double arcs. Flips of $\h T$ are obviously well-defined.

\begin{definition}
\label{signed_matr_assoc}
A {\it signed adjacency  matrix} for a tagged triangulation $\h T$ of the associated orbifold $\hat\O$ is the signed adjacency matrix for the initial
 triangulation $T$ of the orbifold $\O^w$.

\end{definition}
According to Lemma~\ref{graph on O graph o}, the signed adjacency matrix is well-defined.

Clearly, the lambda lengths on the associated orbifold satisfy the Ptolemy relations shown in~\cite{FT}, Lemma~\ref{Ptolemy-prime} and
Lemma~\ref{Ptolemy-local}. In addition, the same reasoning as in the proofs of the lemmas cited above (together with the results of the lemmas) shows the following Ptolemy  relations.

\begin{lemma}
\label{Ptolemy-gen}
In the notation of Fig.~\ref{ptolemy-gen} the following Ptolemy relations hold:
\begin{itemize}
\item[(a)] $\lambda(\nu)\lambda(\zeta)=\lambda(\eta)+\lambda(\mu)$;
\item[(b)] $\lambda(\mu)\lambda(\rho)=\lambda(\eta)^2+\lambda(\nu)^4$;
\item[(c)] $\lambda(\eta)\lambda(\chi)=\lambda(\mu)\lambda(\psi)+\lambda(\nu)^2\lambda(\phi)$.

\end{itemize}

\end{lemma}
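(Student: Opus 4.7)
The plan is to deduce each of (a), (b), (c) from the Ptolemy identities already established in Lemmas~\ref{Ptolemy-prime} and~\ref{Ptolemy-local} together with the classical Ptolemy relations on hyperbolic surfaces from~\cite{FT}, following the same ``unfolding'' strategy used in those proofs. Two rules govern how lambda lengths transform under this unfolding. First, a pending arc $\gamma$ at an orbifold point of weight $1/2$ is eliminated by cutting along $\gamma$ and doubling the adjacent triangle across it: because the angle at the orbifold point equals $\pi$, the result is a genuine piece of a hyperbolic surface, and the new interior arc has length $2l'(\gamma)=l(\gamma)$, so its lambda length equals $\lambda(\gamma)^2$. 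Second, a double arc $\gamma$ at a special marked point (coming from an orbifold point of weight $2$) separates into a pair of associated arcs $\gamma', \gamma''$ with $\lambda(\gamma')=\lambda(\gamma'')=\lambda(\gamma)$; the self-conjugacy of the horocycle at the special marked point ensures that the plain/notched distinction at that endpoint disappears under separation.

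Using these two rules, each of (a), (b), (c) is proved by taking the local polygon shown in Fig.~\ref{ptolemy-gen}, unfolding across every weight-$1/2$ pending arc, and separating every double arc into its associated pair, so that the picture becomes an ordinary quadrilateral on a bordered hyperbolic surface. The classical Ptolemy identity for that quadrilateral, established in~\cite{FT}, then yields the required relation on $\hat \O$, where the powers appearing on the right-hand side count exactly how many times each arc is traversed in the unfolded picture: a weight-$1/2$ pending arc contributes its lambda length squared (from the doubling), and a double arc traversed on both of its associated components likewise contributes its lambda length squared. Composing the two effects, as happens in (b), accounts for the exponent $4$ on $\lambda(\nu)$. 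The argument extends verbatim to tagged arcs, exactly as in the previous lemmas, by replacing the chosen horocycles at notched endpoints by their conjugates.

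The main obstacle is the bookkeeping in (b): one has to check that the doubling across the weight-$1/2$ pending arc $\eta$ takes place in a region which already contains the double arc $\nu$, so that the two squaring operations are applied in succession to the same arc. Drawing the unfolded quadrilateral explicitly and tracking the two copies of $\nu$ produced by the doubling shows this without further difficulty, after which all three identities reduce to a direct application of the planar Ptolemy relation, exactly as in the proofs of Lemmas~\ref{Ptolemy-prime} and~\ref{Ptolemy-local}. No additional geometric input is needed beyond the two unfolding rules stated above.
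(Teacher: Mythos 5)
Your overall strategy --- cut and double across each weight-$1/2$ orbifold point, separate each double arc into its pair of associated arcs, and then invoke the Ptolemy relations of~\cite{FT} --- is exactly the route the paper takes (its proof of this lemma is literally the remark that the reasoning of Lemmas~\ref{Ptolemy-prime} and~\ref{Ptolemy-local} applies verbatim). However, your first transformation rule is quantitatively wrong, and the error propagates into your bookkeeping of exponents. By Definition~\ref{lambda}, $\lambda(\gamma)=\exp(l(\gamma)/2)=\exp(l'(\gamma))$ for a pending arc, where $l(\gamma)=2l'(\gamma)$ is the ``round-trip'' length. The arc created by opening up the orbifold point has hyperbolic length $l(\gamma)$ between horocycles, hence lambda length $\exp(l(\gamma)/2)=\lambda(\gamma)$, \emph{not} $\lambda(\gamma)^2$; the factor $2$ in the definition is inserted precisely so that opening up a weight-$1/2$ orbifold point leaves the lambda length unchanged.

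Consequently your derived rule ``a weight-$1/2$ pending arc contributes its lambda length squared'' is false and is incompatible with the identities you take as input: in Lemma~\ref{Ptolemy-prime}(b),(d) the weight-$1/2$ pending arcs already appear to the \emph{first} power on the right-hand side, and the same happens with $\lambda(\mu)$ in part (c) of the present lemma (as it must, since the corresponding exchange-matrix entry is $\pm 1$, the weights being $d_\mu=1$, $d_\eta=2$). The correct bookkeeping is: an opened-up weight-$1/2$ pending arc contributes one factor of $\lambda$ per occurrence in the unfolded polygon; a double arc contributes $\lambda^2$ per occurrence (one factor for each of its two associated components); and the squaring caused by doubling a triangle across a pending arc applies to the \emph{sides} of that triangle --- each of which now occurs twice --- not to the pending arc itself, which becomes the diagonal and keeps lambda length $\lambda(\gamma)$. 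With these corrections the exponent $4$ in (b) does arise the way you intended (the pair $(\nu',\nu'')$ contributes $\lambda(\nu)^2$, and the doubling duplicates it), but as written your two rules would produce, for instance, $\lambda(\mu)^2\lambda(\psi)$ in place of $\lambda(\mu)\lambda(\psi)$ in (c).
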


\begin{figure}[!h]
\begin{center}
\psfrag{alpha}{\scriptsize $\alpha$}
\psfrag{beta}{\scriptsize $\beta$}
\psfrag{gamma}{\scriptsize $\gamma$}
\psfrag{delta}{\scriptsize $\delta$}
\psfrag{sigma}{\scriptsize $\sigma$}
\psfrag{theta}{\scriptsize $\theta$}
\psfrag{xi}{\scriptsize $\xi$}
\psfrag{mu}{\scriptsize $\mu$}
\psfrag{nu}{\scriptsize $\nu$}
\psfrag{zeta}{\scriptsize $\zeta$}
\psfrag{eta}{\scriptsize $\eta$}
\psfrag{phi}{\scriptsize $\phi$}
\psfrag{psi}{\scriptsize $\psi$}
\psfrag{hi}{\scriptsize $\chi$}
\psfrag{rho}{\scriptsize $\rho$}
\psfrag{c}{\small (b)}
\psfrag{d}{\small (c)}
\raisebox{0mm}{\small (a)}\epsfig{file=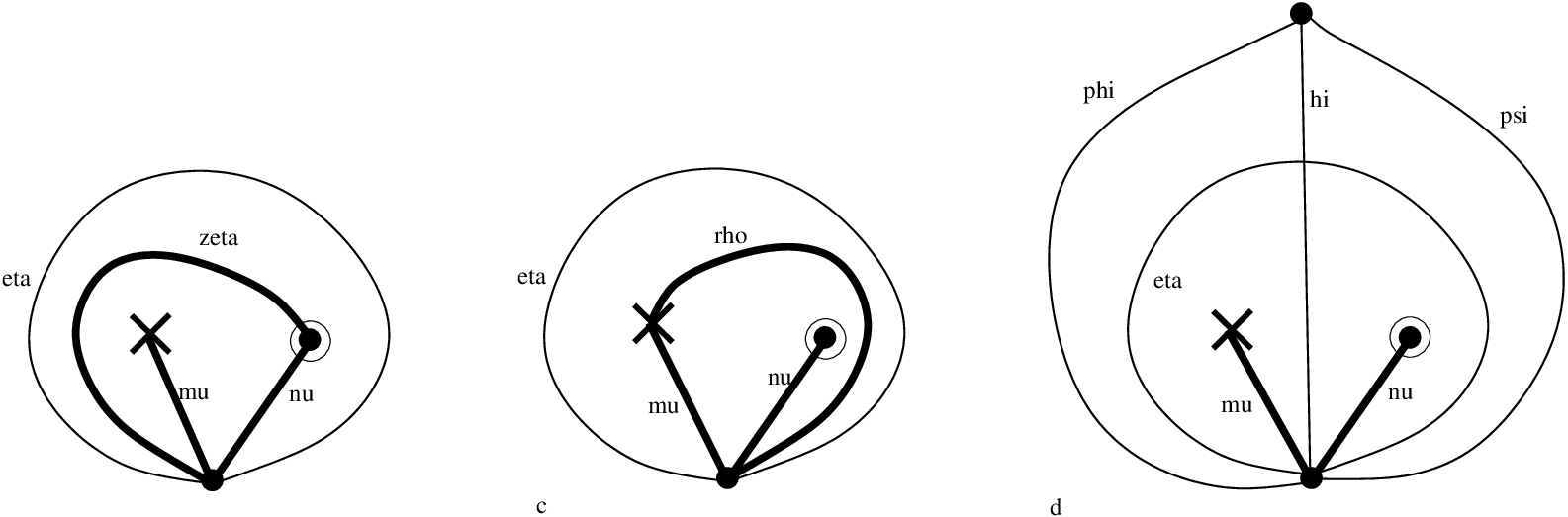,width=0.95\linewidth}
\caption{Notation for Ptolemy relations (Lemma~\ref{Ptolemy-gen})}
\label{ptolemy-gen}
\end{center}
\end{figure}

The same Ptolemy relations hold for tagged arcs as well (with the usual definition of lambda length of tagged
arc).

Now we are able to formulate the main result of the section.

Given a tagged triangulation $\h T$ of $\hat\O$,
choose an initial seed as follows:\\
\begin{itemize}
\item $B=B(\h T)$ is a signed adjacency matrix of $\h T$;
\item ${\bf x}={\bf x}(\h T)=\{\lambda(\gamma): \gamma\in \h T, \gamma\not\subset\partial\hat\O\}$;
\item ${\bf p}={\bf p}(\h T)=\{\lambda(\gamma): \gamma\in \h T, \gamma\subset\partial\hat\O\}.$

\end{itemize}

Denote by $\A(B(\hat T),{\bf x}(\hat T),{\bf p}(\hat T))$  the cluster algebra constructed by the initial seed $(B(\hat T),{\bf x}(\hat T),{\bf p}(\hat T))$. By the same arguments as in the previous sections, we get the following theorem.

\begin{theorem}
\label{thm_gen}
Let $B$ be an s-decomposable skew-symmetrizable matrix. Let $(\O^w,T_{\O^w})$ be a weighted orbifold and its triangulation constructed by an s-decomposition of $B$. Suppose also that weights of all orbifold points of $\O^w$ are equal to $2$.
Let $\hat \O$ be the associated orbifold and let $\hat T$ be the corresponding triangulation of $\hat \O$.
Then the  cluster algebra  $\A= \A(B(\h T),{\bf x}(\h T),{\bf p}(\h T))$
satisfies the following conditions:
\begin{itemize}
\item the exchange matrices of $\A$ are signed adjacency matrices of tagged triangulations of $\hat O$;
\item the cluster variables of $\A$ are lambda lengths of tagged arcs contained in triangulations of $\hat \O$;
\item the clusters consist of all lambda lengths of tagged arcs contained in the same triangulation of $\hat \O$;
\item the coefficients are lambda lengths of the boundary segments of $\hat\O$
(the coefficient semifield $\P$ is the tropical semifield generated by the lambda lengths of boundary segments);
\item the exchange graph of $\A$ coincides with the exchange graph of tagged triangulations of $\hat\O$.

\end{itemize}
\end{theorem}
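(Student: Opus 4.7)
The plan is to run the same argument as in Theorems~\ref{th_prime} and~\ref{th_local}, upgraded to handle mixed weights of orbifold points via the associated-orbifold construction of Definition~\ref{def-ass-gen}. At the combinatorial level, Theorem~\ref{thm_weight} identifies the mutation class of $B$ with tagged triangulations of $\O^w$ (matrix mutations corresponding to flips), and Lemma~\ref{graph on O graph o} transports this identification to tagged triangulations of $\hat\O$: each pair of associated arcs on $\hat\O$ is collapsed into a single double arc, and the simultaneous flip of such a pair is declared to be one move. Combined with Definition~\ref{signed_matr_assoc}, this yields a bijection between the matrix seeds of $\A$ and the tagged triangulations $\h T$ of $\hat\O$, with matrix mutation $\mu_k$ mapped to the flip $f_k$.

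Next I would verify that the cluster variables ${\bf x}(\h T)$ and coefficients ${\bf p}(\h T)$ propagate consistently under flips, i.e.\ that each exchange relation (\ref{eq:ClusterMutation}) dictated by $B(\h T)$ coincides with a Ptolemy relation on $\hat\O$. Since every flip is local, it suffices to enumerate possible local configurations around the flipped arc and check the corresponding relation: triangles with only ordinary arcs are handled by~\cite{FT}, triangles containing a pending arc (from an orbifold point of weight $1/2$) by Lemma~\ref{Ptolemy-prime}, triangles containing a double arc (from an orbifold point of weight $2$) by Lemma~\ref{Ptolemy-local}, and the new mixed triangles in which a pending arc meets a double arc by Lemma~\ref{Ptolemy-gen}. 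In each case the exponents in the Ptolemy monomials (pending arcs squared, double arcs carrying the higher powers seen in Lemma~\ref{Ptolemy-gen}) precisely match the positive and negative entries of the row of $B(\h T)$ corresponding to the flipped arc; this is where the choice of weights in Definition~\ref{signed_matr_assoc} is tested. Because boundary segments are never flipped, the boundary lambda lengths $\pp(\h T)$ remain fixed, so the coefficient mutation formula (\ref{eq:CoeffMutation}) reduces to a routine check inside the tropical semifield generated by them.

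Finally, Theorem~\ref{transitivity} together with Lemma~\ref{realization of diagram mut} and Lemma~\ref{graph on O graph o} guarantees that flips act transitively on tagged triangulations of $\hat\O$, so starting from $\h T$ one obtains every tagged triangulation of $\hat\O$ by a flip sequence; combined with the Ptolemy matching above, this identifies the cluster variables of $\A$ with the lambda lengths of tagged arcs of $\hat\O$, the clusters with the sets of lambda lengths of arcs of a common triangulation, and the exchange graph of $\A$ with the exchange graph of tagged triangulations of $\hat\O$. The hard part is the exhaustive case analysis for flips adjacent to mixed configurations of pending and double arcs: one must check that every associated pair remains associated after any flip (so that the ``double arc'' notion is preserved and the simultaneous flip is still one cluster mutation), and that in each local picture the joint behaviour of the pair produces a \emph{single} Ptolemy-type relation with exponents matching $B(\h T)$, rather than two uncoordinated ones. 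Once the mixed-triangle configurations of Fig.~\ref{ptolemy-gen} and their tagged analogues are exhausted, the theorem follows.
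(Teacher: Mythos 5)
Your proposal is correct and follows essentially the same route as the paper, which derives Theorem~\ref{thm_gen} by ``the same arguments as in the previous sections'': the combinatorial identification of seeds with tagged triangulations via Theorem~\ref{thm_weight}, Lemma~\ref{graph on O graph o} and Definition~\ref{signed_matr_assoc}, plus the matching of exchange relations with the Ptolemy relations of Lemmas~\ref{Ptolemy-prime}, \ref{Ptolemy-local} and~\ref{Ptolemy-gen}. Your explicit flagging of the mixed pending/double-arc configurations and the preservation of associated pairs under flips is exactly the content the paper delegates to Remark~\ref{ex gr ass surf} and the simultaneous-flip convention of Definition~\ref{def-ass-gen}.
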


\section{Laminations on the associated orbifold}
\label{sec-lam}

In this section, we define laminations on associated orbifold (cf.~\cite{FG}). Then we follow~\cite{FT} to construct geometric realization for cluster algebras with general coefficients.

\begin{definition}[{{\it Lamination}}]
\label{def-lam}
Let $\hat \O=\hat \O(M,N,Q)$ be an associated orbifold with the set of marked points $M$, the set of special marked points $N$ and the set
of orbifold points $Q$.
An {\it integral unbounded measured lamination} - in this paper just a {\it lamination} - on  an associated orbifold $\hat \O$
is a finite collection of non-self-intersecting and pairwise non-intersecting curves on $\hat \O$ modulo isotopy relative to the set
$M\cup N\cup Q$, subject to the restrictions below. Each curve must be one of the following:

\begin{itemize}
\item a closed curve;
\item a non-closed curve each of whose ends in one of the following:
\begin{itemize}
\item an unmarked point of the boundary of $\hat \O$;
\item a spiral around a puncture contained in $M$ (either clockwise or counterclockwise);
\item an orbifold point $q\in Q$;

\end{itemize}
\end{itemize}

Also, the following is not allowed:
\begin{itemize}
\item a curve that bounds an unpunctured disk or a disk containing a unique point of $M\cup N\cup Q$;
\item a curve with two endpoints on the boundary of $\hat \O$ isotopic to a piece of boundary containing no marked points
or a single marked point;
\item two curves starting at the same orbifold point (or two ends of the same curve starting at the same orbifold point);
\item curve spiraling in or starting at any special marked point $n\in N$.
\end{itemize}

\end{definition}

Our next aim is to introduce coordinates on laminations on associated orbifolds using W.~Thurstons's notion of shear coordinates extended
in~\cite{FT} to the case of tagged triangulations of surfaces. We refer to~\cite{FT} for all the details and present here only
the basic idea of shear coordinates on surfaces.

Let $S$ be a marked surface with a triangulation $T$ (containing no self-folded triangles),
let $L$ be a lamination on $S$. For each arc $\gamma$ of $T$ the corresponding
{\it shear coordinate} of $L$ with respect to the triangulation $T$, denoted by $b_\gamma (T,L)$, is defined as a sum of
contributions from all intersections of curves in $L$ with the arc $\gamma$. Such an intersection contributes $+1$ (resp, -1)
to  $b_\gamma (T,L)$ if the corresponding segment of the curve in $L$ cuts through the quadrilateral surrounding $\gamma$ as
shown in Fig.~\ref{lam} on the left (resp, on the right).

\begin{figure}[!h]
\begin{center}
\psfrag{g}{\scriptsize $\gamma$}
\psfrag{1}{\small $+1$}
\psfrag{-1}{\small $-1$}
\epsfig{file=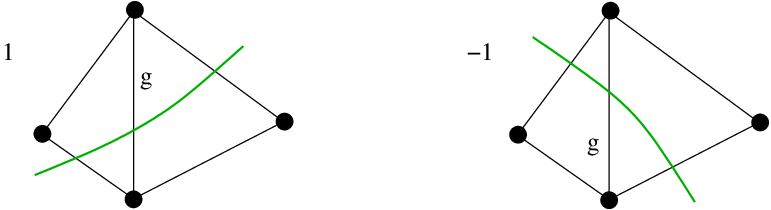,width=0.6\linewidth}
\caption{Defining the shear coordinate  $b_\gamma (T,L)$ on surfaces.}
\label{lam}
\end{center}
\end{figure}

To adjust this definition to the case of associated orbifolds, we will use the following construction.

\begin{definition}[{{\it Shear coordinates on associated orbifold}}]
\label{def-shear}
Let $\hat \O=\hat \O(M,N,Q)$ be an associated orbifold with the set of marked points $M$, the set of special marked points $N$, and the set
of orbifold points $Q$.
Let $\h T$ be a tagged triangulation of $\hat \O$.
Construct a surface $\t\O$ with a tagged triangulation $\t T$ in the following way: substitute each digon or monogon in $\h T$ containing an orbifold point or a special marked point
by a digon or monogon containing an ordinary marked point as in  Fig.~\ref{ne-unf}.
For each ordinary (i.e. non-pending and non-double) arc $\gamma\in \h T$ there exists a unique corresponding arc
$\gamma'\in \t T$, while each pending and each double arc $\gamma\in \h T$ corresponds to two conjugate arcs
 $\gamma'$ and  $\gamma''$ in  $\t T$.
Let $L$ be a lamination on $\hat \O$ and let $\t L$ be the image of this lamination on  $\t \O$ (for a curve ending at orbifold point we define its image as spiraling into the corresponding marked point counterclockwise).
Let $b_{\gamma'}(\t T,\t L)$ be shear coordinates of the lamination $\t L$ on the surface $\t \O$
with triangulation $\t T$.

Then the {\it shear coordinates} of the lamination $L$ on $\hat \O$ are defined as follows:
\begin{itemize}
\item for an ordinary arc $\gamma\in \h T$ define $b_\gamma(\h T,L)= b_{\gamma'}(\t T,\t L)$;
\item for a pending arc $\gamma\in \h T$ define
$b_\gamma(\h T,L)= b_{\gamma'}(\t T,\t L)+b_{\gamma''}(\t T,\t L)$;
\item for a double arc  $\gamma\in \h T$ define
$b_\gamma(\h T,L)=\frac{1}{2} [b_{\gamma'}(\t T,\t L)+b_{\gamma''}(\t T,\t L)]$.

\end{itemize}
\end{definition}

\begin{figure}[!h]
\begin{center}
\psfrag{g}{\scriptsize $$}
\psfrag{g'}{\tiny $$}
\psfrag{g''}{\tiny $$}
\psfrag{b}{\scriptsize $$}
\psfrag{b'}{\tiny $$}
\psfrag{b''}{\tiny $$}
\epsfig{file=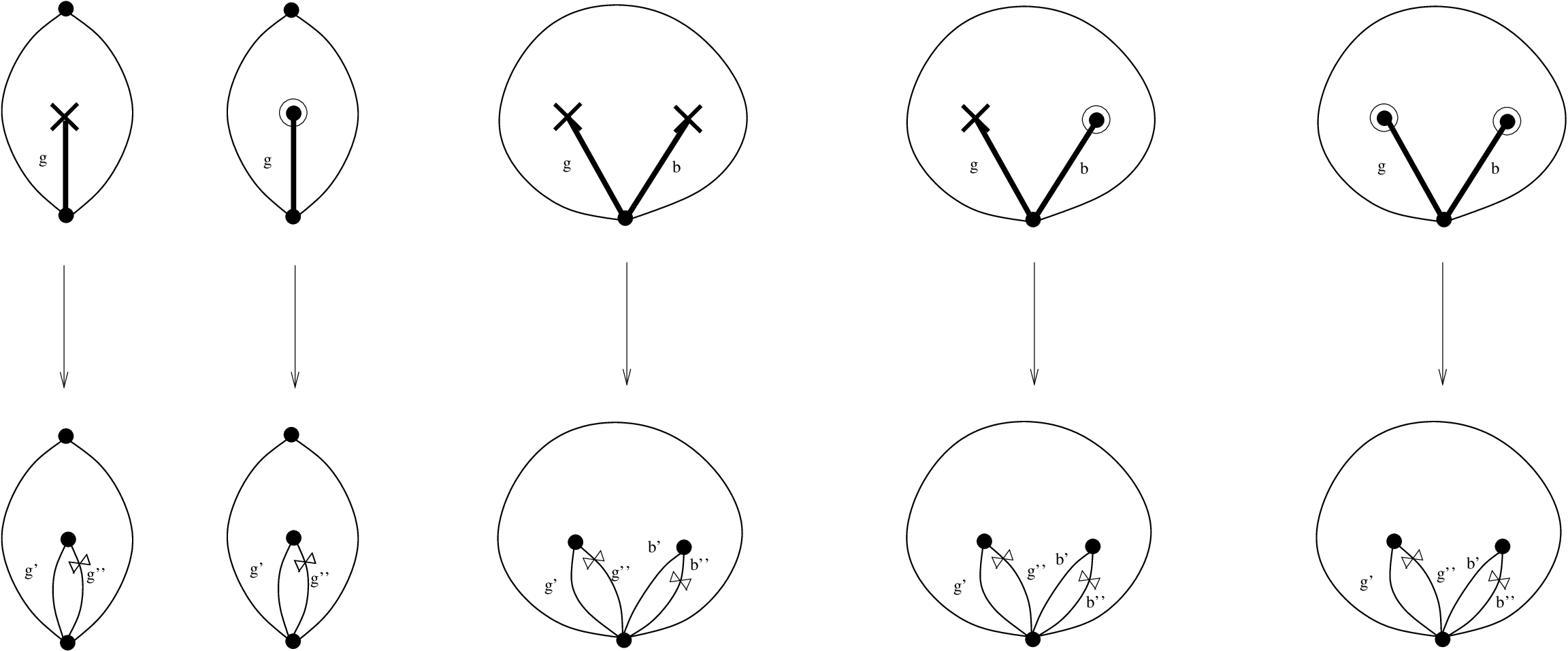,width=\linewidth}
\caption{Defining shear coordinate  $b_\gamma (\h T,L)$ for laminations on associated orbifolds: construction of the surface $\t \O$
from the orbifold $\hat \O$.}
\label{ne-unf}
\end{center}
\end{figure}

\begin{remark}
The definition of $b_\gamma(\h T,L)$ for double arcs contains a division by $2$, however,
it is easy to see that $b_\gamma(\h T,L)\in \Z$. Indeed, by Definition~\ref{def-lam} no curve of $L$ is spiraling into a special
marked point, which implies that $b_{\gamma'}(\t T,\t L)=b_{\gamma''}(\t T,\t L)$,
see Fig.~\ref{elem-lam} for the values of shear coordinates on the pairs of conjugate arcs.

\end{remark}

\begin{figure}[!b]
\begin{center}
\psfrag{gamma}{\scriptsize $\gamma$}
\psfrag{1}{\scriptsize $(-1,-1)$}
\psfrag{2}{\scriptsize $(1,1)$}
\psfrag{3}{\scriptsize $(0,-1)$}
\psfrag{4}{\scriptsize $(0,1)$}
\psfrag{5}{\scriptsize $(-1,0)$}
\psfrag{6}{\scriptsize $(1,0)$}
\epsfig{file=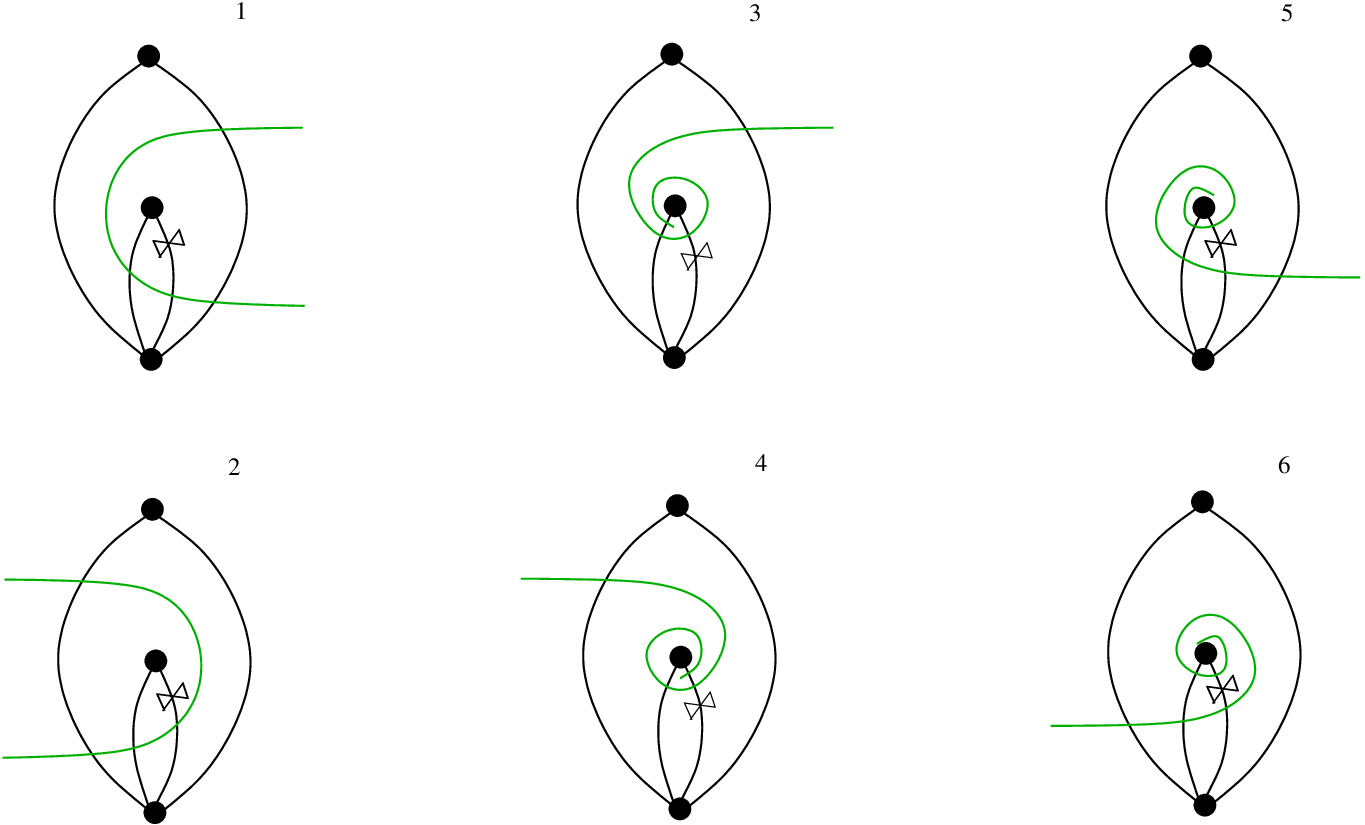,width=0.98\linewidth}
\caption{``Elementary'' laminations of a once-punctured digon: values of shear coordinates on conjugate arcs
(cf.~\cite[Fig.~36]{FT}; first we write the value for the arc tagged plain, then for the arc tagged notched).
The values coincide unless a curve of the lamination spirals into the puncture. Note that the sum of coordinates does not depend on the direction of spiraling.}
\label{elem-lam}
\end{center}
\end{figure}

In the same way as in~\cite{FT} we define {\it multi-laminations} and associated {\it extended signed adjacency matrix}.

\begin{definition}
\label{multi,extended}
A {\it multi-lamination}
is a finite set of laminations $\mathbf L=(L_{n+1},\dots,L_m)$. For a tagged triangulation $\h T$ of $\hat \O$ define an $m\times n$
{\it extended signed adjacency matrix}
$\tilde B=\tilde B(\h T,\mathbf L)= (b_{ij})$ as follows: the top $n\times n$ part of $\tilde B$ is a signed adjacency matrix
$B(\h T)=(b_{ij})_{1\le i,j\le n}$, the bottom $m-n$ rows are formed by shear coordinates of the laminations $L_i$ with respect to the triangulation $\h T$: $b_{ij}=b_j(\h T,L_i), n<i\le m$.

\end{definition}

A straightforward verification shows that, under the flips of $\h T$  the matrix $\tilde B(\h T,\mathbf L)$ transforms
according to the mutation rules:

\begin{lemma}
Let $\mathbf L$ be a multi-lamination on $\hat \O$. If tagged triangulations $\h T$ and $\h T_1$ of $\hat \O$ are related by
a flip in tagged arc $k$, then the corresponding matrices $\tilde B(\h T,\mathbf L)$ and   $\tilde B(\h T_1,\mathbf L)$
are related by a mutation in direction $k$.

\end{lemma}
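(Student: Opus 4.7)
The plan is to reduce the statement to the analogous result for tagged triangulations of marked bordered surfaces, proved in~\cite{FT}, by passing to the auxiliary surface $\t\O$ and the lifted multi-lamination $\t{\mathbf L}$ from Definition~\ref{def-shear}. The top $n\times n$ block of $\tilde B(\h T,\mathbf L)$ is the signed adjacency matrix $B(\h T)$, and for this block the required identity $B(\h T_1)=\mu_k(B(\h T))$ is already furnished by Theorem~\ref{thm_gen}, so only the bottom $m-n$ shear-coordinate rows actually need a proof.

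The first step I would carry out is to record the precise relation between flips downstairs in $\hat\O$ and flips upstairs in $\t\O$. A flip of an ordinary tagged arc $\gamma_k\in\h T$ lifts, via the substitution of Figure~\ref{ne-unf}, to a single flip of the ordinary arc $\gamma_k'\in\t T$, whereas a flip of a pending or double arc $\gamma_k\in\h T$ lifts to the simultaneous flip of the conjugate pair $\{\gamma_k',\gamma_k''\}\subset\t T$. Using the formulas of Definition~\ref{def-shear}, I would rewrite each shear coordinate $b_j(\h T,L)$, and similarly $b_j(\h T_1,L)$, as the appropriate value, sum, or half-sum of surface shear coordinates $b_{\gamma_j'}(\t T,\t L)$ and $b_{\gamma_j''}(\t T,\t L)$.

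Next, I would apply the surface version of the lemma, i.e.\ the known behaviour of shear coordinates under tagged flips from~\cite{FT}, to the lifted data on $\t\O$. This gives the transformation of each $b_{\gamma_j'}(\t T,\t L)$ under the lifted flip in terms of the top-block entries of $\tilde B(\t T,\t{\mathbf L})$. Pushing the resulting identity back down through Definition~\ref{def-shear} yields an expression for $b_j(\h T_1,L)$ in terms of $b_j(\h T,L)$ and entries of $B(\h T)$. Finally, I would compare this expression with the matrix-mutation rule~(\ref{eq:MatrixMutation}) applied to $\tilde B(\h T,\mathbf L)$ directly, running through the cases where the arcs $\gamma_k,\gamma_j$ are each ordinary, pending, or double.

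The main obstacle is precisely this final case-by-case matching: the entry $b_{kj}$ of $B(\h T)$ acquires a factor of $2$ (resp.\ $4$) whenever one (resp.\ both) of $\gamma_k,\gamma_j$ is pending, while $b_j(\h T,L)$ itself involves a sum over a conjugate pair for pending arcs and a half-sum for double arcs. I expect these factors to cancel exactly against the doubling produced by having to flip the conjugate pair $\{\gamma_k',\gamma_k''\}$ simultaneously upstairs, so that every mixed case collapses to the standard mutation identity; viewing $B(\t T)$ as a global unfolding of $B(\h T)$ in the sense of Sections~\ref{unfolding-s}--\ref{unfolding-fin} should make these cancellations systematic rather than ad hoc and complete the verification.
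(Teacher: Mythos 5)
The paper offers no actual proof of this lemma: it is introduced only with the sentence ``A straightforward verification shows that\dots'', so there is no argument of the authors' to match yours against step by step. Your strategy --- passing to the auxiliary surface $\t\O$ of Definition~\ref{def-shear}, applying the surface result of~\cite{FT} to the lifted triangulation and lamination, and then folding the identity back down through the sum/half-sum formulas for shear coordinates --- is a legitimate, and arguably the intended, way to organize that verification; the top $n\times n$ block is indeed already handled by the flip/mutation compatibility of signed adjacency matrices. One point in your final paragraph deserves to be made explicit rather than delegated to the word ``systematic'': when the flipped arc $\gamma_k$ is pending or double, the downstairs correction to a lamination row is built from $|s'+s''|$, where $s'=b_{\gamma_k'}(\t T,\t L)$ and $s''=b_{\gamma_k''}(\t T,\t L)$, whereas the composite flip of the conjugate pair upstairs produces the sum of two separate corrections built from $|s'|$ and $|s''|$; these agree precisely because $\max(s'+s'',0)=\max(s',0)+\max(s'',0)$, i.e.\ because $s'$ and $s''$ never have strictly opposite signs. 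That sign condition is what actually makes all your anticipated cancellations happen, and it holds because by Definition~\ref{def-lam} no curve of a lamination spirals into a special marked point and at most one curve ends at any given orbifold point, so the conjugate shear coordinates differ by at most one (cf.\ Fig.~\ref{elem-lam} and the remark following Definition~\ref{def-shear}); it is exactly condition (2) in the definition of unfolding, extended to the lamination rows. With that observation stated, your case-by-case matching does close up, and the proposal is correct.
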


Now, we will show that, given a tagged triangulation $\h T$ on $\hat \O$,
for each (ordered) $n$-tuple of numbers $(k_1,\dots,k_n)\in\Z^n$
there exists a unique lamination $L$ on $\hat \O$ such that $b_{\gamma_i}(\h T,L)=k_i$, $i=1,\dots, n$.
In Lemma~\ref{ex-uniq-prime} we show this property for the case of associated orbifolds containing no special marked points
(so that $\hat \O=\O$ is a usual orbifold). The general case will be derived from this one in Theorem~\ref{ex-uniq}.

\begin{lemma}
\label{ex-uniq-prime}
Let $\hat \O$ be an associated orbifold containing no special marked points.
For a fixed tagged triangulation $\h T$ of $\hat \O$, the map
$$ L \to (b_\gamma(\h T,L))_{\gamma \in \h T}$$
is a bijection between laminations on $\hat \O$ and $\Z^n$.

\end{lemma}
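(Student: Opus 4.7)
The strategy is to reduce to the surface analogue of this statement proved in~\cite{FT} by transferring each lamination on $\hat\O$ to the auxiliary surface $\t\O$ with tagged triangulation $\t T$ constructed in Definition~\ref{def-shear}. Since $\hat\O$ has no special marked points, $\t\O$ is obtained by replacing the digon or monogon around each orbifold point $q$ with one whose distinguished interior point is an ordinary marked point $m_q$, and each pending arc $\gamma\in\h T$ becomes a pair $(\gamma',\gamma'')$ of conjugate arcs in $\t T$. The assignment $L\mapsto\t L$ of Definition~\ref{def-shear} is injective with explicit image $\Lambda$: a lamination $\t L$ on $\t\O$ lies in $\Lambda$ iff at every new marked point $m_q$ any lamination curve-end is a counterclockwise spiral and at most one such end occurs at each $m_q$. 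The inverse un-spirals each such spiral into a curve terminating at $q$; the two constraints reflect, respectively, the convention chosen in Definition~\ref{def-shear} and the rule in Definition~\ref{def-lam} forbidding two lamination curves to share an orbifold endpoint.

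By the surface bijection of~\cite{FT}, shear coordinates on $(\t\O,\t T)$ identify laminations on $\t\O$ with $\Z^{|\t T|}$. For an ordinary arc $\gamma\in\h T$, the formula $b_\gamma(\h T,L)=b_{\gamma'}(\t T,\t L)$ makes this coordinate free over $\Z$. For a pending arc $\gamma\in\h T$ with conjugate pair $(\gamma',\gamma'')$, the local picture near $m_q$ is a once-punctured digon whose admissible elementary laminations are displayed in Fig.~\ref{elem-lam}: curves passing through contribute $(\pm1,\pm1)$ equally to both arcs, while a counterclockwise spiral contributes either $(0,-1)$ or $(-1,0)$. Reading these off under the ``at most one counterclockwise spiral'' constraint, one verifies that the admissible pairs $(b_{\gamma'},b_{\gamma''})$ are parametrized bijectively by their sum $b_{\gamma'}+b_{\gamma''}\in\Z$: the parity of the sum forces the presence or absence of a spiral, and when a spiral is present its side is recorded by the sign of $b_{\gamma'}-b_{\gamma''}$. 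Combining these local bijections at each pending arc with the free ordinary-arc coordinates yields a bijection between $\Lambda$ and $\Z^n$, hence between laminations on $\hat\O$ and $\Z^n$.

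The main technical step is the pending-arc case analysis -- verifying from Fig.~\ref{elem-lam} that, subject to the counterclockwise-only restriction, each integer value of $b_{\gamma'}+b_{\gamma''}$ is realized by a unique admissible local configuration of passing curves plus at most one spiral. A secondary point is the global consistency of the local choices at distinct orbifold points; this follows from the surface bijection, which permits shear coordinates on arcs of $\t T$ to be prescribed independently, so any admissible combination of local pieces at distinct $m_q$ lifts to a unique global lamination on $\t\O$ and hence on $\hat\O$.
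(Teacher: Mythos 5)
Your argument is correct in substance but follows a genuinely different route from the paper. The paper proves the lemma by induction on the number of orbifold points: at each step one pending arc's digon is replaced by a once-punctured digon, producing an orbifold $\hat\O'$ with one fewer orbifold point, and the prescribed coordinate $k_n$ is split as $(\lfloor k_n/2\rfloor,\lceil k_n/2\rceil)$ over the resulting conjugate pair; the surface case of~\cite{FT} enters only as the base of the induction, and existence and uniqueness are each transported across a single puncture at a time. You instead pass to the surface $\t\O$ of Definition~\ref{def-shear} in one step, replacing all orbifold points simultaneously, and then characterize the image $\Lambda$ of the transfer map $L\mapsto\t L$ inside the set of all laminations on $\t\O$. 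What your version buys is directness and an explicit description of $\Lambda$ (which the paper never isolates as an object); what the paper's induction buys is that the entire local analysis is confined to a single once-punctured digon per step, so the bookkeeping of ``which punctures are new'' never arises. Both proofs ultimately rest on the same facts read off from Fig.~\ref{elem-lam}: the sum $b_{\gamma'}+b_{\gamma''}$ is insensitive to the direction of spiraling, $b_{\gamma'}=b_{\gamma''}$ exactly when no curve spirals into the puncture, and $|b_{\gamma'}-b_{\gamma''}|\le 1$ forces at most one spiraling curve.

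One imprecision in your local analysis should be corrected, although it does not break the argument. You write that when a spiral is present ``its side is recorded by the sign of $b_{\gamma'}-b_{\gamma''}$.'' The sign of that difference records the \emph{direction} of spiraling, not the side of the digon through which the curve enters; under the counterclockwise-only convention of Definition~\ref{def-shear} this sign is therefore constant, and that is precisely why the pair $(b_{\gamma'},b_{\gamma''})$ is determined by its sum rather than the map to $\Z$ being two-to-one on odd values. The side of entry is not visible in $(b_{\gamma'},b_{\gamma''})$ at all --- it is encoded in the shear coordinates of the remaining arcs, and it is the global injectivity of the surface bijection of~\cite{FT} that rules out two distinct laminations in $\Lambda$ sharing all coordinates. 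With that sentence repaired, your reduction is sound.
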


\begin{proof}
The proof is by induction on the number $q$ of orbifold points on $\hat \O$.
If $q=0$, then the statement coincides with~\cite[Theorem~13.6]{FT}.

Suppose that $q>0$.
First, assume that $\h T$ contains no monogons with orbifold points.
Let $\gamma$ be a pending arc in $\h T$, let $t\in T$ be a digon containing $\gamma$.
Denote by $\hat \O'$ an orbifold obtained from $\hat \O$ by removing $t$ and attaching a one-punctured digon $t'$ instead (as
shown in Fig.~\ref{ne-unf}).
Denote by $\h T'$ the obtained tagged triangulation of $\hat \O'$.
Let $\gamma'$ and $\gamma''$ be the conjugated tagged arcs in $t'$. For each arc $\beta\ne \gamma$ in $\h T$
we denote by $\beta'$ the corresponding arc in $\h T'$. Let $x\in t$ be the orbifold point and $x'\in t'$ be
the corresponding puncture in $\h T'$.

Note that there is a natural correspondence between laminations on $\hat\O$ not ending in $x$ and laminations on $\hat\O'$ not spiraling into $x'$. In the sequel, we will identify these laminations.

Choose an indexing $\gamma_1,\dots,\gamma_n$ of arcs of $\h T$ so that $\gamma=\gamma_n$,
and choose an $n$-tuple  $(k_1,\dots,k_n)\in \Z^n$.
Our aim is to find a lamination $L$ on $\hat \O$  such that  $b_{\gamma_i}(\hat T,L)=k_i$, $i=1,\dots,n$.
Suppose that $k_n=2\bar k_n$ (resp, $k_n=2\bar k_n+1$).
By inductive assumption, there exists a unique lamination $L'$ on $\hat O'$ such that
$$
\begin{array}{ccl}
b_{\gamma_i'}(\hat T',L') &=& k_i, \ \ \  \text{for \ $i=1,\dots,n-1$};\\
b_{\gamma_n'}(\hat T',L')& = &\bar k_n \\
b_{\gamma_n''}(\hat T',L')& =& k_n-\bar k_n.\\
\end{array}
$$
Consider the elementary laminations on a once punctured digon $t'$ (see Fig.~\ref{elem-lam}): their shear coordinates on the conjugate arcs
$\gamma$ and $\gamma'$
are distinct if and only if the curve is spiraling into a puncture. Moreover, if there exists a curve spiraling into a puncture in
a clockwise direction, then the same lamination can not contain a curve spiraling counter-clockwise into the same puncture.
Since $|b_{\gamma_n''}(\h T',L')-b_{\gamma_n'}(\h T',L')|\le 1$, the lamination $L'$ either contains a unique curve spiraling into
$x'$ or contains no such a curve. In the latter  case define $L=L'$. In the former case we do the same, substituting in addition
the curve spiraling into $x'$ by a curve ending in $x$. It is easy to see that the obtained set of curves $L$ is a lamination on $\hat \O$
and that  $b_{\gamma_i}(\h T,L)=k_i$, $i=1,\dots,n$.

Therefore, we have shown the existence of a lamination on $\hat \O$ with given shear coordinates. It is left to prove the uniqueness.
Suppose that $L_1$ and $L_2$ are two laminations with the same shear coordinates with respect to $\h T$.
For each lamination $L_i$, $i=1,2$ on $\hat \O$ we build (a unique) lamination $L'_i$, $i=1,2$ on $\hat \O'$ in the following way:
if $L$ contains no curve ending at $x$, then we take $L'_i=L_i$; otherwise, we substitute the curve of $L_i$ ending in $x$ by a curve spiraling in $x'$ in a clockwise direction.
Clearly, if $L_1\ne L_2$, then $L_1'\ne L_2'$. On the other hand,
it follows immediately from Definition~\ref{def-shear} that the shear coordinates of $L'_1$ and $L_2'$ coincide.
This contradicts to the inductive assumption (namely, its  ``uniqueness'' part).

So, the lemma is proved for the case of triangulations without monogons with orbifold points.
The case of triangulation $\h T$ containing monogons is treated in the same way.

\end{proof}

Now we obtain the following theorem.

\begin{theorem}
\label{ex-uniq}
Let $\hat \O$ be an associated orbifold.
For a fixed tagged triangulation $\h T$ of $\hat \O$, the map
$$ L \to (b_\gamma(\h T,L))_{\gamma \in \h T}$$
is a bijection between laminations on $\hat \O$ and $\Z^n$.

\end{theorem}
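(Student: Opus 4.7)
My plan is to reduce Theorem~\ref{ex-uniq} to Lemma~\ref{ex-uniq-prime} by eliminating the special marked points. Given the associated orbifold $\hat\O$ with tagged triangulation $\h T$, I construct an orbifold $\hat\O'$ by replacing each special marked point $\nu\in N$ by an ordinary puncture $x_\nu$; each double arc at $\nu$ is reinterpreted as a pair of conjugate tagged arcs $(\gamma',\gamma'')$ at $x_\nu$. The triangulation $\h T'$ of $\hat\O'$ thus obtained has $n+s$ arcs, where $s=|N|$. Since $\hat\O'$ contains no special marked points, Lemma~\ref{ex-uniq-prime} produces a bijection between laminations on $\hat\O'$ and $\Z^{n+s}$.

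Next, I would establish a bijection between laminations on $\hat\O$ and those laminations on $\hat\O'$ whose curves do not spiral into any puncture $x_\nu$. One direction is immediate from Definition~\ref{def-lam}; the converse amounts to forgetting the extra punctures. The key combinatorial observation, read off from Fig.~\ref{elem-lam}, is that the non-spiraling condition at $x_\nu$ is equivalent to the equality $b_{\gamma'}(\h T',L')=b_{\gamma''}(\h T',L')$ on each conjugate pair surrounding $x_\nu$, since spiraling curves are precisely those contributing asymmetrically to the two conjugate arcs while non-spiraling curves contribute symmetrically.

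Under this correspondence, the formulas of Definition~\ref{def-shear} specialize as follows: for ordinary or pending arcs $\gamma\in\h T$, one has $b_\gamma(\h T,L)=b_{\gamma'}(\h T',L')$; for a double arc $\gamma$ with conjugate pair $(\gamma',\gamma'')\in\h T'$, one has $b_\gamma(\h T,L)=b_{\gamma'}(\h T',L')=b_{\gamma''}(\h T',L')$. Given a target $(k_1,\dots,k_n)\in\Z^n$, I define target shear coordinates on $\hat\O'$ by copying $k_\gamma$ onto $b_{\gamma'}$ (and also onto $b_{\gamma''}$ for double arcs). Lemma~\ref{ex-uniq-prime} produces a unique lamination $L'$ on $\hat\O'$ with these coordinates; by construction $b_{\gamma'}=b_{\gamma''}$ at every extra puncture, so by the characterization above $L'$ has no spirals at any $x_\nu$, yielding the desired lamination $L$ on $\hat\O$. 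Injectivity follows symmetrically: two laminations on $\hat\O$ with identical shear coordinates induce laminations on $\hat\O'$ with identical shear coordinates, forcing equality by Lemma~\ref{ex-uniq-prime}.

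The main obstacle is the ``only if'' direction of the characterization, namely ruling out cancellation phenomena in which several spiraling curves produce $b_{\gamma'}=b_{\gamma''}$ through opposite contributions. I would handle this using the non-intersection requirement in Definition~\ref{def-lam}: two curves spiraling into the same puncture in opposite directions must cross, so all curves of a lamination spiraling into a fixed puncture spiral in the same direction. Their contributions to $b_{\gamma'}-b_{\gamma''}$ therefore add with a common sign and cannot cancel, so the total difference vanishes only when no spiraling curve is present.
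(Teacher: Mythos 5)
Your proposal is correct and follows essentially the same route as the paper's proof: both reduce to Lemma~\ref{ex-uniq-prime} by replacing the special marked points with ordinary punctures and the double arcs with conjugate pairs, and both rest on the observation from Fig.~\ref{elem-lam} that equality of the shear coordinates on a conjugate pair characterizes the absence of curves spiraling into the new puncture. Your explicit argument ruling out cancellation among several spiraling curves (all spirals at a fixed puncture must run in the same direction, so their contributions to the difference of the two coordinates share a sign) fills in a point the paper leaves implicit, but it is a detail rather than a different approach.
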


\begin{proof}
If $\hat \O$ contains no special marked points, then the theorem follows from Lemma~\ref{ex-uniq-prime}.
To prove the theorem in the general case, substitute all the special marked points of $\hat\O$ by ordinary punctures, and all double arcs by pairs of conjugate arcs (as in Definition~\ref{def-shear}). Denote the new triangulation by $\tilde T$ and new orbifold by $\tilde \O$.

For every lamination $L$ on $\hat \O$ we construct in a natural way a unique lamination $\tilde L$ on $\tilde \O$.
Since no lamination is spiraling into a special marked point, one can note that if $\gamma\in\h T$ is a double arc and $(\tilde\gamma',\tilde\gamma'')$ is a pair of corresponding conjugate arcs in $\tilde T$, then $b_{\gamma}(\h T,L)=b_{\tilde\gamma'}(\tilde T,\tilde L)=b_{\tilde\gamma''}(\tilde T,\tilde L)$ (see Fig.~\ref{elem-lam}). On the other hand,
if $\tilde L_0$ is a lamination on $\tilde \O$ having the same shear coordinates for pair of conjugate arcs $(\tilde\gamma',\tilde\gamma'')$ then $\tilde L_0$ contains no curves spiraling into the endpoint of conjugate arcs with different tags, which implies that $\tilde L_0$ can be obtained from some lamination $L_0$ on $\hat \O$ by the procedure above. 
\end{proof}

\section{Opened associated orbifolds}
\label{sec-opened}
Following~\cite{FT}, our next step to the realization of cluster algebras with general coefficients is
``opening'' of interior marked points of the associated orbifold (resp. of the surface in case of skew-symmetric case).
{ 
The aim is to prepare the ground for introducing new geometric quantities which will serve as cluster variables satisfying new Ptolemy relations taking into account general coefficients.
}
At this step the construction for the orbifold case coincides with the one for the surfaces.
We briefly reproduce all necessary definitions and refer to~\cite{FT} for the details and examples.

\begin{definition}[{{\it Opening of an associated orbifold}}]
\label{opened}
Let $\hat \O=\hat\O(M,N,Q)$ be an associated orbifold, where $M$ is the set of marked points, $N$ is the set of special marked
points and $Q$ is the set of orbifold points. Let $\overline M=M\setminus \partial \O$ be the set of punctures , i.e. marked points in the interior of $\hat \O$. For a subset $P\subset \overline M$, the corresponding {\it opened associated orbifold} $\hat \O_P$
is obtained from $\hat \O$ by removing a small open disk around each point in $P$. For $p\in P$ let $C_p$ be the boundary
component of $\hat \O_P$ created in this way. For each $C_p$ one introduces a new marked point $z_p\in C_p$ and set
$M_P=(M\setminus P)\cup \{z_p\}_{p\in P}$ creating a new associated orbifold $\hat \O_P=\hat \O_P(M_P,N,Q)$ with
$|\overline M|-|P|$ punctures (here $|X|$ is the cardinality of the set $X$).

\end{definition}

It is easy to see that $\hat \O_P$ can be constructed as an orbifold associated to weighted orbifold $\O_P^w$ obtained from $\O^w$ by opening of the set $P$ of punctures.

We denote by $\hat \O_{\overline M}$ the opened orbifold $\hat \O_P$ in case $P=\overline M$.

\begin{definition}[{{\it Lifts of tagged arcs}}]
\label{lifts}
One can define a natural map $\hat \O_P\to\hat\O$ by collapsing every $C_p$ to a point. Following~\cite{FT}, we call by a {\it lift}
of a curve $\alpha$ on $\hat \O$ any curve $\bar\alpha$ on $\hat \O_P$ that projects to $\alpha$ under the map above.

A lift may not be unique: if we denote by $\psi_p$ a twist around $C_p$ on $\hat \O_P$, and $\alpha$ ends in $p\in M$, then for given lift $\bar\alpha$ every curve $\psi_p^n\bar\alpha$ will also be a lift. Moreover, if the second end of $\alpha$ does not belong to $P$, then the set
$\{\psi_p^n\bar\alpha\}_{n\in\Z}$ will contain all the lifts of $\alpha$. Similarly, if $\alpha$ ends in two points of $P$, say $p$ and $q$, then all the lifts of $\alpha$ can be written as $\{\psi_p^n\psi_q^m\bar\alpha\}_{n,m\in\Z}$ for some particular lift $\bar\alpha$ of $\alpha$.

The {\it lifts of tagged arcs} have the same tags as their preimages.
\end{definition}

\begin{definition}
\label{partial teichm}
For a subset $P\subset \overline M$, a {\it partial Teichm\"uller space} $\T(\hat \O_P)$,
where $\hat \O_P=\hat \O_P(M_P,N,Q)$,
is the space of all finite-volume complete hyperbolic metrics on $\hat \O_P\setminus (Q\cup N\cup M\setminus P)$
with geodesic boundary and with cone points of angle $\pi$ in $Q$, modulo isotopy. A point of {\it decorated partial Teichm\"uller space}
 $\t \T(\hat \O_P)$ is the space of metrics from $\T(\hat \O_P)$ modulo isotopy relative to ${C_p},\,p\in P$,
with a choice of horocycle around each point in $M\setminus P$ and a self-conjugated horocycle around each point of $N$.

\end{definition}

The hyperbolic structure in  $\t\T(\hat \O_P)$ has a cusp at every point of $N\cup(M\setminus P)$, and an orbifold point with angle $\pi$
at each point of $Q$. The boundary components coming from $\hat \O$ are of infinite length, while the new components $C_p$
($p\in P$) are of finite length: there are no cusps in the points $z_p$ introduced for the new boundary components.

Given $P\subset \overline M$, an orientation on $C_p$ for each $p\in P$ and a decorated hyperbolic structure $\sigma\in \t\T(\hat \O_P)$,
one can build for each arc $\gamma$ on $\hat \O$ a unique non-self-intersecting geodesic $\gamma_\sigma$ on $\hat \O_P$:
if an endpoint of $\gamma$ belongs to $P$, then the corresponding end of $\gamma_\sigma$ spirals around $C_p$ in the direction of orientation of $C_p$, otherwise the end of $\gamma_\sigma$ does the same  as the corresponding end of $\gamma$ (i.e. runs into a cusp or ends in an
orbifold point).

The {\it tagged } arcs are represented in the following way: the ends tagged notched are represented by geodesics spiraling against the
chosen direction, while the ends tagged plain are spiraling in the chosen direction.

For each $p\in P$, there is a {\it perpendicular horocyclic segment} $h_p$ near $C_p$: this is a (short) segment of the horocycle from
$z_p\in C_p$ perpendicular to $C_p$ and to all geodesics $\gamma_\sigma$ spiraling into $C_p$.
For the tagged arcs, one also introduces the {\it conjugate} perpendicular horocyclic segment $\bar h_p$ which satisfies the same
requirements as $h_p$ with respect to the geodesics spiraling in the opposite direction.

\begin{definition}[{{\it Lambda length on an opened associated orbifold}}]
\label{lambda opened}
For a tagged arc $\gamma_\sigma$ on $\hat \O_P$ the lambda length is $\lambda(\gamma_\sigma)=e^{l(\gamma_\sigma/2)}$,
where $l(\gamma_\sigma)$ is a distance between appropriate intersections of the geodesic $\gamma_\sigma$ with the horocycles at its two
ends (or, in case of a geodesic ending in an orbifold point, a doubled distance between the orbifold point and the appropriate
intersection with the horocycle at another end). In case of an end spiraling around one of the openings $C_p$ there are
infinitely many intersections of $\gamma_\sigma$ with the perpendicular horocyclic segment $h_p$ (or a conjugate perpendicular horocyclic
segment $\bar h_p$ in case of the notched tagging), so one needs to choose the intersection.

To choose the correct intersection (in the case when $\gamma_\sigma$ twists
sufficiently far around the openings),
consider an auxiliary curve $\hat \gamma$ obtained from $\gamma_\sigma$ by deleting the spiraling ends (from a given
 point of the intersection with the horosphere) and attaching the segments of the horosphere instead of it.
The points of intersections then should be chosen in a way that $\hat \gamma$ is homotopic to the given lift  $\bar \gamma\subset \hat \O_P$.

To extend the definition to all tagged arcs, not only ones that twist sufficiently many times, one uses formula
$$ l(\psi_p\bar \gamma_\sigma)=l(p)+l(\bar \gamma_\sigma),$$
where $\gamma_\sigma$ is an arc spiraling around $C_p$, $p\in P$,
$\psi_p$ is a clockwise twist around the component $C_p$, and
$$l(p)=
\left\{
\begin{array}{ll}
-\text{length of $C_p$} & p\in P, \text{ if $C_p$ is oriented counterclockwise};\\
0 & \text{if $p\notin P$};\\
\text{length of $C_p$} & p\in P, \text{ if $C_p$  is oriented clockwise}.   \\

\end{array}
\right.
$$

\end{definition}

The correctness of this definition can be checked directly.

\begin{definition}
\label{complete teichm}
The {\it complete decorated Teichm\"uller space}  $\overline \T(\hat \O)=\overline \T(\hat \O(M,N,Q))$ is
a disjoint union over all subsets $P\subset \overline M$ of $2^{|P|}$ copies of  $\t \T(\hat \O_P)$, one for each choice of
orientation on each boundary circle $C_p$ (so that $\overline \T(\hat \O)$ consists of $3^{|\overline M|}$ strata of type
$\t \T(\hat \O_P)$).
The topology on  $\overline \T(\hat \O)$ is the weakest in which all the lambda lengths $\lambda(\gamma)$
are continuous. Here $\lambda(\gamma)$ is defined for all $\gamma\in \hat \O_{\overline M}$ at each point of every stratum $\t \T(\hat \O_P)$ in the following way: we project $\gamma$ to $\hat \O_P$ by contracting remaining circular components $C_q$, $q\notin P$, and compute the lambda length of the obtained arc by Definition~\ref{lambda opened}.

\end{definition}

\begin{prop}
\label{prop 9.5}
Let $\h T$ be a tagged triangulation of $\hat \O$. For each $\gamma\in \h T$, fix an arc $\bar \gamma\in \hat \O_{\overline M}$
that projects to $\gamma$. Then the map
$$\Phi=\left(\prod_{p\in \overline M}\lambda(p) \right)\times
\left(\prod_{\beta\subset \partial \hat \O}\lambda(\beta) \right) \times
\left(\prod_{\gamma\in T}\lambda(\bar \gamma) \right):
\overline \T(\hat \O)\to \R^{n+c+|M|}_{>0}
$$
is a homeomorphism.

\end{prop}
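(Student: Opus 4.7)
The plan is to follow the strategy of the analogous statement for bordered surfaces in~\cite{FT}, establishing the homeomorphism one stratum at a time and then verifying that the strata fit together. Fix a subset $P\subset\overline M$ and an orientation of each $C_p$, $p\in P$; this determines one of the $3^{|\overline M|}$ strata of $\overline\T(\hat\O)$, namely the corresponding copy of $\t\T(\hat\O_P)$. I expect the image of this stratum in $\R^{n+c+|M|}_{>0}$ to be the locally closed subset cut out by the sign of each $\log\lambda(p)$: positive for a clockwise opening of $p$, negative for a counterclockwise opening, and zero ($\lambda(p)=1$) for an unopened point. These subsets partition the target, so once each stratum is a homeomorphism onto its subset the global statement will follow.

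For injectivity on a single stratum, I would use the Ptolemy relations of Lemmas~\ref{Ptolemy-prime},~\ref{Ptolemy-local}, and~\ref{Ptolemy-gen} (together with the formula of Definition~\ref{lambda opened} relating $\lambda(\psi_p\bar\gamma)$ to $\lambda(\bar\gamma)$ and $l(p)$ to handle the different lifts) combined with transitivity of flips on tagged triangulations (Theorem~\ref{transitivity}). Starting from the lambda lengths of the chosen lifts of the arcs of $\h T$, plus the boundary lambda lengths and the $\lambda(p)$, these relations compute $\lambda(\bar\delta)$ for every lift $\bar\delta$ of every arc $\delta$ of $\hat\O_{\overline M}$; this pins down every ideal triangle and every horocycle of the decorated structure by the standard Penner argument, giving injectivity. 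Continuity in each direction is immediate from the definition of the lambda lengths, while surjectivity onto the correct subset is established by the explicit inverse: glue each triangle of $\h T$ as a hyperbolic triangle with prescribed edge lambda lengths (an ordinary ideal triangle; a triangle with an angle-$\pi$ cone point for one containing a pending arc; a triangle with a self-conjugate horocycle at a special marked point for one containing a double arc), attach the pieces along shared edges, and prescribe the length and orientation of each $C_p$ by $|\log\lambda(p)|$ and the sign of $\log\lambda(p)$.

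The main obstacle is handling the two features absent in the pure surface case: orbifold points of angle $\pi$ (giving pending arcs) and special marked points with self-conjugate horocycles (giving double arcs). The cleanest way to dispose of them is to reduce to the surface case by an unfolding mirroring the one of Definition~\ref{def-shear}: replace each orbifold monogon or digon of $\h T$ by the corresponding puncture monogon or digon, and split every double arc into a pair of conjugate arcs. This yields an auxiliary opened surface $\t\O_P$ with triangulation $\t T$, and decorated hyperbolic structures on $\hat\O_P$ correspond bijectively to decorated structures on $\t\O_P$ satisfying the symmetry conditions that associated arcs carry equal lambda lengths and that the horocycles at the punctures arising from special marked points are self-conjugate. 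The number of such constraints matches exactly the drop in the number of free lambda-length coordinates from $\t T$ to $\h T$, so the corresponding surface result from~\cite{FT} restricts to the desired homeomorphism on $\hat\O_P$. Finally, the $3^{|\overline M|}$ strata glue together on all of $\overline\T(\hat\O)$ exactly as in~\cite{FT}: moving $\lambda(p)$ through $1$ collapses the boundary component $C_p$ and carries one from a stratum with $p\in P$ (either orientation) to the stratum with $p\notin P$.
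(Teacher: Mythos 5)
Your proposal is correct and follows the same route as the paper, whose entire proof of this proposition consists of deferring to~\cite[Proposition~9.5]{FT} and asserting that the argument extends to associated orbifolds without change; your write-up simply supplies the details of that extension (stratum-by-stratum analysis, Ptolemy relations and the twist formula of Definition~\ref{lambda opened} for injectivity, explicit gluing for surjectivity, and the gluing of strata as $\lambda(p)$ crosses $1$). One caveat about the step you call the ``cleanest way'': replacing orbifold features by punctured ones realizes $\widetilde \T$ of the associated object as a genuine subspace of the surface's decorated Teichm\"uller space only for the special marked points (this is exactly how the paper defines $\widetilde \T(\S)$ via the codimension-$m$ subspace $\Pi$ in Section~\ref{sec lambda 2}); for true orbifold points an angle-$\pi$ cone point is not a cusp, so $\widetilde \T(\hat \O_P)$ is not literally a subspace of $\widetilde \T(\t \O_P)$, the correspondence holds only at the level of lambda-length coordinates, and invoking it to prove that those coordinates parametrize the space would be circular (note also that the dimension count then requires forbidding the opening of the new punctures, which you do not state). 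Your direct argument --- gluing decorated triangles with prescribed edge lambda lengths, including the triangles with an angle-$\pi$ cone point as in~\cite{P} and~\cite{ChM} --- is the one that actually carries the orbifold-point case, and it suffices on its own.
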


For surfaces the proposition is proved in~\cite[Proposition~10.10]{FT} (more precisely, it is proved for ideal
triangulations and then extended to the case of tagged triangulations). The proposition extends to the case of associated orbifolds
without any changes.

Denote by $\P=\P(\hat \O)$ the free abelian multiplicative group generated by the set
\begin{equation}
\label{coefficients}
\{ \lambda(p): p\in \o M\}\cup \{\lambda(\beta): \beta\subset \partial \hat \O  \}
\end{equation}
of lambda lengths of boundary components $\beta$ and circular components $C_p$.
In view of Proposition~\ref{prop 9.5}, these lambda lengths can be viewed either as functions on $\overline \T(\hat \O)$ or as formal
variables.

\begin{definition}
The {\it rescaling factors} are defined for the marked points $a\in M_{\overline M}\cup N\cup Q$ on the opened associated orbifold
$\hat \O_{\overline M}$ by
$$ \nu(a)=\left\{
\begin{array}{ll}
\sqrt{1-\lambda_+(p)^{-2}} & \text{if $a=z_p$, with $p\in\overline M$;}\\
1& \text{otherwise,}

\end{array}
\right.
$$
where $\lambda_+(p)=\exp(\text{length of $C_p$})/2$.

\end{definition}

For a tagged arc $\bar \gamma\in \hat \O_{\overline M}$ with endpoints $a,b\in  M_{\overline M}\cup N\cup Q$
define the {\it rescaled lambda lengths} as
\begin{equation}
\label{rescaled lambda length}
x(\bar \gamma)=
\left\{
\begin{array}{ll}
\lambda(\bar \gamma)\nu(a)^2\nu(b)^2& \text{if either $a\in Q$ or $b\in Q$;}\\
\lambda(\bar \gamma)\nu(a)\nu(b)& \text{otherwise.}\\

\end{array}
\right.
\end{equation}

\begin{remark}
The definition of $x(\bar \gamma)$ is the only place in the current section where we need to introduce some changes for the orbifold
settings. On the other hand, one may interpret pending arcs as ones ``coming to the orbifold point and then going back''.
Then none of the endpoints of the pending arc is an orbifold point, and both ends $a$ and $b$ coincide,
so that the the formula $x(\bar \gamma)=\lambda(\bar \gamma)\nu(a)\nu(b)$ holds for this case as well.

\end{remark}

It is shown in~\cite{FT} that in the surface settings
the rescaled functions $x(\bar \gamma)$ satisfy the same Ptolemy relations as lambda lengths do.
It is a straightforward computation that the same also holds for rescaled functions in the settings of associated orbifolds
(one needs to check the relations listed in Lemmas~\ref{Ptolemy-prime},~\ref{Ptolemy-local} and~\ref{Ptolemy-gen}).

For each tagged arc $\gamma\in \hat \O$ we fix an arbitrary lift $\bar \gamma\in \hat \O_{\overline M}$ (see Definition~\ref{lifts}),
and set $x(\gamma)=x(\bar \gamma)$, where $x(\bar \gamma$) is  defined by~(\ref{rescaled lambda length}).
Then for each tagged triangulation $\h T$ of the associated orbifold $\hat \O$ define
\begin{equation}
\label{rescaled var}
\begin{array}{l}
\mathbf x(\h T)=
\{x(\bar \gamma):\gamma\in \h T\}.
\end{array}
\end{equation}

Proposition~\ref{prop 9.5} shows that the rescaled lambda lengths in $\mathbf x(\hat T)$ can be treated as formal variables
algebraically independent over the field of fractions of $\P(\hat \O)$.

\begin{theorem}
\label{thm-opened}
For an arbitrary choice of lifts $\bar \gamma$ of tagged arcs $\gamma\in \hat \O$ there exists a (unique) cluster algebra
$\A$ with the following properties:
\begin{itemize}
\item the coefficient group is $\P=\P(\hat \O)$ (see~(\ref{coefficients}));
\item the cluster variables are the rescaled lambda lengths $x(\bar \gamma)$ defined by~(\ref{rescaled lambda length});
\item the cluster $\mathbf x(\hat T)$ is given by~(\ref{rescaled var});
\item the ambient field is generated over $\P$ by some (equivalently, any) cluster $\mathbf x(\hat T)$;
\item the exchange matrices are the signed adjacency matrices $B(\hat T)$;
\item the exchange relations out of each seed are relations associated with the corresponding tagged flips, properly rescaled using {  Definition~\ref{lambda opened}} to reflect the choices of lifts.


\end{itemize}
\end{theorem}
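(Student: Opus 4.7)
The plan is to follow the proof strategy from~\cite{FT} for surfaces, treating the orbifold case by reducing each local exchange to one of the finitely many Ptolemy relations already established in Lemmas~\ref{Ptolemy-prime}, \ref{Ptolemy-local}, and~\ref{Ptolemy-gen}. Fix a tagged triangulation $\h T_0$ of $\hat\O$ together with a choice of lifts $\bar\gamma \in \hat\O_{\overline M}$ for every tagged arc $\gamma \in \hat\O$. By Proposition~\ref{prop 9.5}, the rescaled lambda lengths $\mathbf x(\h T_0)$ are algebraically independent over the field of fractions of $\P(\hat\O)$, so they generate an ambient field $\F$ over $\P$. This produces a candidate initial seed $(\mathbf x(\h T_0), \mathbf p(\h T_0), B(\h T_0))$, where $B(\h T_0)$ is the signed adjacency matrix from Definition~\ref{signed_matr_assoc}, and where the coefficient tuple $\mathbf p(\h T_0) \in \P^{2n}$ is to be read off from the coefficient-carrying factors appearing in the exchange relations below.

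Next, for every tagged arc $\gamma \in \h T_0$ I would analyze the flip $\h T_0 \to f_\gamma(\h T_0)$ and compute the corresponding identity among rescaled lambda lengths. Starting from the appropriate Ptolemy relation in one of Lemmas~\ref{Ptolemy-prime}, \ref{Ptolemy-local}, or~\ref{Ptolemy-gen} (chosen according to whether $\gamma$ is ordinary, pending, double, or adjacent to orbifold/special points), I would multiply both sides of $\lambda(\gamma)\lambda(\gamma') = (\cdots) + (\cdots)$ by the appropriate product of rescaling factors $\nu(a)$ prescribed by~(\ref{rescaled lambda length}). The factor $\nu(z_p)^2 = 1 - \lambda_+(p)^{-2}$ at opened punctures, together with the definition of $\lambda$ on an opening (Definition~\ref{lambda opened}), forces the surviving un-cancelled scalars to land in $\P(\hat\O)$; these scalars are exactly the entries of $\mathbf p(\h T_0)$. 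Comparing with formulas~(\ref{eq:ClusterMutation}) for cluster mutation and~(\ref{eq:CoeffMutation}) for coefficient mutation, and using that $B(f_\gamma(\h T_0)) = \mu_\gamma(B(\h T_0))$ by Theorem~\ref{thm_weight}, one checks that the rescaled Ptolemy identity is precisely the cluster exchange relation of a seed mutation in direction $\gamma$.

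Combining this local verification with Theorem~\ref{transitivity} (flips act transitively on tagged triangulations of $\hat\O$) and Lemma~\ref{realization of diagram mut}, every tagged triangulation of $\hat\O$ is obtained from $\h T_0$ by a sequence of flips, each of which lifts to a seed mutation in the cluster algebra generated by $(\mathbf x(\h T_0), \mathbf p(\h T_0), B(\h T_0))$. Proposition~\ref{prop 9.5} then guarantees that distinct tagged arcs of $\hat\O$ give algebraically independent rescaled lambda lengths, so the cluster variables of the generated algebra are precisely $\{x(\bar\gamma) : \gamma \text{ a tagged arc of } \hat\O\}$, and the clusters are precisely the sets $\mathbf x(\h T)$ of~(\ref{rescaled var}). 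Uniqueness is automatic: all data of the cluster algebra are determined by the initial seed, whose cluster, coefficient tuple, and exchange matrix are specified, and the exchange relation from every cluster has been pinned down by the Ptolemy computation.

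The main obstacle is the bookkeeping in the rescaling step: one must verify in each geometric configuration — in particular for flips involving pending arcs (where the Ptolemy relation has squared lambda lengths, e.g.\ $\lambda(\alpha)^2 + \lambda(\beta)^2$), double arcs (where one formally divides by~$2$ in shear coordinates), and arcs adjacent to opened punctures with either orientation on $C_p$ — that the rescaling factors $\nu(a)^{\epsilon}$ cancel in exactly the right pattern so that (i) the exponents on cluster variables on the right-hand side agree with $[b_{jk}]_\pm$ read from $B(\h T)$, and (ii) the residual monomials $p_k^\pm$ lie in $\P(\hat\O)$ and satisfy the mutation law~(\ref{eq:CoeffMutation}). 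Because there are only finitely many local configurations (the triangles listed in Fig.~\ref{triangles} together with the opened-puncture and special-point variants), the verification reduces to a finite case-by-case check parallel to~\cite[Sec.~10]{FT}; the extra factor of $\nu(a)^2$ at an orbifold end (as opposed to $\nu(a)$ at an ordinary end) is precisely what is needed to convert the squared lambda lengths in Lemmas~\ref{Ptolemy-prime} and~\ref{Ptolemy-local} into first-power monomials in the cluster variables.
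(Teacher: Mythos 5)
Your proposal takes essentially the same route as the paper: the paper's entire proof consists of the observation that the rescaled lambda lengths satisfy the Ptolemy relations of Lemmas~\ref{Ptolemy-prime}, \ref{Ptolemy-local} and~\ref{Ptolemy-gen}, after which the surface argument of \cite[Theorem~10.2]{FT} applies verbatim, and your outline is a correct (indeed more detailed) account of exactly that verification. The one inaccuracy is your closing claim that the factor $\nu(a)^2$ at a pending arc ``converts the squared lambda lengths into first-power monomials'': the squares persist in the exchange relations, matching the exchange-matrix entries $\pm 2$ at pending and double arcs; the doubled exponent is needed instead so that both sides of, say, $\lambda(\gamma)\lambda(\delta)=\lambda(\alpha)^2+\lambda(\beta)^2$ acquire the same overall factor $\nu(a)^2\nu(b)^2$ after rescaling by~(\ref{rescaled lambda length}).
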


The proof of the theorem for surfaces is given in~\cite{FT} (see Theorem 11.1), and does not require any changes in the orbifold settings.

\section{Tropical lambda lengths and laminated Teichm\"uller spaces}
\label{sec-teichm}
In this section, we reproduce for reader's convenience the definitions from~\cite{FT} almost with no changes (except for
Definition~\ref{intersections} where we need to generalize the definition of number of intersections to the case of pending and double
arcs on the orbifold).
{ The laminated lambda lengths on opened associated orbifold introduced in this section will serve as cluster variables in geometric realization of cluster algebras with s-decomposable exchange matrices with arbitrary coefficients.}

\begin{definition}[{{\it Lifts of laminations}}]
\label{lift of lam}
In order to lift a lamination $L$ on $\hat \O$ to a (non-unique) lamination $\bar L$ on an opened orbifold $\hat \O_P$,
for each $p\in P$ we do the following: we replace each curve in $L$ that spirals into $p$ by a new curve in $\bar L$
that run into a point on $C_p$ so that different curves do not intersect and their ends lying on $C_p$ are pairwise distinct and
different from $z_p$. The (ends of) curves in $L$ that do not spiral into opened punctures are lifted ``tautologically''.

A {\it lifted multi-lamination} $\overline {\mathbf L}$ consists of (uncoordinated) lifts of the individual laminations contained in ${\mathbf L}$.

\end{definition}

\begin{definition}[{{\it Intersection numbers on opened orbifolds }}]
\label{intersections}
Denote by $|\bar L\cap \gamma|$ the (geometric, i.e. non-negative) number of intersections of a tagged arc
$\gamma\subset \hat \O_{\overline M}$ with the curves of the lifted lamination $\bar L\subset \hat \O_{\overline M}$
(to find the ``number of intersections''
we need to choose the curves in the corresponding homotopy classes that minimize this number, for example, geodesics for
some hyperbolic structure on $\hat O_{\overline M}$).

If $\gamma$ is a pending arc, then each intersection with the inner part of $\gamma$ counts with multiplicity 2,
and an intersection at the orbifold point counts with multiplicity 1.
All intersections with double arcs count with multiplicity 1.

\end{definition}

\begin{definition}[{{\it Transverse measures}}]
\label{transverse}
For a tagged arc $\gamma\subset \hat \O_{\overline M}\cup \partial \hat \O$  and a lift $\bar L\subset \hat \O_{\overline M}$ of
a lamination $L$, the {\it transverse measure} of $\gamma$ with respect to $\bar L$ is an integer $l_{\bar L}(\gamma)$
defined as follows:
\begin{itemize}
\item $l_{\bar L}(C_p)$ is the number of ends of curves in $\bar L$ that lie on $C_p$;
\item if $\gamma$ does not have ends at boundary components $C_p$ ($p\in \overline M$), then
$l_{\bar L}(\gamma)=|\bar L\cap \gamma|$;
\item if $\gamma$ has one or two such ends, and twists sufficiently many times around the opening(s) in the direction consistent with their
orientation (respectively, in the opposite directions if the end is notched), then again,
$l_{\bar L}(\gamma)=|\bar L\cap \gamma|$;
\item otherwise, $l_{\bar L}(\gamma)$ is defined using the cases above and the formula
$$l_{\bar L}(\psi_p\gamma)=(-1)^{t}l_{\bar L}(p)+l_{\bar L}(\gamma),$$
where
\begin{itemize}
\item[$-\ $] $\psi_p$ is a clockwise twist around $C_p$,
\item[$-\ $] $t=
\left\{
\begin{array}{ll}
0 &\text{if $\gamma$ tagged plain at $z_p$;}  \\
1 &\text{if $\gamma$ tagged notched at $z_p$;}
\end{array}
\right.
$
\item[$-\ $]
$
l_{\bar L}(p)=
\left\{
\begin{array}{ll}
-l_{\bar L}(C_p) &\text{if $p\in \overline M$ and $C_p$ is oriented counterclockwise;}  \\
0 & \text{if $p\notin \overline M$};\\
l_{\bar L}(C_p) &\text{if $p\in \overline M$ and $C_p$ is oriented clockwise.}  \\
\end{array}
\right.
$

\end{itemize}
\end{itemize}
\end{definition}

\begin{definition}[{{\it Tropical semifield associated with a multi-lamination}}]
Let $\mathbf L=(L_i)_{i\in I}$ be a multi-lamination on $\hat \O$, here $I$ is a finite indexing set.
Let $q_i$ be a formal variable for each lamination $L_i$, and let
$$\P_{\mathbf L}={\mathrm{Trop}}(q_i: i\in I)$$
be the multiplicative group of Laurent monomials in variables $\{q_i: i\in I\}$.
Addition $\oplus$ is defined by
$$\prod\limits_i q_i^{a_i}\oplus\prod\limits_i q_i^{b_i}=\prod\limits_i q_i^{\min(a_i,b_i)}$$
$\P_{\mathbf L}$ is called {\it tropical semifield} associated with multi-lamination ${\mathbf L}$.

\end{definition}

\begin{definition}[{{\it Tropical lambda lengths}}]
\label{tropical lambda}
Let $\overline{\mathbf L}=(\bar L_i)_{i\in I}$ be a lift of a multi-lamination $\mathbf L$.
The {\it tropical lambda length} of a tagged arc $\gamma\subset \hat O_{\overline M}\cup \partial \hat \O$ with respect to
$\overline L$ is
$$
c_{\overline {\mathbf L}}(\gamma)=\prod\limits_{i\in I} q^{-l_{\overline L_i}(\gamma)/2}\in \P_{\mathbf L}.
$$

\end{definition}

\noindent
Tropical lambda lengths satisfy the equality
$$
c_{\overline {\mathbf L}}(\psi_p\gamma)=c_{\overline {\mathbf L}}(p)^{t}c_{\overline {\mathbf L}}(\gamma),
$$
where $t=0$ if $\gamma$ tagged plain and $t=1$ if $\gamma$ tagged notched at $z_p$,  and
$$
c_{\overline {\mathbf L}}(p)=c_{\mathbf L}(p)=\prod\limits_{i\in I} q^{-l_{\overline L_i}(p)/2}\in \P_{\mathbf L}.
$$

Tropical lambda lengths of boundary segments, holes, or arcs that are not incident to punctures do not depend on the choice of
a lift $\overline {\mathbf L}$. So, we can use the notation $c_{\mathbf L}(\beta)=c_{\overline {\mathbf L}}(\beta)$ for
$\beta\subset \partial \hat \O$, or  $c_{\mathbf L}(p)=c_{\overline {\mathbf L}}(p)$ for $p\in \overline M$.

Similarly to ordinary lambda lengths, tropical lambda length of a given arc does not depend on a tagged triangulation containing the arc.

The main property of tropical lambda lengths is that they satisfy the tropical version of Ptolemy relations:
to obtain tropical version of an expression containing operations ``$\cdot$'' and ``$+$''
(multiplication and addition), one substitutes multiplication $c\cdot b$ and addition $a+b$ by addition $a\oplus b$ and
minimum $\min(a,b)$ respectively.
For example, the relation $e\cdot f=a\cdot c+b\cdot d$ turns into $e\oplus f=max(a\oplus c, b\oplus d)$.
It is shown in~\cite{FT} that tropical lambda lengths satisfy tropical versions of Ptolemy relations in the surface settings.
To adjust the statement to the orbifold settings one needs to check the relations listed in Lemmas~\ref{Ptolemy-prime},~\ref{Ptolemy-local} and~\ref{Ptolemy-gen}.
This is a straightforward verification based on the computing of the intersection numbers.


\begin{definition}[{{\it Laminated Teichm\"uller space}}]
\label{laminated teichm}
For a multi-lamination $\mathbf L=(L_i)_{i\in I}$ on $\hat\O$, the {\it laminated Teichm\"uller space}
$\overline \T(\hat \O, \mathbf L)$ is defined as follows.
A point $(\sigma,q)\in \overline \T(\hat \O, \mathbf L)$  is a decorated hyperbolic structure $\sigma\in \overline \T(\hat \O)$
together with a collection of positive real weights $q=(q_i)_{i\in I}$ which are chosen so that the following boundary conditions hold:

\begin{itemize}
\item for each boundary segment $\beta\subset \partial \hat \O$, we have $\lambda_\sigma(\beta)=c_{\mathbf L}(\beta)$;
\item for each hole $C_p$ with $p\in \overline M$, we  have $\lambda_\sigma(p)=c_{\mathbf L}(p)$.

\end{itemize}

\end{definition}

\begin{prop}
\label{14.2}
Let $\mathbf L=(L_i)_{i\in I}$ be a multi-lamination in $\hat \O$ and let $\h T$ be a tagged triangulation of $\hat \O$.
Choose a lift $\overline {\mathbf L}=(\overline L_i)$ of $\mathbf L$, and choose a lift of each of the $n$ arcs $\gamma \in \h T$
to an arc on $\hat \O_{\overline M}$. Then the map
$$
\Psi: \overline \T(\hat \O, \mathbf L)\to \R^{n+|I|}_{>0}
$$
defined by
$$
\Psi(\sigma,q)=(\lambda_\sigma(\bar \gamma))_{\gamma\in \h T}\times q
$$
is a homeomorphism. The same is true with the lambda lengths $\lambda(\bar \gamma)$ replaced by their rescaled versions $c(\bar \gamma)$
defined by~(\ref{rescaled lambda length}).

\end{prop}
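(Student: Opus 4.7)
The plan is to deduce Proposition~\ref{14.2} directly from Proposition~\ref{prop 9.5}, by viewing $\overline{\T}(\hat{\O}, \mathbf{L})$ as the locus inside $\overline{\T}(\hat{\O}) \times \R^{|I|}_{>0}$ cut out by the boundary conditions of Definition~\ref{laminated teichm}, and then reading off new coordinates.

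First I would handle the unrescaled version using $\lambda_\sigma(\bar\gamma)$. By Proposition~\ref{prop 9.5}, the map $\Phi$ is a homeomorphism from $\overline{\T}(\hat{\O})$ onto $\R^{n+c+|\overline{M}|}_{>0}$, whose coordinates are the lambda lengths of the holes $C_p$ ($p \in \overline{M}$), of the boundary segments, and of the chosen lifts $\bar\gamma$ of arcs $\gamma \in \hat T$. By Definition~\ref{laminated teichm}, a pair $(\sigma, q)$ lies in $\overline{\T}(\hat{\O}, \mathbf{L})$ precisely when $\lambda_\sigma(\beta) = c_{\mathbf{L}}(\beta)$ for every boundary segment $\beta$ and $\lambda_\sigma(p) = c_{\mathbf{L}}(p)$ for every puncture $p \in \overline{M}$, and by Definition~\ref{tropical lambda} each $c_{\mathbf{L}}(\bullet)$ is a Laurent monomial in $q$, hence a continuous positive function of $q \in \R^{|I|}_{>0}$. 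Consequently, given $\bigl((x_\gamma)_\gamma, q\bigr) \in \R^{n+|I|}_{>0}$, setting $\lambda_\sigma(\bar\gamma) = x_\gamma$, $\lambda_\sigma(\beta) = c_{\mathbf{L}}(\beta)(q)$, and $\lambda_\sigma(p) = c_{\mathbf{L}}(p)(q)$ selects a unique $\sigma \in \overline{\T}(\hat{\O})$ via $\Phi^{-1}$; this $\sigma$ satisfies all boundary conditions by construction, so $(\sigma, q) \in \overline{\T}(\hat{\O}, \mathbf{L})$. This procedure defines a continuous inverse to $\Psi$, while $\Psi$ itself is continuous because $\Phi$ is.

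For the rescaled version with $x(\bar\gamma)$, I would note that each rescaling factor $\nu(z_p) = \sqrt{1 - \lambda_+(p)^{-2}}$ depends only on the length of the hole $C_p$, which on $\overline{\T}(\hat{\O}, \mathbf{L})$ is determined by $q$ via $\lambda_\sigma(p) = c_{\mathbf{L}}(p)$. Thus $(\lambda_\sigma(\bar\gamma))_\gamma \mapsto (x(\bar\gamma))_\gamma$ is a diagonal rescaling of $\R^n_{>0}$ whose positive scale factors depend continuously on $q$, and the statement for rescaled variables follows by composition with the homeomorphism already constructed.

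The main obstacle I anticipate is verifying that the construction behaves well across the various strata $\widetilde{\T}(\hat{\O}_P)$ making up $\overline{\T}(\hat{\O})$: a value $c_{\mathbf{L}}(p)(q)$ equal to $1$ corresponds to the unopened stratum at $p$, while values $\neq 1$ select an opened stratum together with an orientation of $C_p$ determined by the sign of the exponent. The fact that $\Phi$ is a homeomorphism on all of $\overline{\T}(\hat{\O})$ --- the topology in Definition~\ref{complete teichm} being the weakest making all lambda lengths continuous --- ensures that these stratum transitions are absorbed uniformly into $\Phi$, so no separate gluing argument is required beyond the direct appeal to Proposition~\ref{prop 9.5}.
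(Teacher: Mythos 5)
Your proposal is correct and follows essentially the same route as the paper, which simply states that the proposition follows from Proposition~\ref{prop 9.5} and Definition~\ref{laminated teichm}; you have filled in exactly that deduction (the boundary conditions fix the hole and boundary-segment coordinates of $\Phi$ as continuous positive functions of $q$, leaving the $n$ arc coordinates free, and the rescaling factors are likewise determined continuously by $q$). No discrepancy with the paper's argument.
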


The proposition follows from Proposition~\ref{prop 9.5} and Definition~\ref{laminated teichm}.

\begin{definition}[{{\it Laminated lambda lengths}}]
\label{laminated lambda}
Fix a lift $\overline {\mathbf L}$ of a multi-lamination $\mathbf L$. For a tagged arc $\gamma$ on $\hat O$, the {\it laminated
lambda length} $x_{\overline {\mathbf L}}(\gamma)$ is a function on the laminated Teichm\"uller space $\overline \T(\hat \O, \mathbf L)$
defined by
$$
x_{\overline {\mathbf L}}(\gamma)=x(\bar \gamma)/c_{\overline {\mathbf L}}(\bar \gamma),
$$
where $\bar\gamma$ is an arbitrary lift of $\gamma$, $x(\bar \gamma)$ is a rescaled lambda length defined
by~(\ref{rescaled lambda length}), and $c_{\overline {\mathbf L}}(\bar \gamma)$ is the tropical lambda length as in
Definition~\ref{tropical lambda}.

\end{definition}

The value of $x_{\overline {\mathbf L}}(\gamma)$ does not depend on the choice of the lift $\bar \gamma$ since
$x(\bar \gamma)$ and $c_{\overline {\mathbf L}}(\bar \gamma)$ are rescaled by the same factor $c_{\mathbf L}(p)=\lambda_\sigma(p)$
as $\bar \gamma$ twists around the opening $C_p$.

\begin{cor}
For a tagged triangulation $\h T$ of $\hat \O$, the map
$$
\begin{array}{rll}
\overline \T(\hat \O, \mathbf L) &\to & \R^{n+|I|}_{>0}\\
(\sigma,q) &\mapsto & (x_{\overline {\mathbf L}}(\gamma))_{\gamma\in\h T}\times q\\
\end{array}
$$
is a homeomorphism. (As before, $\overline {\mathbf L}$ is a lift of a multi-lamination $\mathbf L$).

\end{cor}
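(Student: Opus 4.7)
The plan is to deduce the corollary directly from Proposition~\ref{14.2} by observing that the map in question differs from $\Psi$ only by a self-homeomorphism of the target $\R^{n+|I|}_{>0}$ which rescales the first $n$ coordinates by continuous positive functions depending only on the last $|I|$ coordinates.

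First I would recall Definition~\ref{laminated lambda}: for any choice of lift $\bar\gamma$ of a tagged arc $\gamma\in\h T$, one has
\[
x_{\overline{\mathbf L}}(\gamma) \;=\; x(\bar\gamma)/c_{\overline{\mathbf L}}(\bar\gamma),
\]
and the quotient is independent of the chosen lift (as remarked after Definition~\ref{laminated lambda}, since $x$ and $c_{\overline{\mathbf L}}$ acquire the same factor $\lambda_\sigma(p) = c_{\mathbf L}(p)$ under a twist $\psi_p$). In particular, if we fix once and for all the same lifts $\bar\gamma$ of the arcs of $\h T$ that were used in Proposition~\ref{14.2}, then the value $x_{\overline{\mathbf L}}(\gamma)$ at a point $(\sigma,q)\in\overline\T(\hat\O,\mathbf L)$ equals $x(\bar\gamma)$ at that point divided by $c_{\overline{\mathbf L}}(\bar\gamma)$ evaluated at $q$.

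Next I would introduce the auxiliary map $\Phi\colon\R^{n+|I|}_{>0}\to\R^{n+|I|}_{>0}$ defined coordinatewise by
\[
\Phi\bigl((y_\gamma)_{\gamma\in\h T}\times q\bigr) \;=\; \bigl(y_\gamma/c_{\overline{\mathbf L}}(\bar\gamma)(q)\bigr)_{\gamma\in\h T}\times q,
\]
where $c_{\overline{\mathbf L}}(\bar\gamma)(q)$ denotes the value of the Laurent monomial $c_{\overline{\mathbf L}}(\bar\gamma)=\prod_i q_i^{-l_{\overline L_i}(\bar\gamma)/2}$ at the point $q=(q_i)_{i\in I}\in\R^{|I|}_{>0}$. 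Since each $c_{\overline{\mathbf L}}(\bar\gamma)$ is a (rational-exponent) Laurent monomial in strictly positive variables, it is a continuous positive function of $q$; consequently $\Phi$ is a continuous bijection whose inverse (multiplication by $c_{\overline{\mathbf L}}(\bar\gamma)(q)$ in each of the first $n$ coordinates) is also continuous. Hence $\Phi$ is a homeomorphism of $\R^{n+|I|}_{>0}$.

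Finally I would observe that the map in the statement of the corollary is precisely $\Phi\circ\Psi$, where $\Psi$ is the homeomorphism supplied by the ``rescaled'' version of Proposition~\ref{14.2}. The composition of two homeomorphisms is a homeomorphism, which proves the corollary. No genuine obstacle arises here: the only point requiring attention is the well-definedness of $x_{\overline{\mathbf L}}(\gamma)$, which is already built into Definition~\ref{laminated lambda}, and the observation that $c_{\overline{\mathbf L}}(\bar\gamma)$ depends only on the lamination weights $q$ and not on the hyperbolic structure $\sigma$, so that the rescaling factor is indeed a function of the last $|I|$ coordinates alone.
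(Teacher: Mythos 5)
Your proposal is correct and follows the same route the paper intends: the corollary is stated as an immediate consequence of Proposition~\ref{14.2}, obtained by composing the (rescaled) homeomorphism $\Psi$ with the coordinatewise division by the tropical lambda lengths $c_{\overline{\mathbf L}}(\bar\gamma)$, which are positive continuous monomial functions of $q$ alone and hence induce a self-homeomorphism of $\R^{n+|I|}_{>0}$. Your explicit attention to the well-definedness of $x_{\overline{\mathbf L}}(\gamma)$ and to the fact that the rescaling factor depends only on $q$ is exactly the content the paper leaves implicit.
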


\section{Cluster algebras with arbitrary coefficients associated with orbifolds}
\label{main}
Now, we are ready to present the main construction: a geometric realization of any cluster algebra with $s$-decomposable exchange matrix.

Let us recall the main idea of~\cite{FT}. For
 a suitable choice of coefficients the corresponding cluster variables can be identified with lambda lengths (see also Section~\ref{sec-geom}). To introduce principal (and in view of~\cite{FZ4} arbitrary) coefficients special laminations on the surface are used. However, a price needs to be paid for introducing principal coefficients since standard lambda lengths do not satisfy cluster relations with coefficients. In order to compensate disturbance caused by coefficients one needs to replace lambda lengths by \emph{laminated lambda lengths}. Geometric meaning of laminated lambda length is a ratio of lambda length and its tropical limit (see~\cite{FT} and Section~\ref{sec-teichm}). The whole construction can be transferred with almost no changes to orbifolds.

In short, the cluster variables in the construction are interpreted as laminated lambda lengths
$x_{\overline {\mathbf L}}(\gamma)$ of tagged arcs on the associated orbifold,
while coefficients $q_i$ are functions on the laminated Teichm\"uller space $\overline \T(\hat \O, \mathbf L)$.

Before stating the theorem, we shortly recall the whole construction. The stars label the steps where we need to adjust construction from \cite{FT} to orbifold case (in contrast to the other steps, where the definitions are copied straightforwardly from the surface case).

\begin{itemize}
\item[$1^*.$] Define s-decomposable skew-symmetrizable matrix via s-decomposable diagrams (see Definitions~\ref{s-dec diagr},~\ref{s-dec matr}).
\item[$2^*.$] Given an s-decomposable skew-symmetrizable matrix $B$, construct weighted diagram $\D(B)$ and weighted orbifold $\O^w$ (see Definitions~\ref{s-dec diagr},~\ref{s-dec matr}).
\item[$3^*.$] Given a weighted orbifold $\O^w$ build an associated orbifold $\hat \O$  as in Definition~\ref{def-ass-gen}.
\item[$4^*.$] Define lambda length $\lambda(\gamma)$  of an arc $\gamma$ on $\hat \O$ (see  Definition~\ref{lambda}) and a decorated Teichm\"uller
  space $\widetilde \T(\hat \O)$ (Definition~\ref{dec teichm}).
\item[$5^*.$] Define laminations on the associated orbifold (Definition~\ref{def-lam}) and shear coordinates
(Definition~\ref{def-shear}).
\item[$6$.] Define an opened associated orbifold $\hat \O_P$ (Definition~\ref{opened}), and lifts $\bar \gamma$ of an arc
$\gamma\in \hat \O$ and  $\overline L$ of a lamination $L$ from $\hat \O$ to $\hat \O_{P}$
(see Definition~\ref{lifts} and~\ref{lift of lam} respectively). Define partial Teichm\"uller space $\widetilde \T(\hat \O_P)$
(Definition~\ref{partial teichm}) and complete Teichm\"uller space $\overline T(\hat O)$ (Definition~\ref{complete teichm}).
\item[$7$.] Define   lambda length $\lambda(\bar \gamma)$ on the opened orbifold $\hat\O_P$
(Definition~\ref{lambda opened}), as well as rescaled lambda length $x(\bar \gamma)$ (see~(\ref{rescaled lambda length})).
\item[$8$.] Define transverse measure $l_{\bar L}(\gamma)$ (Definition~\ref{transverse}) and tropical lambda length
$c_{\overline {\mathbf L} }(\gamma)$ (Definition~\ref{tropical lambda}).
\item[$9$.]
Define laminated Teichm\"uller space $\overline\T(\hat \O,\mathbf L )$ (Definition~\ref{laminated teichm})
and laminated lambda length $x_{\overline {\mathbf L}}(\gamma)$ (Definition~\ref{laminated lambda}).

\end{itemize}

Now, we are able to state the main theorems.

\begin{theorem}
\label{14.6}
For a given associated orbifold $\hat \O$ with a given
multi-lamination $\mathbf L=(L_i)_{i\in I}$ on $\hat \O$, there exists a unique cluster algebra $\A$ of geometric type
with the following properties:
\begin{itemize}
\item the coefficient semifield is the tropical semifield $\P_{\mathbf L}=Trop(q_i:i\in I)$;
\item the cluster variables $x_{\mathbf L}(\gamma)$ are labeled by the tagged arcs $\gamma$ on $\hat \O$;
\item the extended exchange matrix  $\tilde B(\h T,\mathbf L)$ is the extended signed adjacency matrix described in Definition~\ref{multi,extended};
\item the seeds $(\mathbf x(\hat T),\tilde B(\h T,\mathbf L))$ are labeled by tagged triangulations $\hat T$ of $\hat \O$;
\item the exchange graph is the graph of flips of tagged triangulations of $\hat\O$.

\end{itemize}

This cluster algebra has a realization by functions on the laminated Teichm\"uller space $\overline \T(\hat \O,\mathbf L)$.
To obtain this realization, choose a lift $\overline {\mathbf L}$ of the multi-lamination $\mathbf L$; then represent each cluster
variable $x_{\mathbf L}(\gamma)$ by the corresponding laminated lambda length $x_{\overline {\mathbf L}}(\gamma)$, and each coefficient
variable $q_i$ by the corresponding function on  $\overline \T(\hat \O,\mathbf L)$.

\end{theorem}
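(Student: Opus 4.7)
The plan is to follow the strategy of~\cite[Theorem~14.6]{FT} and verify that every step of that argument goes through in the orbifold setting, using the preparatory Lemmas~\ref{Ptolemy-prime},~\ref{Ptolemy-local},~\ref{Ptolemy-gen} as the local ingredients that replace the classical Ptolemy relation, and Theorem~\ref{thm_weight} plus the shear-coordinate mutation lemma for the combinatorial side. Concretely, I would fix an arbitrary tagged triangulation $\h T_0$ of $\hat\O$, fix lifts $\bar\gamma$ of its arcs in $\hat\O_{\overline M}$ and a lift $\overline{\mathbf L}$ of the multi-lamination $\mathbf L$, and take as initial seed the triple $\bigl(\mathbf x(\h T_0),\,\tilde B(\h T_0,\mathbf L),\,\P_{\mathbf L}\bigr)$, where the cluster variables are the laminated lambda lengths $x_{\overline{\mathbf L}}(\gamma)$ of the arcs of $\h T_0$.

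First I would check that this initial data is admissible. By Proposition~\ref{14.2} and its corollary, the map $(\sigma,q)\mapsto (x_{\overline{\mathbf L}}(\gamma))_{\gamma\in\h T_0}\times q$ is a homeomorphism onto $\R^{n+|I|}_{>0}$, so the $x_{\overline{\mathbf L}}(\gamma)$ are algebraically independent over the field of fractions of $\ZZ\P_{\mathbf L}$ and generate the ambient field. Second, I would generate the cluster algebra $\A$ abstractly from this seed and check, seed by seed, that its combinatorial and algebraic data match the geometry. The combinatorial half is already done: the lemma showing that $\tilde B(\h T,\mathbf L)$ mutates in direction $k$ under the flip $f_k$ (i.e.\ that signed adjacency and shear coordinates transform compatibly) together with Theorem~\ref{thm_weight} and transitivity of flips (Theorem~\ref{transitivity}) guarantees that the poset of seeds of $\A$ is in bijection with the graph of flips of tagged triangulations of $\hat\O$.

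The algebraic half is the main content. For each tagged flip $f_k$ I must verify that the laminated lambda lengths satisfy precisely the cluster exchange relation dictated by $\tilde B(\h T,\mathbf L)$. The approach is: (i) use the rescaled Ptolemy relations at ordinary, pending, and double arcs (Lemmas~\ref{Ptolemy-prime},~\ref{Ptolemy-local},~\ref{Ptolemy-gen}), extended from $\lambda(\bar\gamma)$ to $x(\bar\gamma)$ via Section~\ref{sec-opened}; (ii) divide the resulting relation by the tropical Ptolemy relation $c_{\overline{\mathbf L}}(\bar\gamma_k)c_{\overline{\mathbf L}}(\bar\gamma'_k)=c_{\overline{\mathbf L}}(\text{first monomial})\oplus c_{\overline{\mathbf L}}(\text{second monomial})$, which holds because transverse measures are additive in the same way intersections are (this is the orbifold analogue of the surface verification in~\cite{FT}, and needs to be re-checked in the cases covered by Lemmas~\ref{Ptolemy-local} and~\ref{Ptolemy-gen}); and (iii) read off that the ratio of the two monomials is exactly $p^+_k/p^-_k$, where the coefficient tuple $(p^\pm_k)$ is the one determined by the bottom $|I|$ rows of $\tilde B(\h T,\mathbf L)$. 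This is a case-by-case calculation but, thanks to the elementary-lamination formalism and the values shown in Fig.~\ref{elem-lam}, reduces to tracking signs of shear coordinates in each of the triangle types listed in Fig.~\ref{triangles}.

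Uniqueness of $\A$ follows because the initial seed and the exchange relations just verified determine every cluster variable and every coefficient; together with Theorem~\ref{transitivity}, this also shows that every laminated lambda length of a tagged arc on $\hat\O$ occurs as a cluster variable, and conversely that no other cluster variables arise. The main obstacle I anticipate is step (iii) of the algebraic half: the orbifold Ptolemy relations in Lemma~\ref{Ptolemy-gen} involve exponents $1$, $2$, and $4$ coming from pending and double arcs, and one must check that the corresponding transverse-measure identities carry exactly the matching exponents so that the quotient collapses to the standard cluster exchange format. This is essentially a bookkeeping check done by unfolding each pending/double arc into its pair of conjugate arcs on the auxiliary surface $\tilde\O$ of Definition~\ref{def-shear}, applying the known surface identity of~\cite{FT}, and then collapsing back; the consistency of the definition of $b_\gamma(\h T,L)$ for double arcs (the factor $\tfrac12$ in Definition~\ref{def-shear}) is exactly what makes the exponents balance.
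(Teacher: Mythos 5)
Your proposal follows exactly the route the paper takes: the paper's proof of Theorem~\ref{14.6} simply states that it repeats word-by-word the proof of~\cite[Theorem~14.6]{FT}, relying on the orbifold versions of the Ptolemy relations (Lemmas~\ref{Ptolemy-prime},~\ref{Ptolemy-local},~\ref{Ptolemy-gen}), their tropical counterparts, Proposition~\ref{14.2}, and the mutation compatibility of the extended signed adjacency matrices -- precisely the ingredients you assemble. Your write-up is a correct, more explicit unpacking of that same argument, including the one point the paper flags as needing adjustment (the exponent bookkeeping for pending and double arcs in the tropical Ptolemy verification).
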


\begin{cor}
\label{5.1}
Let $T_0$ be a tagged triangulation consisting of $n$ tagged arcs in $\hat\O$.
Let $\Sigma_0=({\bf x}(T_0),{\bf p}(T_0),B(T_0))$ be a triple such that
\begin{itemize}
\item ${\bf x}(T_0)$ is an $n$-tuple of formal variables labeled by the arcs in $T_0$;
\item ${\bf p}(T_0)$ is an $2n$-tuple of elements $(p^\pm(e))$, $e$ is an edge of $T_0$ of a tropical semifield ${\mathbb P}$ satisfying normalization condition $p^+(e)\oplus p^-(e)=1$ ;
    \item $B(T_0)$ is the signed adjacency matrix of $T_0$
\end{itemize}

\noindent Then there is a unique normalized cluster algebra $\A$ with initial seed $\Sigma_0$ whose exchange graph coincides with the exchange graph of tagged triangulations of $\hat\O$.

Moreover,
\begin{itemize}
\item the seeds are labeled by tagged triangulations of $\hat\O$;
\item the cluster variables are labeled by tagged arcs and each cluster variable $x_\gamma=x_\gamma(T)$ does not depend on tagged triangulation $T$.
\end{itemize}
\end{cor}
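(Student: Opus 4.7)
The plan is to reduce Corollary~\ref{5.1} to Theorem~\ref{14.6} by realizing any normalized coefficient data as shear coordinates of a suitable multi-lamination on $\hat\O$. I would choose generators $q_i$ ($i\in I$) of the tropical semifield $\P$ so that each element $p^\pm(e)$ is a monomial in the $q_i^{\pm 1}$. The normalization condition $p^+(e)\oplus p^-(e)=1$ forces, for each $i$ and each arc $e\in T_0$, that at most one of the $q_i$-exponents of $p^+(e)$ and $p^-(e)$ is nonzero; the resulting signed integer is, by definition, the prospective $(e,i)$-entry of the extended exchange matrix. Assembling these into a vector $\mathbf k_i\in\Z^n$ and applying Theorem~\ref{ex-uniq}, one obtains a unique lamination $L_i$ on $\hat\O$ whose shear coordinates with respect to $T_0$ equal $\mathbf k_i$. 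The collection $\mathbf L=(L_i)_{i\in I}$ is the desired multi-lamination.

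Next I would invoke Theorem~\ref{14.6} applied to $(\hat\O,\mathbf L)$: it provides a cluster algebra $\A'$ with coefficient semifield $\P_{\mathbf L}=\mr{Trop}(q_i:i\in I)\cong \P$ and extended exchange matrix $\tilde B(T_0,\mathbf L)$ at the seed attached to $T_0$. By construction $\tilde B(T_0,\mathbf L)$ has top block $B(T_0)$ and bottom rows which recover $\pp(T_0)$ once interpreted as $q_i$-exponents, so the initial seed of $\A'$ is identified with $\Sigma_0$. Theorem~\ref{14.6} simultaneously delivers every remaining assertion: the seeds of $\A'$ are labeled by tagged triangulations of $\hat\O$; the cluster variables $x_{\mathbf L}(\gamma)$ are in bijection with tagged arcs $\gamma$; and the exchange graph is the graph of flips. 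In particular, each $x_\gamma$ is attached to the arc $\gamma$ itself and not to a specific triangulation containing it, because in the lamination realization it equals the intrinsic function $x_{\overline{\mathbf L}}(\gamma)$ of Definition~\ref{laminated lambda} on $\overline\T(\hat\O,\mathbf L)$.

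Uniqueness is then formal: in Fomin--Zelevinsky's setup, a normalized cluster algebra is determined up to canonical isomorphism by its initial seed, since from any seed all mutations are forced by~(\ref{eq:MatrixMutation}),~(\ref{eq:ClusterMutation}), and~(\ref{eq:CoeffMutation}). The main technical care I anticipate lies in the lamination-to-coefficient step of the preceding argument: one must verify that if shear coordinates of $L_i$ mutate correctly under flips of $\hat T$ (as used implicitly in Theorem~\ref{14.6}), then the monomials $p^\pm(e)$ assembled from them transform precisely by the normalized coefficient rule~(\ref{eq:CoeffMutation}). This is a routine check in the surface case of~\cite{FT} and extends to the orbifold setting via the contribution rules of Definition~\ref{def-shear}, in particular the doubling and halving factors that appear for pending and double arcs.
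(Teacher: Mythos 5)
Your argument is correct and follows essentially the same route as the paper: the authors state that Corollary~\ref{5.1} is proved word-for-word as \cite[Theorem~5.1]{FT}, and that proof is precisely your reduction --- read the prospective bottom rows of the extended exchange matrix off the normalized coefficient tuple, realize them as shear coordinates of a multi-lamination via Theorem~\ref{ex-uniq}, and apply Theorem~\ref{14.6}, with uniqueness being formal for normalized seed patterns.
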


\begin{cor}
\label{5.2}
Let $\A$ be a cluster algebra whose exchange matrix is s-decomposable. Then
\begin{itemize}
\item each seed in $\A$ is uniquely determined by its cluster;
\item the cluster complex and the exchange graph $\mathbf E$ of $\A$ do not depend on the choice of coefficients in $\A$;
\item the seeds containing a given cluster variable form a connected subgraph in $\mathbf E$;
\item several cluster variables appear together in the same cluster if and only if every pair among them does;
\item the cluster complex is the complex of the tagged arcs on the corresponding orbifold.

\end{itemize}
\end{cor}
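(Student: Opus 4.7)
The plan is to deduce all five statements from the geometric realization supplied by Theorem~\ref{14.6} (together with Corollary~\ref{5.1}), exploiting the bijection between seeds and tagged triangulations of the associated orbifold~$\hat\O$ and between cluster variables and tagged arcs of~$\hat\O$. The overall strategy mirrors~\cite{FT}: once every cluster-theoretic object is translated into a purely topological one on~$\hat\O$, the statements follow from the combinatorics of triangulations and flips, which we have controlled in Sections~\ref{orbifolds-s}--\ref{sec-geom}.

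First I would fix a weighted orbifold $\O^w$ and its associated orbifold~$\hat\O$ produced from the s-decomposable matrix of~$\A$, and invoke Theorem~\ref{14.6} (or Corollary~\ref{5.1}) to identify seeds with tagged triangulations and cluster variables with tagged arcs. Item~(i), that each seed is determined by its cluster, is then immediate: the cluster~${\bf x}(\h T)$ records all tagged arcs of~$\h T$, and a tagged triangulation determines its own signed adjacency matrix~$B(\h T)$ and hence (together with the chosen coefficients) the whole seed. Item~(ii) is the content of the exchange-graph clause of Theorem~\ref{14.6} and Corollary~\ref{5.1}: the flip graph of tagged triangulations of~$\hat\O$ is a purely topological object, so it realizes the exchange graph $\mathbf E$ no matter which normalized coefficient system is chosen.

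For item~(v), I would show the two inclusions separately. The complex of tagged arcs on~$\hat\O$ has as simplices the compatible collections of tagged arcs, and by Lemma~\ref{max} every such collection extends to a tagged triangulation; since clusters are precisely the vertex sets of tagged triangulations, the maximal simplices on the two sides coincide, and compatibility of a subset is equivalent to its belonging to some triangulation. This simultaneously yields item~(iv): a family of tagged arcs lies in a common triangulation iff every pair of them is compatible (``pairwise compatible implies simultaneously compatible'' for tagged arcs on an orbifold), so the cluster variables corresponding to these arcs appear together in a cluster iff every two of them do. For item~(iii), I would argue that the tagged triangulations of~$\hat\O$ containing a fixed tagged arc~$\gamma$ are connected under flips at the remaining arcs: cutting~$\hat\O$ along~$\gamma$ (or, in the pending/double case, along its conjugate pair) produces a finite disjoint union of associated orbifolds of smaller complexity, and the transitivity of flips there (Theorem~\ref{transitivity}, extended to the tagged/associated setting via Section~\ref{w_orb}) lifts to transitivity on triangulations of~$\hat\O$ containing~$\gamma$.

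The main obstacle is item~(iv), the pairwise-compatibility criterion, because on orbifolds with punctures the presence of notched taggings, self-folded triangles, pending and double arcs produces many local configurations that must be checked. I would handle it by reducing, through the opening/associated-orbifold construction of Definitions~\ref{opened} and~\ref{def-ass-gen}, to the statement already known for bordered surfaces in~\cite{FT}: each pending or double arc of~$\hat\O$ corresponds to a pair of conjugate tagged arcs of a surface, and two sets of tagged arcs of~$\hat\O$ are simultaneously compatible precisely when their images on the auxiliary surface are. The remaining two items then need only a routine verification that taggings at orbifold points (always plain) do not obstruct the argument, which is a finite case analysis analogous to the one carried out for Lemma~\ref{flip-mut}.
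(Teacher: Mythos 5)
Your proposal is correct and follows essentially the same route as the paper: the authors simply state that the proof of Corollary~\ref{5.2} repeats word-for-word the proof of \cite[Corollary~5.2]{FT}, i.e.\ it is deduced from the geometric realization of Theorem~\ref{14.6} and Corollary~\ref{5.1} by identifying seeds with tagged triangulations, cluster variables with tagged arcs, and the exchange graph with the flip graph, exactly as you do. Your sketch merely makes explicit the arc-complex arguments (extension of compatible collections, connectivity of the star of an arc, flag property) that the cited surface-case proof supplies.
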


Proofs of Theorems~\ref{14.6} and Corollaries~\ref{5.1},~\ref{5.2} repeat word-by-word the proofs of~\cite[Theorem~15.6]{FT},~\cite[Theorem~6.1]{FT} and~\cite[Corollary~6.2]{FT}.\\

\section{Growth rate of geometric cluster algebras}
\label{sec-growth}
In this section, we compute growth of cluster algebras arising from orbifolds. Combining with classification of mutation-finite cluster algebras~\cite{FST2}, this determines growth of all but small finite number of exceptional cluster algebras. The work will be completed in the upcoming paper.

A cluster algebra (or the corresponding exchange graph) has {\it polynomial growth} if the number of distinct seeds which can be
obtained from a fixed initial seed by at most $n$ mutations is bounded by a polynomial function of $n$.
A cluster algebra has {\it exponential growth} if the number of such seeds is bounded from below by an
exponentially growing function of $n$.

\begin{remark}
\label{rem_growth}
According to Corollary~\ref{5.2}, exchange graph of cluster algebra with s-decom\-posable exchange matrix depends on the exchange matrix only. Mutations of a matrix, in their turn, are completely determined by mutations of the diagram of the matrix. Thus, while investigating growth of a cluster algebra with exchange matrix $B$ in some seed, we can look instead at cluster algebra with any exchange matrix $B'$ such that $\D(B')=\D(B)$.
\end{remark}

According to remark~\ref{rem_growth}, to determine the growth rate of cluster algebra $\A$  with s-decomposable exchange matrix $B$, we
may assume without loss of generality that the weighted orbifold $\O^w=\O^w(B)$ contains orbifold points of weight 2 only.
For such an orbifold, we can build the associated surface $\S=\S(\O^w)$ as in Definition~\ref{ass surface}
(in other words, the associated orbifold $\hat \O^w$ is a surface in this case).
We will compare the exchange graph of tagged triangulations of $\O^w$ with the exchange graph of tagged triangulations of
$\S(\O^w)$, whose growth is known due to~\cite[Proposition~11.1]{FST}.


%
%
%

\begin{theorem}
\label{quasiisom}
Let $\O^w$ be a weighted orbifold with orbifold points of weight 2 only. Let $\S(\O^w)$ be the associated surface.
Then the exchange graph of tagged triangulations of $\O^w$ is quasi-isometric to the exchange graph of tagged triangulations
of $\S(\O^w)$.


\end{theorem}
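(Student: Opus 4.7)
The plan is to exhibit an explicit map $\Phi \colon \mathcal{E}(\O^w) \to \mathcal{E}(\S(\O^w))$ and verify the three defining properties of a quasi-isometry: injectivity/well-definedness, a two-sided distance estimate, and coarse density of the image. Define $\Phi$ on vertices by sending a tagged triangulation $T$ of $\O^w$ to its associated tagged triangulation $\hat T$ of $\S(\O^w)$ (Definition~\ref{ass surface}). By Remark~\ref{ex gr ass surf}, $\Phi$ is injective and its image $\mathcal{I}$ consists of precisely those tagged triangulations of $\S(\O^w)$ that carry a pair of conjugate arcs at every special marked point.

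For the upper distance estimate $d_\S(\Phi(T_1),\Phi(T_2)) \le 2\, d_\O(T_1,T_2)$, I would analyze a single $\O$-flip. A flip of an ordinary arc descends to a single flip in $\S$, while a flip of a pending arc in $\O^w$ corresponds by construction to the simultaneous flip of a pair of conjugate arcs in $\S(\O^w)$; these two flips commute (they are supported on disjoint regions away from the special point), so they can be performed sequentially as two edges in $\mathcal{E}_\S$. Summing over the flips of a geodesic in $\mathcal{E}_\O$ yields the factor $2$.

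For coarse density, I would define the \emph{defect} of a triangulation $S \in \mathcal{E}_\S$ as the number of special marked points at which $S$ fails to exhibit a conjugate pair of arcs. Since the special marked points sit inside the small triangulated disks introduced in Definition~\ref{ass surface}, the local combinatorics near each defective special point is finite, and each such defect can be repaired by a uniformly bounded number of flips localized in that disk. Summing over the (fixed) number of special marked points bounds the distance from any $S$ to $\mathcal{I}$ by a constant $K$ depending only on $\O^w$.

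The main obstacle is the lower bound $d_\O(T_1,T_2) \le A\, d_\S(\Phi(T_1),\Phi(T_2)) + B$. My approach is to take a geodesic $S_0 = \Phi(T_1),\, S_1,\dots, S_k = \Phi(T_2)$ in $\mathcal{E}_\S$, use coarse density to choose $\tilde T_i \in \mathcal{E}_\O$ (with $\tilde T_0 = T_1$, $\tilde T_k = T_2$) satisfying $d_\S(\Phi(\tilde T_i),S_i) \le K$, so that $d_\S(\Phi(\tilde T_i),\Phi(\tilde T_{i+1})) \le 2K+1$, and then invoke the key lemma: \emph{if $d_\S(\Phi(T),\Phi(T')) \le N$ then $d_\O(T,T') \le M(N)$}. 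The lemma follows because a path of length $\le N$ in $\mathcal{E}_\S$ changes at most $N$ arcs, which (given the conjugate-pair structure of $\mathcal{I}$) translates into at most $N$ differing arcs in $\O^w$; one must then argue that two tagged triangulations of $\O^w$ differing in a bounded number of arcs are connected by a bounded number of flips, which reduces via the transitivity result Theorem~\ref{transitivity} to local transitivity on the subsurface supporting the discrepancy. The subtle point here is that an interpolating path produced by the coarse density construction need not use only flips in $\mathcal{I}$, so verifying that local repair moves near different special marked points can be assembled into a global bounded-length $\O$-path is the technical heart of the argument. Once the lemma is in hand, summing $d_\O(\tilde T_i,\tilde T_{i+1}) \le M(K)$ over $i$ yields $d_\O(T_1,T_2) \le M(K)\cdot k$, completing the quasi-isometry.
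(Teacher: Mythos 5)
Your skeleton --- the map $\Phi$, the factor-$2$ upper bound for composite flips, coarse density of the image, and a lower bound by interpolating along a geodesic --- is the right shape, and the upper estimate is correct and matches the paper. But both remaining load-bearing steps have genuine gaps. For coarse density you assert that each defective special marked point ``sits inside a small triangulated disk'' so the defect can be repaired by boundedly many flips \emph{localized in that disk}. This is false for an arbitrary tagged triangulation of $\S(\O^w)$: the distinguished disks of Definition~\ref{ass surface} exist only in triangulations already lying in the image of $\Phi$, whereas in a general triangulation a special marked point is just a puncture that may be incident to up to $2E$ arc-ends wandering over the whole surface. Producing a conjugate pair there is exactly the content of the paper's Lemma~\ref{conj pairs}, proved by a valence-reduction argument (Lemma~\ref{valence}) giving at most $E$ flips per special point, together with Lemmas~\ref{chain} and~\ref{next filled} (which use that the special points form a \emph{proper} subset of the punctures) to guarantee that filling one special point does not destroy the conjugate pairs already created at the others. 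Your proposal assumes away precisely this, which is the technical heart of the paper's proof.

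For the lower bound, your key lemma is justified by reducing to the claim that two tagged triangulations of $\O^w$ differing in at most $N$ arcs lie at flip distance bounded by a function of $N$. That claim is false: two triangulations can agree outside a subsurface of small complexity (say a once-punctured annulus) and differ there by arcs winding arbitrarily many times, so they share all but two arcs yet are arbitrarily far apart in the flip graph. The information you must not discard is that the two image triangulations are joined by a \emph{short path} in the flip graph of $\S(\O^w)$; the argument should push that whole path back into the image of $\Phi$ using the density statement, rather than remember only the set of differing arcs. Since you yourself flag this step as unresolved, the proposal does not yet constitute a proof; supplying Lemma~\ref{conj pairs} (or an equivalent uniform filling statement) is what is actually needed.
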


To prove Theorem~\ref{quasiisom} it is sufficient to show the following lemma.

\begin{lemma}
\label{conj pairs}
Let $S$ be a bordered surface with marked points, and let $T$ be a triangulation of $S$. Let $\M$ be any proper subset of the set of punctures. Then there exists a positive integer $N=N(S,|M|)$ depending on the surface $S$ and the cardinality $|M|$ of the set $M$ only, such that $T$ can be turned via at most $N$ flips into a triangulation $T'$ having a conjugate pair in each vertex $x\in \M$.

\end{lemma}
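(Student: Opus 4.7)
The plan is to exploit the action of the mapping class group $\mathrm{MCG}(S)$ on tagged triangulations of $S$. The central reduction is that $\mathrm{MCG}(S)$ acts on the set of tagged triangulations with only finitely many orbits: a triangulation has a fixed number $n(S)$ of arcs determined by the topology of $S$, and the combinatorial incidence data of arcs, triangles, and marked points (together with cyclic orderings coming from the orientation, and the finite tag data at each puncture) takes only finitely many values. Consequently the set of orbits of pairs $(T',\M')$, with $T'$ a tagged triangulation and $\M'$ a $k$-subset of the punctures (with $k=|\M|$ fixed), is finite.

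For each orbit representative $(T_0,\M_0)$ I would build a target tagged triangulation $T_0^{\ast}$ in which every $x\in\M_0$ carries a conjugate pair. Since $\M_0$ is a proper subset of the punctures, one can choose a marked point $y\notin\M_0$; for each $x\in\M_0$ draw a radius $r_x$ from $y$ to $x$ and a loop $\ell_x$ based at $y$ enclosing only $x$ and hugging $r_x$, arranging the $r_x$ to be pairwise disjoint near $y$. These arcs are pairwise compatible and so extend to a tagged triangulation $T_0^{\ast}$ by the surface analogue of Lemma~\ref{max}; the pair $(r_x,\bar r_x)$ encoded by the self-folded triangle with loop $\ell_x$ and radius $r_x$ is the required conjugate pair at $x$. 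Transitivity of flips (the surface specialisation of Theorem~\ref{transitivity}) then produces a finite flip sequence from $T_0$ to $T_0^{\ast}$, of some length $N(T_0,\M_0)$. The maximum $N(S,k):=\max N(T_0,\M_0)$ over the finitely many orbit representatives is the desired bound.

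For a general input $(T,\M)$ with $|\M|=k$, I would pick $\phi\in\mathrm{MCG}(S)$ so that $(\phi(T),\phi(\M))=(T_0,\M_0)$ is one of the chosen representatives. Because a flip is manifestly equivariant under $\mathrm{MCG}(S)$, applying $\phi^{-1}$ stepwise to the pre-computed flip sequence $T_0\to T_0^{(1)}\to\cdots\to T_0^{\ast}$ yields a flip sequence of the same length from $T$ to $\phi^{-1}(T_0^{\ast})$, and the latter has a conjugate pair at every puncture of $\phi^{-1}(\M_0)=\M$. The main obstacle is justifying the finiteness of $\mathrm{MCG}(S)$-orbits on tagged triangulations; this is classical (arc complex and fatgraph theory in the spirit of Harer and Penner) but is not proved within the present excerpt, so in a written version I would either cite it or include a short combinatorial argument bounding the number of incidence patterns of $n(S)$ arcs meeting the finitely many marked points of $S$.
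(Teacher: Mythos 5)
Your argument is correct, but it takes a genuinely different route from the paper's. You obtain the uniform bound $N$ by a finiteness principle: there are finitely many $\mathrm{MCG}(S)$-orbits of pairs (tagged triangulation, $k$-subset of punctures), the tagged flip graph is connected, and flips are equivariant under homeomorphisms, so one may take the maximum flip distance over orbit representatives. This is sound, with two caveats you should make explicit in a final write-up: the orbit-finiteness is classical but external to the paper (you flag this), and the one case where connectivity of the tagged exchange graph fails (a closed surface with a single puncture) is harmless only because there $\M$ must be empty and the lemma is vacuous. The paper instead argues constructively: it calls $x\in\M$ \emph{filled} when it carries a conjugate pair whose other end lies outside $\M$, shows that one can always flip an arc at an unfilled point so as to strictly decrease its valence (Lemma~\ref{valence}), that some unfilled point of $\M$ is always joined to a vertex outside $\M$ (Lemma~\ref{chain}), and hence that each point of $\M$ can be filled in at most $E$ flips --- $E$ being the number of arcs --- without disturbing the already filled ones (Lemma~\ref{next filled}); induction gives the explicit bound $N=E\,|\M|$. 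The trade-off is clear: your proof is shorter at the level of ideas but non-constructive (the bound is ``a maximum of finitely many finite numbers'') and imports a classical fact, whereas the paper's proof is elementary, self-contained, and yields an explicit linear bound in $|\M|$.
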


Indeed, exchange graph of triangulations of $\O^w$ is isomorphic to exchange graph of triangulations of $\S$ with conjugate pairs in all special marked points, where conjugate pairs should be flipped simultaneously. Lemma~\ref{conj pairs} implies that the latter graph is quasi-isometric to the exchange graph of triangulations of $\S$, and the theorem follows.

We define the notion of {\it valence} in tagged triangulations consistent with one for untagged triangulations.

\begin{definition}
\label{val}
The {\it valence} of a marked point $x\in S$ in triangulation $T$ is the number of ends of arcs incident to $x$, with one exception. If $(\gamma, \gamma')$ is a conjugate pair in $x$, and $y$ is the other end of $\gamma$ (and $\gamma'$), then valence of $x$ is $1$, and the contribution of $\gamma$ and $\gamma'$ to the valence of $y$ is equal to $3$.

\end{definition}

To prove Lemma~\ref{conj pairs}, we will need the three technical lemmas. First, we show that we can decrease the valence under some assumptions.

\begin{lemma}
\label{valence}
Let $T$ be a triangulation containing an arc $e$ with ends $x$ and $y$, $x\ne y$.
If there is no conjugate pair in $x$ then there exists an arc $e'\ne e$ incident to $x$ such that the flip in $e'$ decreases the valence
of $x$.

\end{lemma}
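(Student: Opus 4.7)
The plan is to work locally at $x$ and reduce the assertion to a combinatorial statement about the cyclic pattern of loops and non-loops at $x$. Label the arc-ends at $x$ cyclically as $e_1,\dots,e_k$, with each loop at $x$ contributing two entries (one per end), and write $e=e_{i_0}$. For each $j$, let $a_j$ denote the endpoint of $e_j$ other than $x$, so $a_j=x$ precisely when $e_j$ is a loop at $x$; let $\Delta_j$ be the triangle whose corner at $x$ lies between $e_j$ and $e_{j+1}$, with vertices $x, a_j, a_{j+1}$. I will first verify that the flip of any $e_j$ replaces it by an arc joining $a_{j-1}$ and $a_{j+1}$. For a non-loop this is the standard flip picture; for a loop $e_j=e_{j'}$ ($j\neq j'$) the two triangles abutting it on its two sides satisfy $\Delta_j=\Delta_{j'-1}$ and $\Delta_{j-1}=\Delta_{j'}$, forcing $a_{j-1}=a_{j'+1}$ and $a_{j+1}=a_{j'-1}$, so the opposite-vertex description of the flip is uniform.

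From this local picture I read off when a flip strictly decreases the valence at $x$. If $e_j$ is a non-loop, the valence drops by exactly $1$ iff both $e_{j-1}$ and $e_{j+1}$ are non-loops (i.e.\ $a_{j\pm 1}\neq x$); otherwise the valence is unchanged. If $e_j$ is a loop, it contributes two ends at $x$ while the new arc contributes $[a_{j-1}=x]+[a_{j+1}=x]$ ends, so the valence drops (by $1$ or $2$) iff at least one of $e_{j-1}, e_{j+1}$ is a non-loop.

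Now I argue by contradiction. Suppose no $e'=e_j$ with $j\neq i_0$ satisfies the appropriate decrease condition. If some $e_j$ is a loop, the loop criterion forces $e_{j-1}$ and $e_{j+1}$ to be loops as well; iterating around the cyclic sequence propagates the loop property to every index, including $i_0$, which contradicts the fact that $e=e_{i_0}$ is a non-loop (this uses $x\neq y$). Hence $x$ is incident to no loops. But then the non-loop criterion, applied to every $j\neq i_0$, fails only if there is no non-loop $e_j$ with $j\neq i_0$ at all, i.e., $e$ is the unique arc at $x$.

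It remains to rule out this last scenario. In a tagged triangulation, if the only arc at $x$ is $e$ and $x\neq y$, then in the untagged picture $x$ is the inner puncture of a self-folded triangle at $y$, and in the tagged picture this situation is encoded precisely by a conjugate pair at $x$ with other endpoint $y$. The hypothesis that no conjugate pair sits at $x$ therefore excludes this case, completing the argument. The main obstacle I expect is the loop-flip step: carefully identifying the two triangles on the two sides of a loop at $x$ in the cyclic data, verifying that the opposite-vertex formula gives a consistent description of the resulting flip, and checking that the tagged-triangulation hypothesis (no self-folded triangles) rules out pathologies such as a loop whose two ends are cyclically adjacent at $x$.
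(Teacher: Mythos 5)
Your argument is correct and rests on the same mechanism as the paper's proof: a local analysis of the cyclic star of $x$ together with the observation that flipping an arc replaces it by the arc joining the two opposite vertices of its quadrilateral, so the flip pulls an end off $x$ exactly when those opposite vertices avoid $x$. The only difference is organizational — you run an exhaustive contradiction/propagation argument over all arc-ends at $x$ (with the degenerate cases of loops, self-folded triangles and the single-arc puncture handled at the end), whereas the paper directly exhibits a suitable arc among the two neighbours $e_1,e_2$ of $e$ and, in one sub-case, a third arc $e_3$, via a short three-case analysis.
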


\begin{proof}
Consider the arcs $e_1$ and $e_2$ incident to $x$ and neighboring to $e$ (say, $e_2$ follows $e$ and $e$ follows $e_1$
in the clockwise order,  see Fig.~\ref{fig valence}.a ($e_1$ may coincide with $e_2$, see~Fig.~\ref{fig valence}.b).
If $e_1$ coincides with $e_2$ then we have two triangles containing arcs
$e$ and $e_1=e_2$, as in Fig.~\ref{fig valence}.b, so, after a flip in $e_1=e_2$ the valence of $x$ decreases from 2 to 1,
and we obtain a conjugate pair in $x$.

Suppose that $e_1\ne e_2$. Consider  a flip $f_1$ in $e_1$. If both ends of $f_1(e_1)$ are distinct from $x$ then the flip $f_1$
decreases the valence of $x$ at least by 1. Suppose that one end of $f_1(e_1)$ coincides with $x$. Then the arc $e_3$ incident to
$x$ and following $e_1$ in a clockwise direction (say $e_3$, see Fig.~\ref{fig valence}.c) has both ends in $x$.
Therefore, if $e_1$ has two distinct ends, then flip in $e_3$ decreases the valence of $x$, otherwise the flip in $e_1$ takes one end of $e_1$ to $y$, so it decreases the valence of $x$ as well (as $x\ne y$).

\end{proof}

\begin{figure}[!h]
\begin{center}
\psfrag{x}{\scriptsize $x$}
\psfrag{y}{\scriptsize $y$}
\psfrag{e}{\scriptsize $e$}
\psfrag{e1}{\scriptsize $e_1$}
\psfrag{e12}{\scriptsize $e_1=e_2$}
\psfrag{e2}{\scriptsize $e_2$}
\psfrag{3e}{\scriptsize $e_3$}
\psfrag{a}{\small (a)}
\psfrag{b}{\small (b)}
\psfrag{c}{\small (c)}
\epsfig{file=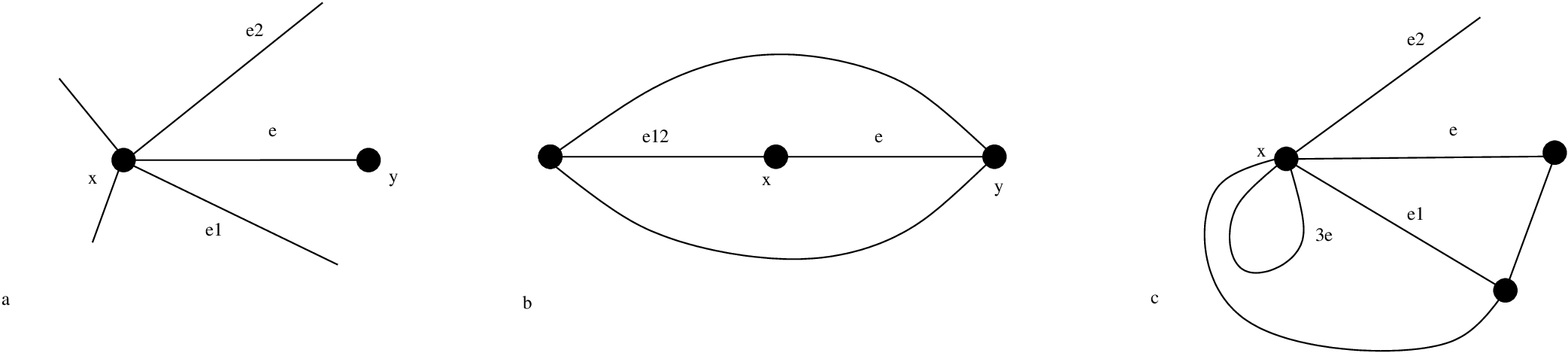,width=0.98\linewidth}
\caption{To the proof of Lemma~\ref{valence}}
\label{fig valence}
\end{center}
\end{figure}

We will say that a vertex $x\in \M$ is {\it filled} if there is a conjugate pair in $x$
and the arc incident to $x$ is not incident to other vertices $y\in \M$.
The second technical lemma concerns the structure on unfilled vertices.

\begin{lemma}
\label{chain}
Let $T$ be a triangulation with a vertex $x\in\M$ that is not filled. Then there exist vertices $z\in\M$ and $y\notin\M$ joined by an arc of $T$ such that $z$ is not filled.

\end{lemma}

\begin{proof}
Since $\M$ is a proper subset of the set of vertices of $T$, there exists a vertex $y_0\notin \M$. Consider the shortest path $\alpha=\{e_1,\dots,e_l\}$ of arcs of $T$ connecting $x$ to $y_0$ (i.e., $l$ is minimal possible).
Note that no vertex of the path $\alpha$ can be a filled vertex from $\M$ (the filled vertices are of valence $1$).
Since $x\in \M$ and $y_0\notin \M$,  there   exists an arc $e_i\in \alpha$  whose one endpoint belongs to $\M$ and the other does not.
Denote by $z$ and $y$ the ends of $e_i$ contained in $\M$ and the other respectively.
Then $z$ and $y$ satisfy the conditions of the lemma.

\end{proof}

The third lemma shows that every unfilled vertex $x\in\M$ can be filled in a uniformly bounded number of flips.

\begin{lemma}
\label{next filled}
 Let $\M=\{x_1,\dots,x_m\}$, and let $T$ be a triangulation where $x_1,\dots,x_k$ are filled and the other vertices are not filled.
Let $E$ be the number of arcs in $T$.
Then it is possible to find a vertex $x_j\in \{x_{k+1},\dots,x_m \}$
such that after at most $E$ flips of arcs incident to $x_j$ the vertex $x_j$ becomes filled.

\end{lemma}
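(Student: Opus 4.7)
The plan is to pick the vertex $x_j$ using Lemma~\ref{chain}, and then drive its valence down to $1$ by iterating Lemma~\ref{valence}, while preserving throughout a specific arc from $x_j$ to a vertex outside $\M$. First I would apply Lemma~\ref{chain} (applicable since at least one of $x_{k+1},\dots,x_m$ is unfilled by hypothesis) to obtain an unfilled vertex $z \in \{x_{k+1}, \dots, x_m\}$ and an arc $e \in T$ whose endpoints are $z$ and some $y \notin \M$. I set $x_j := z$ and adopt as loop invariant that $e$ lies in the current triangulation and $z$ is still unfilled.

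The main step is the iteration: while $z$ is not filled, apply Lemma~\ref{valence} to the arc $e$ to obtain an arc $e' \ne e$ incident to $z$ whose flip strictly decreases the valence of $z$, and perform that flip. Lemma~\ref{valence} always applies here, for otherwise $z$ would carry a conjugate pair at some stage; then the valence at $z$ would equal $1$ and the pair would consist of the only two arcs at $z$, forcing $e$ to be one of them and the pair to connect $z$ with $y \notin \M$ --- making $z$ already filled, contrary to the invariant. Because Lemma~\ref{valence} selects $e' \ne e$, the flip preserves $e$ and the invariant persists.

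Since the valence strictly decreases at each step, the iteration terminates at valence $1$. At this moment a conjugate pair at $z$ must appear (for an interior marked point of a tagged triangulation, valence $1$ is realized only through a conjugate pair), and by the argument above this pair connects $z$ with $y \notin \M$, so $z$ becomes filled; every flip performed was of an arc incident to $z$, as required.

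The main technical obstacle is the quantitative bound of $E$ flips, which I would argue by tracking the effect of each flip on the star of $z$: a flip decreasing valence by $1$ corresponds to removing a non-loop arc from the star, and a flip decreasing valence by $2$ corresponds to removing a loop at $z$. Although the conjugate-pair-creation case of Lemma~\ref{valence} requires slightly finer bookkeeping, in all cases the total number of flips is controlled by the initial size of the star of $z$ (counted with appropriate multiplicity), which is at most $E$.
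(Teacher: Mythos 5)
Your proposal is correct and takes essentially the same route as the paper: pick the vertex via Lemma~\ref{chain}, repeatedly apply Lemma~\ref{valence} while the arc $e$ to $y\notin\M$ is preserved (since the flipped arc $e'\ne e$), and conclude that at valence $1$ the resulting conjugate pair must contain $e$ and hence lands at $y\notin\M$, filling the vertex. The only looseness is the quantitative bound of exactly $E$ flips (when loops at the chosen vertex are present the valence can approach $2E$, so each-flip-decreases-valence-by-one only yields a bound of roughly $2E$), but the paper's own proof asserts the same bound with the same hand-wave, and any uniform bound suffices for Lemma~\ref{conj pairs}.
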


\begin{proof}
By Lemma~\ref{chain}, there exists a vertex $x_j\in \{x_{k+1},\dots,x_m \}$ which is not filled and is joined with some vertex $y\notin \M$.
By Lemma~\ref{valence}, it is possible to make a flip in some arc incident to $x_j$ (and distinct from $x_jy$) such that valence of $x_j$
decreases. Applying   Lemma~\ref{valence} to $x_j$ several times, we decrease the valence of $x_j$ to 1 in less than $E$ flips.
So, in less than $E$ flips of arcs incident to $x_j$ we obtain a conjugate pair in $x_j$ with the other end of conjugate pair in $y\notin \M$, so  $x_j$ is filled.

\end{proof}

Now, we are able to prove Lemma~\ref{conj pairs}.

\begin{proof}[Proof of Lemma~\ref{conj pairs}]
The proof is by induction on the number of filled vertices. Let $\M=\{x_1,\dots,x_m\}$ ($m=|\M|$) and let $E$ be the number of arcs in
the triangulation $T$. Suppose that the vertices $x_1,\dots,x_k$
are filled. By Lemma~\ref{next filled}, we can make one of the vertices $x_{j}\in\{x_{k+1},\dots,x_m \}$ filled
via at most $E$ flips applied only to arcs incident to $x_j$.
Notice that by definition of filled vertex none of the vertices $x_i\in\{ x_1,\dots,x_k\}$ was joined with $x_j$.
This implies that while treating $x_{j}$ we preserve  the vertices $x_1,\dots,x_k$ filled, so that after the procedure we
have a triangulation with $k+1$ filled vertices.
Applying this procedure $m$ times we will get $m$ filled vertices in at most $N(S,m=|\M|)=Em$ flips.

\end{proof}

This completes the proof of Theorem~\ref{quasiisom}.

In view of Remark~\ref{rem_growth}, Theorem~\ref{quasiisom} together with~\cite[Proposition~11.1]{FST}
lead to the following result.

\begin{theorem}
\label{grows result}
Let $\A$ be a cluster algebra with an s-decomposable exchange matrix $B$. Then $\A$ has a polynomial growth if and
only if it corresponds to a diagram $\D(B)$ in the following list:
\begin{itemize}
\item finite type $A_n$, $B_n$, $C_n$ or $D_n$ (finite);
\item affine type $\widetilde A_n$, $\widetilde B_n$, $\widetilde C_n$ or $\widetilde D_n$ (linear growth);
\item diagram $\Gamma(n_1,n_2)$, $n_1,n_2\in \Z_{>0}$, shown in Fig.~\ref{growth-diagr} (quadratic growth);
\item diagram $\Delta(n_1,n_2)$, $n_1,n_2\in \Z_{>0}$, shown in Fig.~\ref{growth-diagr} (quadratic growth);
\item diagram $\Gamma(n_1,n_2,n_3)$, $n_1,n_2,n_3\in \Z_{>0}$, shown in Fig.~\ref{growth-diagr} (cubic growth).

\end{itemize}
Otherwise $\A$ has exponential growth.

\end{theorem}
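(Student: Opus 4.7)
The plan is to reduce the problem to the already-classified skew-symmetric (surface) case via the associated surface construction, and then translate the resulting list back into the language of s-decomposable diagrams.

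First, by Remark~\ref{rem_growth}, the growth rate of $\A$ depends only on the diagram $\D(B)$, so we are free to replace $B$ by any matrix with the same diagram. In particular, for any s-decomposable matrix $B$ whose weighted orbifold $\O^w=\O^w(B)$ contains orbifold points of weight $1/2$, we may flip the weights and assume that all orbifold points of $\O^w$ carry weight $2$. Under this assumption the associated orbifold $\hat\O$ coincides with the associated surface $\S(\O^w)$ of Definition~\ref{ass surface}, which is an honest bordered surface with marked points; the matrix corresponding to (any triangulation of) $\S(\O^w)$ is skew-symmetric and block-decomposable.

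Next, Theorem~\ref{quasiisom} tells us that the exchange graph of tagged triangulations of $\O^w$ is quasi-isometric to the exchange graph of tagged triangulations of $\S(\O^w)$. Since polynomial versus exponential growth of a graph is a quasi-isometry invariant (indeed, the quasi-isometry changes the counting function only by a linear reparametrisation of the radius), the cluster algebra $\A$ has polynomial growth (and of the same degree) if and only if the skew-symmetric cluster algebra associated to $\S(\O^w)$ does. Applying~\cite[Proposition~11.1]{FST}, the latter has polynomial growth exactly for the four classes of surfaces listed there: discs/annuli giving finite and affine types $A_n, D_n, \widetilde A_n, \widetilde D_n$, twice-punctured discs, once-punctured annuli, and thrice-punctured spheres/discs (with appropriate puncture counts), yielding quadratic and cubic growth respectively; all other bordered surfaces give exponential growth.

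The final step is a case-by-case dictionary between these admissible surfaces $\S(\O^w)$ and the diagrams $\D(B)$ appearing in the statement. Each orbifold point of weight $2$ on $\O^w$ is opened up in $\S(\O^w)$ into a once-punctured digon carrying a pair of conjugate arcs (a ``special'' puncture), so punctures on $\S(\O^w)$ split into ordinary punctures of $\O^w$ and punctures produced by orbifold points. Running through the polynomial-growth list from~\cite[Prop.~11.1]{FST} and recording, for each surface, how many of its punctures can come from orbifold points, one reads off precisely the diagrams in the theorem: the finite types $A_n,B_n,C_n,D_n$ come from discs with $0,1,2$ orbifold points; the affine types $\widetilde A_n,\widetilde B_n,\widetilde C_n,\widetilde D_n$ from annuli and once-punctured discs with the corresponding distribution of orbifold points; the families $\Gamma(n_1,n_2)$ and $\Delta(n_1,n_2)$ from the twice-punctured disc and once-punctured annulus with various choices of orbifold points among the punctures; and $\Gamma(n_1,n_2,n_3)$ from the thrice-punctured sphere with the three punctures possibly replaced by orbifold points. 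Any surface outside this list has exponential growth by~\cite[Prop.~11.1]{FST}, hence so does the corresponding orbifold cluster algebra. The main (and only non-mechanical) obstacle is this last translation step: one must verify, diagram by diagram, that the families $\Gamma(n_1,n_2)$, $\Delta(n_1,n_2)$, $\Gamma(n_1,n_2,n_3)$ defined in Figure~\ref{growth-diagr} exhaust precisely the s-decomposable diagrams whose associated surface is one of the polynomial-growth surfaces of~\cite[Prop.~11.1]{FST}, and conversely that every such surface, equipped with any admissible assignment of weights to its special punctures, yields an s-decomposable diagram in this list.
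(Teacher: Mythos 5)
Your proposal follows the paper's argument exactly: reduce via Remark~\ref{rem_growth} to the case where all orbifold points have weight $2$, invoke the quasi-isometry of Theorem~\ref{quasiisom} between the exchange graphs of the orbifold and its associated surface, and quote the surface classification of~\cite[Proposition~11.1]{FST} --- the paper's own proof is precisely this one-sentence combination, followed by the same surface-to-diagram dictionary you describe. The only caveat is that your parenthetical recollection of which surfaces yield which growth is slightly off (e.g.\ the twice-punctured disc gives affine type $\widetilde D_n$ and hence linear, not quadratic, growth), but this lies inside the translation step that you yourself correctly flag as a routine case-by-case verification.
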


\begin{figure}[!h]
\begin{center}
\begin{tabular}{rl}
 $\Gamma(n_1,n_2)$&
\psfrag{a1}{\scriptsize $a_1$}
\psfrag{a2}{\scriptsize $a_2$}
\psfrag{an1-}{\scriptsize $a_{n_1-1}$}
\psfrag{an1}{\scriptsize $a_{n_1}$}
\psfrag{b1}{\scriptsize $b_1$}
\psfrag{b2}{\scriptsize $b_2$}
\psfrag{bn2+}{\scriptsize $b_{n_2+1}$}
\psfrag{bn2}{\scriptsize $b_{n_2}$}
\psfrag{b0}{\scriptsize $b_0$}
\psfrag{b0'}{\scriptsize $b_0'$}
\psfrag{c1}{\scriptsize $c_1$}
\psfrag{cn3-}{\scriptsize $c_{n_3-1}$}
\psfrag{cn3}{\scriptsize $c_{n_3}$}
\psfrag{dots}{\scriptsize $\dots$}
\raisebox{-18pt}{
\epsfig{file=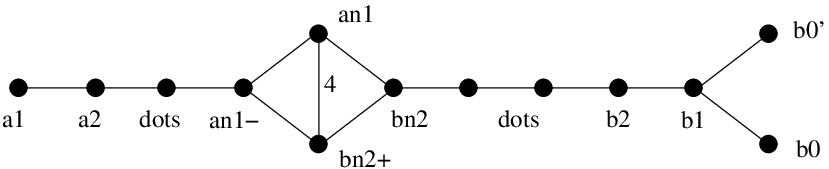,width=0.716\linewidth}
}
\\
 $\Gamma(n_1,n_2,n_3)$&
\psfrag{a1}{\scriptsize $a_1$}
\psfrag{a2}{\scriptsize $a_2$}
\psfrag{an1-}{\scriptsize $a_{n_1-1}$}
\psfrag{an1}{\scriptsize $a_{n_1}$}
\psfrag{b1}{\scriptsize $b_1$}
\psfrag{b2}{\scriptsize $b_2$}
\psfrag{bn2+}{\scriptsize $b_{n_2+1}$}
\psfrag{bn2++}{\scriptsize $b_{n_2+2}$}
\psfrag{bn2}{\scriptsize $b_{n_2}$}
\psfrag{b0}{\scriptsize $b_0$}
\psfrag{b0'}{\scriptsize $b_0'$}
\psfrag{c1}{\scriptsize $c_1$}
\psfrag{cn3-}{\scriptsize $c_{n_3-1}$}
\psfrag{cn3}{\scriptsize $c_{n_3}$}
\psfrag{dots}{\scriptsize $\dots$}
\raisebox{-18pt}{
\epsfig{file=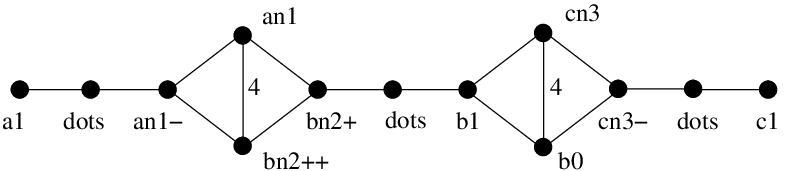,width=0.68\linewidth}
}\\
 $\Delta(n_1,n_2)$&
\psfrag{a1}{\scriptsize $a_1$}
\psfrag{a2}{\scriptsize $a_2$}
\psfrag{an1-}{\scriptsize $a_{n_1-1}$}
\psfrag{an1}{\scriptsize $a_{n_1}$}
\psfrag{b1}{\scriptsize $b_1$}
\psfrag{b2}{\scriptsize $b_2$}
\psfrag{bn2+}{\scriptsize $b_{n_2+1}$}
\psfrag{bn2}{\scriptsize $b_{n_2}$}
\psfrag{b0}{\scriptsize $b_0$}
\psfrag{b0'}{\scriptsize $b_0'$}
\psfrag{c1}{\scriptsize $c_1$}
\psfrag{cn3-}{\scriptsize $c_{n_3-1}$}
\psfrag{cn3}{\scriptsize $c_{n_3}$}
\psfrag{dots}{\scriptsize $\dots$}
\psfrag{2}{$2$}
\raisebox{-18pt}{
\epsfig{file=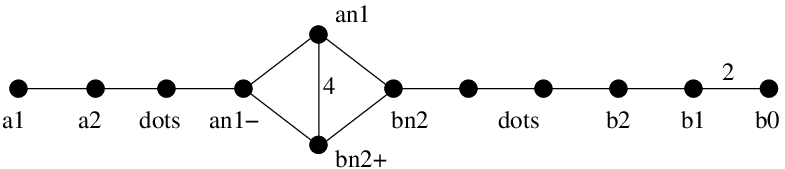,width=0.68\linewidth}
}\end{tabular}
\caption{Diagrams for the cluster algebras of quadratic and cubic growth. All triangles are oriented. Orientations of the remaining
edges are of no importance. Diagrams $\Gamma(n_1,n_2)$ and $\Gamma(n_1,n_2,n_3)$ are obtained in~\cite{FST} for skew-symmetric case.
}
\label{growth-diagr}
\end{center}
\end{figure}

\section{Unfoldings of matrices and diagrams}
\label{unfolding-s}

In this section, we recall basic definitions of unfoldings of matrices and diagrams defined in~\cite{FST2}, and reformulate some constructions of~\cite{FST2} in terms of orbifolds.

\subsection{Definitions}
\label{unf-def}
Let $B$ be an indecomposable $n\times n$ skew-symmetrizable integer matrix, and let $BD$ be a skew-symmetric matrix, where $D=(d_{i})$ is a diagonal integer matrix with positive diagonal entries. Notice that for any matrix $\mu_i(B)$ the matrix $\mu_i(B)D$ will be skew-symmetric.

We use the following definition of unfolding of a skew-symmetrizable matrix (communicated to us by A.~Zelevinsky).

Suppose that we have chosen disjoint index sets $E_1,\dots, E_n$ with $|E_i| =d_i$. Denote $m=\sum\limits_{i=1}^n d_i$.
Suppose also that we choose a skew-symmetric integer matrix $C$ of size $m\times m$ with rows and columns indexed by the union of all $E_i$, such that

(1) the sum of entries in each column of each $E_i \times E_j$ block of $C$ equals $b_{ij}$;

(2) if $b_{ij} \geq 0$ then the $E_i \times E_j$ block of $C$ has all entries non-negative.

Define a {\it composite mutation} $\h\mu_i = \prod_{\hat\imath \in E_i} \mu_{\hat\imath}$ on $C$. This mutation is well-defined, since all the mutations  $\mu_{\hat\imath}$, $\hat\imath\in E_i$, for given $i$ commute.

\begin{definition}
Skew-symmetric matrix $C$ is an {\it unfolding} of skew-symmetrizable matrix $B$ if $C$ satisfies assertions $(1)$ and $(2)$ above, and for any sequence of iterated mutations $\mu_{k_1}\dots\mu_{k_m}(B)$ the matrix $C'=\h\mu_{k_1}\dots\h\mu_{k_m}(C)$ satisfies assertions $(1)$ and $(2)$ with respect to $B'=\mu_{k_1}\dots\mu_{k_m}(B)$.
\end{definition}

If $C$ is an unfolding of a skew-symmetrizable integer matrix $B$, it is natural to define an {\it unfolding of a diagram} of $B$ as a diagram of $C$. In general, we say that a diagram $\h\D$ is an unfolding of a diagram $\D$ if there exist matrices $B$ and $C$ with diagrams $\D$ and $\h\D$ respectively, and $C$ is an unfolding of $B$. This definition is equivalent to the following one.

\begin{definition}
Let $\D$ be a diagram with vertices $x_1,\dots,x_n$, and let $d_1,\dots,d_n$ be positive integers. Let $\h\D$ be a connected skew-symmetric diagram with vertices $x_{\hat\imath}$ indexed by sets $E_i$ of order $d_i$ (i.e., ${\hat\imath}\in E_i$, $E_i=\{\hat\imath_1,\dots,\hat\imath_{d_i}\}$), such that for each $i,j\in\[1\dots n\]$ the following holds:

(A) there are no edges joining vertices inside $E_i$ and no edges joining vertices inside $E_j$;

(B) for all $\hat\imath\in E_i$ the sum of weights of all edges joining $x_{\hat\imath}$ with $E_j$ is the same, and all the arrows are oriented simultaneously either from $E_i$ to $E_j$ or from $E_j$ to $E_i$;

(C) for all $\hat\imath\in E_i$ and $\hat\jmath\in E_j$ the product of total weight of edges joining $x_{\hat\imath}$ with $E_j$ and total weight of edges joining $x_{\hat\jmath}$ with $E_i$ equals the weight of $x_ix_j$.

Define a {\it composite mutation} $\h\mu_i = \prod_{\hat\imath \in E_i} \mu_{\hat\imath}$ on $\h S$. As in the case of matrices, the mutation is well-defined. We say that $\h\D$ is an {\it unfolding} of $\D$ if for any sequence of iterated mutations $\mu_{i_1}\dots\mu_{i_k}$ a pair of diagrams $(\mu_{i_1}\dots\mu_{i_k}\! \D,\,\h\mu_{i_1}\dots\h\mu_{i_k}\!\h\D\,)$ satisfies the same conditions as the pair $(\D,\h\D)$ does, i.e. for each $i,j\le n$ the assumptions (A), (B) and (C) hold.

\end{definition}

One can note that unfolding of a diagram may not be unique: if the diagram corresponds to more than one matrix, then these matrices may have distinct unfoldings with distinct diagrams.

\subsection{Local unfoldings}
\label{unf-local}

In~\cite{FST2}, we build an unfolding for every s-decomposable diagram $\D$ in the following way: every s-block of $\D$ (see column~2 of Table~\ref{table-loc}) is substituted by a skew-symmetric block located in the same row of Table~\ref{table-loc}. The procedure is well-defined since there is a one-to-one correspondence between s-blocks and their unfoldings. According to~\cite[Section~6]{FST2}, the procedure above results in an unfolding of $\D$.

\begin{definition}
The unfolding constructed above is called {\it local} unfolding.

\end{definition}

In terms of s-decomposable matrices, this procedure provides an unfolding of a skew-symmetrizable matrix $B$ with diagram $\D$ such that weights of all the outlets of weighted diagram $\D^w$ are equal to one.

Local unfolding can be reformulated in terms of orbifolds as well. Consider skew-symmetrizable matrix $B$ as above, and construct a weighted orbifold $\O^w$. Assumptions on $B$ imply that all orbifold points of $\O^w$ have weight $2$. Substituting every orbifold point by a puncture, and every arc by a conjugate pair, we obtain the associated surface (see Definition~\ref{ass surface}), and then take the signed adjacency matrix of the obtained triangulation. In other words, we substitute all elementary orbifolds (Section~\ref{orbifolds-s}) by elementary surfaces depicted in the same row of Table~\ref{table-loc} (cf. Remark~\ref{ex gr ass surf}).

\begin{table}[!h]
\begin{center}
\caption{Triangulations corresponding to local unfoldings of non-exceptional s-blocks. Weights $d_i$ for all the outlets are equal to one. The vertices with indices belonging to the same index sets $E_i$ in the unfolding diagram are denoted by the same letter (i.e. $(v_1,v_2)$, $(w_1,w_2)$, $(p_1,p_2)$), cf.~Table~\ref{newblocks} }
\label{table-loc}
\begin{tabular}{cp{0.7cm}cp{0.7cm}c}
Diagram&&Unfolding&&Triangulation\\
\hline
&&&&\\
\psfrag{u}{\tiny $u$}
\psfrag{v}{\tiny $v$}
\psfrag{2-}{\tiny $2$}
\raisebox{8mm}{\epsfig{file=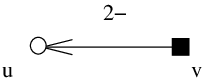,width=0.105\linewidth}}&&
\psfrag{u}{\tiny $u$}
\psfrag{v1}{\tiny $v_1$}
\psfrag{v2}{\tiny $v_2$}
\raisebox{4mm}{\epsfig{file=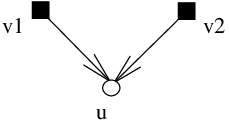,width=0.12\linewidth}}&&
\psfrag{u}{\tiny $u$}
\psfrag{v1}{\tiny $v_1$}
\psfrag{v2}{\tiny $v_2$}
\raisebox{0mm}{\epsfig{file=diagrams_pic/s-block-I.eps,width=0.12\linewidth}}\\
\psfrag{u}{\tiny $u$}
\psfrag{v}{\tiny $v$}
\psfrag{2-}{\tiny $2$}
\raisebox{8mm}{\epsfig{file=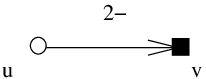,width=0.105\linewidth}}&&
\psfrag{u}{\tiny $u$}
\psfrag{v1}{\tiny $v_1$}
\psfrag{v2}{\tiny $v_2$}
\raisebox{4mm}{\epsfig{file=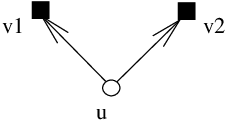,width=0.12\linewidth}}&&
\psfrag{u}{\tiny $u$}
\psfrag{v1}{\tiny $v_1$}
\psfrag{v2}{\tiny $v_2$}
\raisebox{0mm}{\epsfig{file=diagrams_pic/s-block-II.eps,width=0.12\linewidth}}\\
\psfrag{u}{\tiny $u$}
\psfrag{v}{\tiny $v$}
\psfrag{w}{\tiny $w$}
\psfrag{2-}{\tiny $2$}
\raisebox{6mm}{\epsfig{file=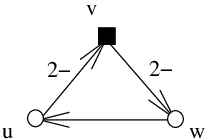,width=0.105\linewidth}}&&
\psfrag{u}{\tiny $u$}
\psfrag{w}{\tiny $w$}
\psfrag{v1}{\tiny $v_1$}
\psfrag{v2}{\tiny $v_2$}
\raisebox{0mm}{\epsfig{file=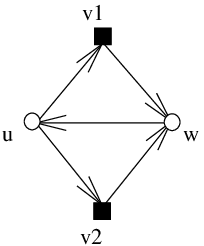,width=0.12\linewidth}}&&
\psfrag{u}{\tiny $u$}
\psfrag{w}{\tiny $w$}
\psfrag{v1}{\tiny $v_1$}
\psfrag{v2}{\tiny $v_2$}
\raisebox{0mm}{\epsfig{file=diagrams_pic/s-block-III.eps,width=0.09\linewidth}}\\
&&&&\\
\psfrag{u}{\tiny $u$}
\psfrag{w}{\tiny $w$}
\psfrag{p}{\tiny $p$}
\psfrag{q}{\tiny $q$}
\psfrag{2-}{\tiny $2$}
\raisebox{0mm}{\epsfig{file=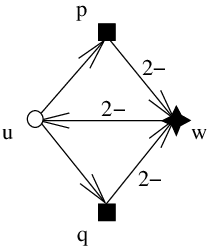,width=0.12\linewidth}}&&
\psfrag{u}{\tiny $u$}
\psfrag{r}{\tiny $q$}
\psfrag{w}{\tiny $w_2$}
\psfrag{p}{\tiny $p$}
\psfrag{q}{\tiny $w_1$}
\raisebox{4mm}{\epsfig{file=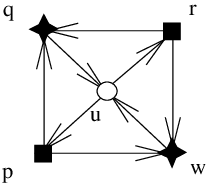,width=0.12\linewidth}}&&
\psfrag{u}{\tiny $u$}
\psfrag{w1}{\tiny $p$}
\psfrag{w2}{\tiny $q$}
\psfrag{p1}{\tiny $w_1$}
\psfrag{p2}{\tiny $w_2$}
\raisebox{0mm}{\epsfig{file=diagrams_pic/s-block-IV.eps,width=0.15\linewidth}}\\
&&&&\\
\psfrag{u}{\tiny $u$}
\psfrag{w}{\tiny $w$}
\psfrag{p}{\tiny $p$}
\psfrag{q}{\tiny $q$}
\psfrag{2-}{\tiny $2$}
\raisebox{0mm}{\epsfig{file=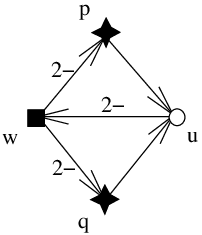,width=0.12\linewidth}}&&
\psfrag{u}{\tiny $u$}
\psfrag{r}{\tiny $w_2$}
\psfrag{w}{\tiny $q$}
\psfrag{p}{\tiny $w_1$}
\psfrag{q}{\tiny $p$}
\raisebox{4mm}{\epsfig{file=diagrams_pic/block5l.eps,width=0.12\linewidth}}&&
\psfrag{u}{\tiny $u$}
\psfrag{w1}{\tiny $w_1$}
\psfrag{w2}{\tiny $w_2$}
\psfrag{p1}{\tiny $p$}
\psfrag{p2}{\tiny $q$}
\raisebox{0mm}{\epsfig{file=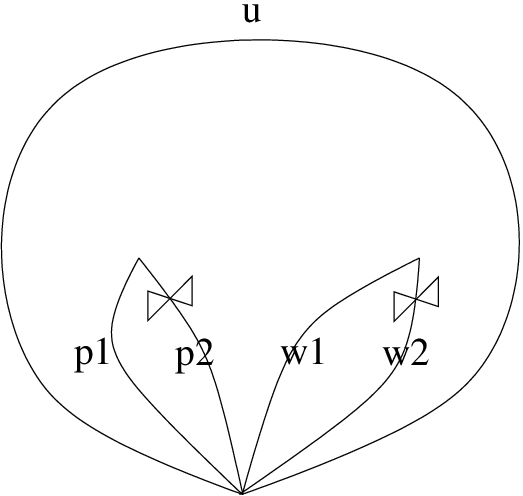,width=0.15\linewidth}}\\
&&&&\\
\psfrag{u}{\tiny $u$}
\psfrag{w}{\tiny $p$}
\psfrag{p}{\tiny $w$}
\psfrag{2-}{\tiny $2$}\psfrag{4}{\tiny $4$}
\raisebox{5mm}{\epsfig{file=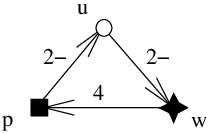,width=0.12\linewidth}}&&
\psfrag{u}{\tiny $u$}
\psfrag{r}{\tiny $w_2$}
\psfrag{w}{\tiny $p_2$}
\psfrag{p}{\tiny $w_1$}
\psfrag{q}{\tiny $p_1$}
\raisebox{3mm}{\epsfig{file=diagrams_pic/block5l.eps,width=0.12\linewidth}}&&
\psfrag{u}{\tiny $u$}
\psfrag{w1}{\tiny $w_1$}
\psfrag{w2}{\tiny $w_2$}
\psfrag{p1}{\tiny $p_1$}
\psfrag{p2}{\tiny $p_2$}
\raisebox{0mm}{\epsfig{file=diagrams_pic/s-block-VI.eps,width=0.15\linewidth}}\\
&&&&\\

\psfrag{u}{\tiny $u$}
\psfrag{r}{\tiny $r$}
\psfrag{w}{\tiny $w$}
\psfrag{p}{\tiny $p$}
\psfrag{q}{\tiny $q$}
\psfrag{2-}{\tiny $2$}
\psfrag{u1}{\tiny $u_1$}
\psfrag{u2}{\tiny $u_2$}
\psfrag{r}{\tiny $r$}
\psfrag{w}{\tiny $w$}
\psfrag{p}{\tiny $p$}
\psfrag{q}{\tiny $q$}
\psfrag{w1}{\tiny $p$}
\psfrag{w2}{\tiny $r$}
\psfrag{u}{\tiny $u_1$}
\psfrag{v}{\tiny $u_2$}
\psfrag{p}{\tiny $w$}
\psfrag{q}{\tiny $q$}

\end{tabular}
\end{center}
\end{table}

\section{Unfoldings of mutation-finite matrices}
\label{unfolding-fin}

In this section, we provide unfoldings for s-decomposable skew-symmetrizable matrices (with exception of one family, see Remark~\ref{unf-ex}). Unfoldings of mutation-finite non-decomposable matrices are constructed in~\cite{FST2}. In other words, we associate with (almost) every triangulated weighted orbifold a triangulated surface, such that flips on the orbifold agree with composite flips on the surface.

Matrix $B$ not admitting local unfolding is characterized by the following property: weights of all outlets in its weighted diagram $\D^w$ are equal to $2$. Further considerations split in two cases depending on the weights of orbifold points of the orbifold $\O^w$: either all the orbifold points are of weight $1/2$ (in this case matrix admits {\it prime unfolding} constructed in Section~\ref{sec-prime}, see also Remark~\ref{unf-ex}), or there are orbifold points of both weights $2$ and $1/2$ (Section~\ref{sec-composite}).

\begin{remark}
\label{unf-ex}
The only series of s-decomposable matrices for which we are not able to construct an unfolding can be described as follows: the corresponding weighted orbifold is of genus zero without boundary, and has exactly one orbifold point of weight $1/2$. The corresponding diagrams are shown in Fig~\ref{fig-ex}. We will exclude these matrices from all considerations in Section~\ref{sec-prime}, see Section~\ref{one}.

\end{remark}

\begin{figure}[!h]
\begin{center}
\psfrag{1}{\scriptsize $c_1$}
\psfrag{2}{\scriptsize $c_2$}
\psfrag{3}{\scriptsize $c_3$}
\psfrag{4}{\scriptsize $c_4$}
\psfrag{5}{\scriptsize $c_5$}
\psfrag{6}{\scriptsize $c_6$}
\psfrag{7}{\scriptsize $c_7$}
\psfrag{8}{\scriptsize $c_8$}
\psfrag{3n3}{\scriptsize $c_{3n+3}$}
\psfrag{3n4}{\scriptsize $c_{3n+4}$}
\psfrag{3n5}{\scriptsize $c_{3n+5}$}
\psfrag{dd}{\scriptsize $\dots$}
\psfrag{2_}{\scriptsize $2$}
\epsfig{file=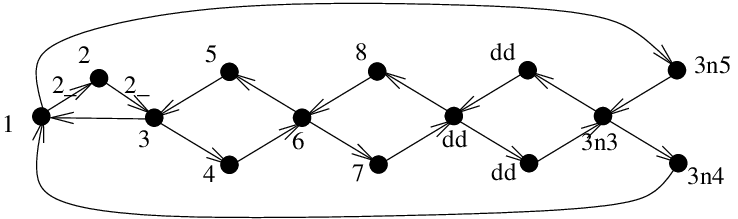,width=0.78\linewidth}
\caption{Series of diagrams corresponding to closed sphere with one orbifold point and $n+3$ marked points, $n\ge 0$.}
\label{fig-ex}
\end{center}
\end{figure}

For the both cases described above we need the following construction.

\begin{definition}
A {\it partial unfolding} of a skew-symmetrizable matrix $B$ is a skew-symmetrizable matrix $C$ satisfying all properties of unfolding except skew-symmetry. A {\it partial unfolding} of a diagram is defined accordingly.

\end{definition}

The following lemma follows immediately from the definition of unfolding.

\begin{lemma}
\label{composition}
A composition of two partial unfoldings is a partial unfolding.

\end{lemma}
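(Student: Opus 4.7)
The plan is straightforward once the notation is fixed. Let $B$ be an $n\times n$ skew-symmetrizable matrix, let $C$ be a partial unfolding of $B$ with index sets $E_1,\dots,E_n$, and let $D$ be a partial unfolding of $C$ whose index sets $F_{\hat\imath}$ are indexed by $\hat\imath\in \bigsqcup_i E_i$. Set
\[
G_i \;=\; \bigsqcup_{\hat\imath\in E_i} F_{\hat\imath}, \qquad \tilde d_i \;=\; |G_i|,
\]
so that the rows and columns of $D$ are partitioned by $G_1,\dots,G_n$. I claim that $D$ with these index sets is a partial unfolding of $B$.

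First I would verify the static conditions (1) and (2) for $D$ with respect to $B$. For condition (1), pick $i,j$ and a column $\tilde\jmath\in G_j$, which lies in some $F_{\hat\jmath}$ with $\hat\jmath\in E_j$. Splitting the row sum over $G_i$ according to the decomposition $G_i=\bigsqcup_{\hat\imath\in E_i}F_{\hat\imath}$ gives
\[
\sum_{\tilde\imath\in G_i} d_{\tilde\imath\tilde\jmath} \;=\; \sum_{\hat\imath\in E_i}\Bigl(\sum_{\tilde\imath\in F_{\hat\imath}} d_{\tilde\imath\tilde\jmath}\Bigr) \;=\; \sum_{\hat\imath\in E_i} c_{\hat\imath\hat\jmath} \;=\; b_{ij},
\]
the middle equality using property~(1) of $D$ over $C$ and the last using property~(1) of $C$ over $B$. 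For condition (2), if $b_{ij}\ge 0$ then $c_{\hat\imath\hat\jmath}\ge 0$ for all $\hat\imath\in E_i, \hat\jmath\in E_j$ by (2) for $C/B$, and then the entries of each $F_{\hat\imath}\times F_{\hat\jmath}$ block of $D$ are non-negative by (2) for $D/C$, so the whole $G_i\times G_j$ block is non-negative.

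Next I would check that the composite mutation $\tilde\mu_k=\prod_{\tilde\imath\in G_k}\mu_{\tilde\imath}$ is well-defined on $D$ and factors through the intermediate level. For $\hat\imath,\hat\jmath\in E_k$ one has $c_{\hat\imath\hat\jmath}=0$ (since $E_k\times E_k$-blocks of $C$ vanish by (1),(2) and skew-symmetrizability of $B$), and the same argument applied at the lower level shows that the $F_{\hat\imath}\times F_{\hat\jmath}$-blocks of $D$ vanish; hence the $G_k\times G_k$-block of $D$ is zero and all mutations in $G_k$ commute. Moreover the factorization
\[
\tilde\mu_k \;=\; \prod_{\hat\imath\in E_k}\,\hat\mu_{\hat\imath}, \qquad \hat\mu_{\hat\imath}=\prod_{\tilde\imath\in F_{\hat\imath}}\mu_{\tilde\imath},
\]
holds by associativity of disjoint commuting mutations.

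Finally I would induct on the length $\ell$ of a mutation sequence $\mu_{k_1}\cdots\mu_{k_\ell}$ on $B$. For the inductive step it suffices to show: if $D'$ is a partial unfolding of $C'$ (via the $F$'s) and $C'$ is a partial unfolding of $B'$ (via the $E$'s), then applying $\tilde\mu_k$ to $D'$ yields a partial unfolding of $\mu_k(B')$. Using the factorization above and the hypothesis that $D'$ is a partial unfolding of $C'$, we get that $\tilde\mu_k(D')$ is a partial unfolding of $\prod_{\hat\imath\in E_k}\mu_{\hat\imath}(C')=\hat\mu_k(C')$; by the hypothesis that $C'$ is a partial unfolding of $B'$, the matrix $\hat\mu_k(C')$ is a partial unfolding of $\mu_k(B')$; applying the static composition verified in the previous paragraph to this new pair completes the step.

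The main obstacle is purely bookkeeping: one has to be careful that composite mutations on $D$ corresponding to a single mutation on $B$ really do decompose through the intermediate composite mutation on $C$, so that the two ``unfolding'' properties can be chained inductively along mutation sequences. Once the factorization $\tilde\mu_k=\prod_{\hat\imath\in E_k}\hat\mu_{\hat\imath}$ is in place, everything else reduces to linearity of column sums and preservation of non-negativity.
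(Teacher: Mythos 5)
Your proposal is correct, and it supplies exactly the bookkeeping that the paper leaves implicit: the paper offers no argument beyond stating that the lemma ``follows immediately from the definition of unfolding.'' Your verification of conditions (1) and (2) for the composed index sets $G_i=\bigsqcup_{\hat\imath\in E_i}F_{\hat\imath}$, together with the factorization $\tilde\mu_k=\prod_{\hat\imath\in E_k}\hat\mu_{\hat\imath}$ (justified by the vanishing of the $G_k\times G_k$ block) and the chaining along mutation sequences, is the intended argument written out in full.
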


\begin{remark}[{{\it Local partial unfolding}}]
\label{partial local}
In the construction of local unfolding we treat all pending arcs simultaneously. We also can do the same construction for any partial
subset of the set of all pending arcs: we can unfold only one (or several) s-blocks, while the other s-blocks remain unchanged.
Then we obtain a partial unfolding of a diagram (or a matrix), which corresponds to an orbifold with fewer orbifold points than the
initial orbifold has.
In this case we say that we perform a {\it local partial unfolding} in the given set of blocks (or in the given set of corresponding
orbifold points of weight $2$).

\end{remark}

\subsection{Prime unfoldings}
\label{sec-prime}

In this section we build a matrix unfolding for the matrices with weighted orbifolds having all orbifold points of weight $1/2$.
Let us fix s-decomposable matrix $B$, its weighted diagram $\D^w$, and weighted orbifold $\O^w$.

Given weighted orbifold $\O^w$, we will choose a triangulation $T$ of $\O^w$ of special type and build a ramified Galois covering of $\O^w$ by a surface $\hat S$ with branching points exactly in all orbifold points of $\O^w$. In case of even number of orbifold points the covering will be of degree 2, otherwise it will be of degree 4. Then we show that the composite flips of triangulation $\hat T$ of $\hat S$ (obtained as a lift of $T$)
agree with the Galois group of the covering, which allows to derive (Theorem~\ref{cover-unf}) that the properties of unfolding are satisfied.
The unfolding will be constructed for signed adjacency matrix of $T$, which is mutation-equivalent to $B$. By definition of unfolding, this is equivalent to a construction of an unfolding for $B$.

\begin{remark}
\label{ex}
The structure of a prime unfolding differs from the structure of a local unfolding. In case of local unfolding there is a natural bijection between elements of the mutation class of matrix $B$ and elements of the mutation class of its unfolding $\hat B$. This does not hold for prime unfolding: here we have a multivalued map from the mutation class of $B$ to the mutation class of $\hat B$. In other words, the graph of mutations of $\hat B$ covers the graph of mutations of $B$.

\end{remark}

\subsubsection*{Construction of Galois covering}
Denote by $z_1,\dots,z_m$ the orbifold points of $\O^w$.
The construction of the covering depends on the parity of the number of orbifold points on $\O^w$. First, we assume that $m$ is even, i.e. $m=2k$.

By Lemma~\ref{bubbles-even}, there exists a triangulation $T$ of $\O^w$ such that each triangle containing a pending arc
actually contains two pending arcs (all small orbifold pieces are disks with two orbifold points, and all s-blocks in the
corresponding diagram $\D^w$ are blocks of type $\widetilde V_{12}$).

Now, consider the monogons $\Delta_1,\dots,\Delta_k \in T$ containing two pending arcs each.
For each of these monogons we join the two orbifold points by a (non-self-intersecting) segment $s_i$ contained in $\Delta_i$.
We cut $\O^w$ along all $s_i$'s, take two copies $\O^w_1$ and $\O^w_2$ of $\O^w$, and glue the left component of $s_i\in \O^w_1$ to the right component of $s_i\in  \O^w_2$, and converse. As a result, we obtain a surface $\hat S$ without orbifold points,  which is a ramified covering of $\O^w$ (with branching in all $2k$ orbifold points) of degree $2$.

The surface $\hat S$ is endowed with a triangulation $\hat T$ covering the triangulation $T$. We will show (Theorem~\ref{cover-unf}) that signed adjacency matrix of $\hat T$ provides an unfolding of signed adjacency matrix of $T$.

\begin{remark}
\label{g1}
The covering $\pi: \hat S \to \O$ is a quotient by an involution on $\hat S$ acting as a central
symmetry with respect to the preimage of any orbifold point.

\end{remark}

Now assume that the number of orbifold points is odd, i.e. $m=2k+1$, $k>0$ (see~Section~\ref{one} for the case $k=0$).

This time we choose a triangulation $T$ of $\O^w$ of the type described in Lemma~\ref{bubbles-odd},
namely, $T$ satisfies the following conditions:
\begin{itemize}
\item $T$ contains $k$ monogons $\Delta_1,\dots,\Delta_k$
with two pending arcs each and one digon $\Delta_0$ with one pending arc (all other triangles have no pending arcs);

\item $\Delta_0$ has a common side with $\Delta_1$.
\end{itemize}

Denote by $z_i,z_{i+1}$ the orbifold points contained in $\Delta_i$, $i>0$, and by $z_0$ the orbifold point contained in $\Delta_0$.

We construct the covering in two steps.

For each of the monogons $\Delta_1,\dots,\Delta_k$  we join the orbifold points $z_i$ and  $z_{i+1}$  by a (non-self-intersecting)
segment $s_i$ contained in $\Delta_i$. Then, similarly to the case of even number of orbifold points we take two copies of
$\O^w$, cut it along all $s_i$'s and construct a degree two ramified covering $\t\O$
with branching points in all $2k$ orbifold points.
Note that $\t\O$ is not a surface: it still has two orbifold points $x_0$ and $x_0'$
(which are projected to the same orbifold point $z_0$ of $\O^w$).
Now we can apply the algorithm for even number of orbifold points.

\subsubsection*{Verification of unfolding properties}
As before, $\O^w$ is an orbifold with orbifold points $z_1,\dots,z_m$ of weight $1/2$.
Denote by $\pi:\hat S \to \O$ the ramified Galois covering constructed above, and let $d$ be its degree (as we have seen, $d=2$ or $4$).
Let $T$ be any triangulation of $\O^w$ and denote by $\hat T$ its lift on $\hat S$.
For each flip $f_i$ in an arc $e_i\in T$ we define a composite flip $\hat f_i$ of $\hat T$
as a composition of flips $f_{ik}$ in all arcs $\hat e_{ik}$ of $\hat T$ projecting to $e_i$.

\begin{lemma}
\label{correctness}
Composite flip is well defined, i.e. for any two arcs $\hat e_{ik}, \hat e_{il}\in \hat T$, such that
$\pi(\hat e_{ik})=\pi(\hat e_{il})=e_i$ one has $f_{ik}f_{il}=f_{il}f_{ik}$.
Furthermore, $\hat f_i(\hat T)=\widehat{(f_i(T))}$.

\end{lemma}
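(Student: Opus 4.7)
My plan is to exploit the Galois structure of the branched covering $\pi:\hat S\to\O^w$, whose deck group $G$ (of order $d\in\{2,4\}$) acts freely outside the branch points (the preimages of the orbifold points, an isolated set). The lift $\hat T$ is automatically $G$-invariant and $G$ permutes each fiber $\pi^{-1}(e_i)$ transitively.

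For the commutativity statement it suffices to show that distinct preimages $\hat e_{ik}\ne\hat e_{il}$ of $e_i$ never share a triangle of $\hat T$, since flips in two arcs of a triangulation always commute when the arcs lie in no common triangle. I would argue by contradiction. Suppose some triangle $\hat\Delta\in\hat T$ has $\hat e_{ik}$ and $\hat e_{il}$ as two of its sides, and pick $g\in G$ with $g(\hat e_{ik})=\hat e_{il}$. Then $g(\hat\Delta)$ is a triangle of $\hat T$ bordering $\hat e_{il}$, so either $g(\hat\Delta)=\hat\Delta$ or $g(\hat\Delta)$ equals the unique other triangle $\hat\Delta'$ on the opposite side of $\hat e_{il}$.

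If $g(\hat\Delta)=\hat\Delta$, the restriction of $g$ to $\hat\Delta$ is a non-trivial symmetry swapping two of its sides, whose fixed set is a segment from the apex to the midpoint of the third side; this contradicts the fact that the fixed points of $g$ form an isolated subset of $\hat S$. If $g(\hat\Delta)=\hat\Delta'$, then $\pi(\hat\Delta)$ is a triangle of $T$ in which $e_i$ appears as two of its sides. By the compatibility rule from Section~\ref{orbifolds-s} that distinct pending arcs cannot share an orbifold endpoint, $e_i$ cannot be a pending arc, so $e_i$ must be a loop and $\pi(\hat\Delta)$ a self-folded triangle with outer edge $e_i$ encircling a branch point. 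However, any loop bounding a disk that contains exactly one simple branch point has non-trivial local monodromy around that branch point, so its preimage in $\hat S$ is a single connected arc double-covering $e_i$ rather than two disjoint copies; this contradicts the existence of two distinct preimages $\hat e_{ik}\ne\hat e_{il}$.

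For the second assertion $\hat f_i(\hat T)=\widehat{f_i(T)}$ I would argue by naturality: the analysis above shows that the local neighborhood of $\hat T$ surrounding each $\hat e_{ik}$ is disjoint from the other preimages of $e_i$ and projects under $\pi$ onto the local neighborhood of $e_i$ in $T$, so the arc produced by $f_{ik}$ in $\hat T$ projects to the arc produced by $f_i$ in $T$. Iterating over all $k$ and invoking $G$-equivariance then gives a $G$-invariant triangulation of $\hat S$ whose $\pi$-image is $f_i(T)$, which is precisely $\widehat{f_i(T)}$. The main obstacle is the second case of the contradiction argument, where one has to extract from a local topological/monodromy computation the fact that a loop encircling a single simple branch point lifts to one connected arc, not two, so that the putative pair $\hat e_{ik}\ne\hat e_{il}$ cannot exist in that configuration.
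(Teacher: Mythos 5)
Your overall strategy is the same as the paper's: reduce commutativity to the claim that two distinct lifts of $e_i$ never bound a common triangle of $\hat T$, and then get $\hat f_i(\hat T)=\widehat{f_i(T)}$ by projecting the local picture around each lift. The problem is that your contradiction argument breaks down in exactly the case that makes this lemma non-trivial, namely when $e_i$ is a pending arc. In your second case you conclude that $\pi(\hat\Delta)$ is a triangle of $T$ having $e_i$ as two of its sides, and then assert that $e_i$ cannot be a pending arc ``by the compatibility rule that distinct pending arcs cannot share an orbifold endpoint.'' That rule is irrelevant here: nothing prevents the \emph{same} pending arc from appearing twice on the boundary of its triangle, and in fact it always does --- the triangle containing a pending arc $e_i$ is the digon (or monogon) cut open along $e_i$, with boundary word of the form $\alpha\, e_i\, e_i^{-1}\beta$ and the orbifold point sitting at the tip of the slit. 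So the configuration you are trying to exclude is the generic one. The fallback you offer instead (``a self-folded triangle with outer edge $e_i$ encircling a branch point'') does not occur in these triangulations: the repeated edge of a self-folded triangle is the \emph{inner} one, and it ends at a puncture, where $\pi$ is unramified; branch points sit only at orbifold points, i.e.\ at endpoints of pending arcs. Finally, even where your monodromy remark applies, the conclusion ``a single connected arc rather than two disjoint copies'' only contradicts the existence of two distinct preimages when $d=2$; for the degree-$4$ coverings used for an odd number of orbifold points there genuinely are two lifts.

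The correct resolution of the pending-arc case is the one the paper uses: view $e_i$ as a round trip from its marked endpoint through the orbifold point and back. Since the ramification at the orbifold point is simple, the two boundary segments of $\hat\Delta$ lying over $e_i$ meet at the ramification point and together constitute the two halves of a \emph{single} lifted arc $\hat e_{ij}$; there are exactly $d/2$ such lifts, each the diagonal of a quadrilateral formed by two copies of the digon containing $e_i$, and these quadrilaterals are pairwise disjoint because $\pi$ is unramified at marked points. Thus the hypothesis $\hat e_{ik}\ne\hat e_{il}$ of your contradiction is never met in this configuration, but your proof does not establish that. A smaller repair is also needed in your first case: a deck transformation is orientation-preserving, so a nontrivial $g$ with $g(\hat\Delta)=\hat\Delta$ would have an isolated interior fixed point, not a fixed segment; the contradiction should instead be that ramification points of $\pi$ lie in the interiors of lifted pending arcs, hence on edges of $\hat T$ and never in the interior of a triangle.
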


\begin{proof}
We prove the lemma separately for $e_i$ being a pending arc or not.

Suppose that $e_i$ is not a pending arc. Then arcs $\hat e_{i1},\dots,\hat e_{id}\in \hat T$ project to $e_i$ (recall, $d$ is the degree of the covering). Note that for $j\ne k$ two arcs $\hat e_{ij}$ and $\hat e_{ik}$ are not sides of the same triangle
(this follows from the property of the covering that there are no branching points other than orbifold points of $\O^w$).
Thus, the corresponding flips commute.

Now, consider two triangles attached along the common edge $e_i$, these two triangles compose a quadrilateral $q$ with diagonal $e_i$. The quadrilateral $q$ is covered by $d$ copies of $q$. The composite flip $\hat f_i$ on $\hat T$ consists of
$d$ commuting flips (of the diagonals of each of these $d$ quadrilaterals), which implies that the triangulation $\hat f_i(\hat T)$
covers the triangulation $f_i(T)$.

Suppose now that $e_i$ is a pending arc. Consider $e_i$ as a ``round trip'' from a marked point to itself via an orbifold point. Then there are exactly $d/2$ copies of this arc in $\hat T$, no pair of them are sides of one triangle (again, since there is no ramification in marked points).
Every lift $\hat e_{ij}$ of $e_i$ is a diagonal of quadrilateral which consists of two copies of the digon containing $e_i$ (the consideration for monogons is similar). All these $d/2$ quadrilaterals project to the digon containing $e_i$ (see Fig.~\ref{commute}), so we see that the lemma holds for pending arcs as well.

\end{proof}

\begin{figure}[!h]
\begin{center}
\psfrag{e}{\scriptsize $e_i$}
\psfrag{p}{\scriptsize $\pi$}
\psfrag{ej}{\scriptsize $\hat e_{ij}$}
\psfrag{1}{\raisebox{-2mm}{\scriptsize $1$}}
\psfrag{2}{\scriptsize $2$}
\psfrag{1'}{\raisebox{-2mm}{\scriptsize $1'$}}
\psfrag{2'}{\raisebox{-0.2mm}{\scriptsize $2'$}}
\psfrag{f1}{\scriptsize $\hat f_i$}
\psfrag{f2}{\scriptsize $f_i$}
\epsfig{file=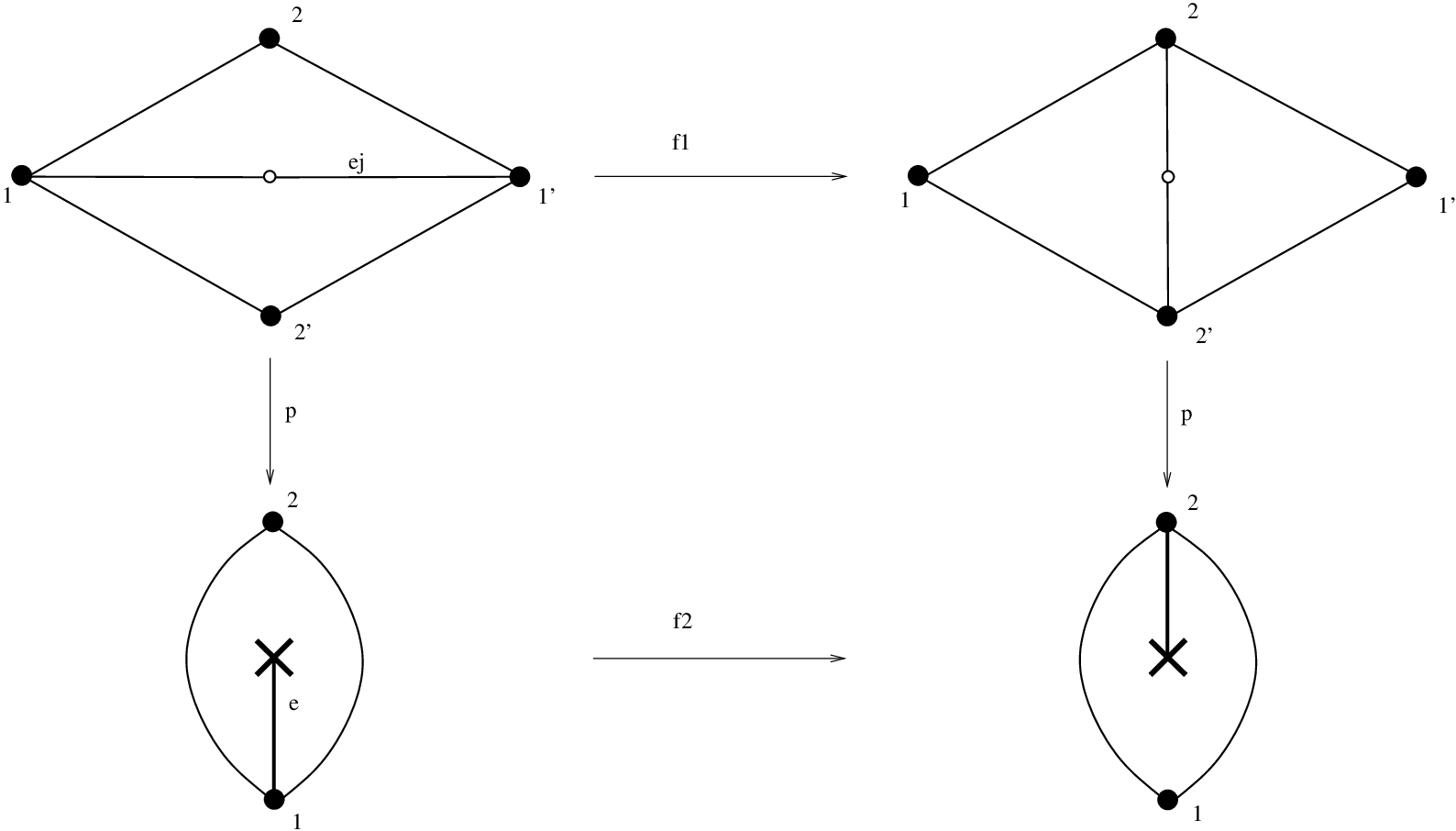,width=0.45\linewidth}
\caption{Composite flips and projection (one of $d/2$ congruent quadrilaterals
of $\hat T$ projecting to the digon in $T$)}
\label{commute}
\end{center}
\end{figure}

Let  $B$ and $C$  be the signed adjacency matrices of the triangulations $T$ and $\hat T$ respectively.
In the following theorem we prove that $C$ is an unfolding of $B$.

\begin{theorem}
\label{cover-unf}
Let $\O^w$ be an orbifold with all orbifold points of weight $1/2$. Let $\pi: \hat S \to \O^w$ be a Galois covering
whose set of branching points coincides with the set of orbifold points of $\O^w$, and suppose that all ramifications are simple.
Let $T$ be a tagged triangulation of $\O^w$  and let $\hat T $ be a tagged triangulation of $\hat S$ covering the triangulation $T$.
Let  $B$ and $C$  be the signed adjacency matrices of the triangulations $T$ and $\hat T$ respectively.

Then $C$ is an unfolding of $B$.

\end{theorem}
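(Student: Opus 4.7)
The strategy is to identify the index sets $E_i$ of the unfolding with the fibers $\pi^{-1}(e_i)$ of the covering, and then to reduce the unfolding conditions to a local comparison of the signed adjacency matrix contributions from each triangle of $T$ with those of the triangles of $\hat T$ above it. The mutation-compatibility will then follow formally from Lemma~\ref{correctness} combined with Theorem~\ref{thm_weight}.

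First I would verify that $|E_i| = d_i$ for a suitable rescaling of $D$. Since $\pi$ is ramified with index $2$ precisely at the orbifold points, each non-pending arc has $d = \deg \pi$ lifts, while each pending arc has $d/2$ lifts. In the weighted diagram attached to $B$, pending vertices have weight $1$ and all other vertices have weight $2$ (outlets all share the common weight $w = 2$ by~\cite[Lemma~6.3]{FST2}, and an orbifold point of weight $1/2$ forces its pending arc to have weight $1$). Thus $|E_i|/d_i$ is constant in $i$, and a uniform rescaling of $D$ (which preserves skew-symmetry of $BD$) makes $|E_i| = d_i$.

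The main step is to verify conditions~(1),~(2) for the initial pair $(B, C)$. Since both signed adjacency matrices decompose as sums of contributions from triangles (Section~\ref{blockdecomp} and Section~\ref{orbifolds-s}), the verification reduces to a case-by-case check on the admissible triangle types in Fig.~\ref{triangles}. For each triangle $\Delta$ of $T$ one examines the triangles of $\hat T$ projecting to $\Delta$: since $\pi$ is an orientation-preserving cover, they all induce the same cyclic orientation on the corresponding arcs, which gives the sign condition~(2). The column-sum condition~(1) is then a counting statement, with the asymmetry between pending and non-pending arcs balanced by the difference in fiber size. A particularly delicate sub-case is a monogon (or digon) containing one or two pending arcs: the neighbourhood of a pending arc lifts to a disk containing two triangles that share the lifted pending arc (cf.\ Fig.~\ref{commute}), and this ``doubling'' is exactly what matches the factor $d_j = 2$ for the non-pending opposite arc.

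To extend the conditions to any mutated pair, I would invoke Lemma~\ref{correctness}: the composite mutation $\hat\mu_i C$ equals the signed adjacency matrix of $\widehat{f_i(T)}$, whose base triangulation $f_i(T)$ has signed adjacency matrix $\mu_i B$ by the matrix analogue of Lemma~\ref{flip-mut} recorded in Theorem~\ref{thm_weight}. Hence every mutated pair arises as the pair of signed adjacency matrices of some $T'$ and its lift $\hat T'$, and the local verification applies verbatim. The principal obstacle is the local case analysis, particularly for self-folded triangles, monogons with two pending arcs, and the tagged configurations at punctures; each individual case is short, but the combined enumeration requires care to ensure that all contributions from the multiple triangles of $\hat T$ above a given triangle of $T$ combine correctly.
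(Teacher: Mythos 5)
Your proposal is correct and follows essentially the same route as the paper's proof: identify the index sets $E_i$ with the fibers $\pi^{-1}(e_i)$, use Lemma~\ref{correctness} to reduce the unfolding conditions to the initial pair $(B,C)$ (since every mutated pair is again the pair of signed adjacency matrices of a triangulation and its lift), and verify conditions~(1) and~(2) locally from the absence of ramification at the vertices together with the orientation-preserving action of the Galois group. The only cosmetic difference is that the paper organizes the local check by angles rather than by triangles, and leaves the rescaling of $D$ implicit where you spell it out.
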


\begin{proof}
Denote by $|T|$ the number of arcs of the triangulation $T$, and denote by $d$ the degree of $\pi$.
Define the natural index sets $E_1,\dots,E_{|T|}$ for $C$:  $E_i$ is the set of indices of all arcs of $\hat T$ projecting to the given arc of $T$.
Then $|E_i|=d$ for non-pending arcs $e_i\in T$ and $|E_i|=d/2$ for pending arcs $e_i$.
This exactly corresponds to the fact that all orbifold points are of weight $1/2$.

To show that $C$ is an unfolding of $B$ we need to show that for any sequence of iterated mutations
$\mu_{k_1}\dots\mu_{k_m}$ the matrices $B'=\mu_{k_1}\dots\mu_{k_m}B$ and $C'=\hat \mu_{k_1}\dots\hat \mu_{k_m}(C)$
satisfy assertions (1) and (2) of the definition of matrix unfolding.

Note that the notion of composite mutation in the definition of the unfolding coincides at the first step with the notion
of composite flip for the covering (i.e. the same indices are involved).
It follows from Lemma~\ref{correctness} that the same property holds after one composite flip
(i.e. the elements of the index set $E_i$ after one mutation still correspond to the set of arcs projected to the same arc $f_j(e_i)$ of
$f_j(T)$). This implies that the notions coincide after any number of flips and that it is sufficient to check the assertions
(1) and (2) with respect to the matrices $B$ and $C$ only.

For matrices $B$ and $C$ the properties (1) and (2) follow from the fact that $\pi$ has no ramification in the vertices of the triangulation $\hat T$. More precisely, for each angle $\alpha$  in $T$ (it may be formed by one or two arcs)
there are exactly $d$ copies of this angle in $\hat T$, and the Galois group of the covering  acts on these copies transitively by orientation preserving transformations.
Therefore, the contributions of these angles to the matrix $C$ are of the same sign, which show both (1) and (2).

\end{proof}

\subsection{General case}
\label{sec-composite}

Suppose now that weighted orbifold contains orbifold points of both weights $2$ and $1/2$. To get the unfolding in this case we do local partial unfolding in all the orbifold points of weight $2$, and then construct prime unfolding of obtained orbifold via Galois covering with branching points in all the remaining orbifold points (they all are of weight $1/2$).

By Lemma~\ref{composition}, the composition of local partial unfolding and prime unfolding is an unfolding. Therefore, as a result of the procedure
we obtain an unfolding of signed adjacency matrix of any triangulation of the initial unfolding. The construction works for each orbifold with at least 2 orbifold points.

\subsection{Orbifolds with a unique orbifold point}
\label{one}
The construction of Section~\ref{sec-composite} works to build an unfolding for each matrix corresponding to an orbifold with at least
two orbifold points. In this section we investigate the case of one orbifold point. We can assume that the weight of the orbifold point is $1/2$, otherwise we construct local unfolding.

First, we give a construction for the case of orbifolds with boundary. Next, we build the unfolding for the case of closed orbifold
topologically different from a sphere. The case of closed sphere with exactly one orbifold point of weight $1/2$ remains open.

\subsubsection*{Orbifolds with boundary}
Let $\O^w$ be a weighted orbifold with one orbifold point $z$ and at least one boundary component. Let $x\in\partial \O^w$ be a marked point.
Take a triangulation $T$ of $\O^w$ such that the pending arc of $T$ is incident to $x$, and the triangle containing the pending arc contains also an edge lying on $\partial\O^w$.

Then we can do the following: cut $\O^w$ along the pending arc, take two copies of obtained surface,
and attach them so that we obtain a 2-sheet covering $\hat S $ branching exactly in $z$ (see Fig~\ref{boundary}).
If we denote by $\hat T$ the triangulation obtained on $\hat S$, then Theorem~\ref{cover-unf} implies that the signed adjacency matrix of $\hat T$ is an unfolding of the signed adjacency matrix of $T$.

\begin{figure}[!h]
\begin{center}
\psfrag{2}{\scriptsize $2$}
\psfrag{R}{\scriptsize $R$}
\psfrag{R'}{\scriptsize $R'$}
\psfrag{R''}{\scriptsize $R''$}
\epsfig{file=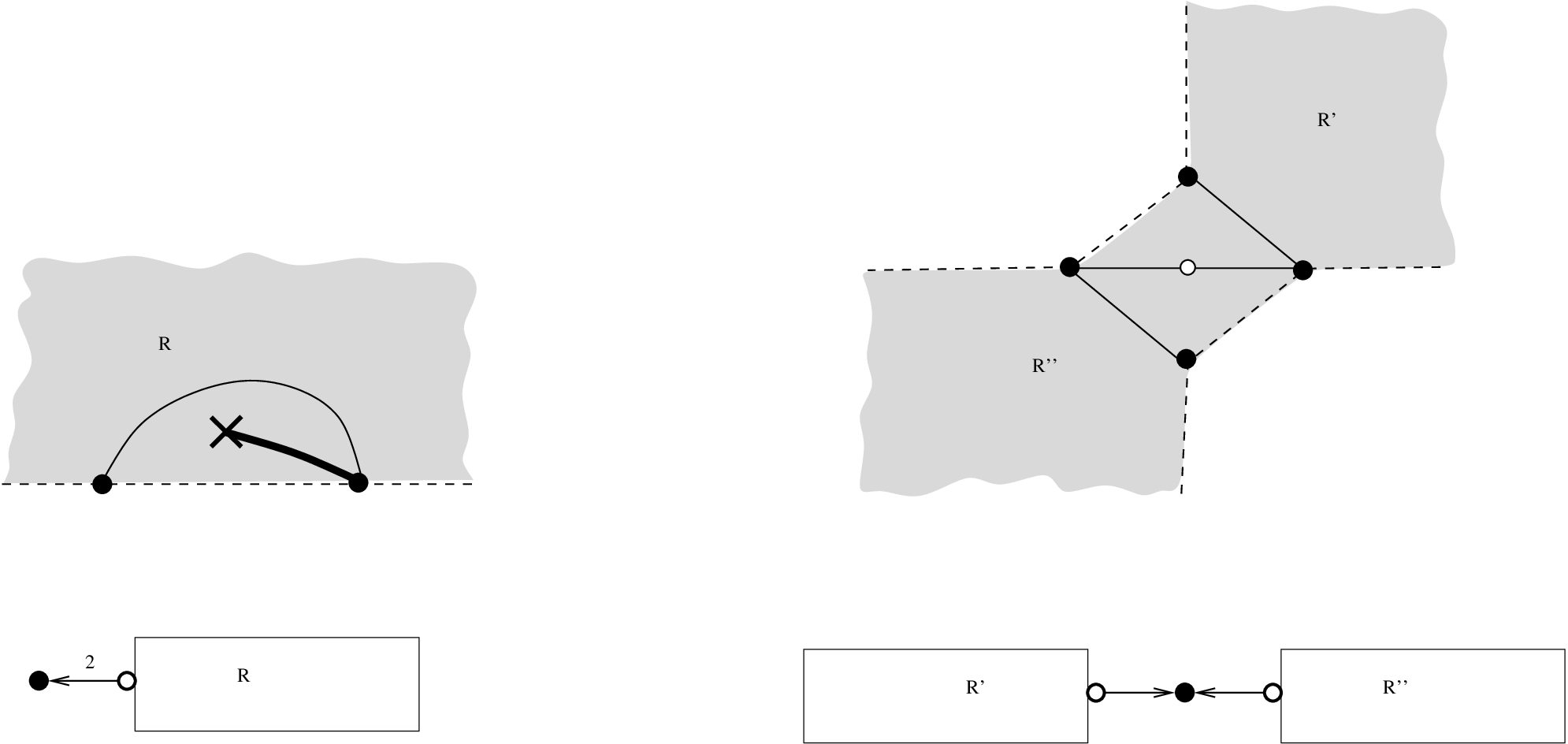,width=0.98\linewidth}
\caption{Construction of unfoldings for orbifolds with boundary.}
\label{boundary}
\end{center}
\end{figure}

\subsubsection*{Closed orbifolds of positive genus}
Let $\O^w$ be a closed weighted orbifold of genus $g>0$ with a unique orbifold point $z$ of weight $1/2$.
Then $\O^w$ contains a closed curve $\gamma$ which does not cut $\O^w$ into two connected components.
We cut $\O^w$ along $\gamma$, take two copies of the obtained surface and attach the sheets so that we obtain a non-ramified 2-sheet covering $\widetilde \O^w$ of $\O^w$. Clearly, $\widetilde \O^w$ has two orbifold points, so we can construct prime unfolding. Combining it with partial unfolding provided by non-ramified covering, we obtain an unfolding of signed adjacency matrix of a triangulation of $\O^w$.

The discussion above leads to the following theorem.

\begin{theorem}
\label{thm-unfolding}
Let $B$ be an s-decomposable skew-symmetrizable matrix.
Let $\O^w$ be a corresponding weighted orbifold. If $\O^w$ is not a closed sphere with a unique orbifold point of weight $1/2$, then $B$ admits an unfolding to a signed adjacency matrix of a triangulated surface.

\end{theorem}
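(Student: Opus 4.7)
The plan is to combine all unfolding constructions developed in Sections~\ref{unfolding-s} and~\ref{unfolding-fin} into a single case analysis based on the weights of orbifold points of $\O^w$ and on the topology of $\O^w$. Throughout, I will freely use Lemma~\ref{composition}, which says that composing partial unfoldings yields a partial unfolding, so that an unfolding to a skew-symmetric matrix can be assembled step by step.

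First, I would reduce to the case where all orbifold points of $\O^w$ have weight $1/2$. Indeed, by the local partial unfolding described in Remark~\ref{partial local}, we may perform the local unfolding procedure at the subset of orbifold points of weight $2$ only; the result is a partial unfolding whose associated weighted orbifold is obtained from $\O^w$ by replacing each weight-$2$ orbifold point with a puncture (equivalently, each weight-$2$ s-block by its skew-symmetric unfolding in Table~\ref{table-loc}). The resulting matrix $B'$ is then skew-symmetrizable, s-decomposable, and its weighted orbifold has only orbifold points of weight $1/2$. By Lemma~\ref{composition}, it suffices to construct an unfolding for $B'$.

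Next, I would split into subcases according to the number $m$ of orbifold points of $\O^w$ (now all of weight $1/2$). If $m \geq 2$, then the prime unfolding construction of Section~\ref{sec-prime} applies directly: depending on the parity of $m$, I build a triangulation of $\O^w$ of the special form given by Lemma~\ref{bubbles-even} or Lemma~\ref{bubbles-odd}, then construct the ramified Galois cover $\pi : \hat S \to \O^w$ (of degree $2$ or $4$) branched exactly at the orbifold points. Theorem~\ref{cover-unf} then guarantees that the signed adjacency matrix of the lifted triangulation $\hat T$ is an unfolding of the signed adjacency matrix of $T$, which is mutation-equivalent to $B'$; by definition of unfolding this is equivalent to producing an unfolding of $B'$ itself. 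If $m=1$, then by the hypothesis of the theorem $\O^w$ is not a closed sphere, so either $\partial\O^w \neq \emptyset$ or $\O^w$ is closed of genus $g \geq 1$. In the boundary case I follow the construction of Section~\ref{one}: choose a triangulation whose unique pending arc lies in a triangle adjacent to $\partial \O^w$, cut along the pending arc, and glue two copies to obtain a double cover branched at the orbifold point; Theorem~\ref{cover-unf} again applies. In the closed positive-genus case I pick a non-separating simple closed curve $\gamma$ in $\O^w$, form the unramified double cover $\widetilde \O^w \to \O^w$ determined by $\gamma$ (this gives a partial unfolding by Lemma~\ref{composition} combined with standard covering-theoretic verification of properties (1)--(2)), and observe that $\widetilde \O^w$ carries exactly two orbifold points of weight $1/2$; the $m\geq 2$ case then provides a prime unfolding of the resulting matrix.

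The one scenario that this strategy does \emph{not} cover is the closed sphere with a unique weight-$1/2$ orbifold point (there is no non-separating curve, and every construction producing a cover branched at a single point fails topologically). This is precisely the family singled out in Remark~\ref{unf-ex} and explicitly excluded from the statement. The main subtlety in the argument is bookkeeping: one has to verify that under each composition of partial unfoldings, the index sets $E_i$ and conditions (1), (2) of the definition of unfolding continue to correspond to the arcs of the successive covering/partial-unfolding construction after arbitrary sequences of (composite) mutations. For prime unfoldings this is handled by Lemma~\ref{correctness} and Theorem~\ref{cover-unf}; for local partial unfoldings it follows from the corresponding statement for local unfoldings in Section~\ref{unf-local}; and Lemma~\ref{composition} glues the pieces. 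I expect the only genuine obstacle to be verifying that in the closed genus-$\geq 1$ case, the choice of $\gamma$ can be made compatible with the triangulation so that the unramified cover really yields a partial unfolding of the signed adjacency matrix in the sense of Section~\ref{unf-def}, but this reduces to the same orientation-preserving/deck-transitivity argument used in the proof of Theorem~\ref{cover-unf}.
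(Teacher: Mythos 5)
Your proposal is correct and follows essentially the same route as the paper: reduce via local partial unfolding at the weight-$2$ orbifold points (Remark~\ref{partial local} and Lemma~\ref{composition}), apply the prime unfolding via a branched Galois cover (Theorem~\ref{cover-unf}) when at least two weight-$1/2$ points remain, and use the boundary and positive-genus double-cover constructions of Section~\ref{one} when exactly one remains, excluding the closed sphere with a single orbifold point. The only case you leave implicit is when no weight-$1/2$ points remain at all, but that is trivial since the local unfolding already yields the signed adjacency matrix of the associated triangulated surface.
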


\begin{remark}
\label{unf-all}
In~\cite{FST2} unfoldings of all non-decomposable mutation-finite matrices were constructed. In view of Theorem~\ref{thm-unfolding}, signed adjacency matrices of closed sphere with a unique orbifold point of weight $1/2$ are the only mutation-finite matrices for which unfolding is not constructed yet. Diagrams of these matrices are shown in Fig.~\ref{fig-ex}.

\end{remark}

\subsection{Constructing the diagram of unfolding}
\label{construction}

In this section, given a skew-sym\-met\-riz\-able s-decomposable matrix (or its weighted diagram) admitting an unfolding, we explicitly construct the diagram of the unfolding.

As it was described in previous sections, every unfolding can be understood as a superposition of partial local unfolding and prime unfolding. The way to construct partial local unfolding was explained in Section~\ref{unf-local}. Now we concentrate on prime unfoldings.

Let $\D^w$ be a weighted diagram, $\O^w$ weighted orbifold with all orbifold points of weight $1/2$, and let $T$ be the corresponding triangulation of $\O^w$.
Performing some flips (and mutations), we may assume that each triangle of $T$ (except at most one) containing a pending arc
actually contains two pending arcs (i.e. all s-blocks in the corresponding diagram $\D^w$ are blocks of type $\widetilde V_{12}$).

Given weighted s-decomposable diagram $\D^w$ (or exchange matrix $B$), we call by {\it irregular part} of $\D^w$ (respectively, $B$) the union of s-blocks corresponding to orbifold points of weight $1/2$ (these s-blocks are also called {\it irregular}), and by {\it regular part} the union of all the other blocks. While considering prime unfolding, regular part is obligatory skew-symmetric.

The diagram $\D^w$ consists of its regular part $R$ and $k$ irregular s-blocks $B_0,B_1,\dots,B_k$ of type $\widetilde V_{12}$ with equal weights $w/2=1$ of the dead ends. If the number $k+1$ of irregular blocks is even, we can proceed in the following way.

 Denote by $x_i\in D^w$ the outlet of $B_i$.
To construct $\hat D$ we take two copies $R'$ and $R''$ of the regular part $R$ of $\D$.
 Then we take for each s-block $B_i$ its unfolding $\hat B_i$ shown in the bottom of Fig.~\ref{V12}
and attach the two outlets of $\hat B_i$ to two copies of the vertex $x_i$ (one in $R'$ and another in $R''$), see Fig.~\ref{prime-even}.
In view of the symmetry of $\hat B_i$ there is a unique way to attach $\hat B_i$ to these two vertices. The resulting diagram is an unfolding of $\D^w$.

\begin{figure}[!t]
\begin{center}
\psfrag{x}{\scriptsize $x$}
\psfrag{a}{\small a)\quad}
\psfrag{b}{\small b)}
\psfrag{2}{\scriptsize $2$}
\psfrag{4}{\scriptsize $4$}
\psfrag{B1}{\scriptsize $B_1$}
\psfrag{Bk}{\scriptsize $B_k$}
\epsfig{file=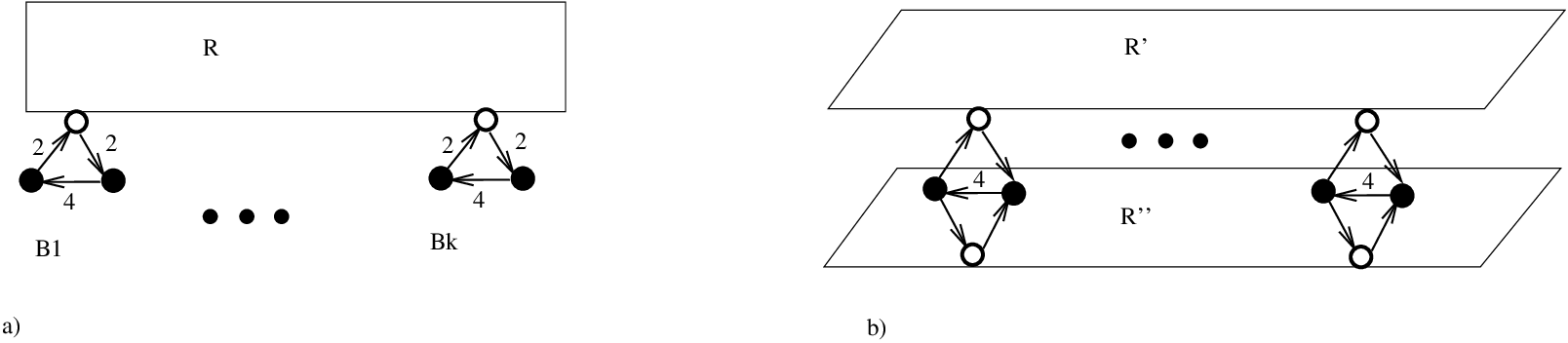,width=0.98\linewidth}
\caption{Prime unfolding for even number of orbifold points: a) diagram $\D$; b) diagram $\hat D$}
\label{prime-even}
\end{center}
\end{figure}

In terms of surfaces, this is equivalent to taking two copies of $\O^w\setminus(\cup_{i=1}^k \Delta_i)$, where $\Delta_i$ are monogons corresponding to irregular blocks of $\D^w$, and connecting every hole obtained from $\Delta_i$ by a cylinder obtained from two copies of $\Delta_i$ (see Fig.~\ref{V12}).

\begin{figure}[!hb]
\begin{center}
\psfrag{x}{\scriptsize $x$}
\psfrag{a}{\small (a)}
\psfrag{b}{\small (b)}
\psfrag{2}{\scriptsize $2$}
\psfrag{c1}{\raisebox{0.5mm}{\scriptsize $c_1$}}
\psfrag{c2}{\raisebox{0.5mm}{\scriptsize $c_2$}}
\psfrag{c1'}{\scriptsize $\hat c_1$}
\psfrag{c2'}{\scriptsize $\hat c_2$}
\psfrag{u}{}
\psfrag{u'}{}
\psfrag{u'}{}
\psfrag{x}{\scriptsize $x$}
\psfrag{x1}{\scriptsize $\hat x'$}
\psfrag{x2}{\scriptsize $\hat x''$}
\epsfig{file=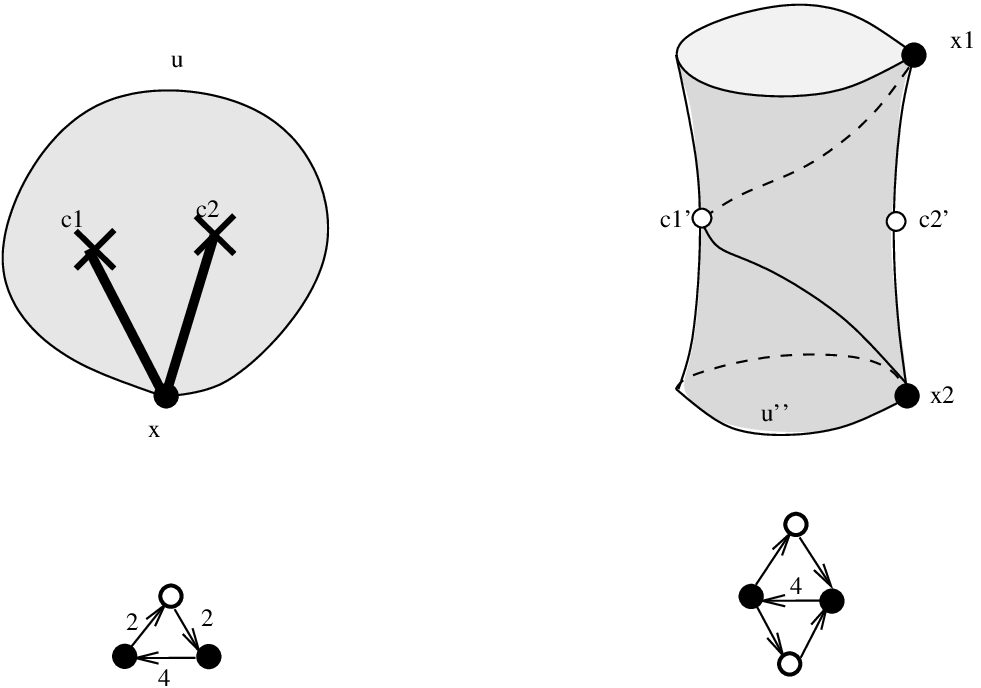,width=0.64\linewidth}
\caption{s-block $\widetilde V_{12}$ (on the left)  and its unfolding (on the right)  with corresponding orbifold and surface}
\label{V12}
\end{center}
\end{figure}

If the number of irregular s-blocks is odd, we assume that $T$ is of the type described in Lemma~\ref{bubbles-odd}, namely, all but one irregular blocks of $\D^w$ correspond to monogons $\Delta_i$, and the last one corresponds to a digon $\Delta_0$ adjacent to a monogon $\Delta_1$.
The s-blocks $B_i$ ($i>0$) are of type $\widetilde V_{12}$, s-block $B_0$ is of type $\widetilde {IV}$. Now we construct the unfolding in two steps.

First, we take two copies $R'$ and $R''$ of the union of the regular part $R$ of $\D$ and $B_0$, and connect $R'$ with $R''$ by unfoldings of blocks $B_1,\dots,B_k$, see Fig.~\ref{first-cover}. In this way we obtain a diagram with two irregular s-blocks. Now we can apply the procedure for even number of s-blocks.

\begin{figure}[!h]
\begin{center}
\psfrag{R}{\small $R$}
\psfrag{R'}{\small $R'$}
\psfrag{R''}{\small $R''$}
\psfrag{c1}{\raisebox{0.5mm}{\scriptsize $c_1$}}
\psfrag{c2}{\raisebox{0.5mm}{\scriptsize $c_2$}}
\psfrag{u}{}
\psfrag{x}{\scriptsize $x_0$}
\psfrag{x'}{\scriptsize $x_0'$}
\psfrag{x1}{\scriptsize $x_1$}
\psfrag{x2}{\scriptsize $x_2$}
\psfrag{a}{\small a)}
\psfrag{b}{\small b)}
\psfrag{2}{\scriptsize $2$}
\psfrag{4}{\scriptsize $4$}
\psfrag{B1}{\scriptsize $B_1$}
\psfrag{Bk}{\scriptsize $B_k$}
\epsfig{file=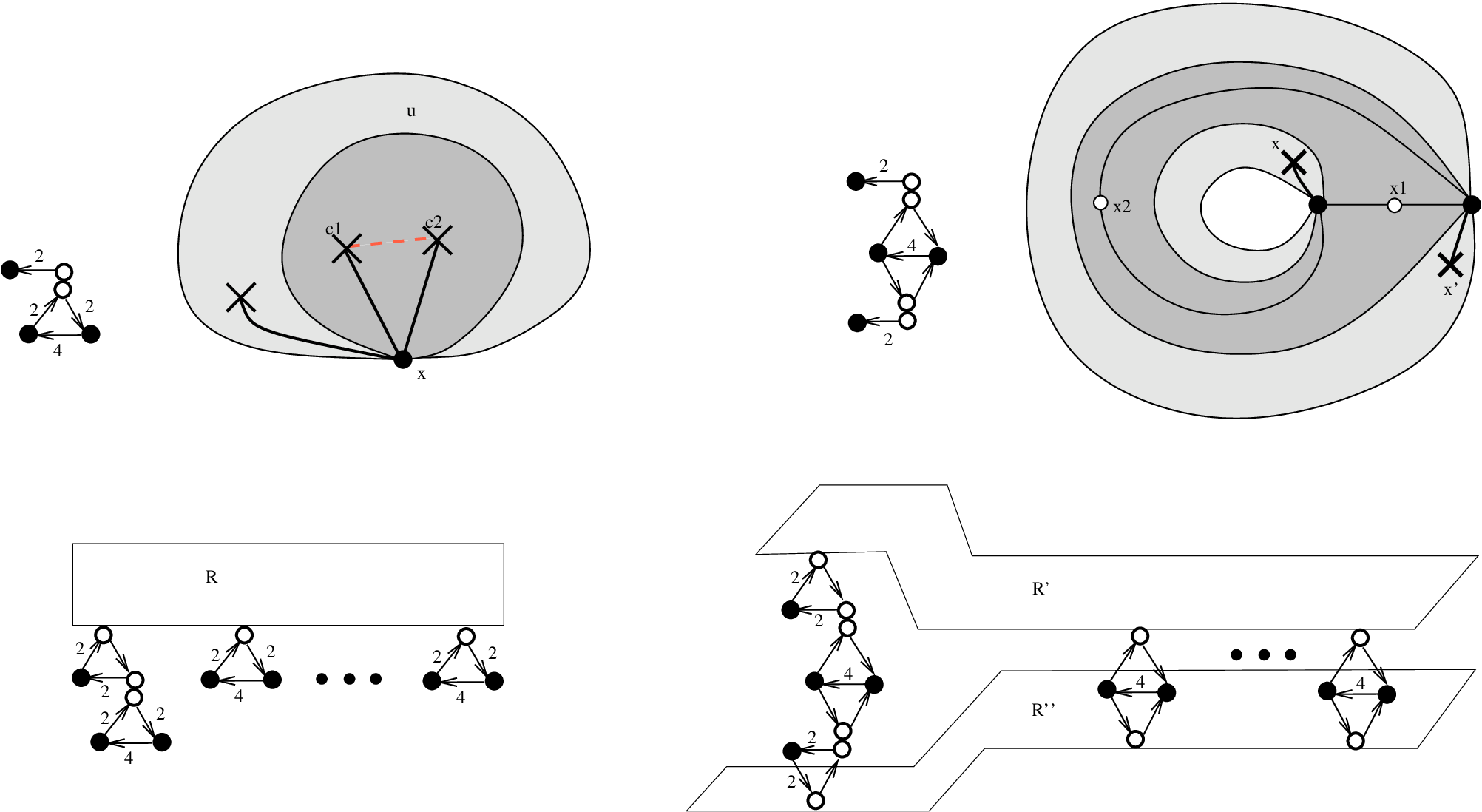,width=0.98\linewidth}
\caption{First step of the construction of prime unfolding for odd number of orbifold points: orbifold and diagram.
A piece of the orbifold with its diagram and the diagram $\D^w$ (all the dead ends of s-blocks are of weight 1) to the left, covering orbifold and $\hat D$ to the right.}
\label{first-cover}
\end{center}
\end{figure}

\section{Positivity}
\label{sec-pos}


In~\cite{FZ1} Fomin and Zelevinsky proved that, given an initial seed of cluster algebra $\A$, any cluster variable of $\A$ can be expressed as a Laurent polynomial of the variables of the initial seed (``Laurent Phenomenon''). The famous positivity conjecture~\cite{FZ1} states that the coefficients of the Laurent polynomials are non-negative integer linear combinations of elements of the coefficient group $\P$.

In~\cite{MSW}, Musiker, Schiffler and Williams show that the positivity conjecture holds for cluster algebras of geometric type originating from surfaces. We will use the unfolding construction to extend this result to algebras with s-decomposable skew-symmetrizable exchange matrices admitting unfoldings.

\begin{theorem}
\label{thm-pos}
Let $\O^w$ be a weighted orbifold (distinct from a closed sphere with marked points and a unique orbifold point of weight $1/2$), let $\A$ be a corresponding cluster algebra of geometric type, and let ${\mathbf x}$ be an initial cluster. Then the coefficients of the Laurent expansion of every cluster variable of $\A$ are non-negative.

\end{theorem}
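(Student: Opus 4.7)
The plan is to bootstrap from the skew-symmetric positivity result of Musiker--Schiffler--Williams via the unfolding construction of Section~\ref{unfolding-fin}. First, apply Theorem~\ref{thm-unfolding} to the given $\O^w$: since $\O^w$ is not the forbidden closed sphere, the s-decomposable exchange matrix $B$ admits an unfolding to a skew-symmetric matrix $\hat B$ which is the signed adjacency matrix of a triangulated bordered surface $\hat S$. Fix an initial tagged triangulation $\h T$ of the associated orbifold $\hat\O$ whose signed adjacency matrix is $B$; this choice determines an initial tagged triangulation $\hat T$ of $\hat S$ (via the Galois covering plus partial local unfolding built in Section~\ref{construction}). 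Let $\hat\A$ be the cluster algebra of geometric type associated with $\hat S$, with initial seed indexed by $\hat T$, and boundary/lamination coefficients chosen to match those of $\A$.

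Next, I would establish a \emph{folding map} between the two cluster algebras. Concretely, partition the index set of $\hat T$ into the sets $E_i$ coming from the unfolding, and define a ring homomorphism $\varphi$ from the ambient field of $\hat\A$ to the ambient field of $\A$ by $\hat x_{\hat\imath}\mapsto x_i$ for every $\hat\imath\in E_i$, with a parallel identification on coefficients. Using Lemma~\ref{correctness} (composite flips cover flips) together with the unfolding column-sum condition, one checks by induction on mutation sequences that $\varphi$ carries the cluster variable produced by the composite sequence $\hat\mu_{k_r}\cdots\hat\mu_{k_1}$ at any $\hat\imath\in E_{k_r}$ to the cluster variable produced by $\mu_{k_r}\cdots\mu_{k_1}$ at the index $k_r$. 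The exchange relation check in the induction step is the key combinatorial identity: the unfolding assumption guarantees that the products $\prod_{\hat\jmath\in E_j}\hat x_{\hat\jmath}^{\hat b_{\hat\jmath\hat\imath}}$ collapse under $\varphi$ to $x_j^{b_{ji}}$ with the correct sign, so that the $\hat B$-exchange relation specializes to the $B$-exchange relation.

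With $\varphi$ in place, the positivity statement follows in a single step. By the Musiker--Schiffler--Williams theorem~\cite{MSW} applied to the surface $\hat S$, every cluster variable of $\hat\A$ expands as a Laurent polynomial in the initial cluster $\mathbf{\hat x}$ whose coefficients are nonnegative integer combinations of elements of the coefficient group. Since $\varphi$ is a substitution that sends initial cluster variables to initial cluster variables and coefficient generators to coefficient generators, it preserves nonnegativity of Laurent coefficients term-by-term. Applying $\varphi$ to the MSW expansion of a lift of any cluster variable of $\A$ thus produces the desired nonnegative Laurent expansion.

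I expect the principal obstacle to be the careful bookkeeping of coefficients in the folding map, particularly in the mixed case where some orbifold points have weight $1/2$ and others weight $2$. In that situation the unfolding of Section~\ref{sec-composite} is a composition of a local partial unfolding (which doubles certain arcs into conjugate pairs) with a prime Galois unfolding, and one must check that the boundary and lamination coefficients of $\hat\A$ can be chosen consistently so that $\varphi$ intertwines the coefficient mutation rules~(\ref{eq:CoeffMutation}) in the two algebras. Once this compatibility is verified, the inductive exchange-relation check goes through uniformly, and the positivity transfer is immediate.
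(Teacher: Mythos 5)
Your proposal is correct and follows essentially the same route as the paper: reduce to the Musiker--Schiffler--Williams positivity theorem for the unfolded surface obtained from Theorem~\ref{thm-unfolding}, and transfer the nonnegative expansion back by identifying all variables in each fiber $E_i$. The only difference is one of packaging: where you verify the folding specialization $\hat x_{\hat\imath}\mapsto x_i$ algebraically by induction on exchange relations, the paper obtains the same identification for free from the geometric realization --- the lambda lengths of all arcs in a fiber of the covering coincide with the lambda length of the underlying arc on the orbifold, so every cluster variable of $\A$ literally is a cluster variable of the unfolding and the MSW expansion specializes immediately.
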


To prove the theorem, we note that, by construction of unfolding (Section~\ref{unfolding-fin}), the collections of lambda lengths of arcs of triangulation $T$ of $\O^w$ and covering triangulation $\t T$ of the resulting covering surface $S$ coincide. Moreover, considering initial seed of $\A$ as a collection of lambda lengths of arcs of some triangulation $T$, we see that the collections of initial cluster variables of $\A$ and of its unfolding coincide as well.

Now we apply the positivity result from~\cite{MSW} to the unfolding and use the fact that all cluster variables of $\A$ are cluster variables of the unfolding.

\section{Sign-coherence}
\label{signs}

In~\cite{FZ4}, Fomin and Zelevinsky conjectured that, when starting with a seed with principal coefficients, the coefficient vectors in every other seed have all coordinates either nonpositive or nonnegative. The conjecture is proved by Derksen, Weyman and Zelevinsky~\cite{DWZ} for skew-symmetric case, and by Demonet~\cite{D} for a large class of skew-symmetrizable algebras. In this section, we prove the conjecture for all algebras from orbifolds. 

We reformulate the conjecture in terms of shear coordinates of laminations.     

Let $\h T$ be a tagged triangulation on associated orbifold $\hat\O$, denote the arcs of $\h T$ by $\gamma_i$, $1\le i\le n$. Consider a multi-lamination ${\mathbf L}_{\h T}=(L_{n+1},\dots,L_{2n})$ constructed in the following way: every $L_i$ is a single curve with $i$-th shear coordinate $b_{\gamma_i}(\h T,L_i)$ equal $1$ and all the others being zero (such a curve is easy to draw: it is spiraling into non-special marked ends of an edge, or ends in orbifold point of a pending edge, see Fig.~\ref{elements}). 

\begin{figure}
\begin{center}
\begin{tabular}{cp{4mm}cccp{4mm}ccc}
\psfrag{a}{\small (a)}
\psfrag{b}{\small (b)}
\psfrag{c}{\small (c)}
\epsfig{file=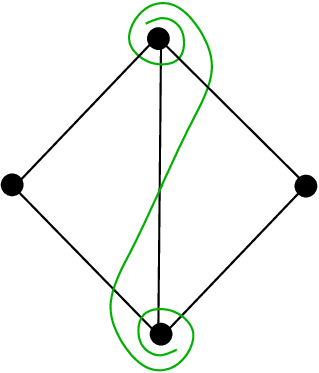,width=0.15\linewidth}&&
\epsfig{file=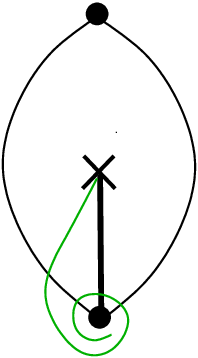,width=0.098\linewidth}&
\epsfig{file=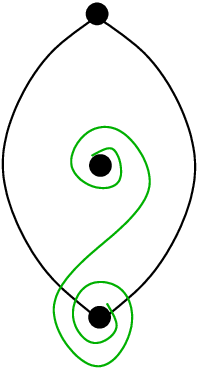,width=0.095\linewidth}&
\epsfig{file=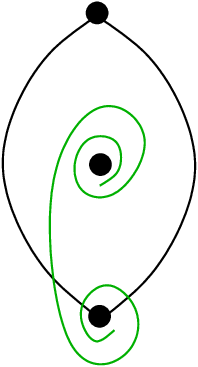,width=0.095\linewidth}&&
\raisebox{-2mm}{\epsfig{file=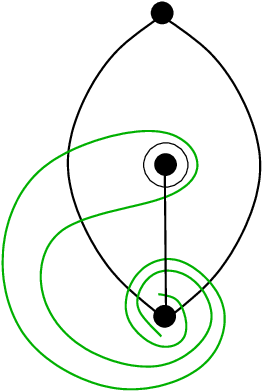,width=0.128\linewidth}}&
\epsfig{file=pic/elem-p.eps,width=0.095\linewidth}&
\epsfig{file=pic/elem-pp.eps,width=0.095\linewidth}\\
\multicolumn{1}{c}{(a)}&&
\multicolumn{3}{c}{(b)}&&
\multicolumn{3}{c}{(c)}
\end{tabular}
\caption{Construction of laminations ${\mathbf L}_{\h T}$ and $\t{\mathbf L}_{\t T}$: (a) $L_i$ for ordinary arc $\gamma_i\in\h T$, (b) $L_j$, $\t L_j'$ and $\t L_j''$ for pending arc $\gamma_j\in\h T$, (c) $L_k$, $\t L_k'$ and $\t L_k''$ for double arc $\gamma_k\in\h T$.} 
\label{elements}
\end{center}
\end{figure}

\begin{theorem}
\label{coherence}
Let $\h T^0$ be arbitrary tagged triangulation of $\hat\O$. For every arc $\gamma$ of $\h T^0$ the shear coordinates $b_\gamma(\h T^0,L_i)$, $n+1\le i\le 2n$, are either all nonpositive or all nonnegative.
\end{theorem}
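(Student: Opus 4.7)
The plan is to reduce sign-coherence of $c$-vectors on the associated orbifold $\hat\O$ to the (known) sign-coherence on the auxiliary marked bordered surface $\t\O$ built in Definition~\ref{def-shear}. The signed adjacency matrix of any tagged triangulation of $\t\O$ is skew-symmetric, so sign-coherence on $\t\O$ is available from the Derksen--Weyman--Zelevinsky theorem~\cite{DWZ} (or equivalently from the surface-specific treatment in~\cite{FT}). The passage between the two settings will be governed by the explicit formula of Definition~\ref{def-shear} together with a short local lemma comparing shear coordinates on a conjugate pair of tagged arcs. A notable feature of this approach is that it handles the exceptional sphere-with-one-orbifold-point series of Fig.~\ref{fig-ex} on equal footing with the rest, bypassing the need for an unfolding.

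Concretely, I would apply Definition~\ref{def-shear} to the pairs $(\hat\O, \h T)$ and $(\hat\O, \h T^0)$ to obtain a single surface $\t\O$ equipped with tagged triangulations $\t T$ and $\t T^0$. Each single-curve lamination $L_i$ on $\hat\O$ lifts to a single-curve lamination $\t L_i$ on $\t\O$, with curves ending at orbifold points reinterpreted as spiraling counterclockwise into the corresponding new puncture. Definition~\ref{def-shear} then expresses
\[
b_\gamma(\h T^0, L_i) \;=\;
\begin{cases}
b_{\gamma'}(\t T^0, \t L_i) & \text{if } \gamma \in \h T^0 \text{ is ordinary},\\
b_{\gamma'}(\t T^0, \t L_i) + b_{\gamma''}(\t T^0, \t L_i) & \text{if } \gamma \in \h T^0 \text{ is pending},\\
b_{\gamma'}(\t T^0, \t L_i) & \text{if } \gamma \in \h T^0 \text{ is double},
\end{cases}
\]
where in the double case we use that $\t L_i$ does not spiral into the special marked point, so the two conjugate-arc shear coordinates coincide.

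For each fixed $\beta \in \t T^0$, sign-coherence on $\t\O$ guarantees that the entries $(b_\beta(\t T^0, \t L_i))_{i=n+1}^{2n}$ share a common sign; this immediately settles the ordinary and double cases. The pending case is the main obstacle and is handled by the following local lemma: for any single curve $c \subset \t\O$ and any conjugate pair $(\gamma', \gamma'')$ in a tagged triangulation of $\t\O$, the shear coordinates $b_{\gamma'}(\t T^0, c)$ and $b_{\gamma''}(\t T^0, c)$ have the same sign (zero permitted). This is a finite local check against Figure~\ref{elem-lam}: the only possible value pairs $(\pm 1, \pm 1)$, $(0, \pm 1)$, $(\pm 1, 0)$ are manifestly sign-coherent, and the shear coordinate at a conjugate pair depends only on local behavior in the once-punctured digon enclosing it. Applying the lemma pointwise to each single curve $\t L_i$ and combining with sign-coherence on $\t\O$ for $\gamma'$ and $\gamma''$ separately, one sees that the two uniform signs across $i$ must agree: if $b_{\gamma'}(\t T^0, \t L_{i_0}) > 0$ for some $i_0$, then $b_{\gamma''}(\t T^0, \t L_{i_0}) \geq 0$ by the lemma, forcing the uniform sign on $\gamma''$ across $i$ to be nonnegative. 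Hence the sum $b_{\gamma'}(\t T^0, \t L_i) + b_{\gamma''}(\t T^0, \t L_i)$ is sign-coherent in $i$, concluding the argument.
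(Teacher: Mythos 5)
Your overall strategy is the one the paper itself uses: pass to the surface $\t\O$ of Definition~\ref{def-shear}, invoke skew-symmetric sign-coherence there, and transfer the conclusion back through the conjugate-pair formulas, with a local comparison of $b_{\gamma'}$ and $b_{\gamma''}$ handling the pending case. Your ``local lemma'' is essentially the observation the paper makes, namely that $|b_{\t\gamma'}(\t T^0,\t L)-b_{\t\gamma''}(\t T^0,\t L)|\le 1$ with the difference supported on the (at most one) end of the curve spiraling into that puncture; note, however, that your justification by listing the value pairs of Fig.~\ref{elem-lam} is too quick, since a curve crossing the once-punctured digon several times produces coordinates of absolute value greater than $1$ --- what saves the statement is that every non-spiraling passage contributes equally to both conjugate arcs.

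The genuine gap is the step ``sign-coherence on $\t\O$ guarantees that the entries $(b_\beta(\t T^0,\t L_i))_{i}$ share a common sign.'' Sign-coherence of $\cc$-vectors in the skew-symmetric case (\cite{DWZ}) applies to the principal-coefficients multi-lamination at $\t T$, i.e.\ to the family of elementary laminations, one per arc of $\t T$, whose shear-coordinate matrix at $\t T$ is the identity. Your family is not that family. For an ordinary arc $\gamma_i\in\h T$ your $\t L_i$ is the elementary lamination dual to $\t\gamma_i$, and for a pending arc it is one of the two elementary laminations at the corresponding conjugate pair, so those rows form a sub-family of a sign-coherent family and are fine. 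But for a double arc $\gamma_k\in\h T$ (which occurs whenever $\hat\O$ has special marked points, i.e.\ whenever the orbifold has orbifold points of weight $2$), the normalization $b_{\gamma_k}(\h T,L_k)=1$ together with the factor $\frac12$ in Definition~\ref{def-shear} forces $b_{\t\gamma_k'}(\t T,\t L_k)=b_{\t\gamma_k''}(\t T,\t L_k)=1$; thus $\t L_k$ is the non-spiraling curve of Fig.~\ref{elem-lam}, whose shear-coordinate vector at $\t T$ has two nonzero entries, and it is not an elementary lamination. The cited theorem says nothing about the row of such a lamination at $\t T^0$. The paper avoids this by placing on $\t\O$ the full principal multi-lamination --- two elementary laminations $\t L_k'$, $\t L_k''$ per conjugate pair --- and then using the identity $b_{\t\gamma}(\t T^0,\t L_k)=b_{\t\gamma}(\t T^0,\t L_k')+b_{\t\gamma}(\t T^0,\t L_k'')$, after which sign-coherence of the two summands gives the sign of the sum; to close your argument you would need this decomposition (or work with the pair from the start), and it is an additional geometric fact, not a formal consequence of your setup. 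Finally, your remark that the argument covers the exceptional series of Fig.~\ref{fig-ex} is correct but is not a point of difference: the paper's proof is likewise unfolding-free.
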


\begin{proof}
Proceeding as in Definition~\ref{def-shear}, we construct a surface $\t\O$ by replacing every orbifold point by a puncture, and triangulations $\t T$ and $\t T^0$ by replacing every pending or double arc $\gamma_i$ in $\h T$ and $\h T^0$ by conjugate pair of arcs $\t\gamma_i'$ and $\t\gamma_i''$ (we also denote by $\t\gamma_i$ the image of $\gamma_i$ on $\t\O$). We also define new multi-lamination $\t {\mathbf L}=\t {\mathbf L}_{\t T}$ consisting of images $\t L_i$ of $L_i$ if $\gamma_i$ is an ordinary arc, or pairs $\t L_i',\t L_i''$ if $\gamma_i$ is pending or double arc. Here $b_{\t\gamma_i}(\t T,\t L_i)=1$ ($b_{\t\gamma_i'}(\t T,\t L_i')=1$ or $b_{\t\gamma_i''}(\t T,\t L_i'')=1$ respectively), and all the other shear coordinates are zeros). 

The triangulation $\t T^0$ can be obtained from $\t T$ by exactly the same sequence of flips required for obtaining $\h T^0$ from $\h T$ (applying composite flips in $\t\gamma_i'$ and  $\t\gamma_i''$ in case of $\gamma_i$ being double or pending). 
According to sign-coherence of $\cc$-vectors in skew-symmetric case, we conclude that for given $\t\gamma\in\t T^0$ the shear coordinates $b_{\t\gamma}(\t T^0,\t L_i)$ (or $b_{\t\gamma}(\t T^0,\t L_i')$ and $b_{\t\gamma}(\t T^0,\t L_i'')$ if $\gamma_i$ is pending or double arc) are either all nonpositive or all nonnegative. We will now compare the signs of shear coordinates of laminations ${\mathbf L}_{\h T}$ and $\t {\mathbf L}_{\t T}$ with respect to triangulations $\h T^0$ and $\t T^0$ respectively. 

First, assume $\gamma\in\h T^0$ is an ordinary arc. According to Definition~\ref{def-shear}, for every ordinary arc $\gamma_i\in\h T$ the coordinates $b_{\t\gamma}(\t T^0,\t L_i)$ and  $b_{\gamma}(\h T^0,L_i)$ are equal. In particular, all the entries $b_{\gamma}(\h T^0,L_i)$ for ordinary arcs $\gamma_i\in\h T$ are of the same sign (of course, some of them may vanish). 

Consider a pending arc $\gamma_j\in\h T$. One of the arcs $\t\gamma_j'$ and $\t\gamma_j''$ (say the former) is tagged plain. Then, by construction of multi-lamination $\t {\mathbf L}$, $b_{\gamma}(\h T^0,L_j)=b_{\t\gamma}(\t T^0,\t L_j')$. Therefore, $b_{\gamma}(\h T^0,L_j)$ has the same sign as all the other $b_{\gamma}(\h T^0,L_i)$ (or zero).

Now consider a double arc $\gamma_k\in\h T$. Then one can see that $$b_{\gamma}(\h T^0,L_k)=b_{\t\gamma}(\t T^0,\t L_k')+b_{\t\gamma}(\t T^0,\t L_k'').$$ Since the signs agree, the sum is of the same sign as well.

Assume now that $\gamma\in\h T^0$ is a pending or double arc. Then $$b_{\gamma}(\h T^0,L_i)=b_{\t\gamma'}(\t T^0,\t L_i)+b_{\t\gamma''}(\t T^0,\t L_i)$$ for ordinary arcs $\gamma_i$, and $$b_{\gamma}(\h T^0,L_j)=b_{\t\gamma'}(\t T^0,\t L_j')+b_{\t\gamma''}(\t T^0,\t L_j')$$ for pending arcs $\gamma_j$, where $\t\gamma'$ is tagged plain. For double arc $\gamma_k$ we have 
$$b_{\gamma}(\h T^0,L_k)=\frac{1}{2}(b_{\t\gamma'}(\t T^0,\t L_k')+b_{\t\gamma'}(\t T^0,\t L_k'')+b_{\t\gamma''}(\t T^0,\t L_k')+b_{\t\gamma''}(\t T^0,\t L_k'')).$$

Denote by $\t L$ one of the laminations $L_i$, $\t L_j'$, $\t L_k'$ or $\t L_k''$. One can note that the difference $|b_{\t\gamma'}(\t T^0,\t L)-b_{\t\gamma''}(\t T^0,\t L)|$ does not exceed one, cf. proof of Lemma~\ref{ex-uniq-prime}. Moreover, this difference is not zero only if the lamination $\t L$ is spiraling into the end of $\t\gamma'$ and $\t\gamma''$ where their tags are distinct. Thus, this end should originate from a special marked point on $\hat \O$ or from an orbifold point. By Definition~\ref{def-lam}, there are no laminations spiraling into special marked point, and by construction of laminations $L_j$, for every orbifold point exactly one of $L_j$ ends in this point. This implies that the columns of coordinates for $\t\gamma'$ and $\t\gamma''$ differ in two places only: for $\t L_j'$ and for $\t L_j''$, and the difference in the values in two columns is equal to one. Hence, both columns have the same sign, so the sum of them has the same sign as well.

\end{proof}

\end{document}